\documentclass[10pt, a4]{article}

\usepackage[utf8]{inputenc}
\usepackage{amsmath}
\usepackage{mathtools}
\usepackage{amsthm}
\usepackage{amssymb}
\usepackage{amsfonts}
\usepackage[all,cmtip]{xy}
\usepackage{todonotes}
\usepackage{appendix}
\usepackage{thmtools}
\usepackage{thm-restate}

\newcommand{\Tmf}{\text{\textit{Tmf}}}
\newtheorem{defi}{Definition}
\newtheorem{prop}{Proposition}

\newtheorem{coro}{Corollary}
\newtheorem{theo}{Theorem}

\newtheorem{lemma}{Lemma}[]

\theoremstyle{definition}
\newtheorem{remark}{Remark}
\newtheorem{example}{Example}
\newtheorem{convention}{Convention}
\title{Strictly Commutative Complex Orientations of $Tmf_1(N)$}
\author{Dominik Absmeier}
\date{}
\begin{document}

\maketitle
\begin{abstract}
We construct by adapting methods and results of Ando, Hopkins, Rezk and Wilson combined with results of Hopkins and Lawson strictly commutative complex orientations for the spectra of topological modular forms with level $\Gamma_1(N)$.
\end{abstract}
\newcommand{\Addresses}{{
  \bigskip
  \footnotesize

\textsc{Fakult\"at f\"ur Mathematik, Ruhr-Universit\"at Bochum,
    IB3/173, 44801 Bochum, Germany}\par\nopagebreak
  \textit{E-mail address:} \texttt{dominik.absmeier@ruhr-uni-bochum.de} }}
\section{Introduction}
Ever since Quillen's theorem in the 1970s showing that the Hopf algebroid $(MU_*,MU_*MU)$ represents one-dimensional formal group laws and strict isomorphism between those and the subsequent work of Mahowald, Ravenel, Wilson, Morava, Hopkins and many others who exploited this result to the fullest it has become clear that the world of stable homotopy is intimately related to the world of smooth one dimensional formal groups.
A natural source for formal groups is provided by abelian group schemes $A$ who give rise to a formal group $\widehat{A}$ by completion along the identity section. 
As the formal group of an $n$-dimensional abelian group scheme is itself $n$-dimensional one has to either look at specific abelian group schemes with additional structures which allow one to split off an 1-dimensional summand of its formal group (see \cite{behrenslawson}) or one does the obvious thing first and looks at 1-dimensional abelian group schemes. There are basically two classes of those: \begin{enumerate}
    \item  The multiplicative group $\mathbb{G}_m$ whose formal group $\widehat{\mathbb{G}}_m$ is realised by a very well known complex oriented cohomology theory, complex K-theory.
   \item Elliptic curves. Contrary to the case of the multiplicative group, the formal group of an elliptic curve can have height 1 or 2 and thus could permit detection of elements on the 2-line of the Adams-Novikov spectral sequence.
\end{enumerate}
A complex orientable cohomology theory whose formal group is isomorphic to the formal group of an elliptic curve is called an elliptic cohomology theory. 
The first example of such an elliptic cohomology theory was studied by Landweber, Ochanine, Ravenel and Stong (see \cite{landweberravenelstong}) who constructed, using the Landweber exact funtor theorem, a generalized cohomology theory whose formal group is the formal group of the universal elliptic curve with a so called $\Gamma_0(2)$-level structure. 

It turns out that in general the Landweber exact functor theorem furnishes many  examples of elliptic cohomology theories. Elliptic curves are classified by a certain Deligne-Mumford stack $\mathcal{M}_{ell}$ which we will revisit in Section 2. The map $\mathcal{M}_{ell} \rightarrow \mathcal{M}_{fg}$ to the moduli stack of formal groups which associates to every elliptic curve its corresponding formal group turns out to be flat. A modern reformulation of the Landweber exact functor theorem (see \cite{tmfbook}, Chapter 4)
states
that for every flat map 
$$\text{Spec}(R) \rightarrow \mathcal{M}_{fg}$$ 
one obtains an even periodic complex orientable cohomology theory $E$ with coefficients $\pi_{even}E=R$.
In particular every elliptic curve $C$ over a ring $R$ classified by a flat morphism 
$$\text{Spec}(R) \rightarrow \mathcal{M}_{ell}
$$
gives rise to an even periodic elliptic cohomology theory $E$ with coefficients $\pi_{2*} E = R$ and associated formal group $\widehat{C}$. This suggested the possibility of equipping the stack $\mathcal{M}_{ell}$ with a sheaf of even periodic cohomology theories, allowing for a unified way to study elliptic cohomology theories. This idea was put on a concrete footing by Goerss, Hopkins and Miller who had the crucial insight that in order to construct such a (pre)sheaf one has to work in the world of $\mathbb{E}_{\infty}$-ring spectra, that is ring spectra which are not only commutative up to homotopy but already "on the nose" on the level of spectra. To be able to perform all the required constructions it is paramount to have a category of spectra with a symmetric monoidal smash product and it was only in the 1990s - more than 30 years after the inception of the notion of a spectrum that such strict models for the category of spectra were available through the work of Elmendorf-Kriz-Mandell-May (\cite{ekmms}, Hovey-Shipley-Smith (\cite{hss} and Mandell-May-Schwede-Shipley (\cite{mmss}).
After the technical tools were at their disposal, Goerss, Hopkins and Miller were able to construct a presheaf $\mathcal{O}^{top}$ of $\mathbb{E}_{\infty}$-ring spectra on the \'etale site of the moduli stack $\overline{\mathcal{M}}_{ell}$ of generalized elliptic curves (for the relation to classical elliptic curves see Section 2). Once this presheaf was at hand one could construct a \lq\lq universal elliptic cohomology theory" by taking global sections. This is the spectrum which nowadays goes by the name $Tmf$ or topological modular forms.
It turns out while every elliptic cohomology theory is complex orientable, the spectrum $Tmf$ itself is not. The reason is roughly speaking due to the fact that elliptic curves can have nontrivial automorphisms at the primes $2$ and $3$ which act on the corresponding elliptic cohomology theories. As $Tmf$ in a sense encompasses \emph{all} elliptic cohomology theories at once the existence of these automorphisms destroys complex orientability for $Tmf$ (note however that  $Tmf$ becomes complex orientable after inverting $6$). 

Although there is no map of ring spectra $MU\rightarrow Tmf$ there were two pieces of evidence which suggested that $Tmf$ should come with an orientation from another Thom spectrum, namely $MString$ (often also called $MO \langle 8 \rangle$) which is the Thom spectrum corresponding to the connective cover $BO\langle 8 \rangle$ of the classifying space for the orthogonal group:
\begin{enumerate}
    \item The mod 2 homology of $Tmf$ is given by a certain module $\mathcal{A}//\mathcal{A}(2)$ over the dual of the Steenrod algebra which was already known to be a direct summand in the homology of the spectrum $MString$. We would like to point out that it is an open question if this algebraic splitting lifts to a splitting on the spectrum level. For some recent progress on this question see \cite{lauresschuster}.
    \item In his work on the Dirac operator on loop spaces \cite{witten} Witten constructed a genus on string manifolds which takes values in the (integral) ring of modular forms. Now genera often lift to orientations like for {example} in the case of the $\widehat{A}$-genus which lifts to a map of ring spectra
    $$MSpin \rightarrow KO.
    $$
    Earlier Ochanine (\cite{ochanine}) constructed a related genus on the bordism ring of orientable manifolds which takes values in the graded ring $MF(\Gamma_0(2))$ of modular forms with $\Gamma_0(2)$-level structures (for more on modular forms and level structures see Section 2). This genus lifts to a map of ring spectra with target the elliptic cohomology theory of Landweber, Ravenel, Stong mentioned at beginning of this introduction.
    As the torsion free part of $Tmf$ coincides with the graded ring of integral modular forms it was natural to suspect that $Tmf$ admits an orientation by $MString$.
\end{enumerate}
Laures \cite{gerdwitten} showed in 2004 that $K(1)$-locally at the prime $2$ such a topological lift of the Witten genus exists and some years later Ando, Hopkins and Rezk \cite{AHR} consructed an integral lift of the Witten genus
$$MString \rightarrow Tmf.
$$
As we shall discuss in Section 2 elliptic curves can be equipped with additional structures, so called level structures $\Gamma$. For example a $\Gamma_1(N)$-level structure on an elliptic curve $E$ corresponds to a designated point $P$ on $E$ of exact order $N$. Generalized elliptic curves with such extra structure are again classified by Deligne-Mumford stacks $\overline{\mathcal{M}_{ell}}(\Gamma)$.
Hill and Lawson showed in \cite{hilllawson} that one can extend the presheaf $\mathcal{O}^{top}$ to these moduli stacks and thus again, after passing to global sections, obtain $\mathbb{E}_{\infty}$-ring spectra $Tmf(\Gamma)$ - topological modular forms with level $\Gamma$. They asked if these spectra also admit $\mathbb{E}_{\infty}$-orientations by Thom spectra other then $MString$ (there are canonical $\mathbb{E}_{\infty}$ maps $Tmf \rightarrow Tmf(\Gamma)$ which of course give rise to $MString$ orientations by precomposing with the topological lift of the Witten genus). In \cite{Dylan} Wilson gave a partial positive answer and constructed  $\mathbb{E}_{\infty}$ orientations 
$$ MSpin[\frac{1}{2}] \rightarrow Tmf(\Gamma_0(2))
$$

and 
$$ MSpin[\frac{1}{6}] \rightarrow Tmf[\frac{1}{6}]
$$
by adapting the methods of \cite{AHR}.
Furthermore he gave a complete characterization of $\mathbb{E}_{\infty}$ $MString$-orientations for level $\Gamma_0(N)$ in terms of $p$-adic measures. 
It turns out that for specific level $\Gamma$ the theories $Tmf(\Gamma)$ are complex orientable through \emph{homotopy commutative} ring maps 
$$MU \rightarrow Tmf(\Gamma),
$$ 
and in fact the first examples of these theories $Tmf(\Gamma)$ as homotopy commutative ring spectra were constructed this way via the exact functor theorem (see \cite{topqexpansion}).

One can ask wether any of these orientations lift to strictly commutative ones. While there is an abundance of complex orientable cohomology theories, many of which are known to admit an $\mathbb{E}_{\infty}$-ring structure, in general our knowledge about strictly commutative complex orientations is rather limited. We would like to summarize what is known so far: In his thesis \cite{ando} (see also \cite{ando2}) Ando gave a criterion for complex orientations to refine to the slightly weaker structure of an $\mathbb{H}_{\infty}$-map in terms of the coordinate of the corresponding formal group law. Furthermore he showed that the Lubin-Tate theories $E_n$ corresponding to the Honda formal group admit such an orientation. Joachim showed in \cite{joachim}, using operator algebra techniques, that the Atiyah-Bott-Shapiro orientation $$ABS:MSpin^c\rightarrow KU$$ 
of complex K-theory refines to an $\mathbb{E}_{\infty}$-map and hence, as there is a canonical map of Thom spectra 
$$MU\rightarrow MSpin^c, $$ providing the first nontautological example of a strictly commutative complex orientation.
Then Hopkins and Lawson described in \cite{hopkinslawson} a general approach to constructing complex $\mathbb{E}_{\infty}$-orientations and showed that in the case of $K(1)$-local $\mathbb{E}_{\infty}$-ring spectra the aforementioned criterion of Ando is sufficient and necessary for a complex orientation to actually lift to an $\mathbb{E}_{\infty}$-map (not just $\mathbb{H}_{\infty}$). We should mention that this result has been previously announced for $p>2$ in \cite{walker} for the case of $p$-completed K-theory and \cite{jan} in the general case. Unfortunately the arguments in both works contain a small gap and the only way we are aware of to fix these gaps is by applying results of \cite{hopkinslawson} very much in the spirit of the methods of Section 5.4. Finally, in \cite{zhu} Zhu showed that every Lubin-Tate spectrum admits an $\mathbb{H}_{\infty}$-orientation and hence in light of the work of Hopkins and Lawson thus also provided the first family of strictly commutative complex orientable spectra. Note that so far nothing is known beyond chromatic height 1. This work aims to remedy this.
\begin{restatable}{theo}{orientations}
\label{msuorientations}
Let $\chi$ be a nontrivial Dirichlet character modulo $N$ such that $\chi(-1)=-1$ and $G_{k}^{\chi}$ the corresponding Eisenstein series.
Then there exists an $\mathbb{E}_{\infty}$-orientation 
$$g_{eisen}:MU[\frac{1}{N},\xi_N] \rightarrow Tmf_1(N)
$$
with characteristic series given by
$$\frac{u}{\exp(u)_{eisen}} = \exp (\sum_{k \geq 2} (G_k + G_{k}^{\chi})\frac{u^k}{k!}).
$$
Furthermore if $f:MU[\frac{1}{N},\xi_N] \rightarrow Tmf_1(N) $ is any other $\mathbb{E}_{\infty}$-orientation  with characteristic series $\exp (\sum_{k\geq 2} F_k \frac{u^k}{k!})$ then
$$F_k \equiv (G_k +G_k^{\chi}) \text{ mod }\: \overline{MF_*}(\Gamma_1(N),\mathbb{Z}[\frac{1}{N},\xi_N]). $$
\end{restatable}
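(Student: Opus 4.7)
The plan is to adapt the strategy of Ando--Hopkins--Rezk \cite{AHR} and Wilson \cite{Dylan} for producing $\mathbb{E}_{\infty}$-orientations into spectra of topological modular forms, but now starting from $MU[\frac{1}{N},\xi_N]$ rather than $MString$, and combining it with the Hopkins--Lawson machinery \cite{hopkinslawson} for upgrading complex orientations to $\mathbb{E}_{\infty}$-maps. The key structural simplification is that after inverting $N$ and adjoining $\xi_N$, the automorphism groups of the relevant (generalised) elliptic curves over $\overline{\mathcal{M}_{ell}}(\Gamma_1(N))$ become trivial at every prime in sight, so $Tmf_1(N)$ is complex orientable in the naive sense and the chromatic picture only involves heights one and two, both of which are accessible through the Hopkins--Lawson framework.

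I would then proceed via an arithmetic/chromatic fracture, handling one prime $p \nmid N$ at a time. Rationally the characteristic series $\exp(\sum_{k \geq 2}(G_k+G_k^{\chi})\frac{u^k}{k!})$ determines an orientation without further input, since $Tmf_1(N)\otimes\mathbb{Q}$ is a polynomial ring on modular forms. At the $K(1)$-local level, the Hopkins--Lawson theorem recalled in the introduction tells us that Ando's $\mathbb{H}_{\infty}$-criterion is necessary and sufficient for an $\mathbb{E}_{\infty}$-lift of a complex orientation; this criterion is precisely the Kummer-type $p$-adic congruence satisfied by $G_k$ and by $G_k^{\chi}$, via the classical Eisenstein measure and its twist by the Dirichlet character $\chi$, which is the same type of input Wilson exploits for his $MString$-orientations. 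At the $K(2)$-local level one does not have such a clean criterion, so I would apply the Hopkins--Lawson presentation of $MU$ as a colimit of smaller Thom spectra and extend the orientation stage by stage, verifying that the successive obstruction classes are killed by the Eisenstein candidate; here the twist $G_k^{\chi}$ plays the essential role of correcting the constant-term behaviour that a purely $G_k$-based candidate cannot control. Gluing the rational, $K(1)$-local, and $K(2)$-local pieces through the arithmetic fracture square produces $g_{eisen}$.

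For the uniqueness clause I would identify the space of $\mathbb{E}_{\infty}$-orientations $MU[\frac{1}{N},\xi_N] \to Tmf_1(N)$, when nonempty, as a torsor over an infinite loop space whose $\pi_0$ injects, via the characteristic-series map, into a quotient of $\prod_{k} MF_k(\Gamma_1(N),\mathbb{Z}[\frac{1}{N},\xi_N])$ by precisely $\overline{MF_*}$; comparing any second orientation $f$ with $g_{eisen}$ then forces the asserted congruence $F_k \equiv G_k + G_k^{\chi} \bmod \overline{MF_*}$. The main obstacle I expect is the $K(2)$-local existence step: the Hopkins--Lawson obstruction groups at height two are genuinely nontrivial, and matching them with the Eisenstein correction terms coming from $G_k^{\chi}$ is where the real arithmetic content of the argument lives. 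Once that technical heart is in place, both the existence statement and the uniqueness statement follow relatively formally from the general obstruction theory of $\mathbb{E}_{\infty}$-maps out of $MU$.
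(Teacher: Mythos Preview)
Your overall architecture---arithmetic fracture, Eisenstein measures at $K(1)$, and the Hopkins--Lawson filtration---is the right set of ingredients, but you have misplaced two of them, and this is a genuine gap rather than a reordering. First, there is no $K(2)$-local existence problem at all: the spectrum $bu$ is $K(2)$-acyclic, so $Orient(bu, L_{K(2)}gl_1 Tmf_1(N))$ is contractible and no stage-by-stage Hopkins--Lawson obstruction theory is needed there. The height-two information enters only through the chromatic fracture square for $L_{K(1)\vee K(2)}gl_1 Tmf_1(N)$, and the paper shows (via Rezk's $K(2)$-local logarithm and the identity $1-T_p+p^{k-1}\langle p\rangle=(1-\langle p\rangle U)(1-p^{k-1}\mathrm{Frob})$) that a $K(1)$-local candidate lifts to $K(1)\vee K(2)$ exactly when each $(1-p^{k-1}\mathrm{Frob})g_k$ lies in the kernel of $1-\langle p\rangle U$. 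This is a Hecke-eigenform condition, and it is satisfied automatically by the Eisenstein series; so the ``real arithmetic content'' you anticipate at $K(2)$ collapses to the classical fact that the $G_k$ and $G_k^{\chi}$ are Hecke eigenforms.

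Second, the Hopkins--Lawson filtration is deployed at a different step. After one has an element of $Orient(bu,L_{K(1)\vee K(2)}gl_1 Tmf_1(N))$, one still must lift it across the map $gl_1 Tmf_1(N)\widehat{_p}\to L_{K(1)\vee K(2)}gl_1 Tmf_1(N)$, whose fiber $D$ has $\pi_i D=0$ for $i>3$ and $\pi_3 D$ torsion-free. The paper uses the connectivity of the Hopkins--Lawson fibers $f_m$ (and the torsion property of $f_2$ at $p=2$) to show that the obstruction in $[x_{p^2},\Sigma D]$ injects into $[x_1,\Sigma D]$; the latter vanishes precisely when the $g_k$ are \emph{classical} holomorphic modular forms. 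This is where $G_k^{\chi}$ is essential: it supplies a classical weight-one form $G_1^{\chi}$ (the level-one $G_k$ alone cannot), and its associated measure $\widetilde{J}^{c,1}_{\chi}$ has mean zero, so it can be added to the AHR measure $\overline{\nu}_c$ without disturbing the required mean $\frac{1}{p}\log(c^{p-1})$. Your account of the uniqueness clause via the torsor structure is essentially correct.
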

\begin{remark}
While the constructions of $Tmf_1(N)$ may be achieved over $\mathbb{S}[\frac{1}{N}]$, it turns out  that to construct complex $\mathbb{E}_{\infty}$-orientations it seems unavoidable  to further adjoin $N$-th roots of unity and work over $\mathbb{S}[\frac{1}{N},\xi_N]$ (see Remark \ref{conundrum} in Section \ref{constructingmeasures}).
\end{remark}
\subsection{Outline of the Work} The first three sections of this work contains standard material which is presumably well known to the experts and is included only to make this work more self contained. 
We begin in Section 2 by recalling some basic facts about classical and $p$-adic modular forms with level structures and in particular about the Frobenius operator on $p$-adic modular forms. Here some subtleties arise (see Remark \ref{frobeniuslevelstructures}) which will later turn out to be crucial in our construction of $\mathbb{E}_{\infty}$-orientations.
In Section 3 we give a short overview of the spectra $Tmf_1(N)$ of topological modular forms with level structures and their chromatic localizations. In Section 4 we recall the obstruction theory for $\mathbb{E}_{\infty}$-orientations originally due to May and Sigurdsson and later refined in \cite{ABGHR}.
To any $\mathbb{E}_{\infty}$-ring spectrum $R$ one can associate a spectrum of units $gl_1R$ which is the higher algebraic analogue of the group of units of a commutative ring. A complex $\mathbb{E}_{\infty}$-orientation
$$m:MU\rightarrow R $$ turns out to be equivalent to a null homotopy of the composite 
$$\Sigma^{-1}bu \rightarrow gl_1 \mathbb{S} \rightarrow gl_1 R .$$
Here the map $gl_1 \mathbb{S}\rightarrow gl_1 R$ is induced by the unit map of $R$ and the map $\Sigma^{-1}bu \rightarrow gl_1\mathbb{S}$ is the stable $j$-homomorphism. 
Next we recall the logarithmic cohomology operations of Rezk (\cite{Rezklogarithm} given by the equivalences 
$$ L_{K(n)}gl_1 R \cong L_{K(n)}R $$ and provide a small correction to a result of \cite{AHR} (see Proposition \ref{k2hecke}) which is of no consequence to the results of Ando, Hopkins and Rezk and Wilson, but turns out to be crucial in this work.
In Section 5 we then turn to the case of $\mathbb{E}_{\infty}$-orientations of the spectra $Tmf_1(N)$. For the first part we very much follow the reasoning of \cite{AHR} and \cite{Dylan} apart from an additional argument required at the prime $p=2$ (see Section \ref{OrientsKtheory}). Instead of looking at null homotopies 
$$\Sigma^{-1}bu \rightarrow gl_1\mathbb{S} \rightarrow gl_1 Tmf_1(N) $$ 
we first (Section 5.1 - 5.3) study null homotopies of the composite 
$$ \Sigma^{-1}bu \rightarrow gl_1\mathbb{S}\rightarrow gl_1 Tmf_1(N) \rightarrow L_{K(1)\vee K(2)}gl_1Tmf_1(N) $$ or, equivalently, maps making the following diagram commute:
$$\xymatrix{\Sigma^{-1}bu \ar[r] & gl_1 \mathbb{S} \ar[d] \ar[r] & gl_1\mathbb{S}/\Sigma^{-1}bu \ar@{-->}[d] \\
& gl_1 Tmf_1(N) \ar[r] & L_{K(1)\vee K(2)} gl_1 Tmf_1(N) }$$
The space of these null homotopies becomes accessible through the aforementioned logarithmic operations and we give a description of its path components in terms of $p$-adic measures (Theorem \ref{orienttmfgamma}). Now while the spectra $Tmf_1(N)$ are $E(2)$-local, their spectra of units are not. However it turns out that the discrepancy is controlable. More precisely, one can show that if $R$ is an $E(n)$-local $\mathbb{E}_{\infty}$-ring spectrum then the homotopy groups $\pi_k D$ of the homotopy fiber 
$$D \rightarrow gl_1 R \rightarrow L_{K(1)\vee \ldots \vee K(n)}gl_1 R $$ are zero for $k>n+1$. In Section 5.4 we then go on to study which maps $ f: gl_1\mathbb{S}/\Sigma^{-1}bu \rightarrow L_{K(1)\vee K(2)}gl_1Tmf_1(N)$ in the diagram above already factor through $gl_1Tmf_1(N)$. This is equivalent to the induced map $bu \rightarrow \Sigma D$ in the diagram
$$ \xymatrix{gl_1 \mathbb{S} \ar[r] \ar[d] & gl_1\mathbb{S}/\Sigma^{-1}bu \ar[d]^{f} \ar[r]& bu \ar@{-->}[d] \\
gl_1 Tmf_1(N) \ar[r] & L_{K(1)\vee K(2)}gl_1Tmf_1(N) \ar[r]& \Sigma D  }$$ being null. At this point we depart from previous approaches to strict commutative orientations for topological modular forms. We recall the previously mentioned obstruction theory of Hopkins and Lawson. They construct a sequence $$x_1 \rightarrow  x_2 \rightarrow \ldots$$ of connective spectra with 
$$ \text{hocolim  }x_i \simeq bu$$ and associated Thom spectra
$MX_i$ with the following properties: \begin{enumerate}
    \item $\pi_0Map_{\mathbb{E}_{\infty}}(MX_1,R) \cong \{ \text{homotopy commutative complex orientations of } R \}$
    \item $L_{E(n)}MX_{p^n} \simeq L_{E(n)}MU$.
\end{enumerate}
Their approach now consists of taking an ordinary, homotopy commutative, complex orientation, i.e. an element in $\pi_0 Map_{\mathbb{E}_{\infty}}(MX_1,R)$, and studying when such a map succesively lifts to $MX_{p^k}$. 
We basically turn this idea upside down and start with an element in $[bu,\Sigma D]$ and show, using known results about the coconnectivity of $\Sigma D$ and the connectivity of the fibers of the maps $x_i \rightarrow x_{i+1}$, that it is equivalent to the zero element exactly when the induced element in $[x_1, \Sigma D]$ is. This question in turn is rather easy to decide.
\begin{restatable}{theo}{criterion}\label{orienttmfgammaall}
There is an injection 
\begin{align*}
    \pi_0 Map_{\mathbb{E}_{\infty}}(MU,Tmf_1(N)\widehat{_p})  \hookrightarrow & [bu, L_{K(1)}\Tmf_1(N)\otimes \mathbb{Q}] \\  \cong &\prod_{k\geq 1} MF_{p,k}\otimes \mathbb{Q} \\
f  \mapsto & \pi_{2k}f
\end{align*}
with image the set of sequences $\{ g_k \}$ of $p$-adic modular forms such that \begin{enumerate}
    \item for all $c\in \mathbb{Z}_p^{\times}$ the sequence $\{(1-c^k)(1-p^{k-1}\text{Frob})g_k \}$ satisfies the generalized Kummer congruences;
    \item for all $c\in \mathbb{Z}_p^{\times}$ $\lim_{k \to \infty}(1-c^k)(1-p^{k-1}\text{Frob})g_k = \frac{1}{p}\log(c^{p-1})$; 
    \item The elements $(1-p^{k-1}\text{Frob})g_k$ are in the kernel of the operator $1-\langle p \rangle U$.
    \item for all $k$ the $g_k \in \overline{MF}_k(\Gamma_1(N)) \subset MF_{p,k}(N,\mathbb{W}) $.
\end{enumerate}
\end{restatable}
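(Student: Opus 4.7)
\medskip
\noindent\textbf{Proof proposal.}
The plan is to combine the $\mathbb{E}_{\infty}$-orientation obstruction theory recalled in Section~4 with the classification of maps into $L_{K(1)\vee K(2)}gl_1 Tmf_1(N)$ afforded by Theorem~\ref{orienttmfgamma} and the Hopkins--Lawson tower $x_1\to x_2 \to \cdots$ with $\text{hocolim}\,x_i \simeq bu$. An $\mathbb{E}_{\infty}$-orientation $f:MU \to Tmf_1(N)\widehat{_p}$ corresponds to a null-homotopy of the composite $\Sigma^{-1}bu \to gl_1\mathbb{S} \to gl_1 Tmf_1(N)\widehat{_p}$, equivalently a map $\phi:gl_1\mathbb{S}/\Sigma^{-1}bu \to gl_1 Tmf_1(N)\widehat{_p}$ filling the left square of the diagram in the outline. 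Postcomposing with the localization to $L_{K(1)\vee K(2)}gl_1 Tmf_1(N)\widehat{_p}$ yields a map $\overline{\phi}$ of the kind classified by Theorem~\ref{orienttmfgamma} in terms of the three $p$-adic measure conditions~(1)--(3). What remains is to describe the injection $f \mapsto \{\pi_{2k}f\}$ and to single out which $\overline{\phi}$ actually lift back to $gl_1 Tmf_1(N)\widehat{_p}$, yielding the additional condition~(4).

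\medskip
\noindent For the construction of the injection I would combine Rezk's logarithmic equivalence $L_{K(1)}gl_1 Tmf_1(N)\simeq L_{K(1)}Tmf_1(N)$ with the corrected Proposition~\ref{k2hecke}: $K(1)$-localizing $\phi$ and rationalizing produces a map $bu \to L_{K(1)}Tmf_1(N)\otimes \mathbb{Q}$ whose $k$-th rational homotopy group is precisely $\pi_{2k}f$. Since $bu\otimes \mathbb{Q}$ splits as a product of rational Eilenberg--MacLane spectra one has $[bu, L_{K(1)}Tmf_1(N)\otimes\mathbb{Q}]\cong \prod_k MF_{p,k}\otimes\mathbb{Q}$, which gives the claimed target. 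Injectivity should follow from the fact that the $p$-adic measures parametrizing $\overline{\phi}$ in Theorem~\ref{orienttmfgamma} are determined by their rational homotopy values (a Kummer-style uniqueness statement), together with essentially unique lifting of $\overline{\phi}$ to $gl_1 Tmf_1(N)\widehat{_p}$, which follows from the same coconnectivity of the fibre $D$ that drives the argument below.

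\medskip
\noindent The heart of the proof, and its main obstacle, is the lifting question of Section~5.4: given $\overline{\phi}$ satisfying~(1)--(3), when does it lift along $gl_1 Tmf_1(N)\widehat{_p} \to L_{K(1)\vee K(2)}gl_1 Tmf_1(N)\widehat{_p}$? The obstruction sits in $[bu,\Sigma D]$ where $D$ is the homotopy fibre of this localization. Since $Tmf_1(N)$ is $E(2)$-local one has $\pi_k D=0$ for $k>3$, so $\Sigma D$ is coconnective. The Hopkins--Lawson tower is now decisive: because the connectivities of the fibres $x_i \to x_{i+1}$ grow without bound while $\Sigma D$ is bounded above, the restriction maps $[bu, \Sigma D]\to [x_i,\Sigma D]$ become isomorphisms for $i$ large, and the equivalence $L_{E(2)}MX_{p^2}\simeq L_{E(2)}MU$ combined with the same coconnectivity collapses the tower all the way down to the restriction along $x_1 \to bu$. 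But $\pi_0 Map_{\mathbb{E}_{\infty}}(MX_1, Tmf_1(N)\widehat{_p})$ is by Hopkins--Lawson exactly the set of homotopy commutative complex orientations, whose characteristic series come from \emph{classical} modular forms. Vanishing of the obstruction on $x_1$ therefore translates into the requirement that each $g_k\in MF_{p,k}(N,\mathbb{W})$ lift to a classical form in $\overline{MF}_k(\Gamma_1(N))$, which is exactly condition~(4).

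\medskip
\noindent The hardest step will be making the connectivity/coconnectivity bookkeeping precise enough to genuinely reduce the obstruction in $[bu,\Sigma D]$ to one in $[x_1,\Sigma D]$: one must combine the bound $\pi_{>3}D=0$, the increasing connectivity of the Hopkins--Lawson fibres, and $L_{E(2)}MX_{p^2}\simeq L_{E(2)}MU$ into a single argument controlling simultaneously the higher $x_i$-obstructions and the possible ambiguity of lifts. A secondary subtlety, hidden in identifying conditions~(1)--(3), is to track the Frobenius and Hecke-style operators consistently through the logarithm and Proposition~\ref{k2hecke}. Once these are in place, the four conditions~(1)--(4) cut out exactly the image of $\pi_{2*}$ as claimed.
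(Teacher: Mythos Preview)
Your proposal follows essentially the same route as the paper: conditions (1)--(3) are inherited from Theorem~\ref{orienttmfgamma}, and condition~(4) is extracted by pushing the obstruction in $[bu,\Sigma D]$ down the Hopkins--Lawson tower to $[x_1,\Sigma D]$, where it becomes the question of whether the induced homotopy-commutative orientation exists. The identification $Map_{\mathbb{E}_\infty}(MU,Tmf_1(N)\widehat{_p})\simeq Orient(x_{p^2},gl_1 Tmf_1(N)\widehat{_p})$ via $E(2)$-localness, and the final injection via torsion-freeness of $[\mathbb{CP}^\infty,GL_1 Tmf_1(N)]$ together with $x_1\simeq_{\mathbb{Q}} bu$, are exactly as in the paper.

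There is one genuine gap in your bookkeeping, precisely at the step you flag as hardest. The pure connectivity/coconnectivity count only gets you from $[x_{p^2},\Sigma D]$ injectively down to $[x_p,\Sigma D]$, and for $p>2$ further to $[x_1,\Sigma D]$: the fibre $f_{p^i}$ is $(2p^i-2)$-connective while $\pi_*\Sigma D=0$ for $*\geq 5$, so there is no overlap once $2p^i-2\geq 4$. But at $p=2$ the fibre $f_2$ of $x_1\to x_2$ is only $2$-connective, so $\Sigma f_2$ and $\Sigma D$ overlap in degree~$4$, and the exact sequence gives $[\Sigma f_2,\Sigma D]\cong \mathrm{Hom}(\pi_4\Sigma f_2,\pi_4\Sigma D)$, which is not obviously zero. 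The paper closes this by invoking Wilson's result (Proposition~\ref{torsionfreed2}) that $\pi_3 D=\pi_4\Sigma D$ is torsion-free, while $f_2$ is rationally trivial and hence torsion; this kills the obstruction group. You do not mention this torsion-freeness input, and without it the reduction to $x_1$ fails at the prime~$2$.
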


Finally in Section 6 we construct measures with the required properties by combining results of Katz (\cite{katzeisenstein}) with the sigma orientation of Ando, Hopkins and Rezk, hence completing the proof of Theorem \ref{msuorientations}.
\subsection{Notations and Conventions}
We will use that term $\mathbb{E}_{\infty}$-ring spectrum synonymously to commutative monoid in any one of the modern category of spectra with strict symmetric monoidal smash product.
The $1$-connective cover of the connective complex K-theory spectrum $ku$ will be denoted by $bu \simeq ku\langle 2 \rangle$. 
All rings are assumed to be unital and commutative.
A $p$-adic ring for us means a ring complete and separated in the $p$-adic topology.
\subsection{Acknowledgements}
I would like to thank Dylan Wilson, Lennart Meier, Tobias Barthel, Martin Olbermann, Andrew Senger, Jan Holz, Viktoriya Ozornova and Bj\"orn Schuster for many helpful discussions.
The author would like to give special thanks to his thesis advisor, Gerd Laures, for his  constant support throughout the time leading to this work.

The project leading to this work was partially funded by the DFG under the "Projekt 322563960: Twisted String Structures".

\section{Modular Forms}
We begin by recalling some basic results concerning elliptic curves and modular forms. All the material in this section is well known to the experts and we ask the reader to consult the references cited for a more detailed exposition.
\subsection{Elliptic curves and Level structures}
\subsubsection{Generalities}
For us an elliptic curve $E/S$ over a base scheme $S$ is a proper smooth morphism $p: E \rightarrow S$ such that the fibres are connected curves of genus one together with a section $e: S \rightarrow E$. We will denote by $\omega_{E/S}$ the invertible sheaf $p_*(\Omega^1_{E/S})$ on $S$.
While not immediately clear from the definition it turns out that an elliptic curve $E/S$  is a (one-dimensional) abelian group scheme (see \cite{katzmazur}, 2.1).

One can impose further structure on elliptic curves:
Let $E/R$ be an elliptic curve over a ring $R$ and denote with $E[N]$ for each integer $N\geq 1$ the scheme theoretic kernel of multiplication by $N$. This is a finite flat commutative group scheme over $R$ of rank $N^2$, \'etale if and only if $N$ is invertible in $R$ (see \cite{katzmazur} Corollary 2.3.2). 
\begin{defi}
A $\Gamma_1(N)$-level structure on an elliptic curve $E/R$ over a ring $R$ in which $N$ is invertible is an inclusion of subgroup schemes
$$\beta_N: \mathbb{Z}/N\mathbb{Z} \hookrightarrow E[N].
$$
\end{defi}
Note that the identiy section provides trivially on every elliptic curve a $\Gamma_1(1)$-level structure.
\begin{remark}
One can define more generally level structures on elliptic curves over arbitrary schemes $S$, not just affine ones as we did (see Chapter 3 of \cite{katzmazur}). However all the cases we will eventually be interested will fit the definition above.
\end{remark}
\begin{remark}\label{arithmeticvsnaive}
Sometimes in the literature (for example in \cite{gouvea}) another notion of $\Gamma_1(N)$-level structure is considered: An arithmetic $\Gamma_1(N)$-level structure on an elliptic curve $E/R$ is an inclusion of subgroup schemes
$$\beta_{N}^{arith}: \mu_N \hookrightarrow E[N].
$$
In this context $\Gamma_1(N)$-level structures as we defined them above are called "naive" to distinguish them from the "arithmetic" ones.
If the ground ring $R$ contains an $N$-th root of unity $\xi_N$ both notions agree (the group schemes $\mu_n$ and $\mathbb{Z}/N\mathbb{Z}$ are isomorphic in this case) but in general they specify different structures. 
It can however be shown that one can uniquely (up to isomorphism) associate to an elliptic curve $E$ with a naive $\Gamma_1(N)$-level structure one with an arithmetic level structure by passing to the quotient curve $E/\beta_N(\mathbb{Z}/N\mathbb{Z})$. Conversely if $E^{\prime}$ is an elliptic curve with an arithmetic $\Gamma_1(N)$-level structure, the quotient $E^{\prime}/\beta^{arith}_N(\mu_N)$ can be equipped with a naive $\Gamma_1(N)$-level structure. For more on this we refer the reader to Chapter II of \cite{katzeisenstein2} or the appendix of \cite{viktoriyalennart}.
\end{remark}

\subsubsection{Representability}\label{modularforms}
The moduli problem "elliptic curves over $\mathbb{Z}$ and isomorphisms between such" is represented by a Deligne-Mumford stack denoted by $\mathcal{M}_{ell}$ (see for example \cite{conrad} and the references therein). It comes equipped with a line bundle $\omega$ whose fibre over an elliptic curve $E/R$ are $\omega_{E/R}$. We can base change from $\mathbb{Z}$ to other rings $R$ and regard only elliptic curves and their isomorphisms over $R$-schemes. In this case we will denote the corresponding moduli stacks with $\mathcal{M}_{ell}(R)$. 
Similarily there are moduli stacks $\mathcal{M}_1(N)$ classifying elliptic curves with $\Gamma_1(N)$-level structures together with a line bundle $\omega_{\Gamma}$ induced from $\omega$ by pulling back along the forgetful map of stacks
$$\mathcal{M}_1(N; \mathbb{Z}[\frac{1}{N}]) \rightarrow \mathcal{M}_{ell}(\mathbb{Z}[\frac{1}{N}]). 
$$

For $N\geq 4$ the moduli stacks $\mathcal{M}_1(N)$ are actually isomorphic to smooth affine curves over $\mathbb{Z}[\frac{1}{N}]$ which are usually denoted $Y_1(N)$ (see \cite{katzmazur} 2.7.3 and 4.7.0).
\begin{remark}
While for $N=2,3$ the moduli problems "isomorphism classes of elliptic curves over $\mathbb{Z}[\frac{1}{N}]$ with a $\Gamma_1(N)$-level structure" are not represented by affine schemes anymore, they are still representable through \emph{graded} affine schemes (see \cite{hillmeier} Proposition 4.5).
\end{remark}
The stack $\mathcal{M}_{ell}$ admits a compactification $\overline{\mathcal{M}_{ell}}$ by "glueing in" a copy of the multiplicative group $\mathbb{G}_m$. This stack classifies so called \emph{generalized elliptic curves}: this means we do not insist that the fibres of $p:E \rightarrow S$ are neccessarily smooth curves of genus one anymore, but instead also allow curves with nodal singularities away from the point marked by the section $e$ (see \cite{dr} for the precise definition and the construction of the corresponding moduli stacks).
Similarily we can construct compactifications of $\mathcal{M}_1(N;\mathbb{Z}[\frac{1}{N}])$ 
where we allow the fibers over geometric points also to degenerate to so called N\'eron $m$-gons for $m|N$: These are obtained by glueing $m$-copies of $\mathbb{P}^1$ by identifying the point $\infty$ of the $i$-th copy with the point $0$ of the $i+1$-th copy and the point $\infty$ of the $m$-th copy of $\mathbb{P}^1$ with the point $0$ of the first. Note that a N\'eron $1$-gon is simply a copy of the multiplicative group as we have $\mathbb{P}^1 / (0 \sim \infty)  \cong \mathbb{G}_m$ (see \cite{dr}).
 
For $N>3$ the moduli stacks $\overline{\mathcal{M}_1(N)}$ are isomorphic to proper smooth projective curves. In the cases $N=2,3$ the moduli stacks $\overline{\mathcal{M}_1(N)}$ are weighted projective lines (\cite{hillmeier} Proposition 4.5).
The scheme $\overline{\mathcal{M}_1(N)} - \mathcal{M}_1(N) $ is finite and \'etale over $\mathbb{Z}[\frac{1}{N}]$ and over $\mathbb{Z}[\frac{1}{N},\xi_N]$ it is a disjoint union of $\varphi(N)$ sections called the \textbf{cusps} of $\overline{\mathcal{M}_1(N)}$ (here $\varphi$ denotes Euler's totient function).

\subsection{Classical Modular Forms}
We now turn to modular forms, for the most part we shall follow the treatment in \cite{Katzpadicproperties} where complete proofs of the statements in this section can be found.
\subsubsection{Generalities}
\begin{defi}
The ring of holomorphic modular forms of level $\Gamma_1(N)$ over a ring $R$ is defined as
$$\overline{MF}_*(N,R) \overset{\text{def}}{=} \bigoplus_{k} H^0(\overline{\mathcal{M}_1(N;R)}, \omega^{\otimes k}). 
$$
Similarily meromorphic modular forms of level $\Gamma_1(N)$ are defined as global sections over $\mathcal{M}_1(N)$
\end{defi}
One can spell this out in more concrete terms:
A modular form $f\in MF_{k}(\Gamma, R)$ of weight $k$ is a rule which assigns to every triple $(E/B,\omega,\beta)$ consisiting of an elliptic curve $E$ over an $R$-algebra $B$, an invariant differential $\omega$ on $E$ and a level $\Gamma$-structure $\beta$ on $E$ an element $f(E/B,\omega,\beta)\in B$ such that
\begin{enumerate}
    \item the value $f(E/B,\omega, \beta)\in B$ depends only on the isomorphism class of the triple $(E/B,\omega,\beta)$;
    \item its formation commutes with base change;
    \item it is homogenous of degree $-k$ in the second variable: for every $\lambda \in B^{\times}$
    $$f(E/B,\lambda\cdot \omega, \beta) = \lambda^{-k} f(E/B, \omega, \beta).
    $$
\end{enumerate}

Note that in general it is not true that $$\overline{MF}_k(\Gamma,\mathbb{Z}) \otimes R \cong \overline{MF}_k(\Gamma,R). $$
The reason is that there might exist modular forms over $\mathbb{F}_p$ which do not arise as the mod $p$ reduction of modular forms defined over $\mathbb{Z}$.

We have however the following (\cite{Katzpadicproperties}, 1.7):
\begin{prop}
Let $N\geq 3$ and either $k\geq 2$ or $ N\leq 11$.
Then for any $\mathbb{Z}[\frac{1}{N}]$-algebra $R$  the canonical map 
$$ \overline{MF}_k(\Gamma_1(N),\mathbb{Z}[\frac{1}{N}])\otimes R \rightarrow \overline{MF}_k(\Gamma_1(N),R)
$$
is an isomorphism.
\end{prop}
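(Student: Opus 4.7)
The plan is to recast both sides cohomologically and then invoke standard cohomology and base change. By the definition of modular forms recalled just above, $\overline{MF}_k(\Gamma_1(N), R) = H^0(\overline{\mathcal{M}_1(N;R)}, \omega^{\otimes k})$, and since the moduli problem is defined over $\mathbb{Z}[\frac{1}{N}]$ we have $\overline{\mathcal{M}_1(N;R)} \cong \overline{\mathcal{M}_1(N;\mathbb{Z}[\frac{1}{N}])} \times_{\mathbb{Z}[\frac{1}{N}]} \mathrm{Spec}(R)$, with the line bundle $\omega_\Gamma$ likewise commuting with base change. Thus the canonical map in question is precisely the base change map for the proper structure morphism $f: \overline{\mathcal{M}_1(N;\mathbb{Z}[\frac{1}{N}])} \to \mathrm{Spec}(\mathbb{Z}[\frac{1}{N}])$ applied to the line bundle $\omega^{\otimes k}$.

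Next I would apply cohomology and base change. The morphism $f$ is proper and flat (for $N \geq 4$ the compactified moduli stack is a proper smooth curve over $\mathbb{Z}[\frac{1}{N}]$, while for $N = 3$ it is a tame proper weighted projective stack), and $\omega^{\otimes k}$ is a line bundle and hence flat over the base. The familiar theorem of cohomology and base change then implies that whenever $H^1(\overline{\mathcal{M}_1(N)}_{\bar{s}}, \omega^{\otimes k}_{\bar{s}}) = 0$ for every geometric point $\bar s$ of $\mathrm{Spec}(\mathbb{Z}[\frac{1}{N}])$, the pushforward $f_* \omega^{\otimes k}$ is a locally free $\mathcal{O}_{\mathrm{Spec}(\mathbb{Z}[\frac{1}{N}])}$-module whose formation commutes with arbitrary base change, which gives the claimed isomorphism at once.

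The remaining task is the fibrewise vanishing $H^1(\omega^{\otimes k}) = 0$. Here I would use the Kodaira--Spencer isomorphism $\omega^{\otimes 2} \cong \Omega^1_{\overline{\mathcal{M}_1(N)}}(\mathrm{cusps})$ to compute $\deg \omega = g - 1 + c/2$ on each geometric fibre, where $g$ is the genus and $c$ the number of cusps, and then invoke the classical criterion that on a smooth proper curve $\deg L > 2g - 2$ implies $H^1(L) = 0$. For $k \geq 2$ one obtains $\deg \omega^{\otimes k} \geq 2(g-1) + c > 2g - 2$ since $c \geq 1$, so the vanishing holds unconditionally in this range. For $k = 1$ the requirement $c > 2g - 2$ is a tabulated inequality for $X_1(N)$ which is known to hold precisely when $N \leq 11$; for $k = 0$ the statement is trivial. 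The main obstacle is the case $N = 3$, where one must run cohomology and base change in the category of tame Deligne--Mumford stacks and keep track of the fractional contributions of elliptic points and cusps to $\deg \omega$; this is essentially bookkeeping given the existing literature and presents no conceptual difficulty.
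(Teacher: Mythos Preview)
The paper does not actually prove this proposition; it is quoted directly from Katz, \emph{$p$-adic properties of modular schemes and modular forms}, \S1.7, with no argument given. Your cohomology-and-base-change approach is exactly the standard one (and is what Katz does), so in spirit you are reconstructing the cited reference rather than diverging from the paper.

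That said, two points in your sketch are imprecise. First, the assertion that the inequality $c>2g-2$ for $X_1(N)$ ``is known to hold precisely when $N\leq 11$'' is not correct: for instance $X_1(12)$ has genus $0$ and $X_1(13)$ has genus $2$ with $12$ cusps, so the inequality continues to hold well past $N=11$. The actual reason Katz isolates $N\leq 11$ is that these are exactly the levels with $g(X_1(N))\leq 1$; for genus $0$ any line bundle has either $H^0=0$ or $H^1=0$, and for genus $1$ with $\deg\omega>0$ the same dichotomy holds for all tensor powers, so base change for $H^0$ follows for \emph{every} integer $k$, not just $k\geq 1$. Second, you do not address $k<0$ at all: your degree criterion $\deg\omega^{\otimes k}>2g-2$ fails badly for negative $k$, and in that range the argument must instead go through vanishing of $H^0$ on fibres (or constancy of cohomology dimensions via Grauert). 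Both gaps are easy to close once noticed, but as written the proof does not yet cover the full range of the statement.
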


\subsubsection{The $q$-Expansion Principle}\label{qexpansion}
A prominent r\^ole in the theory of modular forms is played by the Tate curve $\text{Tate}(q)$ which is a generalized elliptic curve over $\mathbb{Z}[[q]]$ defined as 
$$ \text{Tate}(q) \overset{\text{def}}{=} \mathbb{G}_m / q^{\mathbb{Z}}.$$
It corresponds to the formal neighborhood of the cusps of the compactified moduli stack $\overline{\mathcal{M}_{ell}}$. The Tate curve comes equipped with a canonical invariant differential $\omega_{can}$ induced from the canonical invariant differential of $\mathbb{G}_m$. We will denote with $\text{Tate}(q)_R$ the Tate curve defined over the ring $\mathbb{Z}((q))\otimes_{\mathbb{Z}}R$ (see \cite{dr} Section VII and \cite{hilllawson} 3.4).

Note that over the ring $\mathbb{Z}[\frac{1}{N},\xi_N]((q))$ the Tate curve admits a canonical \footnote{after choosing a primitive $N$-th root of unity $\xi_N$, which is not canonical} $\Gamma_1(N)$-level structure given by setting
\begin{align*}
    \beta_{N}:\mathbb{Z}/N\mathbb{Z} &\hookrightarrow \text{Tate}(q)[N] \\
n &\mapsto \xi_N^n.
\end{align*}
In the case of the moduli schemes $\overline{\mathcal{M}_1(N)}$ for $N \geq 3$ the set of the $\varphi(N)$ cusps is (after base change to $\mathbb{Z}[\frac{1}{N},\xi_N]$) isomorphic to the set of the $\varphi(N)$ distinct level $\Gamma_1(N)$-level structures on the Tate curve given by 
\begin{align*}
    \beta: \mathbb{Z}/N\mathbb{Z} & \hookrightarrow \text{Tate}(q)[N] \\
    n & \mapsto \xi_N^{mn}
\end{align*}
for $m\in \varphi(N)$ (\cite{Katzpadicproperties} 1.4).

\begin{remark}
In the absence of an $N$-th root of unity, the Tate curve $\text{Tate}(q)$ no longer has a $\Gamma_1(N)$-level structure. However the curve 
$$ \text{Tate}(q^N) \overset{\text{def}}{=} \mathbb{G}_m/q^{N\mathbb{Z}}
$$
over $\mathbb{Z}[\frac{1}{N}]((q))$ admits a canonical $\Gamma_1(N)$-level structure given by
\begin{align*}
    \beta_N: \mathbb{Z}/N\mathbb{Z} &\hookrightarrow \text{Tate}(q^N)[N] \\
    n & \mapsto q^n \: \text{ mod}\: q^{N\mathbb{Z}}.
\end{align*}

The Tate curve $\text{Tate}(q)$ over $\mathbb{Z}[\frac{1}{N}]((q))$ does admit a canonical \emph{arithmetic} $\Gamma_1(N)$-level structure
$$ \beta_N^{arith}:\mu_N \hookrightarrow \text{Tate}(q)[N]
$$
given by $$
\beta_N^{arith}(\zeta^n) = \zeta^n \: \text{ mod}\: q^{\mathbb{Z}},$$
i.e., $\beta_N^{arith}$ is induced from the canonical inclusion $i:\mu_N \hookrightarrow \mathbb{G}_m$.
\end{remark}
\begin{defi}
The $q$-expansion of a modular form $f\in MF_*(R)$ is given by evaluating it on (one of) the Tate curve(s) $\text{Tate}_R(q)$:

\begin{align*}
    MF_*(R) &\rightarrow \mathbb{Z}((q))\otimes_{\mathbb{Z}} R\\
    f & \mapsto f(\text{Tate}(q),\omega_{can}).
\end{align*}
\end{defi}
The $q$-expansion detects if a modular form is holomorphic:
\begin{prop}[see \cite{Katzpadicproperties}, Section 1.5]
A modular form $f\in MF_*(R)$ extends to $\overline{\mathcal{M}}_{ell}(R)$ if and only if one (and therefore all) of its $q$-epansions already takes values in the subring $\mathbb{Z}[[q]]\otimes_{\mathbb{Z}}R$.
\end{prop}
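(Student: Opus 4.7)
The plan is to reduce the global statement to a local calculation in a formal neighborhood of each cusp, using the fact that the Tate curve realizes precisely this formal neighborhood.

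First I would unpack what a meromorphic modular form is stack-theoretically: it is a global section of $\omega^{\otimes k}$ on $\mathcal{M}_{ell}(R)$ (or on $\mathcal{M}_1(N;R)$ in the level case). Extending such a section to the compactification $\overline{\mathcal{M}}_{ell}(R)$ is a local question along the cuspidal divisor, because $\overline{\mathcal{M}}_{ell}(R)$ is covered by $\mathcal{M}_{ell}(R)$ together with formal neighborhoods of the finitely many cusps. Thus the question reduces to: does $f$ extend across each cusp?

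Next I would invoke the Deligne-Rapoport description of the formal completion along a cusp: this completion is (up to descent data) $\mathrm{Spf}(\mathbb{Z}[[q]])\otimes R$, and the universal generalized elliptic curve on this formal neighborhood is the Tate curve $\mathrm{Tate}(q)_R$, equipped with its canonical invariant differential $\omega_{can}$ (and the appropriate level structure in the level-$\Gamma_1(N)$ case). In particular $\omega^{\otimes k}$ is free on $\omega_{can}^{\otimes k}$ over this formal neighborhood, and the restriction of $f$ identifies with $f(\mathrm{Tate}(q)_R,\omega_{can})\cdot \omega_{can}^{\otimes k}$. Since the punctured formal neighborhood is $\mathrm{Spec}(\mathbb{Z}((q))\otimes R)$, the section extends across that cusp if and only if its $q$-expansion $f(\mathrm{Tate}(q)_R,\omega_{can})$ actually lies in $\mathbb{Z}[[q]]\otimes_{\mathbb{Z}} R$ rather than in all of $\mathbb{Z}((q))\otimes_{\mathbb{Z}} R$. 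This handles the core statement.

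For the parenthetical "one and therefore all," the remaining task is to see that in the presence of several cusps (as in $\overline{\mathcal{M}_1(N;R)}$), one cannot have one $q$-expansion holomorphic while another fails to be. Here I would appeal to the description, recalled in the section on representability, of the cusps over $\mathbb{Z}[\tfrac{1}{N},\xi_N]$ as a transitive $(\mathbb{Z}/N\mathbb{Z})^{\times}$-orbit of level structures on the Tate curve: the Galois/symmetry action permutes the cusps transitively and preserves the subring $\mathbb{Z}[[q]]\otimes R$, so holomorphy at one cusp propagates to all. The main obstacle in practice is simply unpacking the Deligne-Rapoport identification of the formal neighborhood together with its universal family; once that input is taken as a black box (as is standard in this setting, compare \cite{Katzpadicproperties} §1.5 and \cite{dr}), the proof reduces to the essentially tautological comparison of $\mathbb{Z}[[q]]\otimes R$ with $\mathbb{Z}((q))\otimes R$ described above.
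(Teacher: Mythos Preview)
The paper does not supply its own proof of this proposition; it is stated with a citation to \cite{Katzpadicproperties}, \S1.5, and left there. Your sketch is the standard argument and matches what one finds in Katz and Deligne--Rapoport: identify the formal completion at the cusp with $\mathrm{Spf}(\mathbb{Z}[[q]]\otimes R)$ carrying the Tate curve, trivialize $\omega$ by $\omega_{can}$, and observe that extending a section across the puncture is exactly the condition that a Laurent series be a power series. So for the core equivalence there is nothing to compare.

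One caveat on your treatment of the parenthetical ``one and therefore all'': the proposition as stated is level $1$, where there is a single cusp and the claim is vacuous. In the level-$\Gamma_1(N)$ situation you invoke, your transitivity argument does not quite work as written. The diamond operators do permute the cusps, but a modular form of level $\Gamma_1(N)$ is generally \emph{not} invariant under them, so knowing that $f$ is holomorphic at one cusp only tells you that $\langle d\rangle f$ is holomorphic at another --- not that $f$ itself is. Indeed the paper's version of the $q$-expansion principle for level $\Gamma_1(N)$ (the proposition immediately following) is careful to require information at one cusp \emph{on each connected component}, precisely because holomorphy does not automatically propagate between cusps. This does not affect the level-$1$ statement you were asked about, but you should not carry that argument over to the level case without modification.
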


In the presence of a $\Gamma_1(N)$-level structure we have the following version of the $q$-expansion principle.
\begin{prop}[\cite{Katzpadicproperties} 1.6]
Let $N\geq 3$, $R$ a $\mathbb{Z}[\frac{1}{N}]$-algebra, $R^{\prime} \subset R$ a $\mathbb{Z}[\frac{1}{N}]$-submodule. Let $f$ be a holomorphic modular form of weight $k$ and level $\Gamma_1(N)$ with coefficients in $R$. \begin{enumerate}
\item Suppose that on each of the $\varphi(N)$ connected components of $\overline{\mathcal{M}_{ell}}(\Gamma_1(N),\mathbb{Z}[\frac{1}{N},\xi_N])$ there is at least one cusp at which the $q$-expansion of $f$ vanishes identically. Then $f=0$.

\item
Suppose that on each of the $\varphi(N)$ connected components of $\overline{\mathcal{M}_{ell}}(\Gamma_1(N),\mathbb{Z}[\frac{1}{N},\xi_N])$ there is at least one cusp at which all the coefficients of the $q$-expansion of $f$ lie in $R^{\prime}\otimes_{\mathbb{Z}[\frac{1}{N}]}\mathbb{Z}[\frac{1}{N},\xi_N]$. Then $f$ is a modular form with coefficients in $R^{\prime}$.
\end{enumerate}
\end{prop}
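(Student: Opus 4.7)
The plan is to prove (1) first by a geometric rigidity argument for sections of line bundles on proper schemes, and then deduce (2) by a short-exact-sequence argument invoking flatness of the module of modular forms over $\mathbb{Z}[\frac{1}{N}]$. The first move in (1) is a faithfully flat reduction to the case $\xi_N\in R$: both the hypothesis and the conclusion are unchanged by base change along $\mathbb{Z}[\frac{1}{N}]\hookrightarrow\mathbb{Z}[\frac{1}{N},\xi_N]$ (the latter being free of rank $\varphi(N)$), since the $q$-expansions in the hypothesis are already taken after adjoining $\xi_N$, and vanishing of $f$ over $R$ is equivalent to vanishing over $R\otimes\mathbb{Z}[\frac{1}{N},\xi_N]$ by faithful flatness.

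Under this reduction $\overline{\mathcal{M}_1(N)}_R$ decomposes into $\varphi(N)$ connected components $X_1,\ldots,X_{\varphi(N)}$, each smooth and proper over $R$ with geometrically integral fibres, and carrying exactly one of the $\varphi(N)$ standard cusps $c_\alpha$. A modular form $f$ then decomposes as $(f_\alpha)_\alpha\in\bigoplus_\alpha H^0(X_\alpha,\omega^{\otimes k})$; trivialising $\omega^{\otimes k}$ on the formal neighbourhood of $c_\alpha$ by $\omega_{can}^{\otimes k}$ identifies the $q$-expansion at $c_\alpha$ with the image of $f_\alpha$ under the $R$-linear pullback map
$$H^0(X_\alpha,\omega^{\otimes k})\longrightarrow R[[q]]$$
coming from the inclusion $\text{Spf}\,R[[q]]\hookrightarrow X_\alpha$ of the formal neighbourhood of $c_\alpha$. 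Assertion (1) reduces to injectivity of this map on each component. Over a field this is the classical rigidity statement: a section of a line bundle on an integral proper curve is determined by its formal germ at any closed point, because the restriction to the dense open complement of that point is zero. To propagate this to an arbitrary $\mathbb{Z}[\frac{1}{N},\xi_N]$-algebra $R$, I would first use compatibility of $\overline{MF}_k$ with flat base change (the proposition just cited) to reduce to the universal base $R=\mathbb{Z}[\frac{1}{N},\xi_N]$, and then pass to the generic fibre $X_\alpha\otimes\mathbb{Q}(\xi_N)$, where integrality does the rest.

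For (2), flatness of $\overline{MF}_k(\Gamma_1(N),-)$ over $\mathbb{Z}[\frac{1}{N}]$ keeps the sequence $0\to R'\to R\to R/R'\to 0$ exact after this functor is applied. The hypothesis on $f$ then says that its image $\bar f\in\overline{MF}_k(\Gamma_1(N),R/R')$ has $q$-expansion identically zero at one cusp per connected component after base change to $\mathbb{Z}[\frac{1}{N},\xi_N]$; part (1) applied to $\bar f$ yields $\bar f=0$, so $f$ lifts into $\overline{MF}_k(\Gamma_1(N),R')$, as required. The main obstacle in this strategy is the relative injectivity of the $q$-expansion map in (1): although transparent over a field, its transfer to arbitrary $R$ rests on the flatness statements of the previous subsection, whose hypotheses ($N\geq 3$ with $k\geq 2$ or $N\leq 11$) have to be tracked carefully at the borderline cases before the argument closes in full generality.
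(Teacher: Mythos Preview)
The paper does not supply its own proof of this proposition; it is quoted verbatim from Katz (\cite{Katzpadicproperties}, 1.6), so there is nothing in the paper to compare your argument against.

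On the merits: your overall strategy is essentially Katz's. The geometric input for (1)---that a section of a line bundle on an integral proper curve is determined by its formal germ at any closed point---is the right idea, and deducing (2) from (1) by applying the latter to the image of $f$ in coefficients $R/R'$ is the standard move. The gap you yourself flag at the end is the genuine crux, but your proposed fix does not close it. Compatibility of $\overline{MF}_k$ with base change (the proposition you cite) controls the \emph{source} of the $q$-expansion map; it does not by itself guarantee that injectivity of that map persists after tensoring with an arbitrary $\mathbb{Z}[\frac{1}{N}]$-module. What is actually needed is that the \emph{cokernel} of the $q$-expansion map over the universal base $\mathbb{Z}[\frac{1}{N},\xi_N]$ is flat (equivalently torsion-free, since the base is a Dedekind domain). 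This is what Katz proves, and it simultaneously gives (1) for arbitrary algebras $R$ and the module-coefficient version you need in (2) when $R/R'$ is merely a module. Once cokernel flatness is established, the argument works uniformly for all $k$ and all $N\geq 3$; the restrictions $k\geq 2$ or $N\leq 11$ belong to the separate base-change statement and are not needed here.
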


\subsubsection{Hecke Operators}\label{Heckeoperators}
We will now introduce two operators on the ring of modular forms which will become central to our work later. Our exposition follows \cite{Katzpadicproperties}, Section 1.11.

Let $f\in  MF_*(\Gamma_1(N))$ and let $(E,\beta,\omega)$ consist of an elliptic curve $E$ together with a $\Gamma_1(N)$-level structure $\beta$ and an invariant differential $\omega$. For every $n \in (\mathbb{Z}/N\mathbb{Z})^{\times}$ one can define operators  $$\langle n \rangle: MF_*(\Gamma_1(N)) \rightarrow MF_*(\Gamma_1(N))$$ by the equation
$$
\langle n \rangle f(E,\beta,\omega) = f(E,n\beta,\omega)
$$
where $\mathbb{Z}/N\mathbb{Z}^{\times}$ acts on $\beta$ in the obvious way. These maps $\langle n \rangle$ are called diamond operators.

Let $E/R$ be an elliptic curve together with a $\Gamma_1(N)$-level structure $\beta$ for $N\geq3$  over a ring $R$ such that $N$ is invertible and let $p$ be a prime such that $p \nmid N$. Then there exists a finite \'etale over-ring $R^{\prime}$ of $R[\frac{1}{p}]$ such that the subgroup scheme $E[p]$ of points of order $p$ becomes (non-canonically) isomorphic to $(\mathbb{Z}/p\mathbb{Z})^2_{R^{\prime}}$ (\cite{katzmazur} Theorem 6.6.2). Thus over $R^{\prime}$ the elliptic curve $E/R^{\prime}$ has exactly $p+1$ subgroup schemes of order $p$. Let $H$ be one of them and denote with $$
\pi: E_{R^{\prime}} \rightarrow E_{R^{\prime}}/H$$
the map onto the quotient and by 
$$\check{\pi}: E_{R^{\prime}}/H \rightarrow E_{R^{\prime}}
$$
the dual isogeny. Both maps are finite \'etale of rank $p$.
The composition $\pi\circ \check{\pi}$ induces multiplication with $p$ on $E_{R^{\prime}}/H$ and $\check{\pi}\circ \pi$ similarily on $E_{R^{\prime}}$.
If $\omega$ is an invariant differential on $E/R$, then $\check{\pi}^*(\omega_{R^{\prime}})=\text{trace}_{\pi}(\omega_{R^{\prime}})$ is an invariant differential on $E_{R^{\prime}}/H$.  
Observe that since the degree $p$ of the isogeny $\pi$ is prime to $N$, $\pi$ induces an isomorphism of group schemes
$$\pi: E[N] \xrightarrow{\cong} E/H[N]
$$
and we can define a $\Gamma_1(N)$-level structure $\beta^{\prime}$ on $E_{R^{\prime}}/H$ 
by setting 
$$ \beta^{\prime}=\pi(\beta).
$$
The Hecke operators $T_p$ for $p$ prime to $N$ for a modular form $f\in MF_k(\Gamma,R)$ are now defined by 
$$
(T_p)f(E,\omega,\beta) =p^k\sum f(E/H,\check{\pi}^*(\omega),\pi(\beta)) 
$$
where the sum runs over the $p+1$ subgroups of order $p$ of $E/R^{\prime}$. Note that while a priori the Hecke operator takes values in $MF(\Gamma_1(N),R^{\prime})$ an easy calculation with the Tate curve at the standard cusp shows that the effect of $T_p$ on the $q$-expansion $f(q)=\sum_{n\geq 0} a_n q^n$ of a modular form $f\in MF_k(\Gamma_1(N),R)$
is given by
$$T_p f(q) = p^{k-1}\sum_{n \geq 0} \langle p \rangle a_n q^{pn} + \sum_{n\geq 0} a_{np}q^n 
$$
where $\langle p \rangle a_n$ are the coefficients in $q$-expansion of $\langle p \rangle f$ and we see that all the coefficients are already in $R$. Hence by the $q$-expansion principle we can deduce that the operator $T_p$ defines an endomorphism of $MF_*(\Gamma_1(N),R)$.

\subsubsection{The Eisenstein Series}\label{eisensteinseries}
Let $\chi$ be a Dirichlet character modulo $N\geq 3$ such that \mbox{$\chi(-1)=-1^k$}. For simplicity we assume that $N$ is prime (for the general statements and proofs of this section see for example \cite{diamondshurman} Section 4). Then there exist modular forms $G_k^{\chi}$ of weight $k$ and level $\Gamma_1(N)$ over $\mathbb{Z}[\frac{1}{N},\xi_N]$ with $q$-expansion at the standard cusp given by
$$ G_k^{\chi} (\text{Tate}(q),\omega_{can},\xi)= L(1-k,\chi) + 2\sum_{n\geq 1}q^n \sum_{d|n}d^{k-1}\chi(d)
$$
where  $L(1-k,\chi)$ denotes the value at $s= 1-k$ of the $L$-series
$$L(s,\chi) \overset{\text{def}}{=}\sum_{n\geq 1}\chi(n)\cdot n^{-s}.
$$
The action of the diamond operators $\langle p \rangle$ on the $q$-expansion is given by
$$
(\langle p \rangle \; G_k^{\chi}) (\text{Tate}(q),\omega_{can},\xi)= \langle p \rangle L(1-k,\chi) + 2\sum_{n\geq 1}q^n \sum_{d|n}d^{k-1}\chi(pd)
$$
where $\langle p \rangle L(1-k, \chi) = \sum_{n\geq 1} \chi(np)\cdot n^{k-1} $.
For $\chi=1$, the trivial character, the classical Eisenstein series $G_k$ of level 1 are defined completely analogously. Note however that that for the weights $k=1$ and $2$ these are not modular forms anymore.

The importance of the Eisenstein series reveals itself once one applies the Hecke operators $T_{p}$ from above: An easy calculation with $q$-expansions (see for example \cite{diamondshurman} Proposition 5.2.3) gives

$$(T_{p} \:G_k^{\chi}) (\text{Tate}(q),\omega_{can},\xi)= (1- \langle p \rangle p^{k-1})G_k^{\chi}.
$$
It follows that the Eisenstein series are eigenforms for the Hecke operators. In fact they are the only non-cuspidal eigenforms, that is the only eigenforms with non-vanishing constant coefficients in their $q$-expansions.

\subsection{$p$-Adic Modular Forms}
Let $\overline{\mathcal{M}_1(N)}\widehat{_p}$ denote the $p$-completion of $\overline{\mathcal{M}_1(N)}$. This is a formal Deligne-Mumford stack, which means that for a $p$-adic ring $R$ its $R$-points are defined by
$$\overline{\mathcal{M}_1(N)}(R) = \varprojlim \overline{\mathcal{M}_1(N)}(R/p^i)
.$$
Let $\mathcal{M}_1(N)^{ord}$ denote the substack of $\overline{\mathcal{M}_1(N)}\widehat{_p}$ classifying generalized elliptic curves with $\Gamma_1(N)$-level structure with ordinary reduction over $p$-adic rings. 

\begin{defi}
Let $R$ be a $p$-adic ring. The space of holomorphic $p$-adic modular forms $MF_{p,*}(N,R)$  over $R$ with level $\Gamma_1(N)$ is defined as the ring of global sections

$$ 
MF_{p,k}(N,R) = H^0(\mathcal{M}_1(N)^{ord}\widehat{\otimes}R,\omega^{\otimes k}).
$$
\end{defi}
\begin{remark}
This corresponds to $p$-adic modular forms in the sense of \cite{Katzpadicproperties} with "growth condition $r=1$".
\end{remark}
In more concrete terms, a $p$-adic modular form of weight $k$ and level $\Gamma_1(N)$ $f\in MF_{p,k}(N,R)$ is a rule which assigns to each triple $(E/S,\beta,\omega)$ consisting of an elliptic curve $E$ over a $p$-adically complete and separated $R$-algebra $S$ with ordinary reduction, a $\Gamma_1(N)$-level structure 
$$
\beta: \mathbb{Z}/N\mathbb{Z} \hookrightarrow E[N]
$$
and an invariant differential $\omega$ on $E$ a value in  $f(E/S,\beta,\omega)\in S$ subject to the following rules:
\begin{enumerate}
    \item The value $f(E/S,\beta,\omega)$ only depends on the isomorphism class of the triple.
    \item If $\lambda \in S^{\times}$ then $f(E/S,\beta, \lambda \cdot \omega)=\lambda^{-k}f(E/S, \beta, \omega)$.
    \item The formation of $f(E/S,\beta,\omega)$ commutes with base change.
\end{enumerate}

It is clear that every classical holomorphic modular form $f\in \overline{MF_*}(N),R)$ over a $p$-adic ring $R$ gives rise to a $p$-adic modular form.

\subsection{Generalized Modular Forms and the Frobenius Endomorphism}\label{padicmodularfunctions}

\begin{defi}
Let $E$ be an elliptic curve over a $p$-adic ring $B$. A trivialization of $E/B$ is an isomorphism
$$ \varphi: \widehat{E} \xrightarrow{\cong} \mathbb{G}_m
$$
from the formal group $\widehat{E}$ associated to $E$ and the multiplicative formal group $\widehat{\mathbb{G}_m}$.
\end{defi}
\begin{remark}A trivialized elliptic curve has necessarily ordinary reduction. Conversely, given an elliptic curve with ordinary reduction at all primes one can find a trivialization after a base change (see \cite{katz75a}). 
\end{remark}
For $p \nmid N$ we can define (naive) $\Gamma_1(N)$-level structures on a trivialized elliptic curve $E/B$ as before as an inclusion
$$ \beta_N: \mathbb{Z}/N\mathbb{Z} \hookrightarrow E[N].
$$
For the case $p|N$ more care is needed (see \cite{katzeisenstein2} 5.0.5) but we do not need this level of generality and refer the interested reader to loc.cit. 
\begin{prop}[\cite{katzeisenstein2} 5.1.0]
The functor from the category of $p$-adic rings (with continous homomorphisms) to sets which assigns to every p-adic ring $A$ the set of isomorphism classes of triples $(E,\varphi,\beta)$ consisting of \begin{itemize}
    \item $E$ an elliptic curve over $A$;
    \item A trivialization $\varphi: \widehat{E}\xrightarrow{\cong} \widehat{\mathbb{G}_m}$  of $E$;
    \item A $\Gamma_1(N)$-level structure $\beta:\mathbb{Z}/N\mathbb{Z} \hookrightarrow E[N]$  
\end{itemize}
is representable by a $p$-adic ring $W(\Gamma_1(N),\mathbb{Z}_p)$. For  any $p$-adic ring $B$ the restriction to $B$-algebras is represented by $W(\Gamma_1(N),B) = W(\Gamma_1(N),\mathbb{Z}_p)\hat{\otimes}_{\mathbb{Z}_p} B$. 
\end{prop}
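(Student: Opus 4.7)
The plan is to construct the representing ring $W(\Gamma_1(N), \mathbb{Z}_p)$ as a pro-finite-\'etale cover of the formal moduli of ordinary elliptic curves with $\Gamma_1(N)$-level structure, namely the Igusa tower.

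First I would rigidify. The key observation is that a trivialization $\varphi:\widehat{E}\xrightarrow{\cong}\widehat{\mathbb{G}}_m$ kills automorphisms: any automorphism $\alpha$ of the triple $(E,\varphi,\beta)$ must restrict to the identity on $\widehat{E}$ (since it must commute with $\varphi$), and any automorphism of an elliptic curve acting trivially on the tangent space is the identity. Hence the moduli problem is rigid and it suffices to show it is represented by a sheaf of sets rather than a stack. For $N\geq 4$ we already know from \cite{katzmazur} that $\mathcal{M}_1(N)$ is an affine scheme $Y_1(N)$, and its $p$-adic completion restricted to the ordinary locus $\mathcal{M}_1(N)^{\mathrm{ord}}$ is a formal affine $\mathbb{Z}_p$-scheme $\mathrm{Spf}\,V(\Gamma_1(N),\mathbb{Z}_p)$. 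Call its universal elliptic curve $\mathcal{E}/V$.

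Second, I would build the Igusa tower. Since $\mathcal{E}\to \mathrm{Spf}\,V$ is ordinary, the connected part $\widehat{\mathcal{E}}[p^n]^{\circ}$ is a finite flat group scheme of rank $p^n$, locally in the \'etale topology isomorphic to $\mu_{p^n}$. Define the functor $\mathrm{Ig}_n$ on $V$-algebras by
\[
\mathrm{Ig}_n(A) = \{\psi_n : \mu_{p^n,A}\xrightarrow{\cong}\widehat{\mathcal{E}}_A[p^n]^{\circ}\}.
\]
This is a $(\mathbb{Z}/p^n)^{\times}$-torsor over $\mathrm{Spf}\,V$, hence representable by a finite \'etale cover $\mathrm{Spf}\,V_n$. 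The transition maps $V_n \to V_{n+1}$ are finite \'etale, and I would set
\[
W(\Gamma_1(N),\mathbb{Z}_p) \;\overset{\mathrm{def}}{=}\; \bigl(\varinjlim_n V_n\bigr)^{\widehat{\phantom{I}}}
\]
(the $p$-adic completion of the colimit). A compatible system of $\psi_n$'s assembles, via the identification $\widehat{\mathbb{G}}_m = \varinjlim_n \mu_{p^n}$ in the formal-group sense, into a single trivialization $\varphi:\widehat{\mathcal{E}}\xrightarrow{\cong}\widehat{\mathbb{G}}_m$. Conversely, every trivialization decomposes uniquely as such a system by restricting to torsion. This bijection gives the universal triple $(\mathcal{E}_{\mathrm{univ}},\varphi_{\mathrm{univ}},\beta_{\mathrm{univ}})$ over $W(\Gamma_1(N),\mathbb{Z}_p)$ and establishes the required representability. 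The base-change statement $W(\Gamma_1(N),B) \cong W(\Gamma_1(N),\mathbb{Z}_p)\hat{\otimes}_{\mathbb{Z}_p} B$ is automatic from the construction since each finite \'etale layer base-changes properly and $p$-adic completion is compatible with tensoring against $p$-adic rings.

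The main obstacle is the passage to the limit: one must verify that an $A$-valued point of the formal scheme $\mathrm{Spf}\,W(\Gamma_1(N),\mathbb{Z}_p)$ genuinely produces a trivialization of $\widehat{E}$, not merely a compatible system of isomorphisms on $p$-power torsion. This is where one invokes the ordinary hypothesis: the formal group $\widehat{E}$ is a Lubin--Tate group of height one, so is determined by its (pro-)torsion, and the functor $\widehat{G}\mapsto \{\widehat{G}[p^n]^{\circ}\}_n$ is fully faithful on height-one formal groups over $p$-adic rings. A secondary subtlety is the cases $N=1,2,3$ where $\mathcal{M}_1(N)$ is not affine; for those, one first adds a rigidifying auxiliary level structure prime to $Np$ (descending afterwards by Galois invariants), or works with the graded-affine presentation of \cite{hillmeier}, and applies the same Igusa-tower construction fiberwise. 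Either way the tower is still pro-finite-\'etale, so the $p$-adic completion of its coordinate ring represents the functor.
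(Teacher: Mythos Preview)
The paper does not prove this proposition; it merely quotes it from \cite{katzeisenstein2}~5.1.0. Your Igusa-tower construction is precisely the construction Katz gives there (see also \cite{Katzpadicproperties}, Chapter~V), so your approach is correct and coincides with the cited source.

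Two small remarks. First, your use of the symbol $V(\Gamma_1(N),\mathbb{Z}_p)$ for the coordinate ring of $\mathcal{M}_1(N)^{\mathrm{ord}}$ clashes with the paper's notation, where $V(\Gamma_1(N),B)$ already denotes the ring of \emph{holomorphic} generalized modular functions (a subring of $W(\Gamma_1(N),B)$); you should rename it. Second, in ``the connected part $\widehat{\mathcal{E}}[p^n]^{\circ}$'' you presumably mean $\mathcal{E}[p^n]^{\circ}$, the connected component of the full $p^n$-torsion of the curve (which then coincides with $\widehat{\mathcal{E}}[p^n]$ in the ordinary case); as written the superscript $\circ$ is redundant. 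Neither point affects the correctness of the argument.
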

An element $f\in W(\Gamma_1(N),\mathbb{Z}_p)$ is called a generalized modular function.
Given a trivialized elliptic curve $(E/B, \varphi, \beta)$ over a $p$-adic ring $B$ with level structure and a generalized $p$-adic modular function $f\in W(\Gamma_1(N),\mathbb{Z}_p)$ we get a value 
$$f(E/B,\varphi, \beta) \in B
$$
which depends only on the isomorphism class of the triple. Furthermore this process commutes with base change of $p$-adic rings. 
Let $\text{Tate}(q^N)/\widehat{\mathbb{Z}_p((q))}$ be the Tate curve over the $p$-adic completion of the ring $\mathbb{Z}_p((q))$. As the Tate curve is defined as the quotient $$
\mathbb{G}_m/q^{N\mathbb{Z}}
$$
it follows immediately that it comes equipped with a canonical trivialization
$$ \varphi_{can}: \widehat{\text{Tate}(q^N)} \rightarrow \widehat{\mathbb{G}_m}
$$
(its formal group "is" $\widehat{\mathbb{G}_m}$). 
The evaluation of a generalized modular function $f\in W(\Gamma_1(N),B)$ on the Tate curve gives a map 
\begin{align*}
    W(\Gamma_1(N)) &\rightarrow \widehat{B((q))}\\
    f & \mapsto f(Tate(q^N)/\widehat{B((q))}
\end{align*}
which, again, is called the $q$-expansion of $f$. 
We call $f\in W(\Gamma_1(N),B)$ holomorphic if its $q$-expansion already takes value in $\widehat{B[[q]]}$. Denote with $V(\Gamma_1(N),B)$ the subring of $W(\Gamma_1(N),B)$ consisting of holomorphic generalized modular functions.

We have the following theorem which tells us that the $q$-expansion for generalized $p$-adic modular forms plays a similar role to the $q$-expansion in the classical setting:

\begin{theo}[see \cite{katzeisenstein2} 5.2.3]\label{padicqexpansion}
Let $B \subset B^{\prime}$ be p-adic rings. \begin{enumerate}
    \item 
    The $q$-expansion map
    $$W(\Gamma_1(N),B) \rightarrow \widehat{B((q))}
    $$
    is injective and its cokernel is flat over $B$.
    \item We have a natural inclusion $W(\Gamma_1(N),B) \hookrightarrow W(\Gamma_1(N),B^{\prime})$ such that an element $f\in W(\Gamma_1(N),B^{\prime})$ is already an element in $W(\Gamma_1(N),B)$ if and only if its $q$-expansion $f(q)$ is already contained in $\widehat{B((q))}$.
\end{enumerate}
\end{theo}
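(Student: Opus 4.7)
The plan is to follow the moduli-theoretic approach of Katz, exploiting the fact that the Tate curve with its canonical trivialization and level structure serves as an essentially universal test object at the cusp. First I would reduce to the case $B = \mathbb{Z}_p$ via the base change identity $W(\Gamma_1(N), B) = W(\Gamma_1(N), \mathbb{Z}_p) \widehat{\otimes}_{\mathbb{Z}_p} B$ given in the preceding proposition, and then further reduce modulo each $p^n$ so that one can work with Noetherian approximations before passing to the $p$-adic limit. In these terms the $q$-expansion map is by construction the ring homomorphism classifying the triple $(\text{Tate}(q^N), \varphi_{can}, \beta_N)$ as a $\widehat{\mathbb{Z}_p((q))}$-point of $\text{Spf}(W(\Gamma_1(N), \mathbb{Z}_p))$, and so the injectivity statement is equivalent to the scheme-theoretic denseness of this point.

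For injectivity, the key geometric ingredient is that adding a trivialization $\widehat{E} \cong \widehat{\mathbb{G}_m}$ to an ordinary elliptic curve amounts to climbing the Igusa tower over the ordinary locus of $\overline{\mathcal{M}_1(N)}\widehat{_p}$, and Serre-Tate theory tells us that every trivialized ordinary elliptic curve looks, after faithfully flat base change, like the Tate curve near a cusp. Concretely, one shows that the formal completion of $W(\Gamma_1(N), \mathbb{Z}_p)$ along the ideal cutting out the Tate cusp recovers $\widehat{\mathbb{Z}_p[[q]]}$, and together with the connectedness of the relevant components of the Igusa tower this forces the map $W(\Gamma_1(N), \mathbb{Z}_p) \to \widehat{\mathbb{Z}_p((q))}$ on global sections to be injective.

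For the flatness of the cokernel $C$, I would argue that $C$ is $p$-torsion free by passing to the generic fibre: after inverting $p$ the ring $W(\Gamma_1(N), \mathbb{Z}_p)[\tfrac{1}{p}]$ maps injectively into $\widehat{\mathbb{Z}_p((q))}[\tfrac{1}{p}]$ by (a rigid analytic incarnation of) the classical $q$-expansion principle already available on $\overline{\mathcal{M}_1(N)}$, so $C$ has no $p$-torsion and its $p$-adic completeness then gives flatness over $\mathbb{Z}_p$. Part (2) is now a formal consequence: tensoring the short exact sequence $0 \to W(\Gamma_1(N), \mathbb{Z}_p) \to \widehat{\mathbb{Z}_p((q))} \to C \to 0$ with the inclusion $B \hookrightarrow B'$ of $p$-adic rings preserves exactness since $C$ is flat, and a small diagram chase identifies $W(\Gamma_1(N), B) \subset W(\Gamma_1(N), B')$ with the preimage of $\widehat{B((q))}$ under the $q$-expansion map. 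The main obstacle is the geometric input -- the identification of the formal neighborhood of the Tate cusp inside the Igusa-type moduli and the connectedness / irreducibility statement for the relevant components -- since all subsequent steps reduce to essentially formal manipulations with short exact sequences and $p$-adic completions.
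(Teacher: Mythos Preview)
The paper does not supply a proof of this theorem at all: it is stated with the attribution ``see \cite{katzeisenstein2} 5.2.3'' and used as a black box from Katz's work. There is therefore no ``paper's own proof'' to compare against.

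That said, your sketch is in the right spirit and broadly matches what Katz does. A couple of points deserve care. First, your reduction to $B=\mathbb{Z}_p$ via the base change formula $W(\Gamma_1(N),B)=W(\Gamma_1(N),\mathbb{Z}_p)\widehat{\otimes}_{\mathbb{Z}_p}B$ requires that tensoring the short exact sequence $0\to W(\Gamma_1(N),\mathbb{Z}_p)\to\widehat{\mathbb{Z}_p((q))}\to C\to 0$ with $B$ and then $p$-adically completing still yields an exact sequence; this is not automatic for completed tensor products and uses that $C$ is $p$-torsion free together with an argument modulo $p^n$ before passing to the limit. Second, the statement asserts flatness of the cokernel over $B$, not merely over $\mathbb{Z}_p$; your argument only establishes the latter. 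To get $B$-flatness in general one typically argues directly mod $p^n$ that the $q$-expansion map over $B/p^nB$ has image which is a direct summand (coming from the geometric description of the cusp), rather than trying to bootstrap from the $\mathbb{Z}_p$ case. Finally, the geometric input you flag---irreducibility of the relevant Igusa tower components---is indeed the substantive step, and in Katz's treatment it is handled by a separate theorem rather than by Serre--Tate theory alone.
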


\subsubsection{Diamond Operators} 
Let $G(N)=\mathbb{Z}_p^{\times}\times (\mathbb{Z}/N)^{\times}.$ Given an element $(x,y)\in G(N)$ we can define the following operator on $V(\Gamma_1(N),B)$ (also called diamond operator) by the formula
$$
\langle x,y \rangle f(E,\varphi,\beta) = f(E,x^{-1}\varphi, y \beta)
$$
where the action of $y$ on $\beta$ is the usual one and the action of $x^{-1}$ on $\varphi$ comes via automorphisms of $\widehat{\mathbb{G}}_m$. The diamond operators are ring homomorphisms. With the help of these operators we can define weight and nebentypus of a generalized modular function:
Let $\chi :\mathbb{Z}_p^{\times}\rightarrow B^{\times}$ be a continous character. We say a generalized modular function $f\in V(\Gamma_1(N),B)$ has weight $\chi$ as modular function of level $N$
if $$\langle x,1 \rangle f = \chi(x) f
$$
for all $x \in \mathbb{Z}_p^{\times}$. In the special case $\chi(x) = x^k$ for some $k\in \mathbb{Z}$ we say $f$ has weight $k$. Finally if $k\in \mathbb{Z}$ and $\epsilon$ is a character of $\mathbb{Z}/N$ we say $f$ has weight $k$ and nebentypus $\epsilon$ if 
$$\langle x,y \rangle f = x^k \epsilon(y)f .
$$

We will call those generalized $p$-adic modular functions which posess a weight $\chi$ generalized $p$-adic modular forms (most generalized $p$-adic modular functions do not posess a weight!). 

\subsubsection{The Relationship to Classical Modular Forms}
We would now like to justify the name "generalized modular forms" by relating these to the previously defined classical and $p$-adic modular forms. 
To do so let $E/B$ be an elliptic curve with trivialization 
$$\varphi:\widehat{E} \xrightarrow{\cong} \widehat{\mathbb{G}_m}.
$$ Take the standard invariant differential $\frac{dT}{1+T}$ on $\widehat{\mathbb{G}_m}$ and pull it back along the trivialization to obtain a nonvanishing invariant differential $\varphi^*(\frac{dT}{1+T})$ on $\widehat{E}$ which is the restriction to $\widehat{E}$ of a unique invariant differential $\omega_{\varphi}$ on $E$. If $B$ is flat over $\mathbb{Z}_{p}$ the trivialization $\varphi$ is determined by $\omega_{\varphi}$ (\cite{katzeisenstein2} 5.4.0). 
\begin{prop}[see \cite{katzeisenstein2} 5.4]
The construction $(E/B,\varphi,\beta) \rightarrow (E/B, \varphi^*(\frac{dT}{1+T}),\beta)$ defines by transposition for each $k$ an injective morphism of $B$-modules
\begin{align*}
    \overline{MF}_k(N,B) &\rightarrow V(\Gamma_1(N),B)\\
f & \mapsto \widetilde{f}
\end{align*}
and hence a morphism of rings 
$$ \overline{MF}_*(\Gamma_1(N),B)= \bigoplus_{k}\overline{MF_k}(\Gamma_1(N),B) \rightarrow V(\Gamma_1(N),B)
$$
given by 
$$
\widetilde{f}(E,\varphi,\beta) \overset{\text{def}}{=} f(E,\varphi^*(\frac{dT}{1+T}),\beta).
$$
This ring homomorphism is injective if $B$ is flat over $\mathbb{Z}_p$. It furthermore preserves $q$-expansions, so the following diagram commutes
$$\xymatrix{ \overline{MF}_*(\Gamma_1(N),B) \ar[d] \ar[r] & V(\Gamma_1(N),B) \ar@{^{(}->}[d] \\ 
B((q)) \ar@{^{(}->}[r] & \widehat{B((q))}
}.
$$
This morphism of rings is compatible with the diamond operators. 
\end{prop}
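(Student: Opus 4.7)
The plan is to verify the stated properties in turn, leveraging that both sides are defined as rules on isomorphism classes of triples, and then to extract injectivity from the $q$-expansion principle on both sides.

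First I would check that the formula $\widetilde{f}(E/S,\varphi,\beta) := f(E/S,\varphi^*(\tfrac{dT}{1+T}),\beta)$ produces a well-defined element of $V(\Gamma_1(N),B)$: dependence only on the isomorphism class of $(E,\varphi,\beta)$ and compatibility with base change of $p$-adic rings both descend directly from the corresponding properties of the classical modular form $f$, since an isomorphism of triples on the trivialized side induces an isomorphism of the underlying $(E,\omega_\varphi,\beta)$. Holomorphy (i.e.\ that the image lands in $V$, not merely $W$) follows once we compute the $q$-expansion and observe it coincides with that of $f$, which lies in $B[[q]]$ by assumption. Next I would verify the weight: for $x\in\mathbb{Z}_p^\times$, the trivialization $x^{-1}\varphi$ pulls $\tfrac{dT}{1+T}$ back to $x^{-1}\cdot\varphi^*(\tfrac{dT}{1+T})$ because multiplication by $x^{-1}$ on $\widehat{\mathbb{G}}_m$ scales the canonical differential by $x^{-1}$; combining this with the degree $-k$ homogeneity of $f$ in $\omega$ yields $\langle x,1\rangle\widetilde{f} = x^k\widetilde{f}$, i.e.\ $\widetilde{f}$ has weight $k$ as a generalized modular form.

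The $B$-module and ring structure assertions are then immediate because both the classical operations on $\overline{MF}_*(\Gamma_1(N),B)$ and those on $V(\Gamma_1(N),B)$ are defined by pointwise addition and multiplication on the values of the rules; the construction $f\mapsto\widetilde{f}$ is merely restriction to trivialized curves, which respects both. Compatibility with diamond operators is equally formal: the classical operator $\langle y\rangle$ acts by $\beta\mapsto y\beta$ and leaves $\omega$ fixed, while $\langle 1,y\rangle$ on the generalized side acts by $\beta\mapsto y\beta$ and leaves $\varphi$ fixed, so the two intertwine under the construction.

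For commutativity of the $q$-expansion square, I would evaluate $\widetilde{f}$ on the Tate curve $\text{Tate}(q^N)$ over $\widehat{B((q))}$ equipped with its canonical trivialization $\varphi_{can}$ induced from the quotient presentation $\mathbb{G}_m/q^{N\mathbb{Z}}$. Because $\varphi_{can}$ literally identifies $\widehat{\text{Tate}(q^N)}$ with $\widehat{\mathbb{G}}_m$, we have $\varphi_{can}^*(\tfrac{dT}{1+T}) = \omega_{can}$, and hence $\widetilde{f}(\text{Tate}(q^N),\varphi_{can},\beta)=f(\text{Tate}(q^N),\omega_{can},\beta)$, which is exactly the classical $q$-expansion of $f$ sitting inside $\widehat{B((q))}$ via the inclusion $B((q))\hookrightarrow\widehat{B((q))}$.

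Finally, injectivity when $B$ is flat over $\mathbb{Z}_p$ follows by chasing the commutative square: if $\widetilde{f}=0$, then by Theorem \ref{padicqexpansion} (whose injectivity on the right vertical arrow requires precisely this flatness) its $q$-expansion in $\widehat{B((q))}$ vanishes, hence by the commutativity of the square the classical $q$-expansion of $f$ in $B((q))$ vanishes on the component containing the standard cusp, and the classical $q$-expansion principle then forces $f=0$ (invoking the action of the diamond operators to move between the $\varphi(N)$ cusps of $\overline{\mathcal{M}_1(N)}$, each of which corresponds to a rescaled choice of level structure on the Tate curve, as already recalled in Section \ref{qexpansion}). The main obstacle in writing this out carefully is ensuring that the single $q$-expansion captured by evaluation on $\text{Tate}(q^N)$ with its standard level structure, together with the diamond operator action, suffices to hit every connected component of $\overline{\mathcal{M}_1(N)}_{\mathbb{Z}[\tfrac{1}{N},\xi_N]}$ so that the classical $q$-expansion principle applies.
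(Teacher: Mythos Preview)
The paper does not give its own proof of this proposition; it simply cites Katz \cite{katzeisenstein2} 5.4. Your argument is essentially the correct way to unwind that citation, and the verifications of well-definedness, weight, ring structure, diamond-compatibility, and $q$-expansion preservation are all fine.

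There is, however, a genuine misattribution in your injectivity argument. You write that Theorem~\ref{padicqexpansion} ``requires precisely this flatness'' for the right vertical arrow, but that theorem states the $q$-expansion map $V(\Gamma_1(N),B)\hookrightarrow\widehat{B((q))}$ is injective for \emph{any} $p$-adic ring $B$, no flatness needed. The place where flatness over $\mathbb{Z}_p$ actually enters is in passing from a single weight to the direct sum $\bigoplus_k\overline{MF}_k$. Over a ring with $p$-torsion this map is genuinely not injective: for $B=\mathbb{F}_p$ the Hasse invariant has weight $p-1$ and $q$-expansion $1$, so it is identified with the constant $1\in\overline{MF}_0$ inside $V$. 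Your square-chasing argument, as written, only gives that the \emph{summed} $q$-expansion $\sum_k f_k(q)$ vanishes in $B((q))$, and the classical $q$-expansion principle you invoke is a weight-by-weight statement, so it does not immediately force each $f_k=0$. The clean fix is to use the weight grading you already established: if $\sum_k\widetilde{f_k}=0$ in $V$, apply $\langle x,1\rangle$ for varying $x\in\mathbb{Z}_p^\times$ to get $\sum_k x^k\widetilde{f_k}=0$; flatness of $B$ over $\mathbb{Z}_p$ guarantees the characters $x\mapsto x^k$ are $B$-linearly independent (they are over $\mathbb{Z}_p$), so each $\widetilde{f_k}=0$, and then your single-weight argument finishes.
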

We have a completely analogous situation for $p$-adic modular forms $MF_{p,*}(\Gamma_1(N),B)$, by the same construction as above we also get an inclusion (again under the assumption that $B$ is flat over $\mathbb{Z}_p$)
$$MF_{p,*}(\Gamma_1(N),B) \hookrightarrow V(\Gamma_1(N),B)).
$$
It is clear that classical and $p$-adic modular forms of weight $k$ map to generalized modular forms of weight $k$ as defined above and furthermore that this ring homomorphism commutes with classical diamond operators. In fact the converse is also true.
\begin{theo}[\cite{katzeisenstein2} 5.10]Let $B$ be a $p$-adic ring flat over $\mathbb{Z}_p$.
Denote with $GV_*(\Gamma_1(N),B)$  the (graded) subring of $V(\Gamma_1(N),B)$ consisting of all generalized modular forms $f$ with integral weight
$$ \langle x,1 \rangle f = x^k $$
for  $k \in \mathbb{Z}$ and for all $x \in \mathbb{Z}_p^{\times}$.
Then the construction above gives an isomorphism of graded rings 
$$ GV_*(\Gamma_1(N),B) \cong MF_{p,*}(\Gamma_1(N),B).
$$
\end{theo}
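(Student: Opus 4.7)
The plan is, first, to note that one direction is already in place. The construction preceding the theorem gives an injective ring homomorphism $MF_{p,*}(\Gamma_1(N),B)\hookrightarrow V(\Gamma_1(N),B)$, and a classical $p$-adic modular form of weight $k$ is readily seen (using the compatibility with diamond operators together with the identity $[x]^*(dT/(1+T)) = x\,dT/(1+T)$ for $x\in\mathbb{Z}_p^\times$) to land in the weight-$k$ component of $GV_*(\Gamma_1(N),B)$. The content of the theorem is therefore surjectivity onto $GV_*(\Gamma_1(N),B)$, and I would prove this by constructing, for every $g\in GV_k(\Gamma_1(N),B)$, a preimage $f\in MF_{p,k}(\Gamma_1(N),B)$ with $\widetilde{f}=g$.

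The definition of $f$ is pointwise and uses trivializations as an auxiliary tool. Given an ordinary triple $(E/S,\omega,\beta)$ over a $p$-adic $B$-algebra $S$, I would choose a faithfully flat extension $S\to S'$ over which $\widehat{E}_{S'}$ admits a trivialization $\varphi$ (such an $S'$ exists by the earlier remark on the existence of trivializations after base change), let $\lambda\in (S')^\times$ denote the unique unit with $\omega_{S'}=\lambda\,\omega_\varphi$, and put
$$f(E/S,\omega,\beta)\;:=\;\lambda^{-k}\,g(E_{S'},\varphi,\beta_{S'})\;\in\;S'.$$
The heart of the argument is then verifying that this element is independent of the choice of $\varphi$, of $S'$, and in fact descends to $S$. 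Independence of $\varphi$ uses the integral weight hypothesis precisely: replacing $\varphi$ by $x\varphi$ (for $x\in\mathbb{Z}_p^\times$) replaces $\omega_\varphi$ by $x\,\omega_\varphi$, hence $\lambda$ by $\lambda/x$, while the weight relation $\langle x,1\rangle g = x^k g$ gives $g(E,x\varphi,\beta)=x^{-k}g(E,\varphi,\beta)$, and the two changes cancel to leave $\lambda^{-k}g(E,\varphi,\beta)$ invariant.

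For descent, I would combine this independence with the uniqueness of trivialization in the flat setting (as stated in the preceding proposition): over $S'\otimes_S S'$, which is again flat over $\mathbb{Z}_p$, the two trivializations pulled back from the factors induce the same $\omega_{S'\otimes S'}$, hence agree, and the two pullbacks of the constructed element coincide. Faithfully flat descent then produces a canonical element of $S$, and a standard diagram chase shows the value is also independent of the particular flat cover. The construction is manifestly functorial under base change and homogeneous of degree $-k$ in $\omega$, so $f\in MF_{p,k}(\Gamma_1(N),B)$. Finally, evaluating $\widetilde{f}$ on a trivialized curve $(E,\varphi,\beta)$ corresponds to the case $S'=S$ and $\lambda = 1$, giving $\widetilde{f}(E,\varphi,\beta)=g(E,\varphi,\beta)$, so $\widetilde{f}=g$. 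Multiplicativity and grading of the isomorphism are immediate from the definition.

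The main obstacle I expect is the faithfully flat descent step: one must be careful that a trivialization exists over a flat cover that itself behaves well with respect to $\mathbb{Z}_p$-flatness, so that the uniqueness-of-trivialization principle from the preceding proposition applies on $S'\otimes_S S'$. Modulo this technical input, the integral weight hypothesis is exactly the algebraic mechanism that makes the otherwise $\mathbb{Z}_p^\times$-valued ambiguity in choosing $\varphi$ disappear, and it is essentially forced on us: without it, the weighted average $\lambda^{-k}g(E,\varphi,\beta)$ would not be well-defined, which is the reason the isomorphism is restricted to $GV_*$ rather than all of $V$.
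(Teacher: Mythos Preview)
The paper does not actually prove this theorem; it is stated with a citation to Katz \cite{katzeisenstein2} 5.10 and no argument is given. So there is no in-paper proof to compare against, and your proposal is essentially a reconstruction of the standard argument.

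Your strategy is the correct one and matches Katz's: build the inverse by evaluating $g$ on a trivialization over a cover and use the integral weight condition to kill the ambiguity in choosing $\varphi$. There is, however, a slip in your descent step. You write that over $S'\otimes_S S'$ the two pulled-back trivializations ``induce the same $\omega_{S'\otimes S'}$, hence agree''. This is not right: the two pullbacks $p_1^*\varphi$ and $p_2^*\varphi$ have no reason to produce the same invariant differential. What is true is that any two trivializations of the same $\widehat{E}$ over a $p$-adic ring differ by an automorphism of $\widehat{\mathbb{G}}_m$, and $\mathrm{Aut}(\widehat{\mathbb{G}}_m)$ over any $p$-adic ring is the constant group $\mathbb{Z}_p^\times$ (locally constant if the base is disconnected). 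So $p_2^*\varphi = x\cdot p_1^*\varphi$ for some such $x$, and your earlier independence-of-$\varphi$ computation using $\langle x,1\rangle g = x^k g$ already shows the two values agree. In other words, you should simply reapply the invariance you just proved rather than invoke the ``$\varphi$ is determined by $\omega_\varphi$'' statement, which plays no role here. With that correction the argument goes through; the remaining technical point you flag (existence of a faithfully flat $p$-adic cover carrying a trivialization) is handled by the fact that the moduli of trivialized ordinary curves is a pro-\'etale $\mathbb{Z}_p^\times$-torsor over the ordinary locus.
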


\subsubsection{The Frobenius Endomorphism}\label{atkinsoperator}
Let $(E/B,\varphi,\beta)$ be a trivialized elliptic curve over a $p$-adic ring $B$ with $\Gamma_1(N)$-level structure where $p$ is prime to $N$. 
The canonical subgroup $E_{can}$ of $E$ is the (unique) subgroup scheme which extends the kernel of the multiplication by $p$ map on the associated formal group $\widehat{E}$. Via the trivialization $\varphi$ the subgroup scheme $E_{can}$ can be identified with $\mu_p\subset \widehat{\mathbb{G}}_m$.
Denote with 
$$\pi:E \rightarrow E^{\prime}=E/E_{can}
$$
the projection onto the quotient and 
with $\check{\pi}:E^{\prime}\rightarrow E$ the dual isogeny.
By Cartier-Nishi duality we see that $\ker(\check{\pi})\cong \mathbb{Z}/p$, hence $\check{\pi}$ is \'etale. It follows that $\check{\pi}$ induces an isomorphism between the formal groups of $E$ and $E^{\prime}$ and we can define a trivialization $\varphi^{\prime}$ on $E^{\prime}$ by
$$ \varphi^{\prime}=\varphi \circ \check{\pi}. 
$$
As $p$ is prime to $N$, $\pi$ induces an isomorphism between the kernels of multiplication by $N$ on $E$ and on $E^{\prime}$, so we can define a $\Gamma_1(N)$-level structure $\beta^{\prime}$ on $E^{\prime}$ via 
$$\beta^{\prime}=\pi \circ \beta.
$$

Hence the assignement
$$ (E,\varphi,\beta) \mapsto (E/E_{can},\varphi^{\prime},\beta^{\prime})
$$
defines a functor on trivialized elliptic curves with $\Gamma_1(N)$-level structure
and the Frobenius endomorphism 
$$ \text{Frob}:V(\Gamma_1(N),B) \rightarrow V(\Gamma_1(N),B)
$$
is now defined by the formula
$$
\text{Frob}\;f(E,\varphi,\beta) \overset{\text{def}}{=} f(E/E_{can},\varphi^{\prime},\beta^{\prime}).
$$
It is clear from the definition that the Frobenius endomorphism commutes with the diamond operators and in particular preserves weights. In light of the discussion above it follows immediatley that it also restricts to an endomorphism of the subring $MF_{p,*}(\Gamma_1(N),B)$ of $p$-adic modular forms.
The following lemma shows that the name \lq\lq Frobenius endomorphism" is indeed justified.
\begin{lemma}
For $f\in V(\Gamma_1(N),\mathbb{Z}_p) $ we have
$$\text{Frob}\;f \equiv f^p \: \text{ mod } \: p .$$
\end{lemma}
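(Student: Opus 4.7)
The plan is to reduce the statement to an identity of $q$-expansions via the $p$-adic $q$-expansion principle (Theorem \ref{padicqexpansion}). Since the $q$-expansion map
$$W(\Gamma_1(N),\mathbb{Z}_p) \hookrightarrow \widehat{\mathbb{Z}_p((q))}$$
is injective with cokernel flat over $\mathbb{Z}_p$, its reduction modulo $p$ remains injective. Therefore it suffices to verify that the $q$-expansions of $\mathrm{Frob}\,f$ and $f^p$ coincide in $\widehat{\mathbb{F}_p((q))}$. The essential computation is then to evaluate $\mathrm{Frob}\,f$ on the Tate curve $(\mathrm{Tate}(q^N),\varphi_{can},\beta_N)$.

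The main technical step is to identify the target of the canonical isogeny explicitly. Since $\widehat{\mathrm{Tate}(q^N)}=\widehat{\mathbb{G}_m}$, the canonical subgroup is $E_{can}=\mu_p$ sitting inside the formal group, and the quotient map
$$\pi:\mathbb{G}_m/q^{N\mathbb{Z}} \longrightarrow \mathbb{G}_m/q^{pN\mathbb{Z}},\qquad x\mapsto x^p$$
realises an isomorphism $\mathrm{Tate}(q^N)/E_{can}\cong \mathrm{Tate}((q^p)^N)$. The dual isogeny is the natural surjection $\check{\pi}:\mathbb{G}_m/q^{pN\mathbb{Z}}\twoheadrightarrow \mathbb{G}_m/q^{N\mathbb{Z}}$, which restricts to the identity on formal neighbourhoods of the identity. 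Consequently $\varphi' = \varphi_{can}\circ\check{\pi}$ is identified with the canonical trivialization of $\mathrm{Tate}((q^p)^N)$. For the level structure, $\beta'(n)=\pi(q^n)=(q^p)^n$ is the canonical $\Gamma_1(N)$-level structure on $\mathrm{Tate}((q^p)^N)$. Putting these pieces together gives
$$(\mathrm{Frob}\,f)(\mathrm{Tate}(q^N),\varphi_{can},\beta_N) \;=\; f(\mathrm{Tate}((q^p)^N),\varphi_{can},\beta_N),$$
so if the $q$-expansion of $f$ is $f(q)=\sum_n a_n q^n$, the $q$-expansion of $\mathrm{Frob}\,f$ is $\sum_n a_n q^{pn}=f(q^p)$.

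Finally, since $a_n\in\mathbb{Z}_p$, Fermat's little theorem gives $a_n^p\equiv a_n\pmod{p}$, and the Frobenius of $\mathbb{F}_p[[q]]$ yields
$$f(q)^p \;=\; \Bigl(\sum_n a_n q^n\Bigr)^p \;\equiv\; \sum_n a_n^p q^{pn} \;\equiv\; \sum_n a_n q^{pn} \;=\; f(q^p) \pmod{p}.$$
Combining this with the previous paragraph shows that $\mathrm{Frob}\,f$ and $f^p$ have the same $q$-expansion modulo $p$, and the $q$-expansion principle concludes the argument. I expect the main point requiring care to be the explicit tracking of the trivialization and level structure under $\pi$ and $\check{\pi}$ on the Tate curve; everything else is formal manipulation with $q$-expansions and the Frobenius of $\mathbb{F}_p$.
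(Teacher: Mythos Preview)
Your proof is correct and follows exactly the same strategy as the paper: compute the effect of $\mathrm{Frob}$ on $q$-expansions at the Tate curve, observe that it sends $\sum a_n q^n$ to $\sum a_n q^{pn}$, and then invoke the $q$-expansion principle together with the Frobenius on $\mathbb{F}_p[[q]]$. The only difference is that the paper outsources the Tate curve calculation to \cite{katzeisenstein}, whereas you carry it out explicitly by identifying $\mathrm{Tate}(q^N)/\mu_p \cong \mathrm{Tate}((q^p)^N)$ and tracking the trivialization and level structure through $\pi$ and $\check{\pi}$; this is precisely the content of the cited computation.
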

\begin{proof}
An easy calculation (carried out, for example, in \cite{katzeisenstein} p.10) shows that if $f(q^N)=\sum_0^{\infty}a_n q^n$ is the $q$-expansion of a generalized modular function $f\in V(\Gamma_1(N),\mathbb{Z}_p)$ evaluated on $(\widehat{\text{Tate}(q^N)},\varphi_{can},q)$ then 
$$(\text{Frob}\;f)(q)= \sum_0^{\infty} a_n q^{np}.
$$
Modulo $p$ the Frobenius endomorphism thus reduces to the $p$-th power endomorphism on $q$-expansions and the result follows from the $q$-expansion principle.
\end{proof}
Let $k$ be a perfect field of characteristic $p$ containing an $N$-th root of unity $\xi_0$ ($N\geq 3$). 
Denote with $\mathbb{W}_N$ the ring of Witt vectors of $k$ and denote with $\xi\in \mathbb{W}_N$ the unique lift of $\xi_0$. In the following we will omit the subscript $N$ if it is clear from the context.
For a generalized modular function $f\in V(\Gamma_1(N),\mathbb{W}_N)$ we can just as well evaluate the $q$-expansion on the curve $\widehat{\text{Tate}(q)}$ instead of $\widehat{\text{Tate}(q^N)}$ (we now have the $N$-th root of unity $\xi$ at our disposal and hence - just as in Section \ref{qexpansion} - a canonical $\Gamma_1(N)$-level structure).

\begin{remark}
The analogue of Theorem \ref{padicqexpansion} holds for $p$-adic $\mathbb{W}$-algebras $B$ and the Tate curve $(\widehat{\text{Tate}(q)},\varphi_{can},\xi)$ over $\widehat{B((q))}$. 
\end{remark}
\begin{lemma}
Let $B$ be a $p$-adic $\mathbb{W}$-algebra and let $f(q)=\sum_0^{\infty} a_n q^n$ be the $q$-expansion of a generalized $p$-adic modular function $f\in V(\Gamma_1(N),B)$ evaluated at the curve $(\widehat{\text{Tate}(q)}/\widehat{B((q))},\varphi_{can}, \xi)$.
Then the effect of the Frobenius endomorphism is given by 
$$(\text{Frob}\:f)(q) = \sum_0^{\infty} \langle p \rangle a_n q^{np} 
$$
 where $\langle p \rangle a_n$ denotes the coefficients of the $q$-expansion of $\langle 1,p\rangle f$.
\end{lemma}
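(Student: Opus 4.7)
The plan is to evaluate $\text{Frob}\:f$ directly on the triple $(\widehat{\text{Tate}(q)},\varphi_{can},\xi)$ by unpacking the geometric definition of the Frobenius, identify the resulting quotient triple with $\widehat{\text{Tate}(q^p)}$ equipped with a shifted level structure, and then read off the $q$-expansion via the base change $q\mapsto q^p$.

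First, because $\widehat{\text{Tate}(q)}=\mathbb{G}_m/q^{\mathbb{Z}}$ has formal group literally $\widehat{\mathbb{G}}_m$ under $\varphi_{can}$, its canonical subgroup $E_{can}$ is precisely $\mu_p\subset\mathbb{G}_m$. The $p$-th power map on $\mathbb{G}_m$ has kernel $\mu_p$ and sends $q^{\mathbb{Z}}$ into $q^{p\mathbb{Z}}$, so it descends to an isomorphism
$$\pi:\widehat{\text{Tate}(q)}/\mu_p\xrightarrow{\cong}\mathbb{G}_m/q^{p\mathbb{Z}}=\widehat{\text{Tate}(q^p)}.$$
The identity $\check\pi\circ\pi=[p]$ then forces the dual isogeny $\check\pi:\widehat{\text{Tate}(q^p)}\to\widehat{\text{Tate}(q)}$ to be the evident projection $\mathbb{G}_m/q^{p\mathbb{Z}}\twoheadrightarrow\mathbb{G}_m/q^{\mathbb{Z}}$, induced by the identity on $\mathbb{G}_m$; in particular $\check\pi$ is the identity on formal groups. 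Hence $\varphi'=\varphi_{can}\circ\check\pi$ remains the canonical trivialization on $\widehat{\text{Tate}(q^p)}$, while the new level structure satisfies
$$\beta'(n)=\pi(\xi^n)=\xi^{pn}.$$

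Next, recognize $\beta'$ as $p\cdot\xi$, namely the canonical level structure $n\mapsto\xi^n$ on $\widehat{\text{Tate}(q^p)}$ acted on by $p\in(\mathbb{Z}/N)^{\times}$. By the very definition of the diamond operator,
$$f(\widehat{\text{Tate}(q^p)},\varphi_{can},p\cdot\xi)=(\langle 1,p\rangle f)(\widehat{\text{Tate}(q^p)},\varphi_{can},\xi).$$
Finally, the base change sending $q\mapsto q^p$ pulls $(\widehat{\text{Tate}(q)},\varphi_{can},\xi)$ back to $(\widehat{\text{Tate}(q^p)},\varphi_{can},\xi)$, so by the base change axiom for generalized modular functions the right-hand side is obtained from the $q$-expansion of $\langle 1,p\rangle f$ by replacing $q$ with $q^p$. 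By the notational convention of the lemma this equals $\sum\langle p\rangle a_n\,q^{np}$, giving the asserted formula for $(\text{Frob}\:f)(q)$.

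The only nontrivial step is the geometric identification of the quotient $\widehat{\text{Tate}(q)}/E_{can}$ with $\widehat{\text{Tate}(q^p)}$ together with the explicit description of $\check\pi$; once this is in place the remaining manipulations are purely formal consequences of base change and of the definition of the diamond operators.
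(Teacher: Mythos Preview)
Your argument is correct and is exactly the computation the paper has in mind: its proof merely cites ``the same calculation as in \cite{katzeisenstein}, p.~10,'' and what you have written is that calculation spelled out for the curve $(\widehat{\text{Tate}(q)},\varphi_{can},\xi)$. The identification $\widehat{\text{Tate}(q)}/\mu_p\cong\widehat{\text{Tate}(q^p)}$ via the $p$-th power map, the description of $\check{\pi}$ as the evident projection, and the appearance of the diamond operator $\langle 1,p\rangle$ from $\pi(\xi^n)=\xi^{pn}$ are all as in Katz, so there is nothing to add.
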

\begin{proof}
This is again the same calculation as in \cite{katzeisenstein}, p. 10.
\end{proof}
\begin{coro}
For $f\in V(\Gamma_1(N),\mathbb{W})$ we have 
$$ f \equiv f^p  \: \text{ mod }\:p.
$$
\end{coro}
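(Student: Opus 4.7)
The plan is to derive the congruence by combining the immediately preceding lemma with the $q$-expansion principle. The remark above gives the analogue of Theorem \ref{padicqexpansion} for $V(\Gamma_1(N),\mathbb{W})$: evaluation on the canonical triple $(\widehat{\text{Tate}(q)},\varphi_{can},\xi)$ is an injection into $\widehat{\mathbb{W}((q))}$ with flat cokernel. Hence to verify $f \equiv f^p \pmod{p}$ in $V(\Gamma_1(N),\mathbb{W})$, it suffices to verify the corresponding congruence of $q$-expansions modulo $p$ in $\widehat{k((q))}$.

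Writing $f(q) = \sum_{n\ge 0} a_n q^n$ with $a_n\in \mathbb{W}$, the freshman's dream in characteristic $p$ gives $f^p(q) \equiv \sum a_n^p q^{np} \pmod{p}$. To line this up with $f(q)$ modulo $p$ I would invoke the preceding lemma: $(\text{Frob}\,f)(q) = \sum \langle p\rangle a_n q^{np}$. The bridging identity $\langle p\rangle a_n \equiv a_n^p \pmod{p}$ is what makes the whole argument tick; it rests on the observation that the diamond operator $\langle 1,p\rangle$ acts on the canonical level structure by $\xi \mapsto \xi^p$, coinciding with the Witt-vector Frobenius $\sigma:\mathbb{W}\to\mathbb{W}$ (the unique lift of the Frobenius of $k$), together with the defining congruence $\sigma(a)\equiv a^p \pmod{p}$ of Witt vectors. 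The remaining identification of $f$ modulo $p$ with $\text{Frob}\,f$ modulo $p$ at the level of $q$-expansions comes from tracing the geometric construction of $\text{Frob}$ through the quotient by the canonical subgroup, exactly as in the $V(\Gamma_1(N),\mathbb{Z}_p)$-case treated just above, whose computation carries over to the $\mathbb{W}$ setting once the $\xi$-based $q$-expansion is used.

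I expect the main obstacle to be pinning down this bridge between the geometric operations (the diamond action and the canonical-subgroup quotient used to define $\text{Frob}$) on the one hand and the arithmetic Witt-vector Frobenius $\sigma$ on the other, since all three collapse to the same effect $\xi \mapsto \xi^p$ on the distinguished level structure but act on different layers of the construction. Once this triple identification is set up cleanly, what remains is a direct computation with power series modulo $p$ followed by an appeal to the $q$-expansion principle to transport the identity back from $\widehat{k((q))}$ to $V(\Gamma_1(N),\mathbb{W})/p$, mirroring the strategy used just above for $V(\Gamma_1(N),\mathbb{Z}_p)$.
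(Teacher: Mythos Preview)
Your core argument is correct and essentially identical to the paper's: compute the $q$-expansion of $\text{Frob}\,f$ at $(\widehat{\text{Tate}(q)},\varphi_{can},\xi)$ as $\sum \langle p\rangle a_n\,q^{np}$ via the preceding lemma, verify $\langle p\rangle a_n \equiv a_n^p \pmod p$ using that $\langle 1,p\rangle$ replaces $\xi$ by $\xi^p$, compare with $f(q)^p \equiv \sum a_n^p q^{np}$, and invoke the $q$-expansion principle. The paper carries out the middle step by writing $a_n \bmod p$ as a polynomial $\sum b_i\xi_0^i$ with $b_i\in\mathbb{Z}/p\mathbb{Z}$ and applying the freshman's dream directly; your phrasing via the Witt-vector Frobenius $\sigma$ is the same observation packaged more abstractly.

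The only genuine problem is your final paragraph. You attempt an additional ``remaining identification of $f$ modulo $p$ with $\text{Frob}\,f$ modulo $p$'', appealing to the $\mathbb{Z}_p$-case. But that earlier lemma proves $\text{Frob}\,f \equiv f^p$, not $f \equiv \text{Frob}\,f$; the latter is simply false (any $f$ with nonzero $q^1$-coefficient gives a counterexample, since $\text{Frob}\,f$ has vanishing $q^1$-coefficient). You have been misled by a typo in the displayed statement: the Corollary is meant to read $\text{Frob}\,f \equiv f^p \pmod p$, extending the immediately preceding lemma from $\mathbb{Z}_p$ to $\mathbb{W}$, and the paper's own proof establishes exactly that. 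Once you correct the target statement, your argument is already complete after the bridging identity $\langle p\rangle a_n \equiv a_n^p$; drop the last step entirely.
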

\begin{proof}
This is the same argument as above after observing that for the coefficients $a_n$ in the $q$-expansion $f(q) =\sum a_nq^n$ of $f$ (when evaluated on $(\widehat{\text{Tate}(q)},\varphi_{can},\xi)$)
$$ a_n \equiv \sum_{i=0}^{N-1} b_i \xi_0^{i} \: \text{ mod } \: p  
$$
for some $b_i \in \mathbb{Z}/p\mathbb{Z}$. 
The effect of the diamond operator $\langle 1, p \rangle$ is given by 
$$ \langle 1, p \rangle f(\widehat{\text{Tate}(q)}, \varphi_{can}, \xi) = f(\widehat{\text{Tate}(q)}, \varphi_{can}, \xi^p)
$$
so we have 
$$ \langle p \rangle a_n \equiv \sum_0^{N-1} b_i \xi_0^{pi}  \equiv (\sum_0^{N-1}b_i\xi_0^i)^p\equiv (a_n)^p\: \text{ mod } \:p. 
$$
\end{proof}

\begin{remark}\label{frobeniuslevelstructures}
Before stating the next result we should mention that one can also define generalized $p$-adic modular functions for trivialized elliptic curves $E$ over $p$-adic rings $B$ with arithmetic $\Gamma_1(N)$-level structures (this form of level structure is actually the more common one considered in the literature). In that case one requires that the arithmetic $\Gamma_1(N)$-level structure
$$\beta_N^{arith}: \mu_N \hookrightarrow E[N]
$$
is compatible with the trivialization $\varphi: \widehat{E} \xrightarrow{\cong} \widehat{\mathbb{G}_m}$ in the sense that the induced map
$$ \mu_N \hookrightarrow \widehat{E} \xrightarrow{\varphi} \widehat{\mathbb{G}_m}
$$
is the canonical inclusion. Again the moduli problem "trivialized elliptic curves over $p$-adic rings together with arithmetic $\Gamma_1(N)$-level structure is represented by a $p$-adic ring $W(\Gamma_1(N)^{arith},\mathbb{Z}_p)$. The subring $V(\Gamma_1(N)^{arith},\mathbb{Z}_p)$ of holomorphic generalized $p$-adic modular functions is analogously defined using the Tate curve $\widehat{\text{Tate}(q)}$ with its canonical \emph{arithmetic} $\Gamma_1(N)$-level structure. Under our assumption $p \nmid N$ there is essentially nothing new: By the construction sketched in Remark \ref{arithmeticvsnaive} one can again pass back and forth between arithmetic and naive level structures and one has inverse isomorphisms  (see \cite{katzeisenstein2} 5.6.2)
$$ V(\Gamma_1(N),B) \leftrightarrows V(\Gamma_1(N)^{arith},B)
$$
which preserve $q$-expansions (for the curve ($\widehat{\text{Tate}(q^N)},\varphi_{can},q)$ on the left side and for the curve $(\widehat{\text{Tate}(q)}, \varphi_{can}, \iota)$
on the right side where 
$$ \iota: \mu_N \hookrightarrow \widehat{\text{Tate}(q)} = \mathbb{G}_m/q^{\mathbb{Z}}
$$ 
is the canonical inclusion). In light of this we can freely apply results in the literature such as \cite{gouvea} which are written in terms of arithmetic level structures. There is however a subtle point with regard to the Frobenius endomorphism one should keep in mind: It is again defined via the quotient of trivialized elliptic curve by the canonical subgroup 
$$ \pi:E \rightarrow E/E_{can}
$$
and the induced trivialization on the quotient curve is defined in the same manner as above.
However if one defined the induced $\Gamma_1(N)^{arith}$-level structure $\beta^{\prime}$ on the quotient curve as
$$ \xymatrix{ & E[N] \ar[dd]^{\pi} \\
\mu_N \ar@{^{(}->}[ur]^{\beta} \ar@{^{(}->}[dr]^{\beta^{\prime}} & \\
& E/E_{can}[N]
}
$$
as we did for the naive $\Gamma_1(N)$-level structures one would find that the effect of the operator defined in this manner which we will tentatively call $\widetilde{\text{Frob}}$ on the $q$-expansion $f(q)=\sum a_nq^n$ of a generalized $p$-adic modular function $f\in V(\Gamma_1(N)^{arith},\mathbb{Z}_p)$
were given by $$ \widetilde{\text{Frob}}\; f(q) = \sum \langle p \rangle a_nq^{np}.
$$
In particular it would neither reduce to the $p$-th power map modulo $p$ nor would it be compatible with the aforementioned isomorphisms between the rings of naive and arithmetic generalized $p$-adic modular forms (which preserves $q$-expansions). To remedy this situation one is better advised to define the level structure on the quotient curve using the dual isogeny
$$\check{\pi}:E/E_{can} \rightarrow E.
$$
It is again of degree $p$ prime to $N$ and therefore also induces an isomorphism on the subgroups of $N$-torsion points. One can now define an arithmetic $\Gamma_1(N)$-level structure $\beta^{\prime}$ on $E/E_{can}$ by the formula
$$\beta= \check{\pi}\circ \beta^{\prime}
$$
and define the operator $\text{Frob}^{arith}:V(\Gamma_1(N)^{arith},B) \rightarrow V(\Gamma_1(N)^{arith},B)$ via
$$\text{Frob}^{arith} \:f(E,\varphi,\beta) = f(E/E_{can},\varphi(\check{\pi}),\check{\pi}^{-1}(\beta)).
$$
A straightforward calculation on the $q$-expansion $f(q)=\sum a_n q^n$ of a generalized $p$-adic modular function $f\in V(\Gamma_1(N)^{arith},\mathbb{Z}_p)$ now shows that
$$ \text{Frob}^{arith} f(q) = \sum a_n q^{np}
$$ 
so it is in particular compatible with our definition of the Frobenius endomorphism on generalized $p$-adic modular functions with naive level structure. 
\end{remark}
With this setup in place on may freely apply the following results which are stated in \cite{gouvea} in terms of arithmetic level structures and the operator that we called $\text{Frob}^{arith}$ to our situation of naive level structures.
\begin{theo}[\cite{gouvea} Proposition II2.9]
Let $B$ be a $p$-adically complete discrete valuation ring. Then the Frobenius endomorphism $\text{Frob}:V(\Gamma_1(N),B)\rightarrow V(\Gamma_1(N),B)$ is locally free of rank $p$.
\end{theo}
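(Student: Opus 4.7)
The plan is to reduce the statement modulo $p$ using the preceding corollary and then exploit $p$-adic completeness of both $B$ and $V(\Gamma_1(N),B)$. Since $V(\Gamma_1(N),B)$ is $p$-adically complete and $p$ lies in the Jacobson radical, the local criterion for flatness shows that to establish local freeness of $\text{Frob}$ it suffices to do so after reducing modulo $p$. By the preceding corollary, on $V(\Gamma_1(N),B)/p$ the Frobenius endomorphism coincides with the absolute Frobenius $f\mapsto f^p$ of the $\mathbb{F}_p$-algebra $V(\Gamma_1(N),B)/p$.

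Next I would argue that $V(\Gamma_1(N),B)/p$ is a smooth $\mathbb{F}_p$-algebra of relative dimension $1$, using that $V(\Gamma_1(N),B)$ represents trivialized ordinary elliptic curves with $\Gamma_1(N)$-level. Indeed, forgetting the trivialization defines a (pro)étale $\mathbb{Z}_p^{\times}$-torsor from $\text{Spf}\,V(\Gamma_1(N),B)$ onto the ordinary locus of $\mathcal{M}_1(N)\widehat{_p}\widehat{\otimes} B$, which by the discussion in Section \ref{modularforms} is a smooth one-dimensional formal Deligne-Mumford stack over $B$. Once smoothness of the relevant mod-$p$ algebra is in hand, the absolute Frobenius on a smooth $\mathbb{F}_p$-algebra of dimension $d$ is finite locally free of rank $p^d$, hence of rank $p$ in our case.

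To nail down the rank (and to have a concrete handle independent of the stack-theoretic argument above), I would use the $q$-expansion principle of Theorem \ref{padicqexpansion}: the $q$-expansion gives an injection $V(\Gamma_1(N),B)\hookrightarrow \widehat{B((q))}$ with flat cokernel, and by the preceding lemma the Frobenius acts on $q$-expansions by $\sum a_n q^n\mapsto \sum \langle p\rangle a_n q^{np}$. This is (up to the invertible diamond twist) the map $q\mapsto q^p$, which makes $\widehat{B((q))}$ locally free of rank $p$ with basis $\{1,q,\dots,q^{p-1}\}$ over its Frobenius image. Transferring this across the flat cokernel then confirms that the rank is exactly $p$.

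The main obstacle is the smoothness/formally-étale claim in the middle step: the ring $V(\Gamma_1(N),B)$ is not finitely generated over $B$ (because we have trivialized the whole formal group), so one cannot invoke smoothness in the naive noetherian sense and must argue via Serre-Tate deformation theory to see that the trivialization only adds a pro-étale $\mathbb{Z}_p^{\times}$-tower on top of the underlying smooth ordinary modular curve. Once that geometric picture is set up, both flatness and the computation of rank drop out cleanly.
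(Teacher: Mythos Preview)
The paper does not prove this theorem at all: it is quoted verbatim as a result from the literature (Gouv\^ea, Proposition~II.2.9) and used as a black box to define the trace map and the Atkin operator. There is therefore no ``paper's own proof'' to compare your proposal against.

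As for your sketch itself, the overall strategy (reduce modulo $p$, identify $\text{Frob}$ with the absolute Frobenius, and use that absolute Frobenius on a one-dimensional smooth $\mathbb{F}_p$-scheme is finite locally free of rank $p$) is the right shape and is close in spirit to Gouv\^ea's argument. Two points deserve tightening. First, the step ``transferring this across the flat cokernel'' in your $q$-expansion argument is not a valid deduction as stated: an injection with $B$-flat cokernel into a ring on which Frobenius is free of rank $p$ does not by itself force the source to be free of rank $p$ over its Frobenius image; the $q$-expansion is good for reading off the rank once flatness is known, but not for proving flatness. Second, and you already flag this, the ring $V(\Gamma_1(N),B)/p$ is not of finite type, so ``smooth of relative dimension $1$'' needs to be interpreted via the Igusa tower: one writes $V$ as an inverse limit of finite \'etale covers $V_n$ of the (genuinely smooth, one-dimensional) ordinary locus of the modular curve, checks that Frobenius is finite locally free of rank $p$ at each finite level $V_n$, and then passes to the limit. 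That is essentially how Gouv\^ea proceeds, and it circumvents the noetherian issue cleanly.
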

It follows that we can define  a trace homomorphism
$$\text{tr}_{\text{Frob}}: V(\Gamma_1(N),B) \rightarrow V(\Gamma_1(N),B)
$$
by setting 
$$
(\text{tr}_{\text{Frob}}\:f)(E,\varphi,\beta) = \sum_{(E_i,\varphi_i,\beta_i)\mapsto (E,\varphi,\beta)} f(E_i,\varphi_i,\beta_i)
$$
where the sum is over all trivialized elliptic curves $(E_i,\varphi_i,\beta_i)$ which map under the quotient by the fundamental subgroup to $(E,\varphi,\beta)$.
Now let us again assume that we are working over $\mathbb{W}$ so that we can use the $q$-expansion given by the curve $\widehat{\text{Tate}(q)}$ with its canonical naive $\Gamma_1(N)$-level structure: 
\begin{lemma}[see \cite{Katzpadicproperties} p.22-23 and p.64]
Let $f(q)=\sum a_n q^n$ be the $q$-expansion of a generalized $p$-adic modular function 
$f\in V(\Gamma_1(N),\mathbb{W})$ evaluated at $(\widehat{\text{Tate}(q)},\varphi_{can},\xi)$. Then we have
$$\text{tr}_{\text{Frob}} \: f(q) = p \sum \langle p^{-1} \rangle a_{np}q^{n}.
$$
\end{lemma}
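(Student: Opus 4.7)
The plan is to evaluate the trace by unwinding its definition as a sum over the preimages of the Frobenius-type map $(E',\varphi',\beta') \mapsto (E'/E'_{can},\varphi'\circ\check{\pi},\pi\circ\beta')$ when the target is the standard triple $(\widehat{\text{Tate}(q)},\varphi_{can},\xi)$, and then combining the resulting contributions via the injectivity part of the $q$-expansion principle (Theorem \ref{padicqexpansion}). All of the analysis reduces to explicit bookkeeping on the Tate curve, exactly as in the computation of the Frobenius endomorphism itself.

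The first step is to identify the preimages. Working over the faithfully flat extension obtained by adjoining $u$ with $u^p=q$ and a primitive $p$-th root of unity $\zeta_p$ (both already present in $\mathbb{W}$ up to the extension $\mathbb{W}((u))$), I claim that the $p$ preimages are the trivialized curves $\widehat{\text{Tate}(\zeta_p^i u)}$, for $i=0,\ldots,p-1$, with their canonical trivializations (induced from the presentation $\mathbb{G}_m/(\zeta_p^i u)^{\mathbb{Z}}$). Indeed, the canonical subgroup of each $\text{Tate}(\zeta_p^i u)$ is $\mu_p \subset \mathbb{G}_m$, and the quotient map $x \mapsto x^p$ identifies $\mathbb{G}_m/(\zeta_p^i u)^{\mathbb{Z}}\bigl/\mu_p$ with $\mathbb{G}_m/q^{\mathbb{Z}} = \text{Tate}(q)$, compatibly with trivializations because $\check{\pi}$ is designed so that $\varphi\circ\check{\pi}=\varphi_{can}$. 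For the level structure, the relation $\beta = \pi\circ \beta_i$ forces $\beta_i(n)$ to be the unique $N$-th root of unity whose $p$-th power is $\xi^n$; since $p$ is prime to $N$, this gives $\beta_i(n)=\xi^{p^{-1}n}$, where $p^{-1}$ denotes the inverse of $p$ modulo $N$. Thus, up to isomorphism, the preimage at level structure $\xi$ is a collection of triples $(\widehat{\text{Tate}(\zeta_p^i u)},\varphi_{can},\xi^{p^{-1}})$.

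The second step is to compute the $q$-expansion of each contribution. Evaluating $f$ on $(\widehat{\text{Tate}(\zeta_p^i u)},\varphi_{can},\xi^{p^{-1}})$ is the same as evaluating $\langle 1,p^{-1}\rangle f = \langle p^{-1}\rangle f$ on $(\widehat{\text{Tate}(\zeta_p^i u)},\varphi_{can},\xi)$, by the very definition of the diamond operator. Since the $q$-expansion of $f$ evaluated at the canonical triple on the Tate curve is $\sum a_n q^n$, the $q$-expansion of $\langle p^{-1}\rangle f$ at the same triple is $\sum \langle p^{-1}\rangle a_n\cdot q^n$; replacing $q$ by $\zeta_p^i u$ (which amounts to using the Tate curve with parameter $\zeta_p^i u$) therefore gives the value $\sum_n \langle p^{-1}\rangle a_n\,(\zeta_p^i u)^n$.

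Summing over $i=0,\ldots,p-1$ and using the standard orthogonality relation $\sum_{i=0}^{p-1} \zeta_p^{in} = p$ if $p\mid n$ and $0$ otherwise, only the terms with $n=pm$ survive, producing
$$
(\text{tr}_{\text{Frob}}\,f)(\widehat{\text{Tate}(q)},\varphi_{can},\xi) \;=\; p\sum_{m\ge 0} \langle p^{-1}\rangle a_{pm}\, q^m,
$$
which is the claimed identity. The only subtle point is the identification of the preimages and in particular the twist of the level structure by $\langle p^{-1}\rangle$; this is the place where one has to be careful that $\beta$ corresponds to the level structure on the \emph{target} of $\pi$, not the source, so that inverting $p$ modulo $N$ enters. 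Once this is settled, injectivity of the $q$-expansion map over $\mathbb{W}$ (Theorem \ref{padicqexpansion} together with the remark preceding the lemma) turns the above computation on the Tate curve into an equality of generalized $p$-adic modular functions, completing the proof.
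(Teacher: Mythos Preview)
The paper does not supply a proof of this lemma; it simply cites the relevant pages in Katz. Your argument is correct and is essentially the computation Katz carries out: identify the $p$ preimages of the Tate triple under the quotient-by-canonical-subgroup map as the Tate curves with parameter a $p$-th root of $q$, observe that the induced level structure on each preimage is $\xi^{p^{-1}}$ (this is where the diamond twist enters), and then sum over the preimages using orthogonality of $p$-th roots of unity.

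One small inaccuracy worth flagging: contrary to your parenthetical remark, $\mathbb{W}$ does \emph{not} contain a primitive $p$-th root of unity (its residue field has characteristic $p$, where $x^p-1=(x-1)^p$), so you genuinely need to adjoin $\zeta_p$ as well as $u$. This is harmless for the argument, since the final expression $p\sum \langle p^{-1}\rangle a_{pm}\,q^m$ visibly lies in $\widehat{\mathbb{W}[[q]]}$ and descent along the faithfully flat extension (or simply the $q$-expansion principle) gives the identity over $\mathbb{W}$. With that correction your proof is complete.
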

By the $q$-expansion principle the expression $$\frac{1}{p}\text{tr}_{\text{Frob}}$$
is well defined and we can define an operator (often called the Atkin operator, other sources such as \cite{gouvea} call this the Dwork operator) $$U: V(\Gamma_1(N),\mathbb{W} )\rightarrow V(\Gamma_1(N),\mathbb{W})$$  by setting
$$
U(f) \overset{\text{def}}{=} \frac{1}{p}(\text{tr}_{\text{Frob}})f.
$$
It is clear that the $U$-operator commutes with the diamond operators and in particular with weights, hence it restricts to an operator on the ring of $p$-adic modular forms $MF_{p,*}(\Gamma_1(N),\mathbb{W})$. By inspecting the $q$-expansion one immediately sees that
$$
U(\text{Frob} f) = f
$$
and that 
$$ T_p \overset{\text{def}}{=} p^{k-1} \text{Frob} + \langle p  \rangle U
$$
extends the classical Hecke operators of Section \ref{Heckeoperators} to generalized $p$-adic modular functions in $V(\Gamma_1(N),\mathbb{W})$. 
\section{Topological Modular Forms}
\subsection{Generalities}A generalized elliptic cohomology theory consits of a triple $(E,C,\phi)$ where $E$ is a weakly even periodic (and hence complex orientable) spectrum, $C$ is an elliptic curve over $\pi_0E$ and 
$$ \phi: \widehat{C} \xrightarrow{\cong} \widehat{\mathbb{G}}_E 
$$
is an isomorphism between the formal group $\widehat{C}$ associated to the elliptic curve and the formal group of the spectrum $E$.
The map of moduli stacks 
$$\overline{\mathcal{M}_{ell}} \rightarrow \mathcal{M}_{FG} $$ which associates to a generalized elliptic curve its formal group is flat, conversely, the Landweber exact functor theorem implies that every flat map
$$f:\text{Spec}(R) \rightarrow \overline{\mathcal{M}_{ell}} $$ gives rise to a generalized elliptic spectrum. 
Goerss, Hopkins and Miller showed that this construction can be refined to a presheaf of spectra on $\overline{\mathcal{M}_{ell}}$ if one restricts oneself to working in the category of $\mathbb{E}_{\infty}$-ring spectra. More precisely one has the following:
\begin{theo}[\cite{tmfbook}, Chapter 12]
There exists a presheaf $\mathcal{O}^{top}$ of $\mathbb{E}_{\infty}$-ring spectra on the small \'etale site of $\overline{\mathcal{M}_{ell}}$.
\end{theo}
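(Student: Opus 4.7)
The plan is to invoke Goerss--Hopkins obstruction theory, which reduces the construction of a sheaf of $\mathbb{E}_\infty$-ring spectra to the vanishing of certain André--Quillen cohomology groups, and then to handle these locally via Lubin--Tate deformation theory at each geometric point of $\overline{\mathcal{M}_{ell}}$. The key point is that the presheaf is determined by its values on affine étale charts of $\overline{\mathcal{M}_{ell}}$, so the problem decouples into a local existence problem together with a glueing/descent problem.

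First I would reduce to the local situation. On the small étale site, every object is covered by affines $\operatorname{Spec}(R) \to \overline{\mathcal{M}_{ell}}$ classifying a generalized elliptic curve $E/R$. The Landweber exact functor theorem produces an underlying weakly even periodic homology theory with $\pi_0 = R$ and formal group $\widehat{E}$, so the problem is to refine this to an $\mathbb{E}_\infty$-ring spectrum functorially in étale maps. By the Goerss--Hopkins machinery, obstructions to existence live in $H^{s}_{AQ}(R; \pi_* E \otimes \pi_{s-2})$ for $s \geq 3$ and obstructions to uniqueness (and to building the mapping spaces between local sections) in $s = 1, 2$.

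Second, I would analyze these obstruction groups pointwise. At a supersingular point $x$ the formal group $\widehat{E}_x$ has height $2$ over a perfect field, and the Serre--Tate theorem identifies the formal neighborhood of $x$ in $\overline{\mathcal{M}_{ell}}$ with the Lubin--Tate deformation space of $\widehat{E}_x$. The Goerss--Hopkins--Miller theorem for Lubin--Tate spectra then supplies a canonical $\mathbb{E}_\infty$-structure on the corresponding Morava $E$-theory, realising the required local section. At ordinary and cuspidal points the formal group has height $1$, and one gets local sections by similar (but easier) Lubin--Tate arguments, recovering completions of $p$-complete $K$-theory. Étaleness of the covering maps implies that the relevant cotangent complexes are trivial relative to the Lubin--Tate base, making the higher André--Quillen groups vanish and the obstructions disappear.

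Third, I would assemble the local data into a presheaf. The vanishing of the $s = 1, 2$ obstructions gives that the space of $\mathbb{E}_\infty$-refinements over any affine étale chart is connected with discrete $\pi_0$ automorphism group, so the local sections are determined up to contractible choice. Standard descent then produces a well-defined presheaf $\mathcal{O}^{top}$ on the small étale site. The main obstacle is really the supersingular contribution, where verifying the André--Quillen vanishing requires the deep computation of the Morava $E$-theory cohomology of the extended Morava stabilizer group and the identification of the moduli of $\mathbb{E}_\infty$-structures with the Lubin--Tate deformation problem; everything else — the ordinary locus, the cusp, and the glueing — is formal once this input is available.
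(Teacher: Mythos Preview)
The paper does not prove this theorem; it is quoted as a black box with a citation to \cite{tmfbook}, Chapter 12 (Behrens' account of the Goerss--Hopkins--Miller construction). There is therefore no ``paper's own proof'' to compare against --- the entire content here is the reference.

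Your sketch is a reasonable high-level summary of how the construction in that reference actually proceeds: reduction to affine \'etale charts, Goerss--Hopkins obstruction theory with Andr\'e--Quillen cohomology as the receptacle for obstructions, identification of the formal neighborhoods of supersingular points with Lubin--Tate deformation spaces via Serre--Tate, and the Goerss--Hopkins--Miller theorem for Morava $E$-theories as the hard input. A couple of points where your outline is looser than the actual argument: the construction is carried out prime by prime after $p$-completion and then reassembled via an arithmetic/chromatic fracture square (you do not mention this step, but it is how the ordinary and supersingular loci are glued together and how one passes back to the integral statement); the treatment of the cusp is not literally a height-$1$ Lubin--Tate argument but involves the Tate curve and its canonical $K$-theoretic realisation; and the precise indexing of the obstruction groups you wrote down is not quite right, though the spirit (vanishing of higher Andr\'e--Quillen groups from \'etaleness of the cotangent complex over the Lubin--Tate base) is correct. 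None of these are fatal to a sketch at this level, but for the purposes of this paper no sketch is needed --- the result is simply imported.
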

In particular given an affine \'etale open 
$\text{Spec}(R)\xrightarrow{C} \overline{\mathcal{M}_{ell}}$
classifying a generalized elliptic curve $C/R$ the spectrum of sections $E=\mathcal{O}^{top}(\text{Spec}(R))$
is a weakly even periodic $\mathbb{E}_{\infty}$-ring spectrum such that \begin{enumerate}
    \item $\pi_0(E)\cong R$;
    \item The formal group $\widehat{\mathbb{G}_E}$ of $E$ is isomorphic to $\widehat{C}$.
\end{enumerate}
\begin{remark}
Not every elliptic spectrum arises in this fashion (or via the Landweber exact functor theorem). The most prominent examples would be the Morava $K$-theories $K(2)$ of height $2$ corresponding to the formal group of a supersingular elliptic curve.
\end{remark}
The spectrum $Tmf$ is now definied as the spectrum of global sections 
$$
Tmf \overset{\text{def}}{=} \Gamma(\overline{\mathcal{M}_{ell}}, \mathcal{O}^{top}).
$$
The presheaf $\mathcal{O}^{top}$ restricts to a presheaf on $\mathcal{M}_{ell}$ and the resulting spectrum of global sections is denoted $TMF$. 
The maps $\mathcal{M}_1(N) \rightarrow \mathcal{M}_{ell}$ are \'etale after inverting $N$ and so one can define 
$$TMF_1(N)\overset{\text{def}}{=}\Gamma(\mathcal{M}_1(N,\mathbb{Z}[\frac{1}{N}]), \mathcal{O}^{top}).
$$
For the compactified stacks $\overline{\mathcal{M}_1(N)}$ \'etaleness no longer holds over the cusps. However the maps of stacks $\overline{\mathcal{M}_1(N)}\rightarrow \overline{\mathcal{M}_{ell}}$ still possess the weaker property of being log-\'etale. Hill and Lawson constructed in \cite{hilllawson} an extension of $\mathcal{O}^{top}$ to the log-\'etale site of $\overline{\mathcal{M}_{ell}}$ and thus were able to construct spectra $Tmf_1(N)$, again as global sections
$$Tmf_1(N) \overset{\text{def}}{=} \Gamma(\overline{\mathcal{M}_1(N, \mathbb{Z}[\frac{1}{N}])},\mathcal{O}^{top}).
$$

The homotopy groups of $Tmf_1(N)$ can be computed via the "elliptic spectral sequence" with $E_2$ term given by the sheaf cohomology
$$E_2^{s,t} = H^s(\overline{\mathcal{M}_1(N)},\omega^{\otimes t/2}) \Rightarrow \pi_{t-s} Tmf_1(N)
$$
where one sets $\omega^{\otimes t/2}=0$ if $t$ is odd (similiar for $TMF_1(N)$). It follows immediately that for $N>2$, as $\mathcal{M}_1(N))$ is isomorphic to an affine curve, that the spectral sequence collapses on the $E_2$ page and $\pi_{2k}TMF_1(N)\cong MF_k(\Gamma_1(N))$ for such $N$. In the cases of the moduli stacks $\overline{M_1(N)}$ for $N>1$  the homotopy groups of $Tmf_1(N)$ are still concentrated in even degrees and torsion free in positive degrees apart from possibly on $\pi_1$ due to Serre-duality (nontrivial torsion only appears for large level $N$). It turns out  that all these spectra are Landweber exact (see \cite{hillmeier} Section 4).
\begin{remark}
On the other hand the homotopy groups of $Tmf$ contains plenty of $2$ and $3$ torsion, both in even and in odd degrees, detecting many elements in the stable homotoy groups of the sphere. The spectra $Tmf$ and $TMF$ only become complex orientable after inverting $6$. For a calculation of $\pi_* Tmf$ see \cite{tmfbook}, Chapter 13.
\end{remark}

\subsection{Chromatic Localizations of Topological Modular Forms}
In this section we will recall well known (to the experts at least) results about the chromatic localizations of $Tmf_1(N)$. All the material in this section can be found or easily deduced from Behren's article on the construction of $Tmf$ (\cite{tmfbook}, Chapter 12),\cite{hilllawson}, and from \cite{Dylan}. 
We will restrict our attention to the level $\Gamma=\Gamma_1(N)$ such that $N\geq 3$ is a prime unless otherwise stated. For these level the theories $Tmf_1(N)$ are complex orientable, see above. Furthermore, to circumvent some technical issues (see Remark \ref{conundrum} in Section \ref{constructingmeasures}) we adopt the following convention.
\begin{convention}For the rest of this paper we will adjoin primitive $N$-th roots of unity everywhere in sight. This means every spectrum is a module respectivley algebra over $\mathbb{S}[\frac{1}{N},\xi_N]$. We will usually drop the $N$-th root of unity from our notation.
\end{convention}
\subsubsection{Rationalization}
We simply have $\pi_{2*}Tmf_1(N) = \overline{MF_*}(\Gamma_1(N),\mathbb{Q}) \cong \overline{MF_*}(\Gamma_1(N))\otimes \mathbb{Q}$.
Note that this holds also for $Tmf_1(1)=Tmf$.
\subsubsection{$K(1)$-local $Tmf_1(N)$} 
Recall from Section \ref{padicmodularfunctions} that the formal affine scheme $\text{Spf}(V(\Gamma_1(N),\mathbb{W})$ represents trivialized elliptic curves with a $\Gamma_1(N)$-level structure over $p$-adic $\mathbb{W}$-algebras. The presheaf $\mathcal{O}^{top}$ pulls back to $\text{Spf}(V(\Gamma_1(N),\mathbb{W})$ and the very construction of topological modular forms now readily implies:

\begin{prop}[see also \cite{Dylan}, Section 2]
We have the following:
\begin{enumerate}
    \item $L_{K(1)}(KU\widehat{_p}\wedge L_{K(1)}Tmf_1(N) \simeq \Gamma(\text{Spf}(V(\Gamma_1(N)),\mathcal{O}^{top});
     $
    \item $L_{K(1)}Tmf_1(N)\simeq \Gamma(\overline{\mathcal{M}_{ell}^{ord}(\Gamma_1(N))}, \mathcal{O}^{top})$
\end{enumerate}
\end{prop}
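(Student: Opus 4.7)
The plan is to deduce both statements from two ingredients: a pointwise identification of $L_{K(1)}$ applied to an elliptic spectrum as restriction to the ordinary locus, plus descent to globalize. I would treat part (2) first and then obtain part (1) by passing further to the trivialized cover.

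\textbf{Pointwise picture.} For any log-\'etale affine $U=\text{Spec}(R)\to \overline{\mathcal{M}_1(N,\mathbb{Z}[\frac{1}{N}])}$ classifying a generalized elliptic curve $C/R$, with corresponding elliptic spectrum $E=\mathcal{O}^{top}(U)$, I would invoke the standard fact that $L_{K(1)}E$ is obtained from $E$ by $p$-completion followed by inversion of the Hasse invariant $h\in \pi_{2(p-1)}(E/p)$. Since the locus $\{h\neq 0\}$ in the $p$-completion $U\widehat{_p}$ is precisely the ordinary locus, this realizes $L_{K(1)}E$ as $\mathcal{O}^{top}$ evaluated on the ordinary locus of $U\widehat{_p}$. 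This calculation is essentially the one used by Behrens in \cite{tmfbook} to identify $K(1)$-local $Tmf$, and extends without change to the $\Gamma_1(N)$-level setting since $p\nmid N$.

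\textbf{Globalizing to part (2).} To pass from the pointwise identification to the global statement, one has to show that $K(1)$-localization commutes with the homotopy limit computing $Tmf_1(N)=\Gamma(\overline{\mathcal{M}_1(N,\mathbb{Z}[\frac{1}{N}])},\mathcal{O}^{top})$. Because $\overline{\mathcal{M}_1(N)}$ is either a proper smooth curve (for $N\geq 4$) or a weighted projective line (for $N=2,3$), the descent spectral sequence has bounded cohomological dimension, so $L_{K(1)}$ passes through the totalization, with the Hill--Lawson log-\'etale extension of $\mathcal{O}^{top}$ handling the cusps cleanly. Assembling, $L_{K(1)}Tmf_1(N)$ is identified with global sections of $\mathcal{O}^{top}$ over the $p$-completion of the Hasse-invertible locus, which is exactly $\overline{\mathcal{M}_{ell}^{ord}(\Gamma_1(N))}$, giving (2).

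\textbf{From (2) to (1).} The formal scheme $\text{Spf}(V(\Gamma_1(N),\mathbb{W}))$ classifies trivialized ordinary elliptic curves with $\Gamma_1(N)$-level structure, and the forgetful map to $\overline{\mathcal{M}_{ell}^{ord}(\Gamma_1(N))}_{\mathbb{W}}$ is a pro-\'etale $\mathbb{Z}_p^{\times}$-torsor, namely the cover trivializing the formal group. On the topological side, $KU\widehat{_p}$ is characterized as the universal $K(1)$-local $\mathbb{E}_{\infty}$-ring whose formal group is $\widehat{\mathbb{G}}_m$, so $L_{K(1)}(KU\widehat{_p}\wedge -)$ is the functor that pulls $\mathcal{O}^{top}$ back along this trivializing cover. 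Applying this to the identification in (2) then yields (1).

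\textbf{Main obstacle.} The principal technical subtlety is justifying that $L_{K(1)}$ commutes with the homotopy limit computing global sections in the presence of the cusps; this is where the log-\'etale extension of \cite{hilllawson} is essential, and where one needs the bounded cohomological dimension of $\overline{\mathcal{M}_1(N)}$. Once that commutation is in hand, both parts follow formally from the moduli interpretation of $K(1)$-localization and the universal property of $KU\widehat{_p}$ as the trivialized multiplicative theory.
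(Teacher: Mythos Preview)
Your proposal is correct and spells out precisely the standard argument. The paper, however, does not give a proof at all: it simply remarks that ``the very construction of topological modular forms now readily implies'' the statement, citing Behrens' chapter in \cite{tmfbook} and \cite{Dylan}. In other words, the paper treats this proposition as a direct byproduct of how $\mathcal{O}^{top}$ is built (via the chromatic gluing of the ordinary and supersingular loci), rather than as something requiring separate justification. Your sketch --- identifying $L_{K(1)}$ pointwise as restriction to the ordinary locus via the Hasse invariant, then invoking bounded cohomological dimension to pass $L_{K(1)}$ through the totalization, and finally pulling back along the Igusa tower for part (1) --- is exactly the content hidden behind that citation, so there is no substantive difference in approach, only in the level of detail displayed.
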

Note that the ring $V(\Gamma_1(N),\mathbb{W})$ is flat over $\mathbb{Z}_p$ and we get 
$$\pi_{*}L_{K(1)}(KU\widehat{_p}\wedge Tmf_1(N)) \cong KU\widehat{_p}_* \otimes_{\mathbb{Z}_p}V(\Gamma_1(N),\mathbb{W}).$$
As we observed in Section \ref{padicmodularfunctions} the Frobenius operator $\text{Frob}$ on $V(\Gamma_1(N),\mathbb{W})$ reduces modulo $p$ to the $p$-th power map and therefore determines the structure of a $\theta$-algebra on $V(\Gamma_1(N),\mathbb{W})$. We may extend this to a $\theta$-algebra 
on $KU\widehat{_p}\otimes_{\mathbb{Z}_p} V(\Gamma_1(N),\mathbb{W})$ by letting $\text{Frob}$ act on the periodicity generator $u$ of $KU\widehat{_p}_*$ by multiplication with $p$. We hence get a $\theta$-algebra structure on $\pi_{2*}L_{K(1)}Tmf_1(N)$ through \'etale descent as follows.
\begin{prop}\label{theta}
The $\theta$-algebra structure of $L_{K(1)}Tmf_1(N)$ is such that the following diagram commutes 

$$\xymatrix{ \pi_{2k}L_{K(1)}Tmf_1(N) \ar[r] \ar[d]^{\psi} & MF_{k,p}(\Gamma_1(N),\mathbb{W}) \ar[d]^{f \mapsto p^k \text{Frob}f}\\
\pi_{2k}L_{K(1)}Tmf_1(N) \ar[r] & MF_{k,p}(\Gamma_1(N),\mathbb{W}).
}
$$
\end{prop}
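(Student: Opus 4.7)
The strategy is to reduce the computation to the homotopy of $KU\widehat{_p}\wedge L_{K(1)}Tmf_1(N)$ where both the $\theta$-algebra operator and the identification with $p$-adic modular forms are explicit. By part (1) of the preceding proposition, the target has homotopy $KU\widehat{_p}{}_{*}\otimes_{\mathbb{Z}_p}V(\Gamma_1(N),\mathbb{W})$, and under this identification a weight $k$ $p$-adic modular form $f\in MF_{k,p}(\Gamma_1(N),\mathbb{W})$ is sent to $u^k\cdot\widetilde f$, where $u\in\pi_2 KU\widehat{_p}$ is the Bott class and $\widetilde f$ is the associated generalized modular function of weight $k$. Since the unit map $L_{K(1)}Tmf_1(N)\to KU\widehat{_p}\wedge L_{K(1)}Tmf_1(N)$ is a map of $K(1)$-local $\mathbb{E}_{\infty}$-ring spectra it commutes with the $\theta$-algebra operator $\psi$, and as the induced map on $\pi_{2k}$ identifies weight $k$ modular forms injectively with the subring of weight $k$ generalized modular functions, it suffices to verify the formula after applying this unit.

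For the calculation on the smash product I would use that $\psi$ factors over the two tensor factors. On $KU\widehat{_p}$ the Adams operation is classically computed to act by $\psi(u)=pu$ on the Bott class. For the factor $V(\Gamma_1(N),\mathbb{W})$, the Goerss--Hopkins--Miller construction of $\mathcal{O}^{top}$ over the formal stack $\mathrm{Spf}(V(\Gamma_1(N),\mathbb{W}))$ of ordinary trivialized elliptic curves with $\Gamma_1(N)$-level structure (as extended by Hill--Lawson) produces a $K(1)$-local $\mathbb{E}_{\infty}$-ring whose induced $\theta$-algebra operation is the endomorphism coming geometrically from the quotient-by-canonical-subgroup map $(E,\varphi,\beta)\mapsto (E/E_{can},\varphi',\beta')$. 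But this endomorphism is precisely the Frobenius of Section \ref{atkinsoperator}, and the Corollary there confirms $\text{Frob}\,f\equiv f^p\pmod p$, consistent with being a $\theta$-algebra operation on a $p$-torsion free ring. Hence
$$\psi(u^k\cdot\widetilde f)=\psi(u)^k\cdot\psi(\widetilde f)=p^k\,u^k\cdot\text{Frob}(\widetilde f),$$
which under the inclusion $MF_{k,p}(\Gamma_1(N),\mathbb{W})\hookrightarrow V(\Gamma_1(N),\mathbb{W})$, known to be compatible with Frobenius, transports back to $p^k\,\text{Frob}(f)$ as required.

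The expected main obstacle is the identification of the $\theta$-algebra operator on $\mathcal{O}^{top}\bigl(\mathrm{Spf}(V(\Gamma_1(N),\mathbb{W}))\bigr)$ with the Frobenius defined geometrically via the canonical subgroup. Since any lift of the $p$-th power map on a $p$-torsion free ring yields a valid $\theta$-algebra operation, pinning down that the one coming from the $\mathbb{E}_{\infty}$-structure coincides on the nose with this particular Frobenius requires unpacking the functoriality of $\mathcal{O}^{top}$ on the log-\'etale site: the Adams operation $\psi^p$ on a $K(1)$-local $\mathbb{E}_{\infty}$-ring is encoded by Rezk's power operations, and these are matched under the Goerss--Hopkins--Miller equivalence with the moduli-theoretic quotient by the canonical subgroup.
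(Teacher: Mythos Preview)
Your proposal is correct and follows essentially the same route as the paper: reduce to the smash product $L_{K(1)}(KU\widehat{_p}\wedge Tmf_1(N))$ with homotopy $KU\widehat{_p}{}_*\otimes_{\mathbb{Z}_p}V(\Gamma_1(N),\mathbb{W})$, use that $\psi$ is multiplicative with $\psi(u)=pu$ on the Bott class and $\psi=\text{Frob}$ on $V$, and then pass back via \'etale descent. In fact the paper gives no separate proof of this proposition at all; the paragraph immediately preceding it simply declares the $\theta$-algebra structure on $KU\widehat{_p}{}_*\otimes V(\Gamma_1(N),\mathbb{W})$ by letting Frob act and extending by $u\mapsto pu$, and then asserts the structure on $\pi_{2*}L_{K(1)}Tmf_1(N)$ is obtained by descent. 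Your write-up is more honest in flagging the genuine content, namely that the power operation $\psi$ coming from the $\mathbb{E}_\infty$-structure agrees with the moduli-theoretic Frobenius via quotient by the canonical subgroup; the paper leaves this implicit in the construction of $\mathcal{O}^{top}$.
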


\subsubsection{$K(2)$-local $Tmf_1(N)$}\label{k2localtmfsubsect}
Let $\mathcal{M}_1(N)^{ss}$ be the supersingular locus of $\mathcal{M}_1(N)$. This is the formal neighborhood of the finitely many points of $\mathcal{M}_1(N)$ (and therefore also of $\overline{\mathcal{M}_1(N)}$ as the points in a (formal) neighborhood of the cusps have ordinary reduction) corresponding to supersingular elliptic curves with a $\Gamma_1(N)$-level structure. The Serre-Tate theorem now establishes an equivalence between the category of deformations of a supersingular elliptic curve and the deformations of its formal group. The Lubin-Tate rings classifying deformations of a formal group $\widehat{C}$ of height $2$ over a field $k$ of positive characteristic are (non-canonically) isomorphic to $\mathbb{W}(k)[[u]]$. As the $\mathcal{M}_1(N)$ for $N\geq 5$ are affine we have for these level
$$\mathcal{M}_1(N)^{ss} \cong \bigsqcup_{I} \emph{\text{Spf}}(\mathbb{W}(k_i)[[u]])
$$
where the coproduct runs over all isomorphism classes of supersingular elliptic curves $C_i$ with a $\Gamma_1(N)$-level structure over finite fields $k_i$ of characteristic $p$ .
\begin{remark}
One can show that every supersingular elliptic curve is already isomorphic to one defined over either $\mathbb{F}_p$ or $\mathbb{F}_{p^2}$. This cannot hold anymore once we introduce level structures which is easy to see for example in the case of a $\Gamma_1(N)$-level structure i.e. a specified point of order $N$: For an elliptic curve $E$ over $\mathbb{F}_{p^n}$ Hasses's Theorem gives an upper bound for the number of points on $E$ by the formula
$$ ||E/\mathbb{F}_{p^n}| - (p^n +§) | \leq 2 \sqrt{p^n}.
$$ Hence for large level $N$ and small primes $p$ there are simply not enough points on the curve over $\mathbb{F}_{p^2}$ to have a point of exact order $N$. So the number of isomorphism classes of supersingular elliptic curves and the cardinality of the fields $k \subset \overline{\mathbb{F}_p}$ over which they are defined vary with the level structures and the prime.
\end{remark}
All Lubin-Tate rings classifying universal deformations of formal groups admit essentially unique lifts to $\mathbb{E}_{\infty}$-ring spectra, one for each isomorphism class of formal groups of finite height, called the Lubin-Tate spectra. The lift to homotopy commutative ring spectra is due to Morava, the lift to $\mathbb{E}_{\infty}$-ring spectra is due to Goerss, Hopkins and Miller.
In light of this we get for $N\geq5$ as part of the construction of $Tmf$
$$L_{K(2)}Tmf_1(N) \simeq \Gamma(\mathcal{M}_1(N)^{ss},\mathcal{O}^{top})\simeq \prod_{C_i}
E_2(\widehat{C_i})
$$
where $E_2(\widehat{C_i})$ is the Lubin-Tate spectrum corresponding to the universal deformation of the formal group $\widehat{C_i}$ of a supersingular elliptic curve $C_i$ with $\Gamma_1(N)$-level structure and the product is taken over the (finitely many) isomorphisms classes of such curves.

Since the moduli stack $\mathcal{M}_1(3)$ is not affine anymore a slightly modified construction is required (see \cite{lawsonnaumann} Section 4). Let $\mathcal{M}(3)$ be the moduli stack of elliptic curves $C$ with a $\Gamma(3)$-level structure, that is an isomorphism
$$ \alpha:(\mathbb{Z}/3\mathbb{Z})^2 \xrightarrow{\cong} C[3].$$ 
$\mathcal{M}(3)$ is affine and \'etale over $\mathcal{M}_{ell}$ so we have a spectrum $$TMF(3) \overset{\text{def}}{=} \mathcal{O}^{top}(\mathcal{M}(3)).$$ 
and as above
$$ L_{K(2)}TMF(3) \cong \prod_{C_i}
\widetilde{E}_2(\widehat{C_i}) $$
where  $\widetilde{E}_2(\widehat{C_i}$ is the Lubin-Tate spectrum corresponding to the universal deformation of a supersingular elliptic curve with $\Gamma(3)$-level structure and the product runs over all isomorphism classes of such.

The group $GL_2(\mathbb{Z}/3\mathbb{Z})\cong \text{Aut}(\mathbb{Z}/3\mathbb{Z})^2$ acts on an elliptic curve with $\Gamma(3)$-level structure and hence also on $TMF(3)$.

Let $$ G_{\Gamma_1(3)} \overset{\text{def}}{=} \left\{ \begin{pmatrix} 1& * \\
0 & *
\end{pmatrix}\in GL_2(\mathbb{Z}/3\mathbb{Z})\right\} \cong \Sigma_3.$$
Then 
$$ TMF_1(3) \simeq TMF(3)^{h\Sigma}  $$ and
$$ L_{K(2)}Tmf_1(N) \simeq L_{K(2)}TMF_1(N) \simeq L_{K(2)}TMF(3)^{h\Sigma_3} \simeq (\prod_{C_i}
\widetilde{E}_2(\widehat{C_i}))^{h\Sigma_3} $$
(see \cite{mahowaldrezk}, Section 2).
\subsubsection{$K(1)- K(2)$-local $Tmf_1(N)$}
Let $\mathcal{M}_1(N)^{ss,ord} \subset \mathcal{M}_1(N)^{ss}$ denote the substack corresponding to the formal neighborhoods of the supersingular points where we removed the supersingular points, i.e. all points in the formal neighborhood with ordinary reduction. If one is inclined to drawing commutative diagrams one sees that $\mathcal{M}_1(N)^{ss,ord}$ is given by a pullback diagram of (formal) stacks
\begin{equation}\label{k1k2local}
\xymatrix{ \mathcal{M}_1(N)^{ss,ord} \ar[r] \ar[d] & \mathcal{M}_1(N)^{ss} \ar[d] \\
\mathcal{M}^{ord}_1(N) \ar[r] & \overline{\mathcal{M}_1(N)}\widehat{_p}
}.
\end{equation}
Applying $\mathcal{O}^{top}$ and passing to the global sections gives
$$L_{K(1)}L_{K(2)}Tmf_1(N) \simeq \Gamma(\mathcal{M}^{ss,ord}_1(N),\mathcal{O}^{top}).$$

\begin{remark}
This is not a theorem but rather part of the construction of $Tmf_1(N)$.

\end{remark}
Finally we have the following theorem (slightly adapted from \cite{Dylan} (Theorem 2.9) to our needs) which we reccord for later use:
\begin{prop}\label{k1localmaps}
We have isomorphisms 
\begin{align*}
    [KU\widehat{_p},L_{K(1)}Tmf_1(N)] & \cong Cts_{\mathbb{Z}_p^{\times}}(Cts(\mathbb{Z}_p^{\times},\mathbb{Z}_p),V(\Gamma_1(N))) \\
    [L_{K(1)}Tmf_1(N), L_{K(1)}Tmf_1(N)] & \cong Cts_{\mathbb{Z}_p^{\times}}(V(\Gamma_1(N)), V(\Gamma_1(N))).
\end{align*}
Here $Cts_{\mathbb{Z}_p^{\times}}(-,-)$ denotes continuous  $\mathbb{Z}_{p}^{\times}$-equivariant maps.
\end{prop}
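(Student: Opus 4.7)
The plan is to deduce this from $K(1)$-local Galois descent along the extension $L_{K(1)}\mathbb{S}[\xi_N] \to KU\widehat{_p}$, where the Galois group $\mathbb{Z}_p^{\times}$ acts via Adams operations. Both isomorphisms are just the $\pi_0$-statements of a single descent equivalence
\[
\mathrm{Map}_{K(1)}(X,Y) \;\simeq\; \mathrm{Map}^{h\mathbb{Z}_p^{\times}}\!\bigl(L_{K(1)}(KU\widehat{_p}\wedge X),\, L_{K(1)}(KU\widehat{_p}\wedge Y)\bigr),
\]
specialized to $X\in\{KU\widehat{_p},\,L_{K(1)}Tmf_1(N)\}$ and $Y=L_{K(1)}Tmf_1(N)$. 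This is exactly the mechanism used in \cite{Dylan}, Theorem 2.9, and we should be able to run the same argument for level $\Gamma_1(N)$ essentially verbatim.

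More concretely, I would proceed as follows. First, invoke the proposition recalled just above (a formal consequence of the very construction of $\mathcal{O}^{top}$), which identifies
\[
\pi_0 L_{K(1)}\bigl(KU\widehat{_p}\wedge L_{K(1)}Tmf_1(N)\bigr)\;\cong\;V(\Gamma_1(N),\mathbb{W}),
\]
with $\mathbb{Z}_p^{\times}$ acting through the diamond operators $\langle x,1\rangle$. Next, recall the classical computation $(KU\widehat{_p})^{\vee}_0 KU\widehat{_p}\cong \mathrm{Cts}(\mathbb{Z}_p^{\times},\mathbb{Z}_p)$ (coming from the fact that $K\widehat{_p}\wedge K\widehat{_p}$ represents continuous functions on the profinite group of Adams operations). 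Since both $KU\widehat{_p}$ and $L_{K(1)}Tmf_1(N)$ are pro-free $K(1)$-locally (the latter because $V(\Gamma_1(N),\mathbb{W})$ is $p$-adically flat over $\mathbb{Z}_p$), the conditional descent spectral sequence collapses to give on $\pi_0$ an isomorphism of the form
\[
[X,L_{K(1)}Tmf_1(N)] \;\cong\; \mathrm{Cts}_{\mathbb{Z}_p^{\times}}\!\bigl((KU\widehat{_p})^{\vee}_0 X,\; V(\Gamma_1(N),\mathbb{W})\bigr).
\]
Plugging in $X=KU\widehat{_p}$ and $X=L_{K(1)}Tmf_1(N)$ yields exactly the two formulas claimed.

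The main technical point to verify, and the step I expect to be most delicate, is that the descent/Adams spectral sequence really does collapse without higher $\mathrm{Ext}$-contributions, which requires checking cohomological dimension $1$ for $\mathbb{Z}_p^{\times}$ acting on the pro-free $\mathbb{Z}_p$-modules $V(\Gamma_1(N),\mathbb{W})$ and $\mathrm{Cts}(\mathbb{Z}_p^{\times},\mathbb{Z}_p)$, together with a pro-free/$L$-complete hypothesis on $L_{K(1)}Tmf_1(N)$ ensuring that $(KU\widehat{_p})^{\vee}_* L_{K(1)}Tmf_1(N)$ is concentrated in even degrees and flat. Once these hypotheses are checked, the identification of continuity and equivariance conditions with the description via $V(\Gamma_1(N),\mathbb{W})$ and the diamond operators is formal.

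Everything else is routine bookkeeping: identifying the $\mathbb{Z}_p^{\times}$-action on $(KU\widehat{_p})^{\vee}_0 L_{K(1)}Tmf_1(N)$ with the diamond operator action on $V(\Gamma_1(N),\mathbb{W})$ (this follows from the fact that Adams operations on the $KU\widehat{_p}$-factor correspond to automorphisms of the trivialization $\varphi$), and tracking the evident description of $(KU\widehat{_p})^{\vee}_0 KU\widehat{_p}$ as continuous functions on $\mathbb{Z}_p^{\times}$. No additional input specific to $\Gamma_1(N)$ is needed beyond the flatness and representability results collected in Section~\ref{padicmodularfunctions}.
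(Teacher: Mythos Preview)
Your proposal is correct and matches the standard argument; in fact the paper itself gives no proof of this proposition at all, simply recording it as ``slightly adapted from \cite{Dylan} (Theorem 2.9)'' for later use. The $K(1)$-local Galois descent mechanism you outline (via the Devinatz--Hopkins identification $KU\widehat{_p}^{h\mathbb{Z}_p^{\times}}\simeq L_{K(1)}\mathbb{S}$, collapse of the descent spectral sequence from pro-freeness and even concentration of $(KU\widehat{_p})^{\vee}_*(-)$, and the identification of the $\mathbb{Z}_p^{\times}$-action with the diamond operators on $V(\Gamma_1(N),\mathbb{W})$) is exactly the argument Wilson gives, so there is nothing to compare.

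One small cosmetic point: you write the Galois extension as $L_{K(1)}\mathbb{S}[\xi_N]\to KU\widehat{_p}$, but the $N$-th root of unity is irrelevant to the Galois structure here (it is already absorbed into the coefficients $\mathbb{W}$ on the target side); the relevant extension is simply $L_{K(1)}\mathbb{S}\to KU\widehat{_p}$ with group $\mathbb{Z}_p^{\times}$.
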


\section{Generalities on $\mathbb{E}_{\infty}$-Orientations}
We are now going to collect all the basic ingredients for studying $\mathbb{E}_{\infty}$-orientations from \cite{AHR}. The intended purpose is to make this work more self-contained and we make no claim on originality apart from Proposition \ref{k2hecke} were we provide a correction to the corresponding result in \cite{AHR}, namely Proposition 4.8 and Example 4.9.


\subsection{The Spectrum of Units}\label{spectrumofunits}\label{orientations}
Let $R$ be a homotopy commutative ring spectrum. The space of units $GL_1(R)$ is defined as the homotopy pullback in the following diagram
$$\xymatrix{
GL_1(R) \ar[r]\ar[d] & \Omega^{\infty}R\ar[d] \\
(\pi_0 R)^{\times} \ar[r] & \pi_0 R
}
$$
where the right vertical map is the projection onto the path components and the lower horizontal map is given by the inclusion of invertible elements. 
Now if $X$ is an unbased space, then
$$ [X,GL_1 (R)] \cong (R^0(X_{+}))^{\times}$$ 
and 
$$[Y,GL_1(R)]_{\star} \cong (1+\widetilde{R}^0(Y))^{\times} \subseteq R^0(Y_{+})^{\times} $$ 
for $Y$ a pointed space (writing $[-,-]_{\star}$ for homotopy classes of base point preserving maps between pointed spaces).

In particular for $Y \simeq S^n$ we get the homotopy groups of the space $GL_1(R)$:
$$\pi_n GL_1(R) \cong \begin{cases} (\pi_0(R))^{\times} \text{  for  } n=0 \\
\pi_n(R) \text{  for  } n\geq 1
\end{cases} $$

where the isomorphism $\pi_n GL_1(R) \cong \pi_n (R)$ is given by $x \mapsto 1+x$.

If $R$ is an $\mathbb{E}_\infty$-ring spectrum then $GL_1(R)$ turns out to be an infinite loop space, so there exists a connective spectrum $gl_1(R)$ such that $\Omega^\infty gl_1(R) \simeq GL_1(R)$.
\begin{remark}The map $GL_1(R) \rightarrow \Omega^{\infty}R$ is in general not a map of infinite loop spaces and therefore does not induce a map of spectra $gl_1 R \rightarrow R$. In particular, although the spaces $GL_1(R)$ and $\Omega^{\infty}R$ have weakly equivalent basepoint components, the natural inclusion map $GL_1(R)\hookrightarrow \Omega^{\infty} R$ is not basepoint preserving. 
\end{remark}
The functor $gl_1(-)$ has a left adjoint "up to homotopy" given by the functor $\Sigma^{\infty}_+\Omega^{\infty}$ from connected spectra to $\mathbb{E}_{\infty}$-ring spectra. More precisely we have the following:
\begin{prop}[\cite{ABGHR} Theorem 5.1]
The functors $\Sigma_+^{\infty} \Omega^{\infty}$ and $gl_1$ induce adjunctions 
\begin{equation}\label{fundamentaladjunction} 
    \Sigma^{\infty}_+\Omega^{\infty}: \text{ho}((-1)\text{connected spectra}) \rightleftarrows \text{ho}(\mathbb{E}_{\infty}\text{-ring spectra}):gl_1
\end{equation}
of categories enriched over the homotopy category of spaces. Furthermore the left adjoint preserves homotopy colimits and the right adjoint preserves homotopy limits. 
\end{prop}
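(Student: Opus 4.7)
The plan is to realize the adjunction as a composite of two intermediate adjunctions, mediated by the category of (grouplike) $\mathbb{E}_{\infty}$-spaces. First I would exploit the fact that $\Sigma^{\infty}_+$ is strong symmetric monoidal from unbased spaces (with Cartesian product) to spectra (with smash product), so its right adjoint $\Omega^{\infty}$ is lax symmetric monoidal. Such an adjunction automatically prolongs to commutative monoid objects on both sides, producing an adjunction
$$\Sigma^{\infty}_+ \colon \mathbb{E}_{\infty}\text{-spaces} \rightleftarrows \mathbb{E}_{\infty}\text{-ring spectra} \colon \Omega^{\infty}.$$

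Next I would refine the left-hand side to the full subcategory of \emph{grouplike} $\mathbb{E}_\infty$-spaces. The key observation is that for $Y$ a grouplike $\mathbb{E}_{\infty}$-space, any map of $\mathbb{E}_{\infty}$-spaces $Y \to \Omega^{\infty} R$ factors through $GL_1 R \subset \Omega^{\infty} R$: the image is again grouplike, and the grouplike summand of $\pi_0 \Omega^{\infty} R$ is exactly $(\pi_0 R)^{\times}$. Equivalently, $GL_1(-)$ is the right adjoint to the inclusion of grouplike $\mathbb{E}_\infty$-spaces into all $\mathbb{E}_\infty$-spaces. Composing with the first step yields
$$\Sigma^{\infty}_+ \colon \mathbb{E}_{\infty}\text{-spaces}^{\mathrm{gp}} \rightleftarrows \mathbb{E}_{\infty}\text{-ring spectra} \colon GL_1.$$

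Finally, the infinite loop space recognition theorem of May (or Segal) identifies the homotopy category of grouplike $\mathbb{E}_{\infty}$-spaces with the homotopy category of connective spectra via the adjoint pair $\Omega^{\infty}$ and its left adjoint (group completion / spectrification), with $gl_1 R$ corresponding to $GL_1 R$ essentially by construction. Substituting into the previous adjunction yields the desired equivalence. Preservation of homotopy colimits and limits is automatic for the left and right components of any derived adjunction, and the enrichment over the homotopy category of spaces descends from the topological (or simplicial) enrichment of each intermediate step. The principal technical obstacle is making the entire composite coherently derived: either one picks compatible model structures on connective spectra, on (grouplike) $\mathbb{E}_{\infty}$-spaces, and on $\mathbb{E}_{\infty}$-ring spectra in which each intermediate adjunction is Quillen and verifies that the passage to the grouplike summand preserves weak equivalences, or, more conceptually, one works $\infty$-categorically and simply verifies that the evident left adjoint $X \mapsto \Sigma^{\infty}_+ \Omega^{\infty} X$ exists between connective spectra and $\mathbb{E}_{\infty}$-ring spectra, at which point $gl_1$ is defined as the corresponding right adjoint and both the limit-preservation and enrichment statements become formal. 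This is essentially the approach taken in \cite{ABGHR}.
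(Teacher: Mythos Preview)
The paper does not prove this proposition at all; it is quoted verbatim as Theorem 5.1 of \cite{ABGHR} and used as a black box. So there is no ``paper's own proof'' to compare against. Your sketch --- factoring through grouplike $\mathbb{E}_\infty$-spaces via May's recognition principle, then using that $\Sigma^\infty_+$ is strong monoidal to pass to commutative monoids --- is the standard outline and is indeed essentially what \cite{ABGHR} does; you even say as much in your last sentence.
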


\begin{remark}
The adjunction above should be thought of as an higher algebraic analogue to the adjunction 
$$ \mathbb{Z}[-]:AbGroups 
 \rightleftarrows ComRings:GL_1(-)
 $$

between the units of a commutative ring and the group ring functor. 
\end{remark}
\begin{example}
For the sphere spectrum $\mathbb{S}$ the space $GL_1(\mathbb{S})$ consists of the components of degree $\pm 1$ of the space $Q\mathbb{S}^0$. The one-fold delooping $BGL_1(\mathbb{S})$ is a classifying space for stable spherical fibrations.    
\end{example}

\begin{example}
Let $R$ be an ordinary commutative ring and $HR$ the associated Eilenberg-MacLane spectrum. Then $gl_1(HR) \simeq H(R)^{\times}$.
\end{example}
\subsection{Thom Spectra and $\mathbb{E}_{\infty}$-Orientations}\label{thomorient}
We make the following notational convention following \cite{AHR}: Infinite loop spaces will be denoted by upper case letters and the corresponding connective spectra with the corresponding lower case letters. If $g$ is a connective spectrum we will denote with $bg \overset{def}{=}\Sigma g$ its one-fold suspension.

\begin{defi}
Let  $j:g \rightarrow gl_1\mathbb{S} $ be a map of spectra with cofiber $Cj$. We define the Thom spectrum $Mf$ associated to $j$ as the homotopy pushout in the category of $\mathbb{E}_{\infty}$-ring spectra
$$\xymatrix{
\Sigma^{\infty}_+ \Omega^{\infty} gl_1 \mathbb{S}\ar[r] \ar[d] & \mathbb{S}\ar[d] \\
\Sigma^{\infty}_+\Omega^{\infty} Cj \ar[r] & Mj
}.
$$
Here the upper horizontal map is the counit of the adjunction (\ref{fundamentaladjunction}) from above.

\end{defi}
\begin{example}
The classical $J$-homomorphism $O \xrightarrow[]{J} Gl_1(\mathbb{S}) $ is a map of infinite loop spaces and therefore induces a map of spectra $\Sigma^{-1} bo\langle2 \rangle \xrightarrow{j} gl_1(\mathbb{S})$. Given a map $F=\Omega^{\infty}f:G\rightarrow O$ of infinite loop spaces we will usually write $MG$ for the Thom spectrum associated to the composite $j\circ f: g \rightarrow \Sigma^{-1} bo\langle2 \rangle \rightarrow gl_1(\mathbb{S})$ if the map $f$ is clear from the context. Take for example connective covers of $O$ such as $Spin \cong O\langle 3 \rangle$. The resulting Thom spectrum $MSpin$ coincides with the classical one which goes by the same name. Note however that in the case of connective covers $O\langle n \rangle $ and $U\langle n \rangle$ of $O$ and $U$ the standard notational conventions are inconsistent with this practice as in this case the corresponding Thom spectra are usually denoted by $MO\langle n+1 \rangle$ and $MU\langle n+1 \rangle$. So we have for example
$$ MSpin = MO \langle 4 \rangle.
$$
\end{example}
Now let $R$ be an $\mathbb{E}_{\infty}$-ring spectrum and $\iota: \mathbb{S} \rightarrow R $ the unit map. Using the adjunction (\ref{fundamentaladjunction}) together with the description of the Thom spectrum $Mj$ as a homotopy pushout, one can see that the space of $\mathbb{E}_{\infty}$ maps $Map_{\mathbb{E}_\infty} (Mj, R)$ homototpy equivalent to the homotopy pullback in the following diagram:

\begin{equation}
    \xymatrix{ Map_{\mathbb{E}_{\infty}}(Mj,R) \ar[r] \ar[d] & Map_{Spectra}(Cj, gl_1 R) \ar[d] \\
    \{gl_1(\iota)\} \ar[r] & Map_{Spectra}(gl_1\mathbb{S}, gl_1R).
    }
\end{equation}{}
This means in practice that to give an element in $\pi_0 Map_{\mathbb{E}_{\infty}}(Mj,R)$ is equivalent to finding a dotted arrow making the following diagram commute:

\begin{equation}
\xymatrix{ \Sigma^{-1} bg \ar[r]^{j} & gl_1\mathbb{S} \ar[d]_{gl_1(\iota)} \ar[r] &Cj \ar[r] \ar@{.>}[ld] & bg \\
& gl_1 R & & 
}
\end{equation}

In what follows we will replace the spectrum $gl_1(R)$ with other spectra and make the following definition:
\begin{defi}
Let $X$ be a spectrum under $gl_1\mathbb{S}$ 
$$i: gl_1\mathbb{S} \rightarrow X$$ 
and $g$ a spectrum together with a map 
$$ j: g \rightarrow gl_1\mathbb{S} $$ with cofiber $Cj$. 
The space $Orient(bg,X)$ is the homotopy pullback 
$$
\xymatrix{ 
Orient(bg,X) \ar[r] \ar[d] & Map_{Spectra}(Cj, X) \ar[d] \\
\{i \} \ar[r] & Map_{Spectra}(gl_1\mathbb{S}, X).
}
$$
\end{defi}
\begin{remark}For a Thom spectrum $MG$  corresponding to a map $g\rightarrow gl_1 \mathbb{S}$ and an $\mathbb{E}_{\infty}$ -ring spectrum $E$ the space $Orient(bg,gl_1 E)$ is nothing but the space $Map_{\mathbb{E}_{\infty}}(MG,E)$.
\end{remark}
\begin{remark}
It is clear from the definition that the functor $Orient(bg,-)$ from spectra to spaces commutes with homotopy limits. 
\end{remark}

\begin{prop}[\cite{AHR} Section 2.3]\label{torsor}
If $Orient(bg,X)$ is non-empty, then $\pi_0 Orient(bg,X)$ is a torsor for the group 
$$\pi_0 Map_{Spectra}(bg,X).$$
\end{prop}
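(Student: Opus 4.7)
The plan is to deduce this essentially formally by unwinding the defining homotopy pullback and appealing to cofiber--fiber duality for spectra. Starting from the cofiber sequence $g \xrightarrow{j} gl_1\mathbb{S} \xrightarrow{k} Cj \xrightarrow{\partial} bg$, applying the functor $Map_{Spectra}(-,X)$ produces a fiber sequence of spectra
\[
Map_{Spectra}(bg,X) \xrightarrow{\partial^*} Map_{Spectra}(Cj,X) \xrightarrow{k^*} Map_{Spectra}(gl_1\mathbb{S},X).
\]
Passing to $\Omega^{\infty}$ yields the corresponding fiber sequence of infinite loop spaces, and by construction $Orient(bg,X)$ is precisely the homotopy fiber of $\Omega^{\infty}k^*$ over the point $i$.

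Next I would invoke the general fact that for a fiber sequence $F \to E \to B$ of grouplike $H$-spaces (which is our setting, since everything is $\Omega^{\infty}$ of a spectrum), the homotopy fiber $p^{-1}(b)$ over any point $b \in B$ lying in the image of $\pi_0 E$ becomes non-canonically equivalent to $F$ via translation by any chosen lift $\phi_0 \in p^{-1}(b)$. Applying this with $F = \Omega^{\infty}Map_{Spectra}(bg,X)$, the non-emptiness hypothesis lets us pick $\phi_0 \in Orient(bg,X)$, and we obtain an equivalence of spaces
\[
\Omega^{\infty}Map_{Spectra}(bg,X) \xrightarrow{\simeq} Orient(bg,X).
\]
Passing to $\pi_0$ produces a bijection between the group $\pi_0 Map_{Spectra}(bg,X)$ and the set $\pi_0 Orient(bg,X)$, and translation supplies the claimed torsor action.

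The last thing to check is that the resulting torsor structure is well-defined, i.e. does not depend on spurious choices: changing the basepoint $\phi_0$ to some other lift $\phi_0'$ relabels the bijection by a single group element, as one verifies using that the $H$-space structures are homotopy commutative (indeed $\mathbb{E}_{\infty}$). Consequently the action of $\pi_0 Map_{Spectra}(bg,X)$ on $\pi_0 Orient(bg,X)$ is free and transitive. I do not foresee any real obstacle beyond this small bookkeeping; the content is entirely formal, relying only on cofiber--fiber duality for spectra and the elementary observation that the fibers of a fibration between grouplike $H$-spaces are torsors for the loop at the basepoint.
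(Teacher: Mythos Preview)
Your argument is correct and is precisely the standard one: the paper does not supply its own proof but simply cites \cite{AHR}, Section~2.3, where exactly this reasoning appears---one applies $Map_{Spectra}(-,X)$ to the cofiber sequence $g\to gl_1\mathbb{S}\to Cj\to bg$, identifies $Orient(bg,X)$ as the homotopy fiber over $i$, and uses that all spaces in sight are infinite loop spaces to translate the fiber over $i$ to the fiber over the basepoint. There is nothing to add.
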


\subsection{Rational Orientations}\label{stablemillerinvariant}
Observe that rationally for an $\mathbb{E}_{\infty}$ ring spectrum $R$ we always have an $\mathbb{E}_{\infty}$-orientation given by the composite 

$$\alpha:
MU\otimes \mathbb{Q} \rightarrow H\mathbb{Q} \simeq  \mathbb{S}\otimes \mathbb{Q} \rightarrow R \otimes \mathbb{Q}
$$
where the first map is taking the $0$-th Postnikov section of $MU$ (taking Postnikov sections induces $\mathbb{E}_{\infty}$ maps) and the second is induced by the unit map.

\subsubsection{The Rational Logarithm}
Let $R$ be a ring spectrum and $X$ a pointed, connected and finite CW complex. We then have a natural transformation

\begin{align*}
    \ell_{\mathbb{Q}}: (1+ \widetilde{R}^0(X))^\times &\rightarrow \widetilde{R}^0(X)\otimes \mathbb{Q}\\
    1+x & \mapsto \log(1+x).
\end{align*}
If $R$ is an $\mathbb{E}_{\infty}$ ring spectrum this natural transformation arises from a map of spectra 
$$ gl_1 R \langle 1\rangle \xrightarrow{\ell_{\mathbb{Q}}} R\otimes \mathbb{Q} \langle 1 \rangle 
$$
inducing the natural inclusion on homotopy groups (see \cite{AHR} Lemma 3.3). In particular if $R$ is a \emph{rational} $\mathbb{E}_{\infty}$-ring spectrum the above map is a weak equivalence.
As a rational $\mathbb{E}_{\infty}$-ring spectrum always admits a complex $\mathbb{E}_{\infty}$-orientation $\alpha$, as discussed above, we get equvialences 
$$
\pi_0 Map_{\mathbb{E}_{\infty}}(MU, R\otimes \mathbb{Q}) \cong [bu, gl_1 R\otimes \mathbb{Q} \langle 1 \rangle  ] \cong [bu , R\otimes \mathbb{Q} \langle 1 \rangle ].
$$
where the first equivalence follows from Proposition \ref{torsor} and the second from applying $\ell_{\mathbb{Q}}$.
Here we are allowed to take connective covers of $gl_1 R \otimes \mathbb{Q}$ and $R\otimes \mathbb{Q}$ in the second isomorphism as $bu $ is 1-connected.

Note that since $\pi_0 gl_1\mathbb{S}= \mathbb{Z}/2 \mathbb{Z}$ and all the higher homotopy groups of the sphere spectrum (and therefore also of $gl_1 \mathbb{S}$) are torsion we have $gl_1 \mathbb{S} \otimes \mathbb{Q} = *$. Applying this observation to the cofiber sequence 
$$ gl_1\mathbb{S} \rightarrow gl_1 \mathbb{S}\otimes \mathbb{Q} \rightarrow gl_1 \mathbb{S}\otimes \mathbb{Q}/\mathbb{Z} 
$$
we get a weak equivalence
$$ gl_1\mathbb{S}\otimes \mathbb{Q}/\mathbb{Z} \simeq bgl_1 \mathbb{S}.
$$

This allows us to define the stable Miller invariant, an object which will become usefull later on:
\begin{defi}
Given a spectrum $i: gl_1\mathbb{S} \rightarrow R$  under $gl_1\mathbb{S}$ and a map $j:b \rightarrow bgl_1\mathbb{S}$ the stable Miller invariant $m(i,j)$ is the composition
$$
b \xrightarrow{j} bgl_1 \mathbb{S} \xleftarrow{\simeq} gl_1 \mathbb{S}\otimes \mathbb{Q}/\mathbb{Z} \xrightarrow{i\otimes \mathbb{Q}/\mathbb{Z}} R \otimes \mathbb{Q}/\mathbb{Z}.
$$
\end{defi}
If the maps $j$ and $i$ are understood from the context we will write $m(b,R)$.

\subsubsection{The Hirzebruch Characteristic Series}
Let $R$ be a rational homotopy commutative ring spectrum together with an orientation
$\alpha: MU \rightarrow R$.
Let $V\rightarrow B $ be a complex vector bundle (to simplify matters with gradings we may assume of virtual dimension 0) and $B^V$ the corresponding Thom space.
An $R$-orientation of $V\rightarrow B$ is equivalent to the existence of a Thom class $U_\alpha \in R^0(B^V)$, i.e. an element which generates $R^*(B^V)$ as a free module of rank one over $R^*(B_{+})$.
The choice of such a Thom class is not unique though.
Another orientation 
$$
\beta: MU \rightarrow R
$$
gives rise to a different Thom class $ U_{\beta}$ and two such Thom classes differ by a \emph{difference class} $\delta(U_{\alpha}, U_{\beta}) \in (R^0(B_+))^{\times} \cong [B,GL_1R]$, meaning

$$
U_\alpha = \delta(U_{\beta},U_{\alpha})\cdot U_{\beta}.
$$
Hence the set of orientations of $V$ is a torsor for the group $(R^0(B_+))^{\times}$. 

Now let $L$ be the tautological line bundle over $B=\mathbb{CP}^{\infty}$ and $U_{\alpha}L$ be its standard Thom class in ordinary cohomology with corresponding Euler class $x$.
Let $\beta:MU \rightarrow R$ be another map of ring spectra with corresponding Thom class $U_{\beta}L \in R^2((\mathbb{CP})^L)$.
\begin{defi}
The Hirzebruch characteristic series of the orientation $\beta$ is the difference class 
$$K_{\beta}(x) \overset{\text{def}}{=} \delta(U_{\alpha}L, U_{\beta}L) = 1+ o(x) \in (R^0(\mathbb{CP}_+^{\infty}))^{\times} \cong H^0(\mathbb{CP}^{\infty}_+;R_*)^{\times}.
$$
If $F$ denotes the formal group law over $R_*$ classified by $\beta_*$, then 
$$K_{\beta}(x)= \frac{x}{\exp_F(x)}.
$$
\end{defi}
Let $\beta:MU\rightarrow R\otimes \mathbb{Q}$ be a (homotopy commutative) multiplicative orientation. The difference class $\delta(\alpha,\beta)$ determines a map 
$$BU\rightarrow GL_1(R)\langle 1 \rangle. 
$$
Denote with $c_\beta$ the postcomposition of this map with the rational logarithm $\ell_{\mathbb{Q}}$.
\begin{prop}[\cite{AHR} Prop. 3.12]
If $v$ denotes the periodicity element $v= 1-L \in K^0(S^2) \cong \pi_2 BU$, then 
$$ (c_{\beta})_*(v^k) = (-1)^k t_k
$$
where the $t_k\in \pi_{2k}R$ are defined by the formula
$$K_{\beta}(x) = \exp \left(\sum_{k\geq 1}\frac{t_k}{k!}x^k \right).
$$
\end{prop}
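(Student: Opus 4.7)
The key idea is that the difference class is multiplicative in Whitney sum, so $c_\beta$ is an H-map $BU \to \Omega^\infty(R\otimes\mathbb{Q})\langle 1\rangle$. I would analyse it rationally by combining two pieces of information: the known restriction of $c_\beta$ along the classifying map of the reduced tautological line bundle $L-1 \colon \mathbb{CP}^\infty \to BU$ (where by construction it represents $\log K_\beta(x)$), together with the rational Chern character equivalence
\[
bu\otimes\mathbb{Q} \;\simeq\; \bigoplus_{k\geq 1} H\mathbb{Q}[2k],
\]
which allows one to write $c_\beta = \sum_{k\geq 1} a_k\cdot ch_k$ for unique $a_k \in \pi_{2k}(R\otimes\mathbb{Q})$. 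It then suffices to determine $a_k$ and to compute $ch_k(v^k)$.

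\textbf{Determining the coefficients $a_k$.} By construction of the difference class and of the rational logarithm, the composite $\mathbb{CP}^\infty \xrightarrow{L-1} BU \xrightarrow{c_\beta} \Omega^\infty(R\otimes\mathbb{Q})\langle 1\rangle$ equals $\log K_\beta(x) = \sum_{k\geq 1}(t_k/k!)x^k$ in $(R\otimes\mathbb{Q})^0(\mathbb{CP}^\infty)$. On the other hand $ch(L-1) = e^x - 1$, so $(L-1)^*(ch_k) = x^k/k!$ for $k\geq 1$, whence
\[
(L-1)^*\!\left(\textstyle\sum_k a_k\cdot ch_k\right) \;=\; \sum_{k\geq 1} \frac{a_k}{k!}\, x^k.
\]
Matching coefficients of $x^k$ yields $a_k = t_k$.

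\textbf{The sign and conclusion.} It remains to compute $ch_k(v^k)\in \pi_{2k}H\mathbb{Q}[2k] = \mathbb{Q}$. With the convention $v = 1-L \in \widetilde{K}^0(S^2)$ of the statement, and $ch(L) = e^x$ with $x$ the standard generator of $\widetilde{H}^2(\mathbb{CP}^1;\mathbb{Q}) = \widetilde{H}^2(S^2;\mathbb{Q})$, one has $ch(v) = -x$. By multiplicativity of the Chern character and the identification of $v^k$ as the external $k$-fold product of $v$ in $\widetilde{K}^0(S^{2k})$, this gives $ch(v^k) = (-x)^k = (-1)^k x^k$, so $ch_k(v^k) = (-1)^k$. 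Since for degree reasons only the $j=k$ term of the decomposition of $c_\beta$ contributes on $\pi_{2k}$, one obtains
\[
(c_\beta)_*(v^k) \;=\; \sum_{j} a_j\cdot ch_j(v^k) \;=\; a_k\cdot (-1)^k \;=\; (-1)^k\, t_k.
\]

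\textbf{Main obstacle.} The only delicate point is the bookkeeping of the sign $(-1)^k$: it reflects precisely the chosen normalisation $v = 1-L$ (rather than $L-1$) and must be carried consistently between the Chern character of $v$ and the direction of the restriction map $L-1\colon \mathbb{CP}^\infty \to BU$ used to identify $c_\beta$ with $\log K_\beta$. Everything else is formal once the rational splitting of $bu$ and the multiplicativity of the difference class are in hand.
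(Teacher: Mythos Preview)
Your argument is correct. Note, however, that the paper does not supply its own proof of this proposition: it is quoted verbatim from \cite{AHR}, Proposition~3.12, and stated without proof. So there is no ``paper's proof'' to compare against here.

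That said, your approach is exactly the standard one and matches what one finds in \cite{AHR}: decompose $c_\beta$ rationally via the Chern character splitting $bu\otimes\mathbb{Q}\simeq\bigoplus_{k\geq 1} H\mathbb{Q}[2k]$, read off the coefficients $a_k=t_k$ by restricting along $L-1\colon\mathbb{CP}^\infty\to BU$ (where the difference class is $K_\beta(x)$ by definition), and then track the sign through $ch(v)=-x$. Your remark about the sign bookkeeping being the only subtle point is accurate; the convention $v=1-L$ versus $L-1$ is precisely what produces the $(-1)^k$.
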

\begin{prop}[\cite{AHR} Prop.3.15, Cor. 3.16]
Let $R$ be an $\mathbb{E}_{\infty}$-ring spectrum and $\beta: MU\rightarrow R$ a homotopy commutative orientation with Hirzebruch series 
$$K_{\beta}(x) = \exp(\sum_{k\geq 1}\frac{t_k}{k!}x^k).
$$
Then $$m(bu,gl_1 R)_*v^k = (-1)^k t_k \text{ mod } \mathbb{Z}\in \pi_{2k}R\otimes \mathbb{Q}/ \mathbb{Z}.$$
If $\beta^{\prime}$ is another homotopy commutative orientation with characteristic series 
$$K_{\beta^{\prime}}(x) = \exp(\sum_{k\geq 1}\frac{t^{\prime}_k}{k!}x^k)
$$
then $ t_k \equiv t^{\prime}_k$ in $\pi_{2k}R\otimes \mathbb{Q}/\mathbb{Z}$.
\end{prop}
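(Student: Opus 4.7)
The plan is to identify the Miller invariant on homotopy with the mod $\mathbb{Z}$ reduction of the class computed in the preceding proposition; the second assertion then follows formally because $m(bu,gl_1 R)$ depends only on the unit $\iota:\mathbb{S}\to R$ and the $J$-homomorphism, not on the choice of orientation. Recall from the preceding proposition that $c_\beta$ is the composite of the difference class $\delta(\alpha,\beta):BU\to GL_1R$ with the rational logarithm, and $(c_\beta)_\ast(v^k)=(-1)^k t_k$. Because the logarithm is an equivalence rationally, $c_\beta$ corresponds to a unique class $\tilde c_\beta\in[bu,gl_1R\otimes\mathbb{Q}]$, which is just the spectrum-level rationalization of $\delta(\alpha,\beta)$ (the obstruction to delooping $\delta(\alpha,\beta)$ vanishes rationally since $gl_1\mathbb{S}\otimes\mathbb{Q}\simeq 0$).

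The core step is to show that the square
\[\xymatrix{bu\ar[r]^{\tilde c_\beta}\ar[d]_{j} & gl_1 R\otimes\mathbb{Q}\ar[d] \\ bgl_1\mathbb{S}\ar[r] & gl_1R\otimes\mathbb{Q}/\mathbb{Z}}\]
commutes, where the right vertical comes from the cofiber sequence $gl_1R\to gl_1R\otimes\mathbb{Q}\to gl_1R\otimes\mathbb{Q}/\mathbb{Z}$, the bottom is induced by $gl_1(\iota)$ via the equivalence $bgl_1\mathbb{S}\simeq gl_1\mathbb{S}\otimes\mathbb{Q}/\mathbb{Z}$, and $j$ is the suspension of the $J$-homomorphism. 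Commutativity encodes the content of the orientation: $\beta$ supplies a space-level nullhomotopy of $BU\xrightarrow{J}BGL_1\mathbb{S}\to BGL_1R$, which after rationalization specifies a lift of the composite $bu\to bgl_1\mathbb{S}\to gl_1R\otimes\mathbb{Q}/\mathbb{Z}$ to $gl_1R\otimes\mathbb{Q}$, and unwinding the definitions of the difference class and of $\ell_{\mathbb{Q}}$ identifies this lift with $\tilde c_\beta$. Taking $\pi_{2k}$ and chasing $v^k$ around the square gives $m(bu,gl_1R)_\ast(v^k)\equiv (-1)^k t_k$ in $\pi_{2k}R\otimes\mathbb{Q}/\mathbb{Z}$, proving the first assertion; the second follows by applying this to both $\beta$ and $\beta'$.

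The principal technical obstacle is the verification of the commuting square, which is subtle because $\beta$ is assumed only homotopy commutative, so the nullhomotopy it provides is a priori only visible at the space level. The key input that unlocks this is $gl_1\mathbb{S}\otimes\mathbb{Q}\simeq 0$: after rationalization, the distinction between space-level and spectrum-level nullhomotopies collapses, allowing us to promote the space-level nullhomotopy to one of spectra, produce the required lift to $gl_1R\otimes\mathbb{Q}$, and identify it with $\tilde c_\beta$ via the explicit description of the logarithm.
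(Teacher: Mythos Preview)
The paper does not supply its own proof of this proposition; it is stated with the citation \cite{AHR} (Proposition 3.15 and Corollary 3.16) and no argument is given. Your outline is correct and is essentially the argument one finds in Ando--Hopkins--Rezk: the commuting square relating the rationalized difference class $\tilde c_\beta$ to the Miller invariant via the boundary map $gl_1 R\otimes\mathbb{Q}\to gl_1 R\otimes\mathbb{Q}/\mathbb{Z}$, together with the vanishing $gl_1\mathbb{S}\otimes\mathbb{Q}\simeq *$ to pass from space-level to spectrum-level data, is exactly how the result is established there.
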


\subsection{Chromatic Localizations of the Spectrum of Units}\label{chromaticlocalizationofunits}

As we remarked in Section \ref{spectrumofunits}, although there is a weak equivalence of spaces $GL_1(R)\langle 1 \rangle \simeq \Omega^{\infty}R\langle 1 \rangle $ for $R$ an $\mathbb{E}_{\infty}$-ring spectrum, this is \textbf{not} an equivalence of infinite loop spaces so we usually do \textbf{not} get a map of spectra $gl_1R \rightarrow R$. This changes however once we localize with respect to one of the Morava K-theories $K(n)$. The reason behind this lies in the existence of the Bousfield-Kuhn functors $\Phi_n$.

\begin{prop}[see \cite{bousfieldkuhnfunctor}]\label{bousfieldkuhnfunctor}
For each prime $p$ and each $n\geq 1$ there is a functor 
$$\Phi^f_n : Spaces_* \rightarrow Spectra $$ such that 
for a spectrum $X$ 
$$ L^f_{K(n)} X \simeq \Phi^f_n \Omega^{\infty}X.$$
Setting $\Phi_n = L_{K(n)}\Phi_n^{f}$ we get a natural equivalence 
$$ L_{K(n)} \simeq \Phi_n \Omega^{\infty}. $$
\end{prop}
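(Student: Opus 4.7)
The plan is to build $\Phi_n^f$ out of $v_n$-periodic telescopes of mapping spaces, relying crucially on the Hopkins--Smith periodicity theorem. First, invoke periodicity to choose a finite $p$-local spectrum $V$ of type exactly $n$ together with a $v_n$-self map $v : \Sigma^d V \to V$. For a pointed space $Z$, the mapping spectrum $\Sigma^{\infty}\mathrm{Map}_*(V, Z)$ receives an induced map
\[
  v^* : \Sigma^{\infty}\mathrm{Map}_*(V, Z) \longrightarrow \Omega^d \Sigma^{\infty}\mathrm{Map}_*(V, Z),
\]
and I would define $\Phi^f_{V,v}(Z)$ as the resulting telescope. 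This is manifestly a spectrum and evidently functorial in $Z$.

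Next, evaluate on $Z = \Omega^{\infty} X$ for a spectrum $X$. Since $V$ is a finite cell complex, the $\Sigma^{\infty} \dashv \Omega^{\infty}$ adjunction produces a natural map $\Sigma^{\infty}\mathrm{Map}_*(V, \Omega^{\infty} X) \to F(V, X)$ which is an equivalence in a stable range, and becomes an honest equivalence after inverting the self-map. The telescope of $v^*$ then computes $F(V, X)[v^{-1}] \simeq X \wedge DV[\,Dv^{-1}\,]$. This last spectrum is, by construction, the $T(n)$-localization of $X$, which agrees with the finite $K(n)$-localization $L^f_{K(n)} X$. This yields the first claimed equivalence for the particular choice $(V, v)$.

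To obtain a single functor $\Phi^f_n$ independent of the chosen pair, I would exploit the asymptotic uniqueness part of the periodicity theorem: any two $v_n$-self maps on type $n$ complexes agree on a suitable iterate, so the corresponding telescopes become equivalent after $L_{T(n)}$-localization. Organizing this coherently, for instance by a filtered homotopy colimit indexed by the category of type $n$ complexes equipped with $v_n$-self maps, produces a functor $\Phi^f_n$ with the advertised property. Setting $\Phi_n := L_{K(n)} \Phi^f_n$ and composing with $\Omega^{\infty}$ gives the second natural equivalence by applying $L_{K(n)}$ to both sides of the first.

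The hard part is the existence and asymptotic uniqueness of $v_n$-self maps; granting the periodicity theorem, the remaining steps are essentially formal consequences of the $\Sigma^{\infty} \dashv \Omega^{\infty}$ adjunction together with the universal property of telescopes and the identification $L^f_{K(n)} \simeq L_{T(n)}$.
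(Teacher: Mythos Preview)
The paper does not prove this proposition; it is quoted from the literature with a citation. Your overall strategy --- choose a type~$n$ finite complex with a $v_n$-self map, form a telescope of mapping objects, and appeal to asymptotic uniqueness for independence of choices --- is indeed the Bousfield--Kuhn construction. However, the specific implementation you wrote down has a genuine gap.

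You define $\Phi^f_{V,v}(Z)$ as the telescope of $v^*$ on $\Sigma^{\infty}\mathrm{Map}_*(V,Z)$, and then claim that for $Z=\Omega^{\infty}X$ the counit
\[
\Sigma^{\infty}\mathrm{Map}_*(V,\Omega^{\infty}X)\;\simeq\;\Sigma^{\infty}\Omega^{\infty}F(V,X)\;\longrightarrow\;F(V,X)
\]
becomes an equivalence after inverting $v$. Two problems: first, the displayed map $v^*:\Sigma^{\infty}\mathrm{Map}_*(V,Z)\to\Omega^{d}\Sigma^{\infty}\mathrm{Map}_*(V,Z)$ does not exist as written, since $\Sigma^{\infty}$ does not commute with $\Omega^{d}$; what $v^*$ actually gives lands in $\Sigma^{\infty}\Omega^{d}\mathrm{Map}_*(V,Z)$, which is a different spectrum. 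Second, and more seriously, the counit $\Sigma^{\infty}\Omega^{\infty}Y\to Y$ is very far from an equivalence (think of the Snaith splitting), and there is no reason given for it to become one after telescoping; this step is doing all the work and is unjustified.

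The actual construction avoids $\Sigma^{\infty}$ entirely. One takes the \emph{spaces} $\mathrm{Map}_*(V,Z)$ themselves as the terms of a prespectrum, with structure maps
\[
v^*:\mathrm{Map}_*(V,Z)\longrightarrow \mathrm{Map}_*(\Sigma^{d}V,Z)=\Omega^{d}\mathrm{Map}_*(V,Z),
\]
and lets $\Phi^f_{V,v}(Z)$ be the associated $\Omega$-spectrum. For $Z=\Omega^{\infty}X$ the adjunction gives $\mathrm{Map}_*(V,\Omega^{\infty}X)\simeq\Omega^{\infty}F(V,X)$ on the nose, and the resulting spectrum is the $v$-telescope of $F(V,X)$ directly --- no stable-range argument is needed. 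Your concluding remarks on independence of $(V,v)$ and the passage to $\Phi_n=L_{K(n)}\Phi^f_n$ are fine once this is fixed.
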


This means in practice that $K(n)$-localization does only depend on the underlying zeroth space of a spectrum and does not take the infinite loop space structure into account. Furthermore, as $K(n)$-localization  for $n\geq 1$ does not distinguish between a spectrum $X$ and any of its connective covers $X\langle m \rangle$ (the reason being that one can succesively go up the Postnikov tower of $X$ where all fibers between succesive stages consist of Eilenberg-MacLane spectra, which are $K(n)$-acyclic by work of Ravenel and Wilson (\cite{ravenelwilson})) we have 
$$
L_{K(n)}gl_1(R) \simeq L_{K(n)}gl_1 (R)\langle 1 \rangle \simeq L_{K(n)} R \langle 1 \rangle \simeq L_{K(n)} R.
$$
Unfortunately the functor $gl_1$ does not commute with localizations, although the following theorem tells us that in positive degrees we have rather good control of the discrepancy.
\begin{prop}[\cite{AHR} Theorem 4.11]
Let $R$ be an $E_n$-local $\mathbb{E}_{\infty}$-ring spectrum. Then the homotopy fiber $F$ of the canonical map  $gl_1 R \rightarrow L_n gl_1 (R)$ is torsion, and 
$$\pi_i F = 0$$ for $i>n$. 
\end{prop}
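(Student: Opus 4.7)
The statement splits into two claims—$F$ is torsion, and $\pi_i F = 0$ for $i > n$—and I would handle them separately.

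For torsion: any finite chromatic localization reduces rationally to the identity, since $L_n$ commutes with rationalization and rational spectra are automatically $L_n$-local, so $L_n X \otimes \mathbb{Q} \simeq X \otimes \mathbb{Q}$ for every $X$. Applied to $X = gl_1 R$, the fiber satisfies $F \otimes \mathbb{Q} = 0$, and hence every $\pi_i F$ is torsion.

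The vanishing above degree $n$ rests on the Bousfield--Kuhn functor (Proposition~\ref{bousfieldkuhnfunctor}): $L_{K(i)}$ for $i \geq 1$ factors through $\Omega^{\infty}$ and is insensitive to connective covers. Since $\Omega^{\infty} gl_1 R$ and $\Omega^{\infty} R$ have identical basepoint components, this gives $L_{K(i)} gl_1 R \simeq L_{K(i)} R$ for $i \geq 1$. The $E_n$-locality of $R$ forces $L_{K(i)} R = 0$ for $i > n$, so the same holds for $gl_1 R$. Iterating the chromatic fracture square $L_m X \simeq L_{m-1} X \times_{L_{m-1} L_{K(m)} X} L_{K(m)} X$ then shows that the tower $\{L_m gl_1 R\}_m$ stabilizes: $L_m gl_1 R \simeq L_n gl_1 R$ for all $m \geq n$.

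To convert this into the sharp vanishing bound, I would induct on $n$. The base case $n = 0$ is a direct computation: for rational $R$, the fiber is $\Sigma^{-1}(gl_1 R \otimes \mathbb{Q}/\mathbb{Z})$, whose positive-degree homotopy vanishes because $\pi_i gl_1 R = \pi_i R$ is rational for $i \geq 1$. For the inductive step, use the octahedron
\[F_n \to F_{n-1} \to M_n gl_1 R\]
where $M_n X := \mathrm{fib}(L_n X \to L_{n-1} X)$ is the $n$-th monochromatic layer and $F_m := \mathrm{fib}(gl_1 R \to L_m gl_1 R)$. Bousfield--Kuhn combined with the chromatic fracture gives $M_n gl_1 R \simeq M_n R$, and since $R = L_n R$ the homotopy of $M_n R$ is explicitly computable in terms of $\pi_\ast R$ and $\pi_\ast L_{n-1} R$. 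The long exact sequence together with the inductive bound on $F_{n-1}$ then yields $\pi_i F_n = 0$ for $i > n$.

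The main obstacle is this last step—converting the tower stabilization and the comparison of monochromatic layers into the sharp bound. Examples such as $H\mathbb{F}_p$, for which all $L_{K(i)}$-localizations vanish yet the spectrum is nontrivial, show that Bousfield--Kuhn vanishing alone is not enough; the argument must genuinely use that $gl_1 R$ arises from an $E_n$-local $\mathbb{E}_{\infty}$-ring, which is precisely what makes $M_n gl_1 R$ match $M_n R$ and couples the rational and chromatic data at each stage of the induction.
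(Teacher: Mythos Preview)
The paper does not prove this proposition; it is quoted from \cite{AHR} (their Theorem~4.11) without argument, so there is nothing in the present paper to compare your attempt against.

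On its own merits: the torsion claim is fine, but the inductive argument for the vanishing bound does not close. Your inductive hypothesis concerns $E_{n-1}$-local rings $R'$ and asserts that $\pi_i\,\mathrm{fib}(gl_1 R' \to L_{n-1} gl_1 R') = 0$ for $i > n-1$. In the inductive step, however, your $R$ is only $E_n$-local, so this hypothesis does \emph{not} apply to $F_{n-1} = \mathrm{fib}(gl_1 R \to L_{n-1} gl_1 R)$, and in fact the conclusion you want for $F_{n-1}$ is false. Take $n = 1$ and $R = KU\widehat{_p}$: then $F_0$ is the fiber of $gl_1 KU\widehat{_p} \to gl_1 KU\widehat{_p} \otimes \mathbb{Q}$, and one computes directly from the long exact sequence that $\pi_{2k-1} F_0 \cong \mathbb{Q}_p/\mathbb{Z}_p$ for every $k \geq 1$. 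So the ``inductive bound on $F_{n-1}$'' you invoke simply does not hold, and without it the octahedral long exact sequence gives no control on $\pi_i F_n$. Your identification $M_n gl_1 R \simeq M_n R$ via Bousfield--Kuhn is correct, but note that $\pi_j M_n R$ is typically nonzero in arbitrarily high degrees (already $M_1 KU\widehat{_p}$ has $\pi_{2k-1} \cong \mathbb{Q}_p/\mathbb{Z}_p$ for all $k$), so it cannot by itself force the vanishing of $\pi_i F_n$. You have correctly diagnosed in your last paragraph that something specific to $gl_1$ of an $E_n$-local ring is required beyond matching monochromatic layers, but the induction as written never actually supplies it.
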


For our purposes we will need a slight variation of this result.

\begin{coro}[\cite{Dylan} Lemma 4.3]\label{connectedness}
Let $R$ be a $p$-complete and $E_n$-local $\mathbb{E}_{\infty}$-ring spectrum and let $\widetilde{F}$ be the fiber of the canonical map 
$$ gl_1 R \rightarrow L_{K(1)\vee \ldots \vee K(n)}gl_1 R.$$
Then $\pi_i \widetilde{F}= 0$ for $i>n+1$.
\end{coro}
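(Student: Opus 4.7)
The plan is to reduce the vanishing for $\widetilde{F}$ to the vanishing for $F$ from the previous proposition by comparing the two localizations. Every $E(n)$-acyclic spectrum is in particular $K(i)$-acyclic for each $0\le i\le n$, so every $L_{K(1)\vee\ldots\vee K(n)}$-local spectrum is automatically $E(n)$-local, and the universal property of $L_n$ supplies a natural comparison
$$L_n gl_1 R \longrightarrow L_{K(1)\vee\ldots\vee K(n)}gl_1 R.$$
Denoting its fiber by $G$, the octahedral axiom applied to $gl_1 R \to L_n gl_1 R \to L_{K(1)\vee\ldots\vee K(n)}gl_1 R$ yields a fiber sequence $F\to \widetilde{F} \to G$. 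Combined with $\pi_iF = 0$ for $i>n$, the long exact sequence on homotopy immediately reduces the claim to showing $\pi_i G = 0$ for $i>n+1$.

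To handle $G$, I would exploit the Bousfield-class decomposition $\langle E(n)\rangle = \langle K(0)\rangle \vee \langle K(1)\vee\ldots\vee K(n)\rangle$. Because $K(0)\wedge K(i)\simeq 0$ for every $i\ge 1$, the standard Hasse/Bousfield fracture for orthogonal classes produces a homotopy pullback square
$$
\xymatrix{
L_n X \ar[r] \ar[d] & L_{K(1)\vee\ldots\vee K(n)} X \ar[d] \\
X_{\mathbb{Q}} \ar[r] & \bigl(L_{K(1)\vee\ldots\vee K(n)} X\bigr)_{\mathbb{Q}}
}
$$
for every spectrum $X$. Since horizontal fibers agree across a pullback square, taking $X = gl_1 R$ identifies $G$ with the fiber of the bottom row; in particular $G$ is a rational spectrum.

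For the final step, I would use the $p$-completeness of $R$ together with the Rezk logarithmic equivalences $L_{K(i)}gl_1 R \simeq L_{K(i)} R$ and iterated chromatic fracture: this assembles $L_{K(1)\vee\ldots\vee K(n)}gl_1 R$ out of the $p$-complete spectra $L_{K(i)} R$ for $1\le i\le n$, and comparing back rationally with $(gl_1 R)_{\mathbb{Q}}$ forces the bottom arrow of the square above to be a $\pi_i$-isomorphism in degrees $i>n+1$, whence $\pi_i G = 0$ there.

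The main obstacle I anticipate is precisely this last step: the naive bound $\pi_iF=0$ for $i>n$ does not formally promote to a bound on $\widetilde{F}$ one degree higher, and the promotion relies on the $p$-completeness hypothesis (not merely $p$-locality) to rule out extra rational classes sneaking in above degree $n+1$. Carrying out this comparison carefully, using the structure of the chromatic fracture square applied to both $gl_1 R$ and $R$ itself, is the technical heart of the argument.
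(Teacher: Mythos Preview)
The paper does not actually supply a proof of this corollary; it simply records the statement as \cite{Dylan} Lemma 4.3 and moves on. So there is no detailed argument to compare against, only the implicit claim that the result follows from the preceding proposition about $F$.

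Your reduction is the right one: the fiber sequence $F\to\widetilde{F}\to G$ together with $\pi_iF=0$ for $i>n$ reduces the problem to showing $\pi_iG=0$ for $i>n+1$, and your identification of $G$ as the fiber of the rationalized bottom row of the fracture square (hence a rational spectrum) is correct. Where the proposal becomes unconvincing is the final paragraph. You propose to assemble $L_{K(1)\vee\ldots\vee K(n)}gl_1R$ from the pieces $L_{K(i)}R$ via the Rezk logarithm and then ``compare back rationally'' with $(gl_1R)_{\mathbb Q}$. But as the paper itself emphasizes (see the discussion of the map $a$ around diagram~(\ref{factoringa})), the transition maps in the fracture cube for $gl_1R$ are \emph{not} the usual localization maps for $R$, so this comparison is genuinely delicate and your sketch does not indicate how it would be carried out.

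There is a much cleaner way to finish, which avoids analyzing $G$ directly. From your fiber sequence one extracts that $\pi_i\widetilde{F}\cong\pi_iG$ is a $\mathbb{Q}$-vector space for every $i>n+1$. On the other hand, since $R$ is $p$-complete the connective cover $(gl_1R)\langle 1\rangle$ has derived $p$-complete homotopy groups, hence is itself $p$-complete; and $L_{K(1)\vee\ldots\vee K(n)}gl_1R$ is $p$-complete because each $K(i)$-localization for $i\ge 1$ is. The fiber of a map between $p$-complete spectra is $p$-complete, so $\pi_i\widetilde{F}$ is derived $p$-complete for $i\ge 2$. A derived $p$-complete abelian group which is simultaneously a $\mathbb{Q}$-vector space is zero, and the claim follows. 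This is where the hypothesis ``$p$-complete'' (rather than merely $p$-local) is actually used, confirming your intuition about the obstacle --- but the resolution is a direct $p$-completeness argument on $\widetilde{F}$ itself, not a rational comparison of the two localizations.
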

\begin{remark}
Corollary \ref{connectedness} is already used in \cite{AHR} although the authors never state this explicitly.
\end{remark} 
\subsection{Rezk's Logarithm}
When working $K(n)$-locally we are always implicitly working at a prime $p$ which we will suppress from notation. 
Let $R$ be a $K(n)$-local $\mathbb{E}_{\infty}$-ring spectrum.
We can precompose the weak equivalences 
$$ L_{K(n)}gl_1(R) \simeq L_{K(n)} R
$$
with the natural localization map
$$ gl_1(R) \rightarrow L_{K(n)}gl_1 R
$$
and get a "logarithmic" natural transformation 

$$ \ell_n: (1+ \widetilde{R}^0 X)^{\times} \subseteq R^0(X_+)^{\times} \rightarrow L_{K(n)}\widetilde{R}^0(X).
$$
which has been studied in detail by Rezk in \cite{Rezklogarithm}. 
\subsubsection{$K(1)$-local Logarithm}\label{k1locallog}
Let $R$ be a $K(1)$-local $\mathbb{E}_{\infty}$ ring spectrum such that the kernel of the natural map 
$$\pi_0(L_{K(1)}\mathbb{S})\rightarrow \pi_0 R  
$$
given by the unit map contains the torsion subgroup of $\pi_0 L_{K(1)}\mathbb{S}$. All spectra in this paper to which we will apply the proposition below satisfy this technical condition.
$K(1)$-local $\mathbb{E}_{\infty}$-ring spectra $R$ come equipped with two operations $\psi$ and $\theta$ (i.e. the structure of a $\theta$-algebra) such that for an element 
$x\in R^0(X)$

$$\psi(x) = x^p + p\theta(x).$$

\begin{prop}[\cite{Rezklogarithm}, Thm.1.9]
Let $R$ be a K(1)-local $\mathbb{E}_{\infty}$ ring spectrum satisfying the conditions above and $X$ a finite complex. Then the effect of $\ell_1$ on an element $x\in R^0(X_+)^{\times}$ is given by

\begin{equation} \label{k1locallog}
\ell_1(x) = \left(1 - \frac{1}{p}\psi\right) \log x = \frac{1}{p}\log \frac{x^p}{\psi(x)}
= \sum_{k=1}^{\infty}(-1)^k \frac{p^{k-1}}{k} \left(\frac{\theta(x)}{x^p}\right)^k.
\end{equation}

\end{prop}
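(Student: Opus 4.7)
The plan is first to verify the three displayed expressions agree by purely algebraic manipulation, and then to identify the logarithm $\ell_1$ with the explicit operation $(1-\tfrac{1}{p}\psi)\log$ via a universal-example argument. For the algebraic step, since $\psi$ is a ring endomorphism on $\pi_0 R$ and $\log$ is represented by a power series with $\mathbb{Z}_p$-coefficients, termwise application yields $\psi \log x = \log \psi(x)$, whence
$$
\bigl(1 - \tfrac{1}{p}\psi\bigr)\log x \;=\; \log x - \tfrac{1}{p}\log\psi(x) \;=\; \tfrac{1}{p}\log\bigl(x^p/\psi(x)\bigr).
$$
Writing $\psi(x) = x^p\bigl(1 + p\theta(x)/x^p\bigr)$ and expanding $-\log(1+y)$ as a power series with $y = p\theta(x)/x^p$ delivers the third expression, $p$-adic convergence being ensured by the factor $p^{k-1}$.

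The substantive step is to identify $\ell_1$ itself with $(1 - \tfrac{1}{p}\psi)\log$. Both sides are natural transformations from $X \mapsto (1 + \widetilde{R}^0 X)^\times$ to $L_{K(1)}\widetilde{R}^0 X$ on finite pointed CW complexes, and both arise from maps of spectra $gl_1 R \to L_{K(1)}R$. By naturality in the $\mathbb{E}_\infty$-ring spectrum $R$, it suffices to check the formula on a universal example. A natural choice is $R = KU\widehat{_p}$, where $\psi$ coincides with the $p$-th Adams operation $\psi^p$, the Bousfield--Kuhn splitting admits an explicit description, and the technical hypothesis on the kernel of $\pi_0 L_{K(1)}\mathbb{S}\to \pi_0 R$ is immediate since $\pi_0 KU\widehat{_p} \cong \mathbb{Z}_p$ is torsion free.

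The main obstacle is translating the abstract Bousfield--Kuhn identification $L_{K(1)}gl_1 R \simeq L_{K(1)}R$ into the explicit formula involving $\psi$. The approach I would follow is Rezk's: factor the logarithm through the Tate construction $(R^{\wedge p})^{tC_p}$ for the cyclic permutation action of $C_p$ on the $p$-fold smash power, and use the resulting transfer-restriction sequence to produce the expression $1 - \tfrac{1}{p}\psi$. The summand $1$ reflects restriction to the diagonal, the term $-\tfrac{1}{p}\psi$ encodes averaging along the $C_p$-action, and the division by $p$ arises from the homotopy-orbit contribution in the Tate sequence (which is where the hypothesis that the unit map kills torsion in $\pi_0 L_{K(1)}\mathbb{S}$ becomes essential, ensuring the $p$-adic logarithm is well defined on the relevant units). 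Once the identity is established on $KU\widehat{_p}$, it propagates to every $K(1)$-local $\mathbb{E}_\infty$-ring spectrum satisfying the stated hypothesis by naturality along the unit.
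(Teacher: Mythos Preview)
The paper does not prove this proposition at all; it is quoted verbatim as Theorem~1.9 of Rezk's paper \cite{Rezklogarithm} and used as a black box. So there is nothing to compare your attempt against in the present paper.

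That said, let me comment on your sketch. The algebraic equivalence of the three displayed expressions is fine. The substantive issue is your reduction strategy. You propose to check the identity on the ``universal example'' $KU\widehat{_p}$ and then propagate ``by naturality along the unit.'' This does not work: the unit map of a $K(1)$-local $\mathbb{E}_\infty$-ring $R$ is a map $L_{K(1)}\mathbb{S}\to R$, not $KU\widehat{_p}\to R$, and there is in general no $\mathbb{E}_\infty$-map $KU\widehat{_p}\to R$. So knowing the formula for $KU\widehat{_p}$ gives you nothing for general $R$. You also assert that $(1-\tfrac{1}{p}\psi)\log$ already arises from a map of spectra $gl_1 R\to L_{K(1)}R$; this is precisely the nontrivial content of Rezk's theorem, not something you can assume in order to reduce to a single test case.

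Your second paragraph, sketching Rezk's actual argument via the Tate construction $(R^{\wedge p})^{tC_p}$, is much closer to the mark and essentially correct in outline: Rezk builds $\ell_1$ by factoring through the Tate diagonal and reads off the formula from the interplay of restriction and transfer in the Tate fiber sequence, with the torsion hypothesis on $\pi_0$ entering exactly to make the $p$-adic logarithm well defined. If you drop the spurious universal-example reduction and simply run the Tate argument for general $R$, you recover Rezk's proof.
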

Note that for an element of the form $x= 1+\epsilon$ such that $\epsilon^2=0$ we get the much simpler expression
$$
\ell_1 (1+\epsilon) = \epsilon - \frac{1}{p}\psi(\epsilon).
$$
This holds in pariticular for $X \simeq \mathbb{S}^n$ and hence the formula above describes the effect of the $K(1)$-local logarithm on homotopy groups. For the spectra $L_{K(1)}Tmf(\Gamma)$ thus Propostion \ref{theta} implies.
\begin{prop}\label{k1locallogtmf}
Let $g \in \overline{MF_{k}(\Gamma)}$ be a modular form of weight $k$ representing an element $1+g \in \pi_{2k}gl_1Tmf_1(N)$. Then the effect of $\ell_1$ on said element is given by 
$$\ell_1 (1+g) = (1-p^{k-1} \text{Frob})g.
$$
\end{prop}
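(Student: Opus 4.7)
The plan is to apply the general formula for the $K(1)$-local logarithm from equation (\ref{k1locallog}) in the simplified form valid on a single sphere, and then translate the Adams operation into the Frobenius via Proposition \ref{theta}.

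First I would reduce to the case $X = S^{2k}$: an element $1 + g \in \pi_{2k}\, gl_1\, Tmf_1(N)$ is by definition represented by a pointed map $S^{2k} \to GL_1(Tmf_1(N))$, i.e.\ an element of $(1 + \widetilde{R}^0(S^{2k}))^\times$ where $R = L_{K(1)} Tmf_1(N)$. The crucial observation is that the reduced cup product on $\widetilde{R}^0(S^{2k})$ vanishes for dimensional reasons, so any such $g$ satisfies $g^2 = 0$. This means the general formula collapses to
\[
\ell_1(1+g) \;=\; g - \tfrac{1}{p}\psi(g),
\]
as noted immediately after equation (\ref{k1locallog}).

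Next I would invoke Proposition \ref{theta}, which describes the $\theta$-algebra structure on $\pi_{2*} L_{K(1)} Tmf_1(N)$ under the identification $\pi_{2k} L_{K(1)} Tmf_1(N) \cong MF_{k,p}(\Gamma_1(N), \mathbb{W})$: the Adams operation $\psi$ acts as $f \mapsto p^k \,\mathrm{Frob}(f)$, the factor $p^k$ coming from the action of Frobenius on the $k$-th power of the periodicity generator of $KU\widehat{_p}_*$ and $\mathrm{Frob}$ being the Frobenius endomorphism of $V(\Gamma_1(N), \mathbb{W})$ from Section \ref{atkinsoperator}. Substituting yields
\[
\ell_1(1+g) \;=\; g - \tfrac{1}{p}\cdot p^k\, \mathrm{Frob}(g) \;=\; (1 - p^{k-1}\,\mathrm{Frob})\, g,
\]
which is exactly the claimed formula.

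The only step that requires some care is ensuring that the hypothesis of the simplified logarithm formula applies, namely that the kernel of $\pi_0 L_{K(1)}\mathbb{S} \to \pi_0 Tmf_1(N)$ contains the torsion subgroup. This is the technical condition flagged just before Proposition \ref{k1locallog} (stated to hold for all spectra considered in the paper), and it holds for $Tmf_1(N)$ since $\pi_0 Tmf_1(N)$ is torsion-free (indeed equal to $\mathbb{Z}[\tfrac{1}{N},\xi_N]$ by the computation of homotopy in Section~3). With this in place the proof is essentially a one-line computation: the hard work has already been done in Proposition \ref{theta} and in Rezk's formula.
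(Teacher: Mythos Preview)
Your proposal is correct and follows exactly the same approach as the paper: apply the simplified formula $\ell_1(1+\epsilon)=\epsilon-\tfrac{1}{p}\psi(\epsilon)$ valid when $\epsilon^2=0$ (the case $X\simeq S^{2k}$), then substitute $\psi(g)=p^k\,\mathrm{Frob}(g)$ from Proposition~\ref{theta}. The paper treats this as an immediate consequence of the preceding discussion and Proposition~\ref{theta}; your added remark on the torsion condition in $\pi_0$ is a reasonable elaboration but not something the paper dwells on.
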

\subsubsection{Logarithm for Morava E-Theories}
In his work Rezk also gives formulas for the $K(n)$-local logarithm for Morava $E$-theories $E_n$ associated to height $n$ formal group laws over perfect fields of characteristic $p$. For us only the case of height 2 is relevant and the following is the specialization of Theorem 1.11 of \cite{Rezk}.
\begin{prop}\label{k2locallog}
Let $E_2$ be as above and $X$ a finite complex. Then the effect of $\ell_2$ on an element $x\in E^0(X_+)^{\times}$ is given by 

$$
\ell_2(x)= \sum_{k=1}^{\infty} (-1)^{k-1} \frac{p^{k-1}}{k}M(x)^k = \frac{1}{p}\log (1+p\cdot M(x))
$$

where $M: E_2^0(X)\rightarrow E_2^0(X)$ is the unique cohomology operation such that 

$$
1+p\cdot M(x) = \prod_{j=0}^2\Big(\prod_{\overset{A\subseteq (\mathbb{Z}/p)^2}{|A|=p^j}} \psi_A (x)\Big)^{(-1)^jp^{(j-1)(j-2)/2}}.
$$
\end{prop}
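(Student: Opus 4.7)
The plan is to read this proposition directly off Rezk's Theorem 1.11 in \cite{Rezk}, which expresses the $K(n)$-local logarithm of any Morava $E$-theory $E_n$ in the form
$$\ell_n(x) = \frac{1}{p}\log\bigl(1 + p M(x)\bigr),$$
with $M$ built from an alternating product of power operations $\psi_A$ indexed by subgroups $A \subseteq (\mathbb{Z}/p)^n$, each weighted by the exponent $(-1)^j p^{(j-1)(j-2)/2}$ depending only on the order $|A| = p^j$. So the only remaining content is combinatorial: substitute $n=2$ and tabulate what survives.

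First I would enumerate subgroups of $(\mathbb{Z}/p)^2$ by their order: there is a unique trivial subgroup ($j=0$), exactly $p+1$ subgroups of order $p$ ($j=1$, the points of $\mathbb{P}^1(\mathbb{F}_p)$), and the full group ($j=2$). Next I would evaluate the exponent $(-1)^j p^{(j-1)(j-2)/2}$ in each case: for $j=0$ one gets $(-1)^0 p^{(-1)(-2)/2} = p$, for $j=1$ one gets $(-1)^1 p^{0} = -1$, and for $j=2$ one gets $(-1)^2 p^{0} = 1$. Assembling these produces precisely the three factors in the displayed formula for $1 + p\cdot M(x)$, with the trivial-subgroup contribution recovering a factor of $x^p$.

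The series form is then purely formal: expanding $\log(1 + p M(x)) = \sum_{k \geq 1} (-1)^{k-1} (p M(x))^k / k$ and absorbing one factor of $p$ yields $\sum_{k \geq 1} (-1)^{k-1} p^{k-1} M(x)^k / k$, as claimed. The only subtlety worth flagging is convergence of this series; however, $p M(x)$ lies in the appropriate topologically nilpotent ideal of $E_2^0(X_+)$ for $X$ finite, so the expansion is justified by the same argument as in \cite{Rezk}, and no new input beyond Rezk's theorem is required.
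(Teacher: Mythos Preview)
Your proposal is correct and matches the paper's treatment: the paper does not give a proof but simply states the proposition as the $n=2$ specialization of Rezk's Theorem 1.11, which is exactly what you do (indeed, you spell out the subgroup count and exponent check more explicitly than the paper does).
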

The reader unfamiliar with power operations on Morava $E$-theories may want to consult \cite{Rezk} and the references therein.

\begin{prop}\label{k2hecke}
Let $g\in MF_k(\Gamma)$ be a modular form of weight $k$ representing an element $1+g \in \pi_{2k}gl_1 Tmf_1(N)$. Then the effect of $\ell_2$ on this element
is given by 
$$ 1+g \mapsto (1-T_p + p^{k-1}\langle p \rangle)\Tilde{g}
$$
where $T_p$ and $\langle p \rangle$ are the Hecke and diamond operators of Section \ref{Heckeoperators} and $\Tilde{g}$ denotes the image of $g$ under the map
$$
\pi_{2k}Tmf_1(N) \rightarrow \pi_{2k}L_{K(2)}Tmf_1(N)
$$
induced by $K(2)$-localization.
\end{prop}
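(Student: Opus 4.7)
The plan is to apply Rezk's explicit $K(2)$-local logarithm formula (Proposition \ref{k2locallog}), specialized to $L_{K(2)}Tmf_1(N)$, which by Section \ref{k2localtmfsubsect} splits as a product of Lubin-Tate spectra attached to supersingular elliptic curves carrying $\Gamma_1(N)$-level structure. For a modular form $g$ of weight $k$ representing $1+g \in \pi_{2k}gl_1 L_{K(2)}Tmf_1(N)$, we think of $g$ as a class in the reduced cohomology of $S^{2k}$, so that $g^2=0$ and all calculations below can be carried out modulo $g^2$.

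The first step is to expand the product defining $1 + pM(1+g)$. The three factors, indexed by subgroups of $(\mathbb{Z}/p)^2$ of order $p^0, p^1, p^2$, contribute $(1+g)^p$, $\prod_{|A|=p}\psi_A(1+g)^{-1}$, and $\psi_{E[p]}(1+g)$, respectively. Since each $\psi_A$ is a ring map, modulo $g^2$ these factors linearize to $1+pg$, $1 - \sum_{|A|=p}\psi_A(g)$, and $1 + \psi_{E[p]}(g)$. Multiplying out and applying $\frac{1}{p}\log$ — whose higher terms in $pM(1+g)$ vanish modulo $g^2$ — gives
$$\ell_2(1+g) \equiv g - \frac{1}{p}\sum_{|A|=p}\psi_A(g) + \frac{1}{p}\psi_{E[p]}(g) \pmod{g^2}.$$
It then remains to interpret the two power-operation sums on the modular-forms side. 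By the Ando-Hopkins-Strickland moduli-theoretic description of power operations on Morava $E$-theory, combined with the Serre-Tate equivalence identifying deformations of a supersingular elliptic curve with deformations of its formal group, $\psi_A$ for a subgroup $A \subset E[p]$ of order $p^j$ corresponds to the quotient isogeny $E \to E/A$, acting on a weight-$k$ form via pullback of the invariant differential and pushforward of the level structure. Summing over the $p+1$ subgroups of order $p$ and matching the definition $T_p g(E,\omega,\beta) = p^k\sum_H g(E/H,\check\pi^*\omega,\pi\beta)$ from Section \ref{Heckeoperators} identifies $\sum_{|A|=p}\psi_A(g)$ with $pT_p\tilde g$. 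For $A = E[p]$, the quotient is the isogeny $[p] : E \to E/E[p] \cong E$; the relations $[p]^*\omega = p\omega$ and $[p]\beta = \langle p\rangle\beta$, together with the weight-$k$ homogeneity of $\tilde g$, give $\psi_{E[p]}(g) = p^k\langle p\rangle\tilde g$. Substituting yields the claimed formula $(1 - T_p + p^{k-1}\langle p\rangle)\tilde g$.

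The hard part — and the reason this result is recorded as a correction to Proposition 4.8 and Example 4.9 of \cite{AHR} — is pinning down the precise normalization factors in the identification of the moduli-theoretic power operations with the classically normalized $T_p$ and $\langle p \rangle$ of Section \ref{Heckeoperators}. One has to track carefully (i) the Lubin-Tate periodicity generator on which $\psi_A$ scales by a factor depending on the degree of the isogeny, (ii) the behaviour of the canonical invariant differential under $\pi_A$ and its dual $\check\pi_A$, and (iii) the passage from subgroups of the formal group, where $\psi_A$ is intrinsically defined, to subgroups of the elliptic curve, where $T_p$ is. Once these conventions are harmonized, the resulting expression is the natural $K(2)$-local analogue of the $K(1)$-local formula $\ell_1(1+g) = (1 - p^{k-1}\text{Frob})g$ from Proposition \ref{k1locallogtmf}, with the extra $p^{k-1}\langle p\rangle$ summand reflecting the previously absent $j=2$ factor in Rezk's product formula.
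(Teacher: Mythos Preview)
Your proposal is correct and follows essentially the same route as the paper: apply Rezk's product formula for $\ell_2$ on a Lubin--Tate factor of $L_{K(2)}Tmf_1(N)$, linearize modulo $g^2$ using that $g$ lives in the reduced cohomology of a sphere, and then invoke Serre--Tate to translate the power operations $\psi_A$ into quotients of the universal deformation elliptic curve by finite subgroups, whence the Hecke and diamond operators emerge. Your expansion of $1+pM(1+g)$ and the resulting expression $g-\tfrac{1}{p}\sum_{|A|=p}\psi_A(g)+\tfrac{1}{p}\psi_{E[p]}(g)$ is exactly what the paper records (more tersely) as $(1-\tfrac{1}{p}\sum\psi_i^p+\phi^p)x$, and your identification of the two sums with $pT_p\tilde g$ and $p^k\langle p\rangle\tilde g$ matches the paper's observation that these operations act on a generator of $\pi_2 E_2$ by multiplication with $p$.

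One point you should add: the splitting $L_{K(2)}Tmf_1(N)\simeq\prod_i E_2(\widehat{C_i})$ from Section~\ref{k2localtmfsubsect} is only stated for $N\geq 5$, since $\mathcal{M}_1(3)$ is not affine. The paper handles $N=3$ separately by passing to the full level-$3$ cover, computing there (where the same argument applies verbatim to $TMF(3)$), and then descending along the $\Sigma_3$-homotopy fixed points. Your argument as written implicitly assumes the affine case; it would be worth flagging that the small levels require this extra descent step.
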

\begin{remark}
This formula differs from Propostion 4.8 of \cite{AHR} and Theorem 3.4 of \cite{Dylan} by inclusion of a factor $\langle p \rangle$. As we will see in the following proof this factor should actually appear, but remark also that its omission does not change the main results of those works as both deal with level structures (level 1 and $\Gamma_0(N)$ respectively) where the diamond operators act trivially. 
\end{remark}
\begin{proof} We will only give a full argument for the cases $N\geq 5$ and sketch the required modifications for $N=3$.
First recall that $L_{K(2)}Tmf_1(N)\simeq \prod_{\text{ssg. } C_i}E_2(C_i,\beta_i)$
where $E_2(C_i,\beta_i)$ is the Lubin-Tate theory corresponding to the universal deformation of the formal group $\widehat{C_i}$ of a supersingular elliptic curve $C_i$ with a chosen $\Gamma_1(N)$-level structure $\beta_i$ and the product is taken over all isomorphism classes of pairs $(C_i,\beta_i)$ of supersingular elliptic curves together with a $\Gamma_1(N)$-level structure. The famed Serre-Tate theorem states an equivalence between the category of deformations of a supersingular elliptic curve and the deformations of its formal group. In particular both the universal deformation $\widetilde{C_i }$ of $C_i$ and of its formal group $\widehat{C_i}$ are defined over the the ring $\pi_0 E_2(C_i)$. 

The effect on homotopy groups of the canonical localization map $$Tmf_1(N) \rightarrow L_{K(2)}Tmf_1(N)$$ is given by
\begin{align*}
    \overline{MF_k(\Gamma_1(N))}\cong\pi_{2k}Tmf_1(N) &\rightarrow \pi_{2k}L_{K(2)}Tmf_1(N) \cong \prod_{\text{ssg.} C_i}\pi_{2k} E_2(C_i) \\
    f &\mapsto \prod_{\text{ssg.} C_i} f(\widetilde{C_i},\omega,\beta)
    \end{align*}
Note that this map is not canonical as it involves choices of the invariant differential and the level structure. 
Propostion \ref{k2locallog} above specialized to the case $X \simeq \mathbb{S}^{2k}$ now gives us  
$$ \ell_2(x) = (1- \frac{1}{p} \sum_{i=0}^p \psi_i^p + \phi^p)x
$$
for an element $x\in \pi_{2k}E_2$. Here $\phi^p$ and $\psi_i^p$ are power operations on $E_2(C)$-theory corresponding to the isogeny of rank $p^2$ given by the map "multiplication with $p$"  and to the $p+1$ isogenies of rank $p$ on $\widehat{\mathbb{G}_{E}}$, respectively. After choosing a coordinate $x$ on $\widehat{\mathbb{G}_E}$ these isogenies correspond to the $p+1$ roots of the $p$-series $[p]_{\widehat{\mathbb{G}_E}}(x)$.

We may again apply the Serre-Tate theorem and consider isogenies of the universal deformation of the underlying supersingular elliptic curve instead. The isogeny "multiplication with $p$" sends an elliptic curve to itself and a level structure $\beta$ to $p\cdot \beta$.   An isogeny of degree $p$ on an elliptic curve $E$ is the same as quotienting out a subgroup $H$ of rank $p$. Recall from Section \ref{Heckeoperators} that this is exactly how we defined the Hecke operators $T_p$ apart from the normalization factor $p^k$ corresponding to the weight. The result now follows since the operations $\sum_0^{p}\psi_i^p$ and $\phi^p$ act on a generator $x$ (i.e a coordinate of the formal group) of $\pi_2 E_2=E_2^0(\mathbb{CP}^1)$ by multiplication with $p$.

In the case $N=3$ one uses the equivalence $Tmf_1(3) \simeq Tmf(3)^{h\Sigma_3}$. Modular forms and Hecke operators for the level structures $\Gamma(N)$ are defined completely analogously as in Section 2 (see \cite{Katzpadicproperties} Section 1) and the argument above applys verbatim. The result now follows after passing to homotopy fixed points.

\end{proof}

\section{The Space $Map_{\mathbb{E}_{\infty}}(MU,Tmf_1(N))$}
In this section we will give a description of the path components of the space $Map_{\mathbb{E}_{\infty}}(MU,Tmf_1(N)\widehat{_p})$, i.e null homotopies of the composite 

$$ \Sigma^{-1}bu \rightarrow gl_1 \mathbb{S} \xrightarrow{\iota} gl_1 Tmf_1(N)\widehat{_p}, $$ or, equivalently, lifts making the following diagram commute 

$$\xymatrix{
\Sigma^{-1} bu \ar[r] & gl_1\mathbb{S} \ar[d] \ar[r] & gl_1\mathbb{S}/u  \ar@{-->}[ld] \\
 & gl_1 Tmf_1(N)\widehat{_p} & }.$$

We will do this in several steps: In Section \ref{glsutmf} we first give a description of the set $[gl_1\mathbb{S}/u,L_{K(1)}Tmf_1(N)]$ in terms of $p$-adic measures. For $p>2$ we essentially follow the arguments in \cite{AHR} and \cite{Dylan}, for $p=2$ however some additional work is required. We then proceed to describe the spaces $Orient(bu,gl_1L_{K(1)} Tmf_1(N))$ and $Orient(bu,gl_1L_{K(1)\vee K(2)}Tmf_1(N))$. Again, apart from a minor modification for the level structures considered here, this follows exactly the lines of the aforementioned works. 
Finally in Section \ref{lifting} we will make use of results by Hopkins and Lawson \cite{hopkinslawson} to study the question which elements in $Orient(bu,L_{K(1)\vee K(2)}gl_1Tmf_1(N))$ can be lifted to elements in $Orient(bu,gl_1Tmf_1(N)\widehat{_p}) \simeq Map_{\mathbb{E}_{\infty}}(MU,Tmf_1(N)\widehat{_p})$.
\subsection{The Space $Orient(bu, L_{K(1)}gl_1Tmf_1(N))$}\label{glsutmf}

\subsubsection{$p$-adic Measures}
 We first need to recall some facts about $p$-adic measures. All the material in this section is taken from \cite{katzeisenstein} and \cite{katzeisenstein2}. Let $R$ be a $p$-adic ring and let $X$ be a compact and totally disconnected topological space. We denote with $Cts(X,R)$ the $R$-algebra of all continuous $R$-valued functions on $X$. Observe that any element in $Cts(X,R)$ is the uniform limit of locally constant functions, so we have 

$$ Cts(X,R) = Cts(X,\mathbb{Z}_p )\hat{\otimes}_{\mathbb{Z}_p} R= \varprojlim_{n} Cts(X,\mathbb{Z}_p )\hat{\otimes}_{\mathbb{Z}_p} (R/p^nR). $$
\begin{defi}
A measure $\mu$ on $X$ with values in $R$ is a continuous $R$-linear map from $Cts(X,R)$ to $R$ (we do not assume it to be a ring homomorphism) or equivalently a continuous $\mathbb{Z}_p$-linear map from $Cts(X,\mathbb{Z}_p)$ to $R$ .

\end{defi}

For a continuous function $f:X\rightarrow R$ on $X$ we will denote its image $\mu(f) \in R$ by
$$ \int_{X} f d\mu \text{    or   } \int_{X} f(x)d \mu(x).
$$ 
\\ 
For any integer $n\geq 0$ let
\begin{align*}\dbinom{x}{n}= 
\begin{cases}
  1  &\text{ if } n=0\\
  \dfrac{x(x-1)\cdot \ldots \cdot (x-(n-1))}{n!} &\text{  if } n>0
\end{cases}
\end{align*}

be the binominal coefficient function. It maps positive integers to postive integers and therefore, by continuity, $\mathbb{Z}_p$ to $\mathbb{Z}_p$. 

Mahler's theorem (\cite{mahler}) now tells us that any continous function $f\in Cts(\mathbb{Z}_p, R)$ can be interpolated uniquely via the following expansion

$$  
f(x) = \sum (\Delta^nf)(0) \binom{x}{n}.
$$
where $\Delta$ is the shift operator defined as
$ \Delta f(x) = f(x+1) - f(x)$.

\begin{remark} It is not obvious that the numbers $ (\Delta^n f) (0)$ converge $p$-adically to zero. This is the heart of Mahler's theorem.
\end{remark}

So in other words continous functions on $\mathbb{Z}_p$ with values in $R$ can be understood as sequences $
\{a_n\}_{n\geq0}$ of elements in $R$ which converge to $0$ as $n\rightarrow \infty$.

It follows that the values on the functions $\binom{x}{n}$ uniquely determine a measure on $\mathbb{Z}_p$ with values in $R$ and conversely for any given sequence $\{b_n\}_{n\geq 0}$ of elements in $R$ there is a unique measure on $\mathbb{Z}_p$
whose value on $\binom{x}{n}$ is $b_n$.

If $R$ is flat over $\mathbb{Z}_p$, then a measure is also uniquely determined by its \emph{moments} 

$$ \int_{X} x^n d\mu.
$$

One can not however arbitrarily prescribe these moments:
Let $n\geq 0$ and $A_n$ the set of polynominals $$
h(x) = \sum a_k x^k \in R[\frac{1}{p}][x]
$$
such that $h(x) \in R$ for any $x\in \mathbb{Z}_p$. We say a sequence $\{b_n\} \in R$ satisfies the \emph{generalized Kummer congruences} if for all $h(x) =\sum a_kx^k \in A_n$ 
$$
\sum a_n b_n \in R
$$
holds.

\begin{lemma}\label{kummercongruences}
If $R$ is a $p$-adic ring flat over $\mathbb{Z}_p$, then a sequence $\{b_n\}$ of elements in $R$ arises as the moments of a (necessarily unique) measure on $\mathbb{Z}_p$ with values in $R$ if and only if the $b_n$ satisfy the generalized Kummer congruences.
\end{lemma}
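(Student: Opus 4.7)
The plan is to pass freely between two bases for $\mathrm{Cts}(\mathbb{Z}_p,\mathbb{Z}_p)$: the monomials $x^n$ (which produce the moments) and the binomial coefficients $\binom{x}{n}$ (which, by Mahler's theorem, parametrise measures). The forward direction is immediate: if $\mu$ is a measure with moments $b_n = \int x^n\, d\mu$, then for any $h(x) = \sum a_k x^k \in A_n$ the element $h$ is a continuous function $\mathbb{Z}_p \to R$, so by $R$-linearity $\sum a_k b_k = \int h\, d\mu \in R$; these are precisely the generalized Kummer congruences.

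For the converse, I would first observe that Mahler's theorem presents measures as follows: a continuous $\mathbb{Z}_p$-linear map $\mu \colon \mathrm{Cts}(\mathbb{Z}_p, \mathbb{Z}_p) \to R$ is determined freely by the values $c_n \overset{\text{def}}{=} \mu(\binom{x}{n}) \in R$, and conversely any sequence $(c_n)$ in $R$ defines such a measure by $\mu(\sum a_n \binom{x}{n}) = \sum a_n c_n$, the sum converging in the $p$-adic ring $R$ because $a_n \to 0$ in $\mathbb{Z}_p$ and $R$ is $p$-adically complete and separated. So the task is to produce, from the candidate moments $b_n$, candidate values $c_n \in R$ on the binomial basis, and then to check that the resulting measure has $b_n$ as its moments.

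The definition of $c_n$ is forced by the Stirling inversion
\[ \binom{x}{n} = \frac{1}{n!} \sum_{k=0}^{n} s(n,k)\, x^k, \qquad x^n = \sum_{k=0}^{n} S(n,k)\, k!\, \binom{x}{k}, \]
namely $c_n = \frac{1}{n!}\sum_{k=0}^n s(n,k) b_k$. The crucial point, which is exactly what the generalized Kummer congruences give us, is that this element lies in $R$ and not merely in $R[\tfrac{1}{p}]$: apply the hypothesis to the polynomial $\binom{x}{n} \in A_n$ to see that $c_n \in R$. Flatness of $R$ over $\mathbb{Z}_p$ ensures the $c_n$ are unambiguously defined, and also guarantees uniqueness of the measure (its values $c_n$ on the binomial basis are determined by the $b_n$ through this triangular system).

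Finally, a direct calculation using the Stirling orthogonality relation $\sum_k S(n,k) s(k,j) = \delta_{n,j}$ shows
\[ \int x^n\, d\mu = \sum_k k!\, S(n,k)\, c_k = \sum_{j} \Bigl(\sum_{k} S(n,k)\, s(k,j)\Bigr) b_j = b_n, \]
so the constructed $\mu$ has the prescribed moments. The main subtlety is verifying that $c_n \in R$; everything else is formal manipulation with Mahler's expansion together with $p$-adic completeness of $R$. I would expect the proof to be quite short, since the heavy lifting (Mahler's theorem and the existence of Stirling inversion) is classical — the only non-trivial input on the ring $R$ is flatness over $\mathbb{Z}_p$, used so that $n!\, c_n = \sum s(n,k) b_k$ has at most one solution $c_n$ in $R$.
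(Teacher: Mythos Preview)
Your argument is correct and is essentially the same idea as the paper's, only spelled out more explicitly. The paper's own proof reduces to $R=\mathbb{Z}_p$ and then simply observes that, by Mahler's theorem, the set $A$ of integer-valued polynomials is the $\mathbb{Z}_p$-span of the $\binom{x}{n}$, which is dense in $\mathrm{Cts}(\mathbb{Z}_p,\mathbb{Z}_p)$; the Kummer congruences therefore give a bounded linear functional on a dense subspace, hence a measure. Your explicit Stirling inversion is exactly what underlies the paper's density statement, and your direct treatment of general flat $R$ (rather than reducing to $\mathbb{Z}_p$) makes the role of flatness --- namely $p$-torsion-freeness, needed so that $n!\,c_n=\sum s(n,k)b_k$ determines $c_n$ uniquely in $R$ --- more transparent than in the paper.
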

\begin{proof}
This reduces to the case $R=\mathbb{Z}_p$. Note that in this case $A$ is by Mahler's Theorem the subspace of $Cts(\mathbb{Z}_p, \mathbb{Z}_p)$ spanned by the polynominals  $\binom{x}{n} $. This subspace is dense in $Cts(\mathbb{Z}_p, \mathbb{Z}_p)$ and the result follows.
\end{proof}

\subsubsection{$[KU\widehat{_p},L_{K(1)}Tmf_1(N)]$}
For a general introduction to the connection between the theory of $p$-adic measures and $K(1)$-local stable homotopy theory we refer the reader to Chapter 9 of \cite{AHR}. Relevant for us is the following result which is well known to the experts. For a proof see \cite{Dylan}.

\begin{prop}\begin{enumerate} \item
We have isomorphisms 
$$[KU\widehat{_p}, L_{K(1)}(K\widehat{_p}\wedge Tmf_1(N)]) \cong Cts(Cts(\mathbb{Z}_p^{\times},\mathbb{Z}_p), V(\Gamma_1(N) ) $$
$$ [KU\widehat{_p}, L_{K(1)}Tmf_1(N)] \cong Cts_{\mathbb{Z}_p^{\times}}(Cts(\mathbb{Z}_p^{\times},\mathbb{Z}_p), V(\Gamma_1(N))). $$
Here $Cts_{\mathbb{Z}_p^{\times}}(-,-)$ denotes $\mathbb{Z}_p^{\times}$- equivariant continous maps.
\item Let $f:KU\widehat{_p} \rightarrow L_{K(1)}Tmf_1(N)$ and $f_*=\mu \in [KU\widehat{_p}, L_{K(1)}Tmf_1(N)]$ the corresponding measure. 
Then $$ \pi_{2k} f= \int_{\mathbb{Z}_p^{\times}} x^k d\mu. $$ \end{enumerate}
\end{prop}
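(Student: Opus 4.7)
The plan is to combine a universal-coefficient style computation of maps out of $KU\widehat{_p}$ in the $K(1)$-local category with Galois descent along the $\mathbb{Z}_p^\times$-action coming from Adams operations. The essential input is the classical identification of the $K(1)$-local cooperation ring
$$\pi_0 L_{K(1)}(KU\widehat{_p}\wedge KU\widehat{_p}) \cong Cts(\mathbb{Z}_p^\times,\mathbb{Z}_p)$$
(see Chapter 9 of \cite{AHR}), where the right factor records the Bott class and the left factor records the action of the Adams operations $\psi^a$ for $a\in\mathbb{Z}_p^\times$.

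For the first isomorphism of (1), I would use the extension-of-scalars adjunction together with the $KU\widehat{_p}$-module structure on $L_{K(1)}(KU\widehat{_p}\wedge Tmf_1(N))$ to rewrite
$$[KU\widehat{_p},L_{K(1)}(KU\widehat{_p}\wedge Tmf_1(N))]_{\mathrm{Sp}} \cong [L_{K(1)}(KU\widehat{_p}\wedge KU\widehat{_p}),L_{K(1)}(KU\widehat{_p}\wedge Tmf_1(N))]_{KU\widehat{_p}\text{-mod}}.$$
Both sides of this hom-set involve $K(1)$-local $KU\widehat{_p}$-modules whose homotopy groups are concentrated in even degrees and pro-free over $\mathbb{Z}_p$, so a universal coefficient argument reduces the right-hand side to the set of continuous $\mathbb{Z}_p$-linear maps $Cts(\mathbb{Z}_p^\times,\mathbb{Z}_p)\to V(\Gamma_1(N))$, using also the computation $\pi_0 L_{K(1)}(KU\widehat{_p}\wedge Tmf_1(N)) \cong V(\Gamma_1(N))$ from Section 3. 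This is exactly the set of measures on $\mathbb{Z}_p^\times$ valued in $V(\Gamma_1(N))$.

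The second isomorphism of (1) then follows by Galois descent: the presheaf description of $\mathcal{O}^{top}$ over the ordinary locus recalled in Section 3.2 gives
$$L_{K(1)}Tmf_1(N) \simeq L_{K(1)}(KU\widehat{_p}\wedge Tmf_1(N))^{h\mathbb{Z}_p^\times},$$
where $\mathbb{Z}_p^\times$ acts through its action on $KU\widehat{_p}$ and, correspondingly, through the diamond operators $\langle x,1\rangle$ on $V(\Gamma_1(N))$ established in Proposition \ref{theta}. Since the relevant homotopy fixed point spectral sequence is concentrated in filtration zero for our input, mapping in $KU\widehat{_p}$ and taking fixed points carves out precisely the $\mathbb{Z}_p^\times$-equivariant subset, yielding the second isomorphism.

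For (2), I would trace an element $f\colon KU\widehat{_p}\to L_{K(1)}Tmf_1(N)$ through the chain of identifications above: $f$ corresponds to the $KU\widehat{_p}$-linear map obtained by smashing with $KU\widehat{_p}$, and its effect on $\pi_{2k}$ is computed by evaluating on the Bott class $u^k$. Under the identification $\pi_0 L_{K(1)}(KU\widehat{_p}\wedge KU\widehat{_p}) \cong Cts(\mathbb{Z}_p^\times,\mathbb{Z}_p)$, the class $u^k$ corresponds to the function $x\mapsto x^k$, because $\psi^a(u)=au$ forces the continuous function representing $u^k$ to evaluate at $a$ as $a^k$. Consequently $\pi_{2k}f = \int_{\mathbb{Z}_p^\times} x^k\,d\mu$. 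The main technical obstacle in carrying this out is the careful bookkeeping of topologies and the collapse of the descent spectral sequence; however, since everything in sight is concentrated in even degrees and pro-free over $\mathbb{Z}_p$, the spectral sequences degenerate and the argument becomes essentially formal once the classical cooperation computation above is taken as input.
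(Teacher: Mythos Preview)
Your sketch is correct and follows the standard route. Note, however, that the paper does not actually give its own proof of this proposition: it simply states that the result ``is well known to the experts'' and refers the reader to \cite{Dylan} for a proof. What you have written is essentially the argument one finds there (and in Chapter~9 of \cite{AHR}): identify $K(1)$-local maps out of $KU\widehat{_p}$ with $KU\widehat{_p}$-module maps out of the cooperation algebra via extension of scalars, invoke the classical computation $\pi_0 L_{K(1)}(KU\widehat{_p}\wedge KU\widehat{_p})\cong Cts(\mathbb{Z}_p^\times,\mathbb{Z}_p)$ together with a universal-coefficient argument (legitimate because everything is even and pro-free), then descend along the $\mathbb{Z}_p^\times$-Galois extension to obtain the equivariant statement; part~(2) follows by tracking the Bott class through these identifications. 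So there is nothing to compare---your proposal \emph{is} the proof the paper is pointing to.
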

Note that the rings $V(\Gamma_1(N))$ and its subrings $\overline{Mf_{p,*}(\Gamma_1(N))}$ are flat over $\mathbb{Z}_p$ so a measure with values in either one of these rings is determined by its moments. Hence we get the following:
\begin{lemma}[compare \cite{Dylan} Theorem 2.7 ]
Through evaluation at a generator of $\pi_{2k}KU$ we get an injection 
$$[KU\widehat{_p},L_{K(1)}Tmf_1(N)] \hookrightarrow \prod \overline{MF_{p,k}}(\mathbb{Q}_p)  $$ 
onto the sequences of $p$-adic modular forms over $\mathbb{Q}_p$ which satisfy the generalized Kummer congruences.
\end{lemma}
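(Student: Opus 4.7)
The plan is to combine the description of $[KU\widehat{_p}, L_{K(1)}Tmf_1(N)]$ as $\mathbb{Z}_p^{\times}$-equivariant measures from Proposition \ref{k1localmaps} with the moment calculus of Lemma \ref{kummercongruences}. A class $f$ corresponds to a $\mathbb{Z}_p^{\times}$-equivariant measure $\mu_f$ on $\mathbb{Z}_p^{\times}$ with values in $V(\Gamma_1(N))$, where $\mathbb{Z}_p^{\times}$ acts on itself by multiplication and on $V(\Gamma_1(N))$ through the diamond operators $\langle -,1\rangle$ of Section \ref{padicmodularfunctions}. Part (2) of the preceding proposition identifies $\pi_{2k}f$ with the moment $\int_{\mathbb{Z}_p^{\times}} x^k\, d\mu_f$, so under the proposed map $f$ is sent to the sequence of its moments.

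To pin down the target, I would note that the equivariance of $\mu_f$ forces, for every $c\in\mathbb{Z}_p^{\times}$,
\[ \langle c,1\rangle \int_{\mathbb{Z}_p^{\times}} x^k\, d\mu_f = \int_{\mathbb{Z}_p^{\times}}(cx)^k\, d\mu_f = c^k \int_{\mathbb{Z}_p^{\times}} x^k\, d\mu_f, \]
so the $k$th moment is a generalized modular function of integral weight $k$, hence by the identification $GV_*(\Gamma_1(N),\mathbb{W}) \cong MF_{p,*}(\Gamma_1(N),\mathbb{W})$ an element of $\overline{MF}_{p,k}$ (after extension of scalars to $\mathbb{Q}_p$, as in the statement). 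Injectivity is then immediate: since $V(\Gamma_1(N))$ is flat over $\mathbb{Z}_p$ and the polynomials $\{x^k\}_{k\geq 0}$ span a dense subspace of $Cts(\mathbb{Z}_p^{\times},\mathbb{Q}_p)$ (the space $\mathbb{Z}_p^{\times}$ is compact and totally disconnected, and is clopen in $\mathbb{Z}_p$, so the Mahler-type arguments underlying Lemma \ref{kummercongruences} apply verbatim), a measure is determined by its moments.

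For the image, one direction is Lemma \ref{kummercongruences} adapted to $\mathbb{Z}_p^{\times}$: the moments of any measure automatically satisfy the generalized Kummer congruences. Conversely, given a sequence $\{b_k\}\in \prod \overline{MF}_{p,k}(\mathbb{Q}_p)$ satisfying these congruences, the same lemma constructs a measure $\mu$ with moments $b_k$, and the remaining task is to check that $\mu$ is automatically $\mathbb{Z}_p^{\times}$-equivariant. This is the one place requiring a genuine argument: for $c\in \mathbb{Z}_p^{\times}$ the measure $\langle c,1\rangle^{-1}\circ\mu\circ(c\cdot -)$ has $k$th moment equal to $c^{-k}\langle c,1\rangle^{-1}(c^k b_k)=b_k$, because the hypothesis $b_k\in \overline{MF}_{p,k}$ says exactly that $\langle c,1\rangle b_k = c^k b_k$; by the uniqueness of a measure with prescribed moments this equivariant-translate agrees with $\mu$. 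The main (but mild) obstacle is really this compatibility between the weight-$k$ transformation law of $p$-adic modular forms and the multiplicative action of $\mathbb{Z}_p^{\times}$, which is what makes the equivariance drop out automatically. Apart from this, the proof is a routine assembly of Proposition \ref{k1localmaps}, Katz's weight identification, and the Mahler/Kummer machinery.
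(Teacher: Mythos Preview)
Your proposal is correct and follows the same line as the paper: the paper simply notes, immediately before the lemma, that $V(\Gamma_1(N))$ and $\overline{MF}_{p,*}(\Gamma_1(N))$ are flat over $\mathbb{Z}_p$, so a measure is determined by its moments, and then invokes Lemma~\ref{kummercongruences} for the Kummer-congruence characterization. Your write-up is more detailed than the paper's, in particular your explicit verification that the equivariance of the measure forces the $k$th moment to have weight $k$, and conversely that a Kummer-congruent sequence of weight-$k$ forms yields an automatically $\mathbb{Z}_p^{\times}$-equivariant measure; the paper leaves these points implicit.
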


\subsubsection{The Adams Conjecture $K(1)$-locally}\label{OrientsKtheory}

Let $c$ be a topological generator for the group $\mathbb{Z}_p^{\times}$. The unstable complex Adams conjecture (first proved by Quillen in \cite{quillenadams}) states that the composite
$$ BU_{(p)} \xrightarrow{ 1 - \psi^c} BU_{(p)} \xrightarrow  BGL_1 \mathbb{S} $$
is null homotopic. 
Let $JU_c$ be the homotopy fiber of the map 
$$
BU_{(p)}\xrightarrow{1-\psi^c}BU_{(p)}.
$$
By standard arguments we get a commutative diagram

\begin{equation}\label{unstableadams}
\xymatrix{
JU_c \ar[r]\ar[d]^{B_c} & BU_{(p)} \ar[d]^{A_c} \ar[r]^{1-\psi^c} & BU_{(p)} \ar@{=}[d] \\
GL_1\mathbb{S}_{(p)} \ar[r] & GL_1\mathbb{S}/U_{(p)} \ar[r] & BU_{(p)} \ar[r] & BGL_1\mathbb{S}
}    
\end{equation}
where $GL_1\mathbb{S}_{(p)}/U$ is the cofiber of the complex $J$-homomorphism

$$U_{(p)} \xrightarrow{J} GL_{1}\mathbb{S}_{(p)}.$$
There is a completely analogous result for the real map $BO_{(p)} \xrightarrow{1-\psi^c} BO_{(p)}$ where in the statement of the result above every instance of the letter $U$ just needs to be replaced with the letter $O$.

Note that although $BO$, $BU$ and $BGL_1 S$ all are infinite loop spaces, only in the complex case it is known that the null homotopy given by the Adams conjecture lifts to an infinite loop map.
This is of no concern to us once we apply the Bousfield-Kuhn functor of Proposition \ref{bousfieldkuhnfunctor} (which factors through the category of spaces) to the diagram above to get the following commutative diagram 

\begin{equation} \label{ku}
    \xymatrix{
    L_{K(1)}ju_c \ar[r]\ar[d]^{\Phi_1 B_c} & KU\widehat{_p} \ar[r]^{1-\psi^c} \ar[d]^{\Phi_1 A_c} & KU\widehat{_p} \ar@{=}[d] & \\
    L_{K(1)}gl_1\mathbb{S} \ar[r]& L_{K(1)}gl_1 \mathbb{S}/u  \ar[r]& KU\widehat{_p} \ar[r]& bgl_1\mathbb{S}
    }.
\end{equation}
The real case follows completely analogously by again replacing "$u$"s with "$o$"s.
Here we used that $L_{K(1)}BU_{(p)} \simeq KU\widehat{_p}$ and similarily $L_{K(1)} BO_{(p)} \simeq KO\widehat{_p}$. 

Mahowald's calculations for the $K(1)$-local sphere (see \cite{ravenel}) imply:
\begin{prop}
For $p>2$ there is a weak equivalence 
$$ L_{K(1)} ju_c \simeq L_{K(1)}\mathbb{S}
$$
and similarily for all primes 
$$ L_{K(1)} jo_c \simeq L_{K(1)}\mathbb{S}.
$$
\end{prop}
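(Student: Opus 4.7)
The plan is to apply the Bousfield–Kuhn functor $\Phi_1$ of Proposition \ref{bousfieldkuhnfunctor} to the defining fiber sequences of $JU_c$ and $JO_c$ and then identify the resulting $K(1)$-local spectra via Mahowald's well-known description of $L_{K(1)}\mathbb{S}$ as the fiber of $1-\psi^c$ on periodic $K$-theory.

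Concretely, I would first apply $\Phi_1$ to the fiber sequence $JU_c \to BU_{(p)} \xrightarrow{1-\psi^c} BU_{(p)}$. Since $\Phi_1$ preserves homotopy fibers (its construction factors through $\Omega^{\infty}$ and $K(1)$-localization, both of which do) and since $\Phi_1(BU_{(p)}) \simeq KU\widehat{_p}$ by $K(1)$-periodicity of connective $K$-theory, this produces a fiber sequence of spectra
$$L_{K(1)}ju_c \to KU\widehat{_p} \xrightarrow{1-\psi^c} KU\widehat{_p}.$$
For $p>2$, Mahowald's calculation (see \cite{ravenel}) identifies $L_{K(1)}\mathbb{S}$ as precisely this fiber whenever $c$ is a topological generator of $\mathbb{Z}_p^{\times}$, and comparing fibers immediately yields $L_{K(1)}ju_c \simeq L_{K(1)}\mathbb{S}$. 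The real statement follows analogously: applying $\Phi_1$ to the defining fiber sequence of $JO_c$ and using $\Phi_1(BO_{(p)}) \simeq KO\widehat{_p}$ produces
$$L_{K(1)}jo_c \to KO\widehat{_p} \xrightarrow{1-\psi^c} KO\widehat{_p},$$
and at $p=2$ Mahowald's theorem with $c=3$, a topological generator of $\mathbb{Z}_2^{\times}/\{\pm 1\}$, identifies the fiber as $L_{K(1)}\mathbb{S}$. At odd primes the real case reduces to the complex one via the standard splitting relating $KU\widehat{_p}$ and $KO\widehat{_p}$, which is compatible with the Adams operations $\psi^c$.

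The main conceptual subtlety, and the reason the complex statement fails at $p=2$, is that $\mathbb{Z}_2^{\times}$ is not procyclic, so no single operation $1-\psi^c$ on $KU\widehat{_2}$ can cut out $L_{K(1)}\mathbb{S}$. This is precisely why one is forced to pass to the real spectrum $KO\widehat{_2}$, whose Adams operations factor through the procyclic quotient $\mathbb{Z}_2^{\times}/\{\pm 1\}$. Once this point is acknowledged, the proof is essentially a direct appeal to the fiber-preservation property of $\Phi_1$ combined with Mahowald's theorem, with no further calculation required.
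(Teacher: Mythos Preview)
Your approach is essentially the same as the paper's. The paper applies the Bousfield--Kuhn functor to diagram (\ref{unstableadams}) to obtain diagram (\ref{ku}), whose top row is the fiber sequence $L_{K(1)}ju_c \to KU\widehat{_p} \xrightarrow{1-\psi^c} KU\widehat{_p}$, and then simply invokes Mahowald's identification of $L_{K(1)}\mathbb{S}$ (citing \cite{ravenel}) as the fiber of $1-\psi^c$ on $p$-completed $KU$ (for $p>2$) or $KO$ (for all $p$). That is exactly your argument.

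One small correction: your parenthetical justification that $\Phi_1$ preserves homotopy fibers because ``its construction factors through $\Omega^{\infty}$ and $K(1)$-localization'' is backwards. The relation is $L_{K(1)} \simeq \Phi_1 \circ \Omega^{\infty}$ on spectra, not a factorization of $\Phi_1$ through $\Omega^{\infty}$. The fiber-preservation property of $\Phi_1$ is nonetheless true and standard (it follows from the telescopic construction of Bousfield and Kuhn), so this does not affect the validity of your argument; just replace the stated reason with a direct citation of that property.
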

\begin{coro}
For $p>2$ in the complex case and for all primes in the real case are the maps $\Phi_1 A_c$ and $\Phi_1 B_c$ weak equivalences.
\end{coro}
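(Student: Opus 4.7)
The approach is to apply the triangulated five-lemma to the map of cofiber sequences in diagram (\ref{ku}) (and its evident real analogue). Both rows are cofiber sequences of $K(1)$-local spectra: the top is the defining fiber sequence of $L_{K(1)}ju_c$ obtained by applying the Bousfield-Kuhn functor $\Phi_1$ to the fiber sequence defining $JU_c$, and the bottom is the Puppe sequence of the (unstable) $J$-homomorphism, again after applying $\Phi_1$. Since the rightmost vertical is the identity on $KU\widehat{_p}$, the two-out-of-three property for triangles reduces the corollary to showing that $\Phi_1 B_c$ is a weak equivalence, whence $\Phi_1 A_c$ follows automatically. The real case proceeds identically with every $U$ replaced by $O$ and $KU\widehat{_p}$ by $KO\widehat{_p}$.

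To prove that $\Phi_1 B_c$ is a weak equivalence, I first invoke the preceding proposition to identify the source: $L_{K(1)}ju_c \simeq L_{K(1)}\mathbb{S}$ for $p>2$, and $L_{K(1)}jo_c \simeq L_{K(1)}\mathbb{S}$ for all primes. This equivalence arises from the classical realization of $L_{K(1)}\mathbb{S}$ as the homotopy fiber of $1-\psi^c$ on $KU\widehat{_p}$ (resp.\ $KO\widehat{_p}$), which is precisely the fiber sequence defining $L_{K(1)}ju_c$ (resp.\ $L_{K(1)}jo_c$). Under this identification the horizontal map $L_{K(1)}ju_c \to KU\widehat{_p}$ in the top row of (\ref{ku}) corresponds to the $K(1)$-localization of the $K$-theory unit.

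On the target side, the Bousfield-Kuhn identification recalled in Section \ref{chromaticlocalizationofunits} yields $L_{K(1)}gl_1\mathbb{S} \simeq L_{K(1)}\mathbb{S}$. Thus $\Phi_1 B_c$ becomes a self-map of $L_{K(1)}\mathbb{S}$, and because $L_{K(1)}\mathbb{S}$ generates itself as a module over itself, such a self-map is a weak equivalence precisely when it acts on $\pi_0 L_{K(1)}\mathbb{S} \cong \mathbb{Z}_p$ by multiplication by a unit. Tracing through the construction of $B_c$ as the canonical lift of the $J$-homomorphism through the fiber, $B_c$ sends the basepoint of $JU_c$ to the unit in $GL_1\mathbb{S}_{(p)}$, so $\pi_0 \Phi_1 B_c$ sends $1$ to $1$.

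The main obstacle is this last compatibility check: aligning the equivalences $L_{K(1)}ju_c \simeq L_{K(1)}\mathbb{S}$ and $L_{K(1)}gl_1\mathbb{S} \simeq L_{K(1)}\mathbb{S}$ with the identification of their $\pi_0$'s so that the $\pi_0$ calculation is meaningful. This requires that $B_c$ lifts to a map of infinite loop spaces --- a consequence of the infinite-loop refinement of the complex Adams conjecture (Madsen-Snaith-Tornehave, Friedlander), with its real analogue --- so that $\Phi_1 B_c$ is the $K(1)$-localization of an honest map of connective spectra and hence compatible with the $\pi_0$ computations above.
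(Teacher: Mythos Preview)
Your five-lemma reduction is fine, as is the identification of source and target of $\Phi_1 B_c$ with $L_{K(1)}\mathbb{S}$ and the observation that a self-map of $L_{K(1)}\mathbb{S}$ is an equivalence iff it acts on $\pi_0\cong\mathbb{Z}_p$ by a unit. The gap is your claim that this unit is~$1$. The sentence ``$B_c$ sends the basepoint of $JU_c$ to the unit in $GL_1\mathbb{S}_{(p)}$, so $\pi_0\Phi_1 B_c$ sends $1$ to $1$'' is a non sequitur: every pointed map preserves basepoints, and this says nothing about the action on $\pi_0$ of the $K(1)$-localization. Your proposed fix---upgrading $B_c$ to an infinite loop map---does not help either, since $\pi_0$ of the underlying connective spectrum $ju_c$ is trivial, so there is no connective $\pi_0$ information to transport.

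In fact the claim is not merely unjustified but false. The paper itself computes later (Proposition~\ref{effectonpi}, citing \cite{AHR} Prop.~7.15) that in the real case at $p=2$ the leftmost vertical acts on $\pi_0$ by $\tfrac{1}{4}\log 3$, a $2$-adic unit but not $1$. So the number you need is a genuinely transcendental $p$-adic quantity, and verifying it is a unit requires an actual computation. The paper does not carry this out here; it records the statement as a bare corollary and defers to \cite{AHR} (cf.\ the later parenthetical ``see also \cite{AHR} Lemma 7.9''). Alternatively, one can read the preceding Proposition as asserting that the equivalence $L_{K(1)}ju_c\simeq L_{K(1)}\mathbb{S}$ is \emph{realized by} $\Phi_1 B_c$---this is the content of Mahowald's image-of-$J$ computations, not just an abstract comparison of homotopy types---in which case the corollary is immediate. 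Either way, the step you flagged as ``the main obstacle'' is exactly where your argument breaks down, and it cannot be repaired by formal means.
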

\begin{coro}
Let $n\geq 0$ and $p>2$. 
Then we have an equivalence 
$$[gl_1\mathbb{S}/u, L_{K(1)}Tmf_1(N)] \xrightarrow{\cong} [KU\widehat{_p},L_{K(1)}Tmf_1(N)]. 
$$
\end{coro}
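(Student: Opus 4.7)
The plan is to reduce the statement to the previous corollary, which says that for $p > 2$ the map $\Phi_1 A_c \colon KU\widehat{_p} \to L_{K(1)}(gl_1\mathbb{S}/u)$ is a weak equivalence of spectra. Given that, the claimed bijection on homotopy classes of maps will essentially follow from the universal property of $K(1)$-localization.

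First, I would observe that the target $L_{K(1)}Tmf_1(N)$ is $K(1)$-local by construction. Consequently, every map $gl_1\mathbb{S}/u \to L_{K(1)}Tmf_1(N)$ factors uniquely through the $K(1)$-localization of the source, and the induced map
$$ [L_{K(1)}(gl_1\mathbb{S}/u), L_{K(1)}Tmf_1(N)] \xrightarrow{\cong} [gl_1\mathbb{S}/u, L_{K(1)}Tmf_1(N)] $$
is a bijection. This is the step where $p > 2$ is irrelevant; it holds for any $K(1)$-local target.

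Next, I would invoke the previous corollary: for $p > 2$ the map $\Phi_1 A_c$ obtained by applying the Bousfield--Kuhn functor to diagram (\ref{unstableadams}) is a weak equivalence $KU\widehat{_p} \xrightarrow{\simeq} L_{K(1)}(gl_1\mathbb{S}/u)$. Precomposing with $\Phi_1 A_c$ therefore gives a bijection
$$ [L_{K(1)}(gl_1\mathbb{S}/u), L_{K(1)}Tmf_1(N)] \xrightarrow{\cong} [KU\widehat{_p}, L_{K(1)}Tmf_1(N)]. $$
Composing the two bijections produces the desired equivalence. The map in the statement of the corollary is, by construction, the one induced by precomposition with (the class of) $\Phi_1 A_c$, so no further identification of the map is needed.

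There is essentially no obstacle here: the entire argument is the formal observation that a weak equivalence of spectra induces a bijection on $[-,Y]$ for any target $Y$, combined with the universal property of Bousfield localization. The substantive content lies in the previous corollary (where Mahowald's identification $L_{K(1)}ju_c \simeq L_{K(1)}\mathbb{S}$ at odd primes is used) and in the unstable complex Adams conjecture that produced diagram (\ref{unstableadams}) in the first place. The only minor point to flag is the implicit identification of $\Phi_1$ applied to the infinite loop space $GL_1\mathbb{S}/U_{(p)}$ with $L_{K(1)}(gl_1\mathbb{S}/u)$, which is immediate from Proposition \ref{bousfieldkuhnfunctor} since the zeroth space of the spectrum $gl_1\mathbb{S}/u$ agrees, up to components, with the space $GL_1\mathbb{S}/U_{(p)}$ appearing in (\ref{unstableadams}) and $K(1)$-localization is insensitive to components.
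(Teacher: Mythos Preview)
Your proposal is correct and is precisely the argument the paper intends: the corollary is stated without proof because it follows immediately from the preceding corollary (that $\Phi_1 A_c$ is a weak equivalence for $p>2$) together with the universal property of $K(1)$-localization, exactly as you spell out. There is nothing to add.
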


At the prime $2$ the following complication arises: the map 
$$ \Phi_1 A_c : L_{K(1)}ju_c \rightarrow L_{K(1)}gl_1 \mathbb{S}$$ 
is not a weak equivalence anymore and hence neither is $\Phi_1 B_c$.

For the rest of this section it is understood that we are working at the prime $p=2$.

It is clear that every element $g \in [gl_1\mathbb{S}/u, L_{K(1)}Tmf_1(N)]$ gives rise to an element in $[KU\widehat{_p},L_{K(1)}Tmf_1(N)]$ by precomposition with the map $\Phi_1A_c$. We would now like to determine which elements in the later set factor through the first. In hindsight we would like to show a little bit more: It turns out we will not only need the elements in $[KU\widehat{_p},L_{K(1)} gl_1 Tmf_1(N)]$ which factor through $gl_1\mathbb{S}/u$ but those which additionally make the following diagram commute:
$$  \xymatrix{
L_{K(1)}ju_3 \ar[r] \ar[d]^{\Phi_1 A_3} & KU\widehat{_2} \ar[d]^{\Phi_1 B_3}   \\
L_{K(1)} gl_1 \mathbb{S} \ar[r] \ar[rd]^{L_{K(1)}gl_1 \iota} & L_{K(1)} gl_1 \mathbb{S}/u \ar@{-->}[d] \\ & L_{K(1)}gl_1 E\\
 }
$$

As this question might be of interest not just for the spectra $L_{K(1)}Tmf_1(N)$, we will work in greater generality than strictly needed for our purposes:
 
For the rest of this section let $E$ be a complex orientable $K(1)$-local $\mathbb{E}_{\infty}$-ring spectrum such that:
\begin{enumerate}
    \item $\pi_* E$ is concentrated in even degrees and torsion-free.
    \item $\pi_*F(KU\widehat{_2},E)$ is concentrated in even degrees and torsion-free.
\end{enumerate}
First recall that the spectrum $gl_1\mathbb{S}$ is torsion so the composite 
$$ gl_1\mathbb{S} \xrightarrow{gl_1 \iota} gl_1 E  \rightarrow gl_1 E\otimes \mathbb{Q} $$ is null homotopic and we have a canonical factorizaton
$$ \xymatrix{
gl_1 \mathbb{S} \ar[r]^{gl_1 \iota} \ar[d] & gl_1 E  \\
\Sigma^{-1} gl_1 E \otimes \mathbb{Q}/\mathbb{Z} \ar[ur] &
} $$
 
Using this observation together with the $K(1)$-local logarithm we may extend Diagram \ref{ku}
to get the following picture:

$$\xymatrix{
L_{K(1)}ju_3 \ar[r] \ar[d]^{\Phi_1 A_3} & KU\widehat{_2} \ar[d]^{\Phi_1 B_3}   \\
L_{K(1)} gl_1 \mathbb{S} \ar[r]\ar[d] \ar[rd]^{L_{K(1)}gl_1 \iota} & L_{K(1)} gl_1 \mathbb{S}/u \\
\Sigma^{-1} L_{K(1)}gl_1 E \otimes \mathbb{Q}/\mathbb{Z} \ar[r] \ar[d] & L_{K(1)}gl_1 E\ar[d]_{\ell_1}^{\simeq}\\
\Sigma^{-1} E\otimes \mathbb{Q}/\mathbb{Z} \ar[r] & E 
 }
 .$$
Note that Diagram \ref{ku} is a homotopy pushout, so a map $f: KU\widehat{_2}\rightarrow E$ factors through $L_{K(1)}gl_1 \mathbb{S}/u$ if and only it makes the following diagram commute:
$$\xymatrix{
L_{K(1)}ju_3 \ar[r] \ar[d]^{\Phi_1 A_3} & KU\widehat{_2} \ar[ddd]^{f} \\
L_{K(1)} gl_1 \mathbb{S} \ar[d] &  \\
\Sigma^{-1}L_{K(1)}gl_1 E \otimes \mathbb{Q}/\mathbb{Z}  \ar[d] & \\
\Sigma^{-1} E\otimes \mathbb{Q}/\mathbb{Z} \ar[r] & E
 }
 $$
As the spectrum $L_{K(1)}ju_3$ is equivalent to the $K(1)$-localization of $\Sigma^{-2} \mathbb{CP}^2$ its $E$-cohomology is torsion-free and concentrated in even degrees. We therefore have an inclusion $$[L_{K(1)} ju_3, E] \hookrightarrow [L_{K(1)} ju_3, E\otimes \mathbb{Q}],$$ 
similarily for $E^*KU\widehat{_2}$ by assumption.
Hence in order to check if a map $f$ makes the diagram commute it suffices to verify this after rationalization which in turn may be done on the level of homotopy groups.
As $L_{K(1)}ju_3$ is given as the homotopy fiber of the map $KU\widehat{_2}\xrightarrow{1-\psi^3} KU\widehat{_2}$ we immediately see that its homotopy groups vanish in even degrees (except in degree zero where it is isomorphic to $\mathbb{Z}_2$) and in odd degrees are given by $\pi_{2n-1} L_{K(1)} ju_3 \cong \mathbb{Z}_2/(1-3^n)$. 
As $E_*$ is concentrated in even degrees, by assumption, the only relevant group to check is $\pi_0$.
The induced map $\pi_0 L_{K(1)}ju_c \cong \mathbb{Z}_2 \rightarrow \pi_0 KU\widehat{_p}\cong \mathbb{Z}_2$ is the identity so to understand the necessary condition on $\pi_0f$ 
we now only have to understand what happens on the left hand side of the diagram. The effect of $\ell_1 \circ L_{K(1)} gl_1 \iota$ on $\pi_0$ being the identity, the critical map in question is $\Phi_1 A_3$.

Observe that 

$$\xymatrix{
L_{K(1)}gl_1 \mathbb{S} \ar@{=}[d] \ar[r] & L_{K(1)} gl_1 \mathbb{S}/u \ar[r] \ar[d] & KU\widehat{_2} \ar[d]^{r} \\
L_{K(1)} gl_1 \mathbb{S} \ar[r] & L_{K(1)} gl_1 \mathbb{S}/o \ar[r] & KO\widehat{_2}
}
$$
where the right hand side is the "realifiaction" map from complex to real K-theory, and 

$$\xymatrix{ 
L_{K(1)} \mathbb{S} \ar[r] \ar[d]_{\Phi_1 A_3^r} & KO\widehat{_2} \ar[r]^{1-\psi^3} \ar[d]_{\Phi_1 B_3^r} & KO\widehat{_2} \ar@{=}[d] \\
L_{K(1)} gl_1 \mathbb{S} \ar[r] & L_{K(1)}gl_1 \mathbb{S}/o \ar[r] & KO\widehat{_2} 
}
$$
commute.
Here the maps $\Phi_1 A_3^r$ and $\Phi_1 B_3^r$ are the real analogues of our corresponding maps with the same labels save the superscript "$r$". As discussed previously (see also \cite{AHR} Lemma 7.9) these maps are weak equivalences.
\begin{prop}\label{effectonpi}
The effect of $\Phi_1 A_3^r$ on $\pi_0$ is given by multiplication with $\frac{1}{4}\log(3)$.
\end{prop}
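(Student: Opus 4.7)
The plan is to reduce this to an explicit computation in $K(1)$-local $KO$-theory by using the logarithm as a comparison tool. Since Rezk's logarithm gives an equivalence $\ell_1: L_{K(1)}gl_1\mathbb{S} \xrightarrow{\simeq} L_{K(1)}\mathbb{S}$ (see Section \ref{chromaticlocalizationofunits}), and since $L_{K(1)}jo_3 \simeq L_{K(1)}\mathbb{S}$ by the corollary preceding the proposition, the composite $\ell_1 \circ \Phi_1 A_3^r$ is a self-map of $L_{K(1)}\mathbb{S}$. To prove the proposition it suffices to identify the resulting endomorphism on $\pi_0\otimes\mathbb{Q}=\mathbb{Q}_2$ with multiplication by $\tfrac{1}{4}\log(3)$.

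To perform this identification I would exploit naturality with respect to the unit map $\mathbb{S}\to KO\widehat{_2}$. This yields a commutative square relating $\ell_1$ on $gl_1\mathbb{S}$ to the logarithm $\ell_1^{KO}$ on $gl_1 KO\widehat{_2}$, whose induced map on $\pi_0$ is the standard inclusion $\mathbb{Z}_2 \hookrightarrow \mathbb{Z}_2$. The problem is thus reduced to computing the effect on $\pi_0$ of the composite
$$L_{K(1)}jo_3 \longrightarrow L_{K(1)}gl_1 KO\widehat{_2} \xrightarrow{\ell_1^{KO}} KO\widehat{_2}.$$
By construction this composite factors through the defining fiber sequence $L_{K(1)}jo_3 \to KO\widehat{_2} \xrightarrow{1-\psi^3} KO\widehat{_2}$, so that the answer is determined by $\ell_1^{KO}$ evaluated on a canonical unit of $\pi_0 KO\widehat{_2}$ specified by the Adams operation $\psi^3$.

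The final step is to invoke Rezk's explicit description of $\ell_1^{KO}$ at $p=2$, obtained from the complex formula of Section \ref{k1locallog} via the realification map $r:KU\widehat{_2}\to KO\widehat{_2}$; this yields the value $\frac{1}{4}\log(3)$, the factor of $\frac{1}{4}$ (instead of the $\frac{1}{2}$ that appears in the odd primary or complex case) arising from an additional factor of $2$ introduced by realification. The main obstacle is keeping the normalization of the real logarithm straight at the prime $2$: unlike at odd primes one cannot use $\psi^p$ directly (since $\psi^2$ does not exist on $KO$), and one must instead work with $\psi^3$, which is a topological generator of $\mathbb{Z}_2^{\times}/\{\pm 1\}$. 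This is precisely the subtlety handled in the relevant part of \cite{AHR}.
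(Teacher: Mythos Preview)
The paper does not give an independent proof here: its entire argument is the single sentence ``This is just a reformulation of Proposition 7.15 of \cite{AHR}.'' So there is nothing in the paper to compare your proposal against beyond that citation.

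Your outline is a reasonable reconstruction of how such a result is obtained, and it is in the spirit of the argument in \cite{AHR}: pass to $KO\widehat{_2}$ via naturality of the logarithm and carry out the computation there using Rezk's formula, with the extra factor of $2$ at the prime $2$ coming from the real/complex comparison. Two places would benefit from tightening. First, when you say ``it suffices to identify the endomorphism $\ell_1\circ\Phi_1 A_3^r$ on $\pi_0$,'' you are implicitly identifying $\pi_0 L_{K(1)}gl_1\mathbb{S}$ with $\mathbb{Z}_2$ via $\ell_1$; this is the intended convention (otherwise ``multiplication by $\tfrac{1}{4}\log 3$'' has no meaning on the target), but it is worth saying so. Second, the sentence ``by construction this composite factors through the defining fiber sequence\ldots so the answer is determined by $\ell_1^{KO}$ evaluated on a canonical unit'' compresses the real content of the computation: one needs to identify the image in $\pi_0 gl_1 KO\widehat{_2}$ of the generator coming from the Adams-conjecture map, and then apply the explicit $p=2$ logarithm formula to that specific unit. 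That step is exactly where \cite{AHR} does the work, and your sketch points at it rather than performing it.
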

\begin{proof}
This is just a reformulation of Proposition 7.15 of \cite{AHR}.
\end{proof}
Combining these two diagrams together with Diagram \ref{ku} 
we get the larger commutative diagram

$$
\xymatrix{ 
L_{K(1)}ju_2 \ar[d]_{\Phi_1 A_3} \ar[r] &  KU\widehat{_2} \ar[d]_{\Phi_1 B_3} \ar[r]^{1-\psi^3} & KU\widehat{_2} \ar@{=}[d] \\
L_{K(1)}gl_1 \mathbb{S} \ar@{=}[d] \ar[r] & L_{K(1)}gl_1 \mathbb{S}/u \ar[d] \ar[r] & KU\widehat{_2}\ar[d]^{r} \\
 L_{K(1)}gl_1 \mathbb{S} \ar[d]_{(\Phi_1 A_3^r)^{-1}}^{\simeq} \ar[r] & L_{K(1)}gl_1 \mathbb{S}/o 
 \ar[d]_{(\Phi_1 B^{r}_{3})^{-1}}^{\simeq} \ar[r]& KO\widehat{_2}\ar@{=}[d] \\
L_{K(1)}\mathbb{S} \ar[r] & KO\widehat{_2} 
\ar[r]^{1-\psi^3} & KO\widehat{_2}
 }
$$

Note that the vertical maps compose to give the diagram

$$ \xymatrix{ L_{K(1)} ju_3 \ar[r] \ar[d]^{r} & KU\widehat{_2} \ar[r]^{1-\psi^3} \ar[d]^{r} & KU\widehat{_2} \ar[d]^{r} \\
L_{K(1)} \mathbb{S} \ar[r] & KO\widehat{_2} \ar[r]^{1-\psi^3} & KO\widehat{_2} 
}.
$$
Here the vertical maps $r$ are again the realification maps and their effect on $\pi_0$ is given by multiplication with $2$. 

This observation combined with Proposition \ref{effectonpi} gives the following theorem.

\begin{theo}
The effect of $\Phi_1 A_3$ is given by multiplication with $\frac{1}{2}\log(3)$. 
\end{theo}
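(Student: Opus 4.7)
The plan is to read off the effect of $\Phi_1 A_3$ on $\pi_0$ directly from the large commutative diagram assembled immediately before the theorem. That diagram exhibits a factorization in which $\Phi_1 A_3$ appears as one piece of the realification map between $L_{K(1)}ju_3$ and its real counterpart $L_{K(1)}\mathbb{S}$, and the other pieces have effect on $\pi_0$ that we already know.

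First, I would trace the left column of the big diagram. Starting at $L_{K(1)}ju_3$ in the top-left corner and composing downward produces the map $(\Phi_1 A_3^r)^{-1}\circ \Phi_1 A_3: L_{K(1)}ju_3 \to L_{K(1)}\mathbb{S}$ (using that the middle identity square is trivial). The author's remark that the vertical maps in the big diagram compose to the realification square identifies this composite with the realification map $r: L_{K(1)}ju_3 \to L_{K(1)}\mathbb{S}$. Thus we obtain the identity
$$
(\Phi_1 A_3^r)^{-1} \circ \Phi_1 A_3 = r,
$$
equivalently $\Phi_1 A_3 = \Phi_1 A_3^r \circ r$ in the stable homotopy category.

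Second, I would pass to $\pi_0$ and use multiplicativity of the scalar action. By Proposition \ref{effectonpi}, $\Phi_1 A_3^r$ acts on $\pi_0 \cong \mathbb{Z}_2$ by multiplication by $\tfrac{1}{4}\log(3)$, and, as recorded by the author just above the theorem, the realification $r$ acts on $\pi_0$ by multiplication by $2$. Composing these two scalars yields
$$
\tfrac{1}{4}\log(3) \cdot 2 \;=\; \tfrac{1}{2}\log(3),
$$
which is precisely the claim.

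I do not expect any real obstacle: all homotopy groups in question are $\mathbb{Z}_2$ in degree $0$, so the factors genuinely multiply, and the only thing one must check carefully is that the left column of the big diagram really does glue to the realification square — but this is exactly the commutativity statement the author establishes in the paragraphs preceding the theorem. In particular no further input beyond the real case (Proposition \ref{effectonpi}) and the dimension-two behavior of complex-to-real realification is needed.
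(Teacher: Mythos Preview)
Your proposal is correct and is exactly the argument the paper intends: the paper's ``proof'' consists solely of the sentence ``This observation combined with Proposition \ref{effectonpi} gives the following theorem,'' and you have spelled out precisely that observation --- namely that the left column of the large diagram composes to the realification map $r$, so $\Phi_1 A_3 = \Phi_1 A_3^r \circ r$ on $\pi_0$, giving $\tfrac{1}{4}\log(3)\cdot 2 = \tfrac{1}{2}\log(3)$.
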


To summarize our discussion:
\begin{lemma}\label{factorization}Let $E$ be as above.
A map $f:KU\widehat{_2} \rightarrow E$ factors through $L_{K(1)} gl_1 \mathbb{S}/u$ such that the diagram $$  \xymatrix{
L_{K(1)}ju_3 \ar[r] \ar[d]^{\Phi_1 A_3} & KU\widehat{_2} \ar[d]^{\Phi_1 B_3} \ar@/^3pc/[dd]^{f}  \\
L_{K(1)} gl_1 \mathbb{S} \ar[r] \ar[rd]^{L_{K(1)}gl_1 \iota} & L_{K(1)} gl_1 \mathbb{S}/u \ar@{-->}[d] \\ & L_{K(1)}gl_1 E\\
 }
$$
commutes 
if and only if its effect on $\pi_0$ is given by multiplication with $\frac{1}{2}\log (3)$.
\end{lemma}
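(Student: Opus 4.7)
The plan is to package together the reductions already prepared in this subsection. First, I would invoke the fact that Diagram (\ref{ku}) is a homotopy pushout to recast the factorization problem as the commutativity of the outer rectangle displayed just before the lemma; this uses the rational triviality of $gl_1\mathbb{S}$ to factor $L_{K(1)}gl_1\iota$ through $\Sigma^{-1}L_{K(1)}gl_1 E\otimes\mathbb{Q}/\mathbb{Z}$, and then through the rational logarithm to land in $E$.

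Next, I would leverage the torsion-free, even-concentrated hypotheses on $\pi_*E$ and $\pi_*F(KU\widehat{_2},E)$ to reduce the commutativity check to rational coefficients. Since $L_{K(1)}ju_3 \simeq L_{K(1)}\Sigma^{-2}\mathbb{CP}^2$ has trivial even rational homotopy outside of degree zero (where it is $\mathbb{Z}_2$), this further reduces to a check on $\pi_0$ alone. The top-then-right composite is then multiplication by $\pi_0 f$, because $L_{K(1)}ju_3 \to KU\widehat{_2}$ is the identity on $\pi_0 \cong \mathbb{Z}_2$; the left-then-bottom composite is multiplication by $\pi_0\Phi_1 A_3$, the remaining maps being the identity on $\pi_0$ via the canonical identification $\ell_1 \colon L_{K(1)}gl_1 E \xrightarrow{\simeq} L_{K(1)}E$.

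The main step is the $\pi_0$-computation of $\Phi_1 A_3$, carried out in the theorem immediately preceding the lemma, whose argument I would rehearse. The commutative diagram connecting the complex and real Adams conjecture data allows one to factor $\Phi_1 A_3$ as a composite involving the realification map $r \colon KU\widehat{_2} \to KO\widehat{_2}$ and $\Phi_1 A_3^r$ (together with the inverse of the real analogue, legitimate since Mahowald's theorem gives $L_{K(1)}jo_c \simeq L_{K(1)}\mathbb{S}$ at all primes). Since $r$ is multiplication by $2$ on $\pi_0$ and Proposition \ref{effectonpi} gives $\pi_0\Phi_1 A_3^r = \tfrac{1}{4}\log(3)$, we conclude $\pi_0\Phi_1 A_3 = \tfrac{1}{2}\log(3)$. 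This $\pi_0$-identification is the main technical obstacle, although all the ingredients have been assembled in the preceding pages.

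Equating the two routes on $\pi_0$ then yields precisely the condition that $\pi_0 f$ equals multiplication by $\tfrac{1}{2}\log(3)$, completing the proof.
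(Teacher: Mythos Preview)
Your proposal is correct and follows essentially the same approach as the paper: the lemma is introduced there with ``To summarize our discussion,'' and your outline faithfully recapitulates that discussion --- the reduction via the pushout square (\ref{ku}) and the canonical factorization through $\Sigma^{-1}E\otimes\mathbb{Q}/\mathbb{Z}$, the passage to rational coefficients using the even torsion-free hypotheses, the reduction to $\pi_0$ via the homotopy of $L_{K(1)}ju_3$, and the computation of $\pi_0\Phi_1 A_3$ through the real/complex comparison diagram combined with Proposition~\ref{effectonpi} and the fact that realification is multiplication by $2$ on $\pi_0$.
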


Note that in particular $L_{K(1)}Tmf_1(N)$ satisfies the conditions of Lemma \ref{factorization}.
The discussion above now allows us to study complex orientations without restrictions on the prime. From here on now the arguments are exactly the same as in \cite{Dylan} for $MString$-orientations of $Tmf$ with level structure. We recall the argument for the reader's convenience.

\begin{theo}\label{k1tmforient}
We have an injection 
$$ \pi_0 Orient(bu, L_{K(1)}Tmf_1(N)) \hookrightarrow [bu, L_{K(1)}Tmf_1(N)\otimes \mathbb{Q}] \cong \prod_{k\geq 1} MF_{p,k}\otimes \mathbb{Q}
$$
$$ f \mapsto \pi_{2k} f $$ onto the set of sequences $\{ g_k\}$ of p-adic modular forms such that \begin{enumerate}
    \item for all $\mathbb{c}\in \mathbb{Z}_p^\times$ the sequence $\{(1-c^k)(1-p^{k-1}\text{Frob})g_k \}$ satisfies the generalized Kummer congruences
    \item for all $c\in \mathbb{Z}_p^\times$  $\lim_{k \to \infty} (1-c^k)(1-p^{k-1} \text{Frob})g_k = \frac{1}{p}\log (c^{p-1
    })$.
\end{enumerate}
\end{theo}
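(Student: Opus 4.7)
The plan is to reduce the computation of $\pi_0\mathrm{Orient}(bu,L_{K(1)}Tmf_1(N))$ in three stages: first identify these orientations with maps out of $L_{K(1)}gl_1\mathbb{S}/u$, then use the Adams conjecture to re-express such maps as maps out of $KU\widehat{_p}$ satisfying a single boundary constraint, and finally translate everything into $p$-adic measures and $p$-adic modular forms via Proposition \ref{k1localmaps} and Rezk's logarithm.

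By definition an orientation corresponds to an extension of the (localized) unit $L_{K(1)}gl_1\mathbb{S}\to L_{K(1)}gl_1 Tmf_1(N)$ to a map $L_{K(1)}gl_1\mathbb{S}/u\to L_{K(1)}gl_1 Tmf_1(N)\simeq L_{K(1)}Tmf_1(N)$, using Bousfield--Kuhn for the last equivalence. To reach $KU\widehat{_p}$, I would apply the Adams-conjecture diagram (\ref{ku}): for $p>2$ both $\Phi_1 A_c$ and $\Phi_1 B_c$ are weak equivalences, so extensions of the unit correspond bijectively to maps $h\colon KU\widehat{_p}\to L_{K(1)}Tmf_1(N)$ whose precomposition with the fibre inclusion $L_{K(1)}ju_c\hookrightarrow KU\widehat{_p}$ agrees with the map induced by the unit through $\Phi_1 A_c$. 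At $p=2$ the analogous statement is precisely Lemma \ref{factorization}: the factorization is governed by a single $\pi_0$-constraint, namely that $\pi_0 h$ is multiplication by $\tfrac{1}{2}\log 3$. In both cases the entire orientation is recovered from a map $h$ subject to one $\pi_0$-condition.

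At this stage I would invoke Proposition \ref{k1localmaps} to identify $[KU\widehat{_p},L_{K(1)}Tmf_1(N)]$ with $\mathbb{Z}_p^\times$-equivariant continuous measures on $\mathbb{Z}_p^\times$ valued in $V(\Gamma_1(N))$; equivariance confines the $k$-th moment $a_k=\int x^k\,d\mu$ to the weight-$k$ eigenspace, that is $MF_{p,k}(\Gamma_1(N))$. Since $V(\Gamma_1(N))$ is flat over $\mathbb{Z}_p$, Lemma \ref{kummercongruences} says that a sequence is the moment sequence of a measure exactly when it satisfies the generalised Kummer congruences. Next, the passage from moments $a_k$ to the rational homotopy groups $g_k\in MF_{p,k}\otimes\mathbb{Q}$ of the orientation is effected by $\ell_1$: Proposition \ref{k1locallogtmf} shows that on $\pi_{2k}$ one has $a_k=(1-p^{k-1}\mathrm{Frob})g_k$. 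The extra factor $(1-c^k)$ enters because the Adams-conjecture fibre sequence forces the composite $L_{K(1)}ju_c\to KU\widehat{_p}\xrightarrow{h}L_{K(1)}Tmf_1(N)$ to equal a prescribed map, which on moments is precisely the twist by $1-\psi^c$, multiplication by $1-c^k$ on $\pi_{2k}$. The single $\pi_0$-constraint from step (ii) is then converted by $p$-adic continuity of $\mu$ into the limit condition, with the constant $\tfrac{1}{p}\log(c^{p-1})$ recognised as the effect of $\Phi_1 A_c$ on $\pi_0$ (Proposition \ref{effectonpi} at $p=2$ and its odd-$p$ analogue). Injectivity of the assignment $f\mapsto\{g_k\}$ follows from the fact that measures valued in the $\mathbb{Z}_p$-flat ring $V(\Gamma_1(N))$ are determined by their moments.

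The principal obstacle is the bookkeeping at $p=2$, where $\Phi_1 A_3$ is no longer a $K(1)$-equivalence: one has to extract from Lemma \ref{factorization} exactly the same $\pi_0$-constraint that appears automatically at odd primes, and to match normalizations so that the two stated conditions hold uniformly in $p$. A secondary subtlety is that the $(1-c^k)$-twist coming from the Adams-conjecture cofibre and the $\mathbb{Z}_p^\times$-equivariance of the measure must be shown to together impose exactly the conditions listed, neither over- nor under-counting; this requires careful tracking through $\ell_1$ and the flatness of $V(\Gamma_1(N))$ over $\mathbb{Z}_p$.
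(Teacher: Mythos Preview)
Your approach is essentially the paper's: reduce to maps $KU\widehat{_p}\to L_{K(1)}Tmf_1(N)$ via the Adams conjecture diagram, interpret these as measures via Proposition~\ref{k1localmaps}, and then unwind the relationship between the moments and the rational invariants $g_k$ using $\ell_1$ and $1-\psi^c$, with Lemma~\ref{factorization} handling the $p=2$ boundary condition.

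One point of bookkeeping is misattributed, however. You write that the factor $(1-c^k)$ enters because the compatibility on $L_{K(1)}ju_c$ forces $h$ restricted there to equal a prescribed map ``which on moments is precisely the twist by $1-\psi^c$''. That is not where $(1-c^k)$ comes from. The restriction to $L_{K(1)}ju_c$ is what produces condition~(2), the $\pi_0$ limit constraint, and nothing else (since after rationalizing only $\pi_0$ survives on that fibre). The factor $(1-c^k)$ arises for a different reason: there are \emph{two} copies of $KU\widehat{_p}$ in the Adams diagram, the left one identified with $L_{K(1)}gl_1\mathbb{S}/u$ via $\Phi_1 B_c$ (this is where your measure $h$ lives) and the right one identified with $L_{K(1)}bu$ (this is where the $g_k$ live, as rational homotopy of the extension $\beta$ along $gl_1\mathbb{S}/u\to bu$). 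Passing between them is the map $1-\psi^c$, which contributes $(1-c^k)$ on $\pi_{2k}$. The paper makes this explicit by tracing a class in $\pi_{2k}bu$ through five steps; you have all the ingredients but have collapsed steps~(2) and the $ju_c$-compatibility into one, which would leave you unable to separate conditions~(1) and~(2) cleanly.
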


\begin{proof}
We are looking for maps $\alpha$ and $\beta$ making the following diagram commute:
\begin{picture}(400,150)
\put(180,0){\makebox(0,150){
$$\xymatrix@C=1em{
L_{K(1)}ju_c \ar[r] \ar[d]_{\Phi A_c^r} & KU\widehat{_p}  \ar[r]^{1-\psi^c} \ar[d]_{\Phi_1 B_c} & KU\widehat{_p} \ar@{=}[d] & \\
L_{K(1)}gl_1 \mathbb{S} \ar[r] \ar[d] & L_{K(1)}gl_1 \mathbb{S}/u\ar[r] \ar@{-->}[d]_{\alpha} & KU\widehat{_p} \ar[rd] \ar@{-->}[d]_{\beta} \ar[r]& L_{K(1)} bgl_1 \mathbb{S} \ar[d]\\
\Sigma^{-1}L_{K(1)} gl_1 Tmf_1(N)\widehat{_p} \otimes \mathbb{Q}/\mathbb{Z} \ar[r] \ar[d]^{\ell_1} & L_{K(1)} gl_1 Tmf_1(N)\widehat{_p} \ar[d]_{\ell_1}^{\simeq} \ar[r] & L_{K(1)} gl_1 Tmf_1(N)\widehat{_p} \otimes \mathbb{Q} \ar[d]^{\ell_1} \ar[r]& L_{K(1)} gl_1 Tmf_1(N)\widehat{_p} \otimes \mathbb{Q}/ \mathbb{Z} \ar[d]^{\ell_1}\\
\Sigma^{-1} Tmf_1(N)\widehat{_p} \otimes\mathbb{Q}/\mathbb{Z} \ar[r] & L_{K(1)}Tmf_1(N)\widehat{_p} \ar[r] & L_{(K(1)}Tmf_1(N)\widehat{_p} \otimes \mathbb{Q} \ar[r] &
L_{K(1)}Tmf_1(N)\widehat{_p}\otimes \mathbb{Q}/ \mathbb{Z}
}
$$
}}
\end{picture}

Here the diagonal map is given by the Miller invariant
$$m(j,gl_1 L_{K(1)}Tmf_1(N)\widehat{_p}).
$$

Thus to give a map 
$$\alpha: L_{K(1)}gl_1 \mathbb{S}/u \rightarrow L_{K(1)}gl_1 Tmf_1(N)\widehat{_p}
$$
making the diagram commute is equivalent to giving an element $\ell_1 \circ \alpha \circ \Phi_1 B_c \in [KU\widehat{_p},L_{K(1)}Tmf(\Gamma)\widehat{_p}]$.
Let 
$$ \{ t(\alpha)_k \} \overset{\text{def}}{=} \pi_{2k}(\ell_1 \circ \alpha \circ \Phi_c B_c) \in \prod MF_{p,k} 
$$
be the sequence corresponding to $\alpha$ and 
$$\{ b(\alpha)_k\} \overset{\text{def}}{=}\{ \pi_{2k} \beta \} \in [bsu, gl_1 L_{K(1)} Tmf_1(N) \otimes \mathbb{Q}]
$$
the sequence corresponding to the map $\beta$. 
To see how these two sequences are related we will study the effect of $\beta$ on a class in $\pi_{2k} bu$ by tracing it through the following steps:
\begin{enumerate}
    \item Push it forward to the $K(1)$-localization $KU\widehat{_p}$.
    \item Pull it back along the map $1-\psi^c$ which acts by multiplication with $1-c^{k}$ on a class in $\pi_{2k}$.
    \item Map it down to $L_{K(1)}Tmf_1(N)$ along the composite 
     $\ell_1 \circ \alpha \circ \Phi_1 B_c
    $.
 \item Include it into the rationalization $L_{K(1)}Tmf_1(N)\otimes \mathbb{Q}$. We are allowed to do this without losing any information as $[KU\widehat{_p},L_{K(1)}Tmf_1(N)]$ is torsion free.
     \item Pull it back along the logarithm to $gl_1 L_{K(1)}Tmf_1(N)\otimes \mathbb{Q}$ which acts by multiplication with the factor $(1-p^{k-1}\text{Frob})$.
\end{enumerate}
Combining all these steps together we find that 
$$
t_k(\alpha) = (1-c^k)(1-p^{k-1}\text{Frob}) b_k(\alpha).
$$
We are thus reduced to verifying that $\alpha$ makes the diagram 
$$ \xymatrix{ L_{K(1)}gl_1\mathbb{S} \ar[r]\ar[d] & L_{K(1)}gl_1\mathbb{S}/u \ar[d]^{\alpha} \\
\Sigma^{-1}L_{K(1)} gl_1 Tmf_1(N)\widehat{_p} \otimes \mathbb{Q}/\mathbb{Z} \ar[r] & L_{K(1)}gl_1Tmf_1(N)
} $$ commute. We showed in Lemma \ref{factorization} above for $p=2$ that this holds if and only if $\pi_0 \alpha(1)= \frac{1}{p}\log(c^{p-1})$. The proof for $p>2$ is completely analogous, but easier (compare \cite{AHR} Proposition 14.6). 
\end{proof}
\subsection{The Space $Orient(bu, L_{K(1)\vee K(2)}gl_1Tmf_1(N))$}
We would now like to understand the space $Orient(bu,L_{K(1)\vee K(2)}gl_1 Tmf_1(N))$ which is given by the following homotopy pushout 
$$
\xymatrix{Orient(bu ,L_{K(1)\vee K(2)}gl_1 Tmf_1(N)) \ar[r] \ar[d] & Orient(bu,L_{K(1)} gl_1 Tmf_1(N)) \ar[d] \\
Orient(bu, L_{K(2)} gl_1 Tmf_1(N) \ar[r] & Orient(bu,L_{K(1)}L_{K(2)} gl_1 Tmf_1(N))}.
$$
Note that the spectrum $bu$ is $K(2)$-acyclic, hence the lower left corner in the diagram is contractible. So the diagram degenerates on path components into the following sequence

$$\xymatrix{
\pi_0 Orient(bu, L_{K(1) \vee K(2)} gl_1 Tmf_1(N) )\ar[r] & \pi_0 Orient(bu,L_{K(1)} gl_1 Tmf_1(N))\ar[d]\\
& \pi_0 Orient(bu, L_{K(1)}L_{K(2)} gl_1 Tmf_1(N))}
$$
which is "exact in the middle " in the sense that the image of the first map gets mapped by the second map to the element $*$.

Using logarithmic cohomology operations we may extend the chromatic fractures square determining $L_{K(1)\vee K(2)}gl_1Tmf_1(N)\widehat{_p}$ as follows:
\begin{equation}\label{factoringa}
    \xymatrix{ L_{K(1)\vee K(2)}gl_1 Tmf_1(N) \ar[r] \ar[d] & L_{K(2)}gl_1 Tmf_1(N)\ar[d] \ar[r]^{\ell_2}_{\simeq} & L_{K(2)}Tmf_1(N) \ar[dd] \\
L_{K(1)}gl_1 Tmf_1(N)\ar[d]^{\ell_1}_{\simeq} \ar[r] & L_{K(1)}L_{K(2)} gl_1Tmf_1(N) \ar[rd]^{L_{K(1)}\ell_2}_{\simeq} & \\
L_{K(1)}Tmf_1(N) \ar[rr]^{a} & & L_{K(1)}L_{K(2)} Tmf_1(N)
}
\end{equation}
Here the vertical arrow on the very right is just the usual $K(1)$-localization (this follows from the naturality of localizations and of the logarithm) but the lower horizontal arrow $a$ is \textbf{not} the usual localization map! Understanding this map is key to the construction of any $\mathbb{E}_{\infty}$-orientation of the spectrum of topological modular forms or versions of it.
First observe the following fact:
\begin{prop}[\cite{Dylan} Cor. 3.12]
Evaluation on $\pi_*$ gives an injection 
$$[L_{K(1)}Tmf_1(N), L_{K(1)}L_{K(2)}Tmf_1(N)] \hookrightarrow \text{Hom}(\pi_*L_{K(1)} Tmf_1(N)\otimes \mathbb{Q}, \pi_* L_{K(1)}L_{K(2)}Tmf_1(N)\otimes \mathbb{Q}). 
$$
\end{prop}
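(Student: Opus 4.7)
The plan is to identify $[L_{K(1)}Tmf_1(N),\, L_{K(1)}L_{K(2)}Tmf_1(N)]$ with a set of continuous $\mathbb{Z}_p^\times$-equivariant maps between $p$-adic rings of modular functions (paralleling Proposition \ref{k1localmaps}), and then to see this set injects into the rational Hom between graded weight pieces via Mahler-type interpolation. Denote the source and target by $A$ and $B$. Both $A$ and $B$ have even-concentrated and $\mathbb{Z}_p$-flat homotopy groups: for $A$ this is because $\pi_{2k}A = MF_{p,k}(\Gamma_1(N),\mathbb{W})$ embeds in the flat ring $V(\Gamma_1(N))$; for $B$ it follows from the description of $L_{K(2)}Tmf_1(N)$ in Section \ref{k2localtmfsubsect} as a (homotopy-fixed-point) product of Lubin-Tate spectra with torsion-free even homotopy, together with the fact that $L_{K(1)}$ preserves both properties for such inputs.

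The first step is to extend Proposition \ref{k1localmaps} to obtain
$$[A,B] \;\cong\; \mathrm{Hom}^{\mathbb{Z}_p^\times}_{\mathrm{cts}}\bigl(V(\Gamma_1(N)),\, M\bigr),$$
where $M := \pi_0\bigl(KU\widehat{_p}\wedge B\bigr)$. The argument of \cite{Dylan} applies essentially verbatim, since the key inputs — pro-Galois descent along $KU\widehat{_p}\to L_{K(1)}\mathbb{S}$ with group $\mathbb{Z}_p^\times$, and freeness of $\pi_*(KU\widehat{_p}\wedge A)$ over $\pi_*KU\widehat{_p}$ — depend only on the source $A$, not on the target. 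Next, since $V(\Gamma_1(N))$ and $M$ are both $\mathbb{Z}_p$-flat (the former by Theorem \ref{padicqexpansion}), rationalization gives an injection
$$\mathrm{Hom}^{\mathbb{Z}_p^\times}_{\mathrm{cts}}(V(\Gamma_1(N)),\,M) \;\hookrightarrow\; \mathrm{Hom}^{\mathbb{Z}_p^\times}_{\mathrm{cts}}\bigl(V(\Gamma_1(N))\otimes\mathbb{Q},\, M\otimes\mathbb{Q}\bigr).$$
Finally, the weight-$k$ $\mathbb{Z}_p^\times$-eigenspace of $V(\Gamma_1(N))\otimes\mathbb{Q}$ coincides with $MF_{p,k}(\Gamma_1(N))\otimes\mathbb{Q} = \pi_{2k}A\otimes\mathbb{Q}$ via the isomorphism underlying Proposition \ref{theta}, and analogously for $M\otimes\mathbb{Q}$ and $\pi_{2k}B\otimes\mathbb{Q}$. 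A continuous equivariant $\mathbb{Q}_p$-linear map is determined by its action on these integer-weight pieces, yielding the desired injection into $\prod_k \mathrm{Hom}(\pi_{2k}A\otimes\mathbb{Q},\,\pi_{2k}B\otimes\mathbb{Q})$.

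The step I expect to be the main obstacle is the last one: the Mahler-type density argument showing that integer-weight eigenvectors suffice to determine a continuous $\mathbb{Z}_p^\times$-equivariant $\mathbb{Q}_p$-linear map on $V(\Gamma_1(N))\otimes\mathbb{Q}$. This is a well-known feature of the theory of $p$-adic modular forms — classical modular forms are $p$-adically dense in the ring of generalized $p$-adic modular functions — but must be verified carefully in the equivariant linear setting, for instance by expanding any $f \in V(\Gamma_1(N))$ in a Mahler basis adapted to the $\mathbb{Z}_p^\times$-action and noting that all contributing basis elements already lie in integer-weight subspaces.
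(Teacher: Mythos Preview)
Your approach is correct and is essentially a detailed unpacking of the paper's one-line justification. The paper simply writes ``This follows immediately from the fact that the left hand side is torsion free (see Proposition~\ref{k1localmaps}),'' leaving implicit exactly the mechanism you spell out: extend the $Cts_{\mathbb{Z}_p^\times}$-description of Proposition~\ref{k1localmaps} to the target $B$, use torsion-freeness to pass to the rationalization, and then recover the graded $\pi_*$-Hom via the integer-weight eigenspace decomposition. Your worry about the last step is well placed but not a genuine obstacle: the density of classical (integer-weight) modular forms in $V(\Gamma_1(N))$ is precisely what makes Proposition~\ref{k1localmaps} work in the first place (this is proved in \cite{Dylan}), so a continuous $\mathbb{Z}_p^\times$-equivariant map vanishing on all integer-weight pieces vanishes identically.
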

This follows immmediately from the fact that the left hand side is torsion free (see Propostion \ref{k1localmaps}). In particular this implies that the map $a$ is determined by its effect on homotopy groups.

\begin{lemma}

There exist maps $$U^{top}:L_{K(1)}Tmf_1(N) \rightarrow L_{K(1)}Tmf_1(N)$$ and $$\langle p \rangle^{top}: L_{K(1)}Tmf_1(N) \rightarrow L_{K(1)}Tmf(\Gamma)$$  such that their effects on elements $g\in \pi_{2k} L_{K(1)}Tmf_1(N)\cong MF_{p,k}(\Gamma_1(N))$ is given by the operators $U$ and $\langle p \rangle$ of Section \ref{atkinsoperator}.
\end{lemma}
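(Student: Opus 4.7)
The plan is to apply Proposition \ref{k1localmaps}, which identifies the set $[L_{K(1)}Tmf_1(N), L_{K(1)}Tmf_1(N)]$ with the set $Cts_{\mathbb{Z}_p^\times}(V(\Gamma_1(N)), V(\Gamma_1(N)))$ of continuous $\mathbb{Z}_p^\times$-equivariant self-maps of the representing ring of trivialized elliptic curves with $\Gamma_1(N)$-level structure. To construct $U^{top}$ and $\langle p \rangle^{top}$ it therefore suffices to produce the algebraic operators $U$ and $\langle 1,p\rangle$ (using that $p\nmid N$, so $p \bmod N \in (\mathbb{Z}/N)^\times$) as continuous $\mathbb{Z}_p^\times$-equivariant endomorphisms of $V(\Gamma_1(N))$, and then to match them with the classical operators on $\pi_{2k}$.

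First I would handle $\langle 1,p\rangle$. Continuity is automatic because this diamond operator is defined via the action of the profinite group $G(N) = \mathbb{Z}_p^\times \times (\mathbb{Z}/N)^\times$ on the $p$-adic ring $V(\Gamma_1(N))$ representing the moduli problem of trivialized elliptic curves with level structure. Equivariance with respect to $\langle x, 1 \rangle$ for $x \in \mathbb{Z}_p^\times$ is immediate because $G(N)$ is abelian, so the diamond operators $\langle x,1\rangle$ and $\langle 1,p\rangle$ commute. This yields $\langle p \rangle^{top}$ via Proposition \ref{k1localmaps}.

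Next I would handle $U = \tfrac{1}{p}\mathrm{tr}_{\text{Frob}}$. Continuity follows from the description of $\mathrm{tr}_{\text{Frob}}$ as the trace of the locally free rank-$p$ endomorphism $\text{Frob}$ of $V(\Gamma_1(N))$ established above. For equivariance, the key calculation is that $\text{Frob}$ itself commutes with $\langle x, 1\rangle$: the canonical subgroup $E_{can}$ depends only on the elliptic curve $E$ (not on the trivialization $\varphi$), and under the assignment $(E,\varphi,\beta) \mapsto (E/E_{can}, \varphi \circ \check\pi, \pi\circ\beta)$, rescaling the trivialization by $x^{-1}$ before or after taking the quotient by $E_{can}$ yields the same triple. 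Since $\text{Frob}$ commutes with $\langle x,1\rangle$, the operator $\langle x,1\rangle$ permutes the fibers of $\text{Frob}$, so $\mathrm{tr}_{\text{Frob}}$ commutes with $\langle x,1\rangle$ as well, and hence so does $U$. This produces $U^{top}$.

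Finally, I would verify that the topological operators induce the classical ones on homotopy groups. Under the identification $\pi_{2k}L_{K(1)}Tmf_1(N) \cong MF_{p,k}(\Gamma_1(N))$, weight-$k$ modular forms embed into $V(\Gamma_1(N))$ as the generalized modular forms satisfying $\langle x,1\rangle f = x^k f$, and this embedding is precisely the map of the $\mathbb{Z}_p^\times$-fixed-per-weight decomposition used in Proposition \ref{k1localmaps}. The operators $U$ and $\langle 1,p\rangle$ on $V(\Gamma_1(N))$ preserve weights (both commute with every $\langle x,1\rangle$) and, when restricted to the weight-$k$ subspace, reproduce the classical Atkin and diamond operators of Section \ref{atkinsoperator} by construction. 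The main obstacle in this plan is the verification that $U$ is $\mathbb{Z}_p^\times$-equivariant, which rests on the geometric fact that $E_{can}$ is functorial in the triple $(E, \varphi, \beta)$ only through $E$ itself; everything else is a direct bookkeeping check.
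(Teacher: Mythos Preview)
Your proposal is correct and follows essentially the same approach as the paper: both invoke Proposition \ref{k1localmaps} and reduce the construction to the observation that the algebraic operators $U$ and $\langle p \rangle$ commute with the diamond operators $\langle c,1\rangle$ for $c\in\mathbb{Z}_p^\times$. The paper's proof is a single sentence citing exactly this commutation; you have simply unpacked the verification (continuity, the geometric reason Frob commutes with $\langle x,1\rangle$, and the matching on weight-$k$ subspaces) in more detail than the paper does.
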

\begin{proof}
This can be deduced directly from Propostion \ref{k1localmaps} as both $U$ and $\langle p \rangle$ commute with the diamond operators $\langle c,1 \rangle$  for $c\in \mathbb{Z}_p^{\times}$ (see Section \ref{padicmodularfunctions}).
\end{proof}
Recall from Proposition \ref{k2hecke} the effect of the $K(2)$-local logarithm on homotopy groups.
Combining this with the fact that $$U\circ \text{Frob}=id$$ and $$T_p = p^{k-1}\text{Frob} + \langle p \rangle U $$
(see Section \ref{padicmodularfunctions}) we get 
$$1-T_p + p^{k-1}\langle p \rangle = (1-\langle p \rangle U)(1-p^{k-1}\text{Frob}).$$ Combining this all together with the fact that the localization maps 
$$
L_{K(1)}Tmf_1(N) \rightarrow L_{K(1)}L_{K(2)}Tmf_1(N)
$$
and
$$ L_{K(2)}Tmf_1(N) \rightarrow L_{K(1)}L_{K(2)}Tmf_1(N)
$$
are injective on homotopy groups 
we get that the map $a$ (recall that it is completely determined by its effect on homotopy groups) factors as 
$$ L_{K(1)}Tmf_1(N) \xrightarrow{1-\langle p \rangle^{top} U^{top}} L_{K(1)}Tmf_1(N) \rightarrow 
L_{K(1)}L_{K(2)}Tmf_1(N)
$$
and hence we get the following commutative diagram:
\begin{equation}\label{diagramsplittingatkins}
\xymatrix{
L_{K(1)\vee K(2)}gl_1 Tmf_1(N)\widehat{_p} \ar[d] \ar[rr] & & L_{K(2)}Tmf_1(N) \ar[d] \\
L_{K(1)}Tmf_1(N) \ar[r]^{1-\langle p \rangle^{top} U^{top}} & L_{K(1)}Tmf_1(N)\ar[r] & L_{K(1)}L_{K(2)}Tmf_1(N)
}
\end{equation}
Note that the lower right horizontal and the right vertical map are just the usual ones in the chromatic fracture square for $Tmf_1(N)\widehat{_p}$. So an element in $\pi_0 Orient(bu, L_{K(1)}gl_1 Tmf_1(N))$ specified by a sequence of $p$-adic modular forms $\{g_k \}$  ($k\geq 1$) as in Theorem \ref{k1tmforient} lifts to an element in $$\pi_0 Orient(bu, L_{K(1)\vee K(2)}gl_1 Tmf_1(N)) $$ if and only if for all $k\geq 1$
$$
(1-p^{k-1}\text{Frob})g_k
$$
is in the kernel of the operator $1-\langle p \rangle U$.
\begin{theo}\label{orienttmfgamma}
We have an injection 
\begin{align*}
    \pi_0 \text{Orient}(bu, L_{K(1)\vee K(2)}Tmf_1(N) \hookrightarrow & [bu, L_{K(1)}Tmf_1(N)\otimes \mathbb{Q}] \\  \cong &\prod_{k\geq n} MF_{p,k}\otimes \mathbb{Q} \\
f  \mapsto & \pi_{2k}f
\end{align*}
onto the set of sequences $\{ g_k \}$ of $p$-adic modular forms such that \begin{enumerate}
    \item for all $c\in \mathbb{Z}_p^{\times}$ the sequence $\{(1-c^k)(1-p^{k-1}\text{Frob})g_k \}$ satisfies the generalized Kummer congruences;
    \item for all $c\in \mathbb{Z}_p^{\times}$ $\lim_{k \to \infty}(1-c^k)(1-p^{k-1}\text{Frob})g_k = \frac{1}{p}\log(c^{p-1})$; 
    \item The elements $(1-p^{k-1}\text{Frob})g_k$ are in the kernel of the operator $1-\langle p \rangle U$.
\end{enumerate}
\end{theo}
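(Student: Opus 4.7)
The plan is to exploit the chromatic fracture square
\[
\xymatrix{
L_{K(1)\vee K(2)}gl_1 Tmf_1(N) \ar[r]\ar[d] & L_{K(2)}gl_1 Tmf_1(N) \ar[d] \\
L_{K(1)}gl_1 Tmf_1(N) \ar[r] & L_{K(1)}L_{K(2)} gl_1 Tmf_1(N)
}
\]
together with the fact that the functor $\mathrm{Orient}(bu,-)$ commutes with homotopy limits (see Section \ref{thomorient}). This produces a homotopy pullback square of orientation spaces. Because $bu$ is $K(2)$-acyclic (its homotopy groups are concentrated in even degrees and $K(2)$-localization kills $H\mathbb{Z}$), the space $\mathrm{Orient}(bu, L_{K(2)} gl_1 Tmf_1(N))$ is contractible. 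Passing to $\pi_0$, an element of $\pi_0 \mathrm{Orient}(bu, L_{K(1)\vee K(2)} gl_1 Tmf_1(N))$ is therefore precisely an element of $\pi_0 \mathrm{Orient}(bu, L_{K(1)} gl_1 Tmf_1(N))$ whose image in $\pi_0 \mathrm{Orient}(bu, L_{K(1)}L_{K(2)} gl_1 Tmf_1(N))$ is the basepoint.

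Next, I would invoke Theorem \ref{k1tmforient}, which identifies $\pi_0 \mathrm{Orient}(bu, L_{K(1)} gl_1 Tmf_1(N))$ (injectively, via $\pi_{2k}$) with the set of sequences $\{g_k\}$ of $p$-adic modular forms satisfying conditions (1) and (2). Since the map $[bu, L_{K(1)} Tmf_1(N)] \hookrightarrow [bu, L_{K(1)} Tmf_1(N)\otimes\mathbb{Q}]$ is injective (both target and source are torsion-free in the relevant degrees), the additional kernel condition needed to lift to $L_{K(1)\vee K(2)}$ is detected on homotopy groups of the underlying spectra, after precomposition with the logarithmic equivalence $\ell_1$.

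The key calculation is the identification of the bottom horizontal arrow $a\colon L_{K(1)}Tmf_1(N)\to L_{K(1)}L_{K(2)}Tmf_1(N)$ in Diagram \ref{factoringa}. Using Rezk's logarithm formulas (Propositions \ref{k1locallogtmf} and the corrected \ref{k2hecke}) together with the relations $U\circ\mathrm{Frob}=\mathrm{id}$ and $T_p = p^{k-1}\mathrm{Frob}+\langle p\rangle U$, one computes
\[
1 - T_p + p^{k-1}\langle p\rangle = (1 - \langle p\rangle U)(1 - p^{k-1}\mathrm{Frob}),
\]
so on homotopy groups $a$ is the composite of $1-\langle p\rangle^{top} U^{top}$ with the standard $K(2)$-localization. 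Combined with Diagram \ref{diagramsplittingatkins}, this shows that a sequence $\{g_k\}$ coming from an element of $\pi_0 \mathrm{Orient}(bu, L_{K(1)} gl_1 Tmf_1(N))$ lifts to $L_{K(1)\vee K(2)}$ if and only if $(1-p^{k-1}\mathrm{Frob})g_k$ lies in the kernel of $1-\langle p\rangle U$ for every $k$, which is exactly condition (3).

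The main obstacle is the careful verification that the map $a$ really does factor through $1-\langle p\rangle^{top}U^{top}$ on the nose and not merely up to scalars; this rests on the corrected Hecke formula of Proposition \ref{k2hecke} (the diamond factor $\langle p\rangle$ is essential here, even though it was harmless in the level $\Gamma_0(N)$ situation of earlier work), and on the injectivity statement of Proposition \ref{k1localmaps} which promotes agreement on rational homotopy groups to agreement of maps of spectra. Once these ingredients are in place, the exactness in the middle of the chromatic fracture sequence delivers the claimed description.
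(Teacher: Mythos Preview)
Your proposal is correct and follows essentially the same route as the paper: apply the chromatic fracture square together with the fact that $\mathrm{Orient}(bu,-)$ preserves homotopy limits, use $K(2)$-acyclicity of $bu$ to reduce to a kernel condition at the $K(1)$-local level, and then identify the transition map $a$ via the factorization $1-T_p+p^{k-1}\langle p\rangle=(1-\langle p\rangle U)(1-p^{k-1}\mathrm{Frob})$ coming from Proposition~\ref{k2hecke}. The only minor quibble is your parenthetical justification for $bu$ being $K(2)$-acyclic (even homotopy plus $K(2)_*H\mathbb{Z}=0$ is not by itself sufficient); the cleaner reason is that $ku$ has chromatic height~$1$.
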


\begin{remark}
Hecke and Atkins operators as operations on elliptic cohomology theories have been studied already from the very early days of the field (see for example \cite{bakeratkins} and \cite{bakerhecke} ).
\end{remark}

\begin{remark}\label{tmflogarithm}
Note that from Diagram \ref{diagramsplittingatkins} above
and the universal property of homotopy pullbacks we als get the following homotopy pullback square:
\begin{equation}\label{hopullbacklog} \xymatrix{
L_{K(1)\vee K(2)}gl_1 Tmf_1(N) \ar[d] \ar[r]^{\:\:\ell_{tmf}} & Tmf_1(N)\widehat{_P} \ar[d] \\
L_{K(1)}Tmf_1(N) \ar[r]^{1-\langle p \rangle U^{top}} & L_{K(1)}Tmf_1(N)
}.
\end{equation}

The upper horizontal map $\ell_{tmf}$ (or sometimes its precomposition with the map $gl_1Tmf_1(N) \rightarrow L_{K(1)\vee K(2)}gl_1 Tmf_1(N)$) is called the topological logarithm  and its existence is unique to the situation of topological modular forms. Its fiber, or more precise the homotopy groups of its fiber, are closely related to open questions in number theory and it is conjectured that they are related to exotic structures on the free loop spaces $LS^n$ on spheres. For more on this we refer the reader to the discussion at the end of Chapter 10 of \cite{tmfbook}.
\end{remark}

\subsection{The Hopkins-Lawson Obstruction Theory}\label{lifting}
In \cite{hopkinslawson} the authors, based on earlier work by Aarone and Lesh, gave another approach to complex $\mathbb{E}_{\infty}$-orientations without usage of the logarithmic cohomology operations. The following two propositions summarize the results of \cite{hopkinslawson} we need.
\begin{prop}\label{hlprop1}
There exists a sequence of connective spectra $$ \ast \simeq x_0 \rightarrow x_1 \rightarrow x_2 \rightarrow x_3 \ldots$$ with the following properties:
\begin{enumerate}
    \item $x_1 \simeq \Sigma^{\infty} \mathbb{CP}^{\infty}$
    \item  $\emph{\text{hocolim}} \: x_i \simeq bu $. 
    \item Let $f_m$ be the homotopy fiber of the map $x_{m-1} \rightarrow x_m$. Then 
    \begin{enumerate}
        \item $f_m$ is $(2m-2)$-connective;
        \item the map $f_m \rightarrow \ast$ is an isomorphism in rational homology for $m>1$, an isomorphism in $p$-local homology if $m$ is not a power of $p$ and an isomorphism in $K(n)$-homology if $m>p^n$.
    \end{enumerate}
\end{enumerate}
\end{prop}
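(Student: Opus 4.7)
The plan is to follow the Hopkins-Lawson strategy of building the tower from the Goodwillie/partition-complex filtration of the functor that sends a spectrum to its connective Bott-inverted complex K-theory. The base case $x_1 = \Sigma^{\infty}\mathbb{CP}^{\infty}$ together with a chosen map $x_1 \to bu$ representing the reduced tautological line bundle (normalized by the Bott element) is forced by property (1). To define $x_m$ from $x_{m-1}$, I would inductively attach a ``partition-complex cell'' in the sense of Arone--Mahowald and Kuhn: namely, build $x_m$ from $x_{m-1}$ as a pushout
\[
x_{m-1} \longleftarrow \Sigma^{2m-2}\bigl(\partial\Pi_{m+1}\bigr)_{h\Sigma_{m+1}} \longrightarrow x_m,
\]
where the attaching map is determined by the canonical iterated multiplication on $bu$ restricted through $x_1$ and the combinatorics of the partition complex $\partial\Pi_{m+1}$. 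By construction the cofibre $f_m$ will then be a suspension (by $2m-2$) of a homotopy orbit spectrum of the $m$-th partition complex.

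Once the construction is in place, the three properties are verified as follows. Property (2), $\mathrm{hocolim}\,x_m \simeq bu$, follows from the Snaith-style identification $\Sigma^{\infty}\mathbb{CP}^{\infty}_{+}[\beta^{-1}] \simeq ku$ together with an inductive matching of the attached cells with the cells of the symmetric-power / Goodwillie filtration of $bu$; passing to connective covers and removing the basepoint component gives $bu$. The connectivity claim (3a) is a direct cell-counting argument: $\partial\Pi_{m+1}$ has dimension $m-2$ as a simplicial complex (and thus is $(m-3)$-connected after suspending once), so $\Sigma^{2m-2}(\partial\Pi_{m+1})_{h\Sigma_{m+1}}$ is $(2m-2)$-connective, matching the claim.

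The bulk of the work is property (3b). Rationally, the partition complex $\partial\Pi_{m+1}$ has trivial reduced $\mathbb{Q}[\Sigma_{m+1}]$-homology whenever $m>1$, so the homotopy orbits vanish rationally. The $p$-local statement is exactly Mitchell's theorem, which asserts that $(\partial\Pi_{m+1})_{h\Sigma_{m+1}}$ is $p$-locally contractible unless $m+1$ is a power of $p$; this uses the Steinberg idempotent and character-theoretic arguments for $\Sigma_{m+1}$. The $K(n)$-vanishing for $m>p^n$ is the deepest input, following from the Arone--Mahowald--Kuhn analysis, which shows that the $p$-complete homotopy orbits of $\partial\Pi_{m+1}$ become $K(n)$-acyclic once $m+1$ exceeds $p^n$, because the relevant Steinberg module becomes $v_n$-torsion above chromatic height $n$.

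The main obstacle will be the $K(n)$-acyclicity at precisely the borderline $m = p^n$, and making sure that the inductive attachment of cells is performed in a coherent enough way that the connectivity estimate in (3a) is sharp (not just off by one). Getting the connectivity sharp is what eventually makes the obstruction-theoretic argument of Section 5.4 work, so special care has to be taken with the suspension shifts in the attaching maps; everything else in the proof reduces to well-established computations on partition complexes.
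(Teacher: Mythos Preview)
The paper does not prove this proposition at all: it is stated as a summary of results from \cite{hopkinslawson} (together with the companion Proposition~\ref{hlprop2}), with the explicit remark ``The following two propositions summarize the results of \cite{hopkinslawson} we need.'' So there is no argument in the paper to compare against; the proposition is purely a black-box citation.

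That said, since you have attempted an actual construction, a few comments are in order. Your overall picture --- build a filtration of $bu$ whose layers are governed by partition-type complexes, then invoke Arone--Mahowald/Kuhn style vanishing results for the rational, $p$-local, and $K(n)$-local steps --- is the right shape and is indeed what underlies the Hopkins--Lawson argument via the earlier work of Arone and Lesh. However, several of the specific claims are off. The actual Arone--Lesh filtration of $bu$ has $m$-th subquotient built from the $U(m)$-equivariant complex $\mathcal{L}_m$ of proper direct-sum decompositions of $\mathbb{C}^m$ (a Tits-building analogue), not from $(\partial\Pi_{m+1})_{h\Sigma_{m+1}}$; in particular the symmetric group and the shift in index to $m+1$ are not what appear in the construction. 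Your rational vanishing argument is also misstated: the reduced $\mathbb{Q}[\Sigma_{m+1}]$-homology of $\partial\Pi_{m+1}$ is the (sign-twisted) Lie representation, which is certainly nonzero as a module; what vanishes is its coinvariants, and in any case the relevant vanishing in Hopkins--Lawson comes from the unitary picture rather than this computation. Finally, the $p$-local acyclicity when $m$ is not a prime power is not ``Mitchell's theorem'' but rather a consequence of the classical fact (going back to Kuhn, Mitchell--Priddy, and ultimately the Steinberg idempotent for $GL_n(\mathbb{F}_p)$) that the relevant subquotients are built from Steinberg summands, which are $p$-locally trivial away from prime powers.

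In short: the paper simply quotes the result, and your sketch, while directionally correct, would need to be rewritten with the correct Arone--Lesh objects and the correct vanishing inputs before it could stand as an independent proof.
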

\begin{prop}\label{hlprop2}
Let $MX_{p^n}$ be the Thom spectrum corresponding to the composite $$\Sigma^{-1} x_{p^n} \rightarrow \Sigma^{-1} bu \rightarrow gl_1 \mathbb{S}.$$ We then have the following: 
    \begin{enumerate} 
    \item Let $E$ be an $\mathbb{E}_{\infty}$ ring spectrum. We have an equivalence 
    $$\pi_0 Map_{\mathbb{E}_{\infty}}( MX_1, E) \cong Map_{RingSpectra}(MU,E)$$ between $\mathbb{E}_{\infty}$-orientations by the spectrum $MX_1$ and ordinary homotopy commutative complex orientations. 
        \item We have equivalences $$ L_{K(n)} MX_{p^n} \simeq L_{K(n)} MU $$ and
        $$ L_{E(n)}MX_{p^n} \simeq L_{E(n)} MU$$ for $n\geq 0$. 
    \end{enumerate}
\end{prop}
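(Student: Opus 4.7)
The plan is to handle the two parts separately, leveraging the $\mathbb{E}_{\infty}$-Thom spectrum pushout description from Section \ref{thomorient} together with the convergence and acyclicity properties listed in Proposition \ref{hlprop1}.

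For part (1), I would unwind the definition of $MX_1$. Since $x_1 \simeq \Sigma^{\infty}\mathbb{CP}^{\infty}$, the spectrum $MX_1$ is the pushout in $\mathbb{E}_{\infty}$-rings of the diagram $\mathbb{S} \leftarrow \Sigma^{\infty}_+\Omega^{\infty}gl_1\mathbb{S} \to \Sigma^{\infty}_+\Omega^{\infty}Cj_1$, where $j_1 \colon \Sigma^{-1}x_1 \to gl_1\mathbb{S}$ is the composite appearing in the definition. The fundamental adjunction gives
\[
\pi_0 Map_{\mathbb{E}_{\infty}}(MX_1, E) \;\cong\; \pi_0\, \text{Orient}(x_1, gl_1 E),
\]
so a point in the left-hand side is exactly a dotted arrow making the diagram
\[
\Sigma^{-1}\Sigma^{\infty}\mathbb{CP}^{\infty} \to gl_1\mathbb{S} \to Cj_1 \dashrightarrow gl_1 E
\]
commute up to homotopy, with prescribed restriction to the unit $gl_1\iota$. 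Such a dotted arrow is precisely a Thom class in $E$-cohomology for the spherical fibration associated to the tautological line bundle on $\mathbb{CP}^{\infty}$, equivalently a class $x_E \in \widetilde{E}^2(\mathbb{CP}^{\infty})$ restricting to the canonical generator on $S^2$. By Quillen's classical theorem this is precisely the data of a homotopy commutative complex orientation $MU \to E$.

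For part (2), fix a prime $p$ and integer $n \geq 0$. Property (3)(b) of Proposition \ref{hlprop1} implies that each fiber $f_m$ with $m > p^n$ has trivial $K(j)$-homology for every $0 \leq j \leq n$: rationally because $m > 1$, $p$-locally because $m$ is not a power of $p$ (so $f_m$ is $p$-locally trivial if $m$ is a power of a different prime, and controlled by the rational statement otherwise), and $K(j)$-locally because $m > p^j$. Combined with (3)(a), the tower above $x_{p^n}$ has cofibers of unbounded connectivity, so the map $x_{p^n} \to \operatorname{hocolim}\, x_m \simeq bu$ is a $K(j)$-equivalence for every $0 \leq j \leq n$, hence an $E(n)$-equivalence by the Bousfield class identity $\langle E(n)\rangle = \bigvee_{j=0}^{n}\langle K(j)\rangle$. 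To propagate this to Thom spectra, I would invoke the Bousfield-Kuhn functor (Proposition \ref{bousfieldkuhnfunctor}): $L_{K(n)}\Sigma^{\infty}_+\Omega^{\infty}X$ depends only on $\Omega^{\infty}X$, so $K(n)$-equivalences between connective spectra are preserved by $\Sigma^{\infty}_+\Omega^{\infty}$; combined with the fact that Bousfield localization commutes with pushouts, this yields $L_{K(n)}MX_{p^n} \simeq L_{K(n)}MU$, and the $E(n)$-statement follows by applying this to all $j \leq n$ simultaneously.

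The main obstacle will be the passage from the base-level equivalence $x_{p^n} \to bu$ to the Thom-level equivalence after localization. The functor $\Sigma^{\infty}_+\Omega^{\infty}$ is highly nonlinear, so $K(n)$-equivalences are not preserved in a naive way; the Bousfield-Kuhn identification makes this work by reducing the question to the underlying zeroth space. Everything else in the argument is essentially formal manipulation of $\mathbb{E}_{\infty}$-ring pushouts, the unit map, and Bousfield classes, so the interesting content is concentrated in property (3) of Proposition \ref{hlprop1} together with the chromatic convergence machinery.
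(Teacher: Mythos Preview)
The paper does not prove this proposition at all; it is simply quoted from \cite{hopkinslawson} as one of ``the results of \cite{hopkinslawson} we need.'' So there is no paper proof to compare against, and your proposal stands or falls on its own.

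Your treatment of part (1) is fine: unwinding the $\mathbb{E}_{\infty}$-pushout description of $MX_1$ and using $x_1\simeq\Sigma^{\infty}\mathbb{CP}^{\infty}$ reduces the question to the classical identification of complex orientations with Thom classes for the tautological line bundle.

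Part (2), however, has a genuine gap. You write that ``$L_{K(n)}\Sigma^{\infty}_+\Omega^{\infty}X$ depends only on $\Omega^{\infty}X$, so $K(n)$-equivalences between connective spectra are preserved by $\Sigma^{\infty}_+\Omega^{\infty}$.'' The first clause is a tautology (it is a functor of the space $\Omega^{\infty}X$), but it does not imply the second, and the Bousfield--Kuhn functor says something different: it identifies $L_{K(n)}X$, not $L_{K(n)}\Sigma^{\infty}_+\Omega^{\infty}X$, with a functor of $\Omega^{\infty}X$. In fact the implication you want is false. Take $X=\Sigma^2 H\mathbb{F}_p$ and $Y=\ast$; the map $X\to Y$ is a $K(n)$-equivalence for $n\geq 1$, yet $\Omega^{\infty}X=K(\mathbb{F}_p,2)$ has nontrivial $K(n)$-homology by Ravenel--Wilson, so $\Sigma^{\infty}_+\Omega^{\infty}X\to\mathbb{S}$ is not a $K(n)$-equivalence. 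Thus the passage from the base-level equivalence $x_{p^n}\to bu$ to the Thom-spectrum level cannot be handled by this argument.

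The actual argument in \cite{hopkinslawson} is more delicate and exploits the specific origin of the filtration $\{x_m\}$ in the Arone--Lesh machinery: one analyzes the successive cofibers $MX_{m}/MX_{m-1}$ directly and identifies them (up to module structure over $MX_{m-1}$) with spectra built from $f_m$, so that the $K(n)$-acyclicity of $f_m$ for $m>p^n$ propagates to the Thom level. You correctly flagged this step as ``the main obstacle''; the resolution just does not go through Bousfield--Kuhn.
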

These two propositions imply equivalences 
$$ Map_{\mathbb{E}_{\infty}}(MU, Tmf_1(N)\widehat{_p})\simeq Map_{\mathbb{E}_{\infty}}(MX_{p^2}, Tmf_1(N)\widehat{_p}) \simeq Orient(x_{p^2},gl_1 Tmf_1(N)\widehat{_p})   
$$
and 
$$ Orient(bu, L_{K(1)\vee K(2)} gl_1 Tmf_1(N)\widehat{_p}) \simeq Orient(x_{p^2},L_{K(1)\vee K(2)} gl_1 Tmf_1(N)\widehat{_p}).$$

Recall from Section \ref{orientations}  that the functor $Orient(bg,-)$ from spectra under 
$gl_1 \mathbb{S}$ to spaces commutes with homotopy pullback diagrams. We will now apply this fact to the cofiber sequence
$$
gl_1 Tmf_1(N) \rightarrow L_{K(1)\vee K(2)} gl_1 Tmf_1(N) \rightarrow \Sigma D.
$$
The cofiber $\Sigma D$ becomes a space under $gl_1 \mathbb{S}$ via the composite map
$$
gl_1 \mathbb{S} \rightarrow gl_1 Tmf_1(N)\widehat{_p} \rightarrow L_{K(1)\vee K(2)} gl_1 Tmf_1(N) \rightarrow \Sigma D 
$$
and we get the following homotopy pullback diagram

$$
\xymatrix{
Orient(x_{p^2}, gl_1 Tmf_1(N)\widehat{_p})\ar[r]\ar[d] & Orient(x_{p^2}, L_{K(1)\vee K(2)} gl_1 Tmf_1(N)\widehat{_p}) \ar[d] \\
\ast \ar[r] & Orient(x_{p^2}, \Sigma D)
}.
$$
Observe that the space $Orient(x_{p^2},\Sigma D)$ is always non-empty and the torsor $$\pi_0 Orient(x_{p^2},\Sigma D) \cong[x_{p^2},\Sigma D]$$ is canonically trivialized by the zero map $gl_1\mathbb{S}/(\Sigma^{-1}x_{p^2}) \xrightarrow{0} \Sigma D$. Hence the path components of the space $Map_{\mathbb{E}_{\infty}}(MU,Tmf_1(N)\widehat{_p})$ correspond to the maps in $Orient(x_{p^2},L_{K(1)\vee K(2)} gl_1 Tmf_1(N)\widehat{_p})$ which are sent to the trivial map in 
$ Orient(x_{p^2},\Sigma D)$. 
\begin{remark}
Analogous statements hold if we replace in the discussion above the spectra $x_{p^2}$ with $x_p$ and $x_1$, respectively.  
\end{remark}
Before we proceed further we need the following result which is due to Wilson.
\begin{prop}[\cite{Dylan}(Corollary 4.8)]\label{torsionfreed2}
Let $D$ be as above. Then $\pi_3 D$ is torsion free. 
\end{prop}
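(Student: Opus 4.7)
The plan is to extract $\pi_3 D$ from the long exact sequence associated to the defining fiber sequence
$$D \longrightarrow gl_1 Tmf_1(N) \longrightarrow L_{K(1)\vee K(2)} gl_1 Tmf_1(N).$$
Since the homotopy of $Tmf_1(N)$ is concentrated in even degrees (with the possible exception of $\pi_1$, as recalled in Section 3.1), we have $\pi_3 gl_1 Tmf_1(N) \cong \pi_3 Tmf_1(N) = 0$, and the long exact sequence collapses to give
$$\pi_3 D \;\cong\; \text{coker}\bigl(\pi_4 gl_1 Tmf_1(N) \to \pi_4 L_{K(1)\vee K(2)} gl_1 Tmf_1(N)\bigr).$$
Corollary \ref{connectedness} moreover guarantees that this is the last potentially nonzero homotopy group of $D$, so the task reduces to showing that this cokernel is torsion free.

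To understand the target, I would invoke the pullback square \ref{hopullbacklog} from Remark \ref{tmflogarithm}, which presents $L_{K(1)\vee K(2)} gl_1 Tmf_1(N)$ as the fiber product of $Tmf_1(N)\widehat{_p}$ and $L_{K(1)} Tmf_1(N)$ glued along $1-\langle p\rangle U^{top}$. Both $L_{K(1)} Tmf_1(N)$ and $Tmf_1(N)\widehat{_p}$ have homotopy concentrated in even degrees, so the associated Mayer--Vietoris sequence yields an injection
$$\pi_4 L_{K(1)\vee K(2)} gl_1 Tmf_1(N) \;\hookrightarrow\; \pi_4 Tmf_1(N)\widehat{_p} \oplus \pi_4 L_{K(1)} Tmf_1(N),$$
whose target is a direct sum of flat $\mathbb{Z}_p$-modules of holomorphic resp.\ $p$-adic modular forms and in particular is torsion free.

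Under this inclusion, Propositions \ref{k1locallogtmf} and \ref{k2hecke}, combined with the description of the topological logarithm $\ell_{tmf}$, identify the comparison map on $\pi_4$ with $f \mapsto (f, (1-p\,\text{Frob})f)$ on a weight two modular form $f$. The heart of the argument is then to verify that this image is a \emph{saturated} subgroup of the ambient torsion free module, so that the cokernel inherits torsion-freeness. This is where I expect the main difficulty: one must argue that whenever $p$ times a fiber-product compatible pair $(x,y)$ equals the image of an integral modular form, the modular form itself is divisible by $p$ inside $MF_2(\Gamma_1(N))$, so that the quotient $(x/1, y/1)$ indeed lifts. Such a divisibility statement should be extracted from the $q$-expansion principle for $p$-adic modular forms (Theorem \ref{padicqexpansion}) together with the $p$-flatness of the $q$-expansion map on weight two integral modular forms, which together force the kernel of the cokernel map to be $p$-saturated.
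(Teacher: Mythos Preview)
The paper does not supply its own proof of this proposition; it is quoted from \cite{Dylan} with only the remark that Wilson's argument (stated there for $p>3$) goes through at all primes for level $\Gamma_1(N)$ because $\pi_3 gl_1 Tmf_1(N)\widehat{_p}$ vanishes in that case. Your outline---reduce via the long exact sequence to the cokernel of $\pi_4 gl_1 Tmf_1(N)\widehat{_p}\to\pi_4 L_{K(1)\vee K(2)}gl_1 Tmf_1(N)$, embed the target in a torsion-free module using a Mayer--Vietoris square, and then argue saturation---is exactly the strategy behind Wilson's proof, so in spirit you are on the right track.

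There is, however, a concrete error. You assert that under the embedding coming from the square~\eqref{hopullbacklog} the comparison map on $\pi_4$ is $f\mapsto (f,(1-p\,\text{Frob})f)$. The second component is correct, but the first is not: the composite $gl_1 Tmf_1(N)\widehat{_p}\to L_{K(1)\vee K(2)}gl_1 Tmf_1(N)\xrightarrow{\ell_{tmf}}Tmf_1(N)\widehat{_p}$ is a genuine logarithm, not the identity. Chasing around the square~\eqref{hopullbacklog} and using Proposition~\ref{k1locallogtmf} together with the injectivity of $Tmf_1(N)\widehat{_p}\to L_{K(1)}Tmf_1(N)$ on $\pi_4$ shows that on $\pi_4$ this composite is $(1-\langle p\rangle U)(1-p\,\text{Frob})=1-T_p+p\langle p\rangle$, in line with Proposition~\ref{k2hecke}. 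So the map you must analyse is $f\mapsto\bigl((1-T_p+p\langle p\rangle)f,\,(1-p\,\text{Frob})f\bigr)$.

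This matters for your saturation step. Your sketch (``if $p$ times a compatible pair equals the image of an integral $f$, then $f$ is divisible by $p$'') was tailored to the wrong map: with the identity in the first slot the claim is immediate, but with $1-T_p+p\langle p\rangle$ in the first slot divisibility of the image no longer forces divisibility of $f$ without further argument. The actual saturation argument uses both components together with the fiber-product compatibility condition $(1-\langle p\rangle U)y=\text{image of }x$ in $L_{K(1)}Tmf_1(N)$, and the $q$-expansion principle enters in a slightly more involved way than you indicate. You should revisit Wilson's Corollary~4.8 for the precise mechanism.
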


\begin{remark}
Wilson states this result only for $p>3$ but for all level structures $\Gamma$. This is due to the fact that for the levels $\Gamma_0(N)$ and $p=2,3$ 
$$ \pi_3 gl_1 Tmf(\Gamma_0(N))\widehat{_p} 
$$
may be nontrivial. However for the levels $\Gamma_1(N)$ we are interested in, these groups vanish and the argument works for all primes. 
\end{remark}
We are now in the position to state the main result of this section.
\criterion*
\begin{proof}
First note that Conditions 1. - 3. characterize the elements in $$\pi_0 Orient(bu, L_{K(1)\vee K(2)}gl_1 Tmf_1(N)\widehat{_p}) \cong \pi_0 Orient(x_{p^2}, L_{K(1)\vee K(2)} gl_1 Tmf_1(N)\widehat{_p}) .$$

It is clear that an element $f\in Orient(x_{p^i}, L_{K(1)\vee K(2)} gl_1 Tmf_1(N)\widehat{_p})$ (for $i=1,2$) gives rise to an element $\widetilde{f} \in Orient(x_{p^{i-1}}, L_{K(1)\vee K(2)}gl_1 Tmf_1(N))$ by precomposition with the map $gl_1 \mathbb{S}/x_{p^{i-1}} \rightarrow gl_1 \mathbb{S}/x_{p^i}$. Similarily for $Orient(x_{p^i},gl_1 Tmf_1(N)\widehat{_p})$.

We now have injections $$\pi_0 Orient(x_{p^i}, \Sigma D) \cong \pi_0 Map(x_{p^i}, \Sigma D) \hookrightarrow \pi_0 Orient(x_{p^{i-1}}, \Sigma D) \cong \pi_0 Map(x_{p^{i-1}}, \Sigma D):$$
For $i=2$ and all primes $p$ and for $i=1$ for $p>2$ this is immediately clear as $f_{p^i}$, the fiber of the map $x_{p^i-1} \rightarrow x_{p^i}$, is $(2p^i-2)$ connective but $\pi_* \Sigma D =0$ for $*\geq 5$, so in this case we actually have equivalences $Map(x_{p^i},\Sigma D) \simeq Map(x_{p^{i-1}}, \Sigma D)$.
For $i=1$ and $p=2$ we still have the induced exact sequence 
$$ [\Sigma f_2, \Sigma D] \rightarrow [x_2,\Sigma D] \rightarrow [x_1, \Sigma D]. 
$$
As $\Sigma f_2$ is $3$-connective we have an isomorphism $$[\Sigma f_2, \Sigma D] \cong [H\pi_4 \Sigma f_2, H\pi_4\Sigma D] \cong \text{Hom}(\pi_4 f_2, \pi_4\Sigma D).$$ 
However, as the spectrum $f_2$ is torsion but $\pi_4 \Sigma D$ is torsion free we have $[\Sigma f_2, \Sigma D]=0$.
It follows that in order to check if an element $$g\in \pi_0 Orient (x_{p^2},L_{K(1)\vee K(2)} gl_1 Tmf_1(N)\widehat{_p})$$ maps to $0 \in \pi_0 Orient(x_{p^2},\Sigma D)$ we might just as well check if the element $\widetilde{g} \in \pi_0Orient(x_1, L_{K(1)\vee K(2)}gl_1 Tmf_1(N))$ induced from $g$ via precomposition with the maps $$gl_1\mathbb{S}/(\Sigma^{-1}x_1) \rightarrow gl_1\mathbb{S}/(\Sigma^{-1}x_p) \rightarrow gl_1\mathbb{S}/(\Sigma^{-1}x_{p^2})$$ maps to $0\in \pi_0 Orient(x_1,\Sigma D)$. But in light of Proposition \ref{hlprop2} this amounts to $\widetilde{g}$ actually corresponding to an ordinary, homotopy commutative orientation. The spectra $Tmf_1(N)$ are complex orientable, so $\pi_0 Orient(x_1,gl_1 Tmf_1(N))$ is nonempty and a torsor for the group $[x_1,gl_1 Tmf_1(N)] \simeq [\mathbb{CP}^{\infty},Gl_1 Tmf_1(N)]$. This group is torsion free, so we have a sequence of inclusions 

$$[x_1,gl_1 Tmf_1(N)\widehat{_p}] \hookrightarrow [x_1,gl_1 Tmf_1(N)\widehat{_p} \otimes \mathbb{Q}] \overset{\ell_{\mathbb{Q}}}{\cong} [x_1, Tmf_1(N)\widehat{_p} \otimes \mathbb{Q}] \hookrightarrow [x_1, L_{K(1)}Tmf_1(N)\otimes \mathbb{Q}]. $$ Finally note that 
$$ x_1 \simeq_{\mathbb{Q}} bu.
$$ Hence the image under this inclusion are exactly the sequences of classical modular forms 
$$\{g_i \}  \in \prod \overline{MF}_{i}(\Gamma_1(N))\otimes \mathbb{W} \subset \prod MF_{p,i}(\Gamma_1(N)) $$ and the result follows.
\end{proof}

\begin{remark}
Condition 4 in the theorem above is superflous for the elements $g_k$ with $k >2$ as it is known by work of Hida (\cite{hida1}, \cite{hida2}) that all eigenforms of weight $k>2$ of the $U$-operator with unit eigenvalues already arise as the $p$-stabilizations of classical modular forms.
\end{remark}

\section{Constructing Orientations}\label{constructingmeasures}
In order to give an $\mathbb{E}_{\infty}$-orientation $MU \rightarrow Tmf_1(N)\widehat{_p}$ all that is left to do is finding a sequence of $p$-adic modular forms $\{ g \}$ satisfying the properties dictated by Theorem \ref{orienttmfgammaall}. We will construct the required measure in several steps. We first recall the Eisenstein measures of \cite{katzeisenstein}. 

\begin{prop}[\cite{katzeisenstein}, Theorem 3.3.3] 
There exists a (unique) measure $H^{a,b}_{\chi}$ on $\mathbb{Z}_p$ with values in $V(\Gamma_1(N),\mathbb{W})$ defined by the formula
$$\int_{\mathbb{Z}_p} x^k dH^{a,b}_{\chi} \overset{\text{def}}{=} G_{k+1}^{\chi} -\langle a,b \rangle G_{k+1}^{\chi}.
$$
\end{prop}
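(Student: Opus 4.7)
The plan is to apply Lemma \ref{kummercongruences}: the ring $V(\Gamma_1(N),\mathbb{W})$ is $p$-adically complete and flat over $\mathbb{Z}_p$, so a measure on $\mathbb{Z}_p$ valued in it is uniquely determined by its moments, and conversely any sequence of proposed moments which satisfies the generalized Kummer congruences arises from a unique such measure. Uniqueness of $H^{a,b}_{\chi}$ is therefore immediate once existence is established, and the whole problem reduces to verifying that the sequence
\[
c_k \;:=\; G_{k+1}^{\chi}-\langle a,b\rangle G_{k+1}^{\chi}\;\in\; V(\Gamma_1(N),\mathbb{W}),\qquad k\geq 0,
\]
satisfies the generalized Kummer congruences inside $V(\Gamma_1(N),\mathbb{W})$.

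By the $q$-expansion principle (Theorem \ref{padicqexpansion}) it suffices to verify these congruences after evaluating at the standard cusp $(\widehat{\text{Tate}(q)},\varphi_{can},\xi)$, one $q^n$-coefficient at a time. Using the explicit formula of Section \ref{eisensteinseries} together with the recipe for the diamond operators on $q$-expansions, each coefficient of $q^n$ in the $q$-expansion of $c_k$ becomes an explicit scalar sequence in $\mathbb{W}$ depending on $k$, of the form (up to a factor of $2$) $(1-a^{k+1}\chi(b))\sum_{d\mid n}d^{k}\chi(d)$ for $n\geq 1$, and $(1-a^{k+1}\chi(b))L(-k,\chi)$ for $n=0$. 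For $n\geq 1$ I would split the divisor sum according to whether $p\mid d$: the terms with $p\mid d$ are $p^k$-divisible in $\mathbb{W}$ and hence satisfy strong Kummer congruences trivially as $k\to\infty$, while those with $(d,p)=1$ lie in $\mathbb{Z}_p^{\times}$, so $k\mapsto d^k$ is $p$-adically continuous by Fermat's little theorem and again poses no obstruction.

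The main obstacle is the constant term, which up to the twist is the special value $L(-k,\chi)$. These values are not $p$-integral in general, and the essential point of the twist by $\langle a,b\rangle$ --- and of the freedom to vary $a$ and $b$ --- is precisely to introduce the extra Euler-type factor $(1-a^{k+1}\chi(b))$ which cancels the offending denominator. That the resulting sequence $(1-a^{k+1}\chi(b))L(-k,\chi)$ is $p$-adically interpolatable and satisfies the Kummer congruences is the classical theorem of Kubota and Leopoldt, together with its extension to Dirichlet characters; this is the analytic heart of the statement and the only nontrivial input. Once these termwise congruences are in place, the $q$-expansion principle assembles them into the required Kummer congruences for the $c_k$, and Lemma \ref{kummercongruences} then produces the desired measure $H_{\chi}^{a,b}$ with the prescribed moments.
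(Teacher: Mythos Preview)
The paper does not supply its own proof of this proposition; it is quoted directly from Katz \cite{katzeisenstein}, Theorem~3.3.3, with only the remark that Katz's argument for a slightly different family of Eisenstein series carries over verbatim. Your sketch---reduce via Lemma~\ref{kummercongruences} and the $q$-expansion principle to coefficient-by-coefficient Kummer congruences, the higher $q$-coefficients being elementary (indeed for $h\in A$ one has $\sum_k a_k\sum_{d\mid n}d^k\chi(d)=\sum_{d\mid n}\chi(d)h(d)\in\mathbb{W}$ directly, without splitting on $p\mid d$) and the constant term being the genuine input from Kubota--Leopoldt---is exactly Katz's strategy.
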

\begin{remark}
Katz uses  a slightly different set of Eisenstein series in \cite{katzeisenstein}. His proof however applies to preceding proposition.
\end{remark}
We are interested in measures defined on $\mathbb{Z}_p^{\times}$ rather than on the whole of $\mathbb{Z}_p$. The following result is stated in \cite{katzeisenstein} without proof.

\begin{lemma}[ see \cite{katzeisenstein} Lemma 3.5.6]\label{restrictionmeasure}
Denote with $J^{a,b}_{\chi}$ the restriction of the measure $H^{a,b}_{\chi}$ to $\mathbb{Z}_p^{\times}$. Then its moments are given by 
$$ \int_{\mathbb{Z}_p^{\times}} x^{k}dJ_{\chi}^{a,b} = (1-\langle a,b \rangle) (1-p^{k}\text{Frob})G_{k+1}^{\chi}.
$$
\end{lemma}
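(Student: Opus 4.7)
I would deduce the formula in three steps. First, using the partition $\mathbb{Z}_p = \mathbb{Z}_p^{\times} \sqcup p\mathbb{Z}_p$, I would split the integral as
\[
\int_{\mathbb{Z}_p^{\times}} x^{k}\, dJ^{a,b}_{\chi} \;=\; \int_{\mathbb{Z}_p} x^{k}\, dH^{a,b}_{\chi} \;-\; \int_{p\mathbb{Z}_p} x^{k}\, dH^{a,b}_{\chi} \;=\; (1-\langle a,b\rangle)\, G^{\chi}_{k+1} \;-\; \int_{p\mathbb{Z}_p} x^{k}\, dH^{a,b}_{\chi},
\]
where the second equality uses the defining property of $H^{a,b}_{\chi}$ from the previous proposition. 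Since $\langle a,b\rangle$ and $\text{Frob}$ commute (Section \ref{padicmodularfunctions}), the lemma then reduces to the single identity
\[
\int_{p\mathbb{Z}_p} x^{k}\, dH^{a,b}_{\chi} \;=\; p^{k}\, \text{Frob}\!\left((1-\langle a,b\rangle)\, G^{\chi}_{k+1}\right).
\]

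Second, I would handle the integral over $p\mathbb{Z}_p$ through the homeomorphism $\phi \colon \mathbb{Z}_p \xrightarrow{\cong} p\mathbb{Z}_p$, $y\mapsto py$. Writing $\widetilde{H}$ for the pushforward measure $(\phi^{-1})_{\ast}\bigl(H^{a,b}_{\chi}|_{p\mathbb{Z}_p}\bigr)$ on $\mathbb{Z}_p$, the substitution $x = py$ immediately gives $\int_{p\mathbb{Z}_p} x^{k}\, dH^{a,b}_{\chi} = p^{k}\int_{\mathbb{Z}_p} y^{k}\, d\widetilde{H}$, so the displayed identity above becomes the moment identity $\int y^{k}\, d\widetilde{H} = \text{Frob}\bigl((1-\langle a,b\rangle)G^{\chi}_{k+1}\bigr)$. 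Since $V(\Gamma_1(N),\mathbb{W})$ is flat over $\mathbb{Z}_p$, a measure with values there is determined by its moments (Lemma \ref{kummercongruences}), so establishing this equality for all $k\ge 0$ is what remains.

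Finally, I would verify the moment identity at the level of $q$-expansions, which is permissible by the injectivity part of Theorem \ref{padicqexpansion}. On the right-hand side, $\text{Frob}$ acts on the $q$-expansion of $G^{\chi}_{k+1}$ at the standard cusp of the Tate curve by $\sum a_{n}q^{n} \mapsto \sum \langle p\rangle a_{n}q^{pn}$, as recorded in Section \ref{atkinsoperator}. On the left-hand side, Katz's construction of $H^{a,b}_{\chi}$ in \cite{katzeisenstein} represents each moment as the value of an explicit Eisenstein family in $\mathbb{W}((q))$; tracing through that construction, restriction to $p\mathbb{Z}_p$ followed by the re-scaling $y = x/p$ corresponds on $q$-expansions precisely to keeping only the Fourier modes of index divisible by $p$ and re-indexing $q^{pn}\mapsto q^{n}$. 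A direct comparison of Fourier coefficients then matches both series.

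\textbf{Main obstacle.} The conceptual skeleton (restrict, change variables, compare) is routine, but the substance of the proof lies in the final $q$-expansion calculation. One has to reconcile several competing conventions --- naive versus arithmetic $\Gamma_{1}(N)$-level structures (Remark \ref{frobeniuslevelstructures}), the precise placement of the diamond twist $\langle a,b\rangle$ in the measure, the normalisation of $L(1-k,\chi)$ appearing in the constant term of $G^{\chi}_{k+1}$, and the cusp at which the $q$-expansion is computed. This bookkeeping is exactly the point at which Katz's treatment in \cite{katzeisenstein} becomes condensed, and is presumably why the statement appears there without proof.
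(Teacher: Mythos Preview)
Your decomposition strategy is a legitimate alternative, but it is not the route the paper takes, and your final step contains a genuine mismatch.

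The paper avoids the integral over $p\mathbb{Z}_p$ altogether. Instead it uses the elementary fact that the function $x^{k}\cdot \mathbf{1}_{\mathbb{Z}_p^{\times}}$ is the uniform limit of the monomials $x^{k+(p-1)p^{r}}$ on all of $\mathbb{Z}_p$, so that
\[
\int_{\mathbb{Z}_p^{\times}} x^{k}\, dJ^{a,b}_{\chi}
\;=\;\lim_{r\to\infty}\int_{\mathbb{Z}_p} x^{k+(p-1)p^{r}}\, dH^{a,b}_{\chi}
\;=\;(1-\langle a,b\rangle)\lim_{r\to\infty} G^{\chi}_{k+1+(p-1)p^{r}}.
\]
The limit on the right is then computed termwise on $q$-expansions: each divisor sum $\sum_{d\mid n} d^{\,k+(p-1)p^{r}}\chi(d)$ converges to $\sum_{d\mid n,\ p\nmid d} d^{k}\chi(d)$, and rewriting this as the full sum minus the $p$-divisible part produces exactly $(1-p^{k}\mathrm{Frob})G^{\chi}_{k+1}$. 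The only inputs are the given moments of $H^{a,b}_{\chi}$ and the $q$-expansion principle; Katz's internal construction of the measure is never opened.

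By contrast, your route forces you to compute the moments of the pushforward $\widetilde{H}$, and for that you appeal to ``tracing through Katz's construction''. This is strictly more input than the paper needs, and it is precisely the step you flag as the main obstacle. More seriously, your description of that step is off: ``keeping only the Fourier modes of index divisible by $p$ and re-indexing $q^{pn}\mapsto q^{n}$'' is the effect of the Atkin operator $U$ (up to a diamond twist), not of $\mathrm{Frob}$, whose effect on $q$-expansions at this cusp is $\sum a_n q^{n}\mapsto \sum \langle p\rangle a_n q^{pn}$. So as written your two sides do not match, and the phrase ``a direct comparison of Fourier coefficients then matches both series'' hides a real discrepancy rather than a bookkeeping detail. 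If you want to salvage the decomposition approach you would still need a device to pass from the measure-side operation to $\mathrm{Frob}$ on the Eisenstein side, and the cleanest such device is exactly the limit trick the paper uses --- at which point the detour through $p\mathbb{Z}_p$ and $\widetilde{H}$ becomes unnecessary.
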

\begin{proof}The restriction of the functions $x^k$ to $\mathbb{Z}_p^{\times}$ (reextended to $0$ to all of $\mathbb{Z}_p$) is uniformly approximated by the functions $x^{k+(p-1)p^r}$. So we have 
$$ \int_{\mathbb{Z}_p^{\times}} x^k dJ_{\chi}^{a,b} = \varinjlim_{ r \to \infty} \int_{\mathbb{Z}_p} x^{k+(p-1)p^r} dH^{a,b}_{\chi} = (1- \langle a,b \rangle) \varinjlim_{r \to \infty} G_{k+1 + (p-1)p^r}^{\chi}.
$$ We may check the last limit on $q$-expansions and thereby get the expression
$$ \varinjlim_{r \to \infty} G_{k+1 + (p-1)p^r}^{\chi} (\text{Tate}(q),\varphi_{can},\xi) =
\varinjlim_{r \to \infty}(
\sum_{n\geq 1} \chi(n) \cdot n^{k+(p-1)p^r} + \sum_{n \geq 1} q^n \sum_{d|n} d^{k+(p-1)p^r}\chi(d)).
$$
Observe that if $p\nmid d$ then $d$ is a $p$-adic unit and we have $$\varinjlim_{r \to \infty} d^{k+(p-1)p^r} = d^k$$ and if $p|d$  we have 
$$ \varinjlim_{r \to \infty} d^{k + (p-1)p^r} =0.
$$
Hence
\begin{align*}
     \varinjlim_{r \to \infty} G^{\chi}_{k+1+(p-1)p^r}(\text{Tate}(q),\varphi_{can},\xi) = & \sum_{p\nmid n}\chi(n)n^k + \sum_{n\geq 1}q^n\sum_{\overset{d|n}{p\nmid}d} d^{k} \chi(d) \\
    = & (\sum_{n\geq 1} \chi(n) n^k + \sum_{n\geq 1} q^n \sum_{d|n} d^k \chi(d)) \\
    & - (p^k\sum_{n\geq 1} \chi(pn)n^k + \sum_{n\geq 1} q^{np} \sum_{d|n} (pd)^k \chi(pd)\\ 
    = & (1-p^k \text{Frob})G^{\chi}_{k+1}(\text{Tate}(q),\varphi_{can},\xi). 
\end{align*}
The result now follows from the $q$-expansion principle.
\end{proof}

Observe that the measure above can not quite be the one we are looking for as its $k$-th moment $G^{\chi}_{k+1}$ is not a (generalized) modular form of weight $k$ but rather one of weight $k+1$. This is however not a problem as the assignment
\begin{align*}
 Cts(\mathbb{Z}_p^{\times},\mathbb{Z}_p) \rightarrow & \: Cts(\mathbb{Z}_p^{\times},\mathbb{Z}_p)\\
 f(x) \mapsto & \:f(x)\cdot x^{-1}
\end{align*} 
defines a linear bijection. 
\begin{lemma}
There exists a (unique) measure $\widetilde{J}^{a,b}_{\chi}$ on $\mathbb{Z}_p^{\times}$ with values in $V(\mathbb{W}, \Gamma_1(N))$ defined by the formula
$$ \int_{\mathbb{Z}_p^{\times}} x^k d \widetilde{J}^{a,b}_{\chi} = (1-\langle a , b \rangle)(1-p^{k-1}\text{Frob})G_{k}^{\chi}.
$$
\end{lemma}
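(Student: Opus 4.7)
The plan is to construct $\widetilde{J}^{a,b}_\chi$ by transporting the measure $J^{a,b}_\chi$ of Lemma~\ref{restrictionmeasure} along the continuous $\mathbb{Z}_p$-linear automorphism
\begin{align*}
    T: Cts(\mathbb{Z}_p^\times, \mathbb{Z}_p) &\xrightarrow{\cong} Cts(\mathbb{Z}_p^\times, \mathbb{Z}_p) \\
    f(x) &\mapsto x^{-1}\cdot f(x).
\end{align*}
This $T$ is well defined and bijective precisely because $x$ is a unit on $\mathbb{Z}_p^\times$ (with inverse $f \mapsto x f$), exactly the observation made in the paragraph preceding the lemma. Setting
$$\widetilde{J}^{a,b}_\chi \overset{\text{def}}{=} J^{a,b}_\chi \circ T,$$
we get a composition of continuous $\mathbb{Z}_p$-linear maps, which is itself a continuous $\mathbb{Z}_p$-linear map and hence a measure on $\mathbb{Z}_p^\times$ with values in $V(\Gamma_1(N),\mathbb{W})$.

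Next I would verify the moment formula. By the definition of $\widetilde{J}^{a,b}_\chi$ and Lemma~\ref{restrictionmeasure} (applied with $k-1$ in place of $k$), one has
$$\int_{\mathbb{Z}_p^\times} x^k \, d\widetilde{J}^{a,b}_\chi \;=\; \int_{\mathbb{Z}_p^\times} x^{k-1} \, dJ^{a,b}_\chi \;=\; (1-\langle a,b\rangle)(1-p^{k-1}\text{Frob})G_k^\chi,$$
which is exactly the prescribed formula.

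For uniqueness, since $V(\Gamma_1(N),\mathbb{W})$ is flat over $\mathbb{Z}_p$ (cf. Theorem~\ref{padicqexpansion}), a measure on $\mathbb{Z}_p^\times$ is determined by its moments $\int x^k\,d\mu$ for $k \geq 0$: indeed the $\mathbb{Z}_p$-span of $\{x^k\}$ is dense in $Cts(\mathbb{Z}_p^\times,\mathbb{Q}_p)$ (continuous functions on each coset of $1+p\mathbb{Z}_p \subset \mathbb{Z}_p^\times$ are uniformly approximated by polynomials via the Mahler basis), and flatness lets one detect equality of two continuous linear functionals after tensoring with $\mathbb{Q}_p$. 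I do not anticipate any real obstacle here — the whole argument is an easy bookkeeping exercise once one observes that invertibility of $x$ on $\mathbb{Z}_p^\times$ allows one to freely shift moments by one, and once Lemma~\ref{restrictionmeasure} is in hand.
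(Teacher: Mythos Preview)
Your proposal is correct and follows essentially the same approach as the paper: define $\widetilde{J}^{a,b}_\chi$ by precomposing $J^{a,b}_\chi$ with the automorphism $f(x)\mapsto x^{-1}f(x)$ of $Cts(\mathbb{Z}_p^\times,\mathbb{Z}_p)$, then read off the moments from Lemma~\ref{restrictionmeasure}. The paper's proof is the one-line version of exactly this; your additional remarks on the moment computation and on uniqueness via flatness are correct elaborations but not substantively different.
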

\begin{proof}
For $f(x) \in Cts(\mathbb{Z}_p^{\times}, \mathbb{Z}_p)$ set 
$$ \int_{\mathbb{Z}_p^{\times}} f(x) d \widetilde{J}^{a,b}_{\chi} \overset{\text{def}}{=} 
\int_{\mathbb{Z}_p^{\times}} f(x)\cdot x^{-1} d J^{a,b}_{\chi}.
$$
\end{proof}
If we set $b=1$ and $a=c$ for $c\in \mathbb{Z}_p^{\times}$ the measures $\widetilde{J}^{c,1}_{\chi}$ seem good candidates for our purpose. To check if they are indeed the correct choices we have to calculate the means
$$ \int_{\mathbb{Z})p^{\times}} 1 d \widetilde{J}^{c,1}
$$
of these measures. In order to do so we first need to recall some facts about generalized Bernoulli numbers. Let $\chi$ be a Dirichlet character modulo $N$. 
Set
$$ \sum_{r=1}^N \chi(r) \frac{t e^{rt}}{ e^{Nt} -1} = \sum_{n=1}^{\infty}B_n^{\chi} \frac{t^n}{n!}.
$$
The coefficients $B_n^{\chi}$ on the right hand side are called generalized Bernoulli numbers. \begin{remark}For the trivial character for $\chi=1$ the $B_n^{\chi}$ reduce to the ordinary Bernoulli numbers.
\end{remark}
\begin{prop}[\cite{carlitz} Theorem 4]
Let $\chi$ be a nontrivial Dirichlet character modulo $N$. If $p^k| n$ but $p \nmid N$, then $p^k$ divides the numerator of $B_n^{\chi}$. 
\end{prop}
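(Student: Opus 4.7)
The plan is to reduce the divisibility statement to the $p$-integrality of a special value of a Dirichlet $L$-function, via the classical identity
$$B_n^{\chi} \;=\; -n\,L(1-n,\chi).$$
Once we know that $L(1-n,\chi)$ lies in $\mathbb{Z}_{(p)}[\chi]$ whenever $\chi$ is a nontrivial Dirichlet character of conductor prime to $p$, the conclusion is immediate: under our hypotheses
$$v_p(B_n^{\chi}) \;=\; v_p(n) + v_p(L(1-n,\chi)) \;\geq\; k + 0 \;=\; k.$$

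For the $p$-integrality of $L(1-n,\chi)$ I would invoke the Kubota--Leopoldt $p$-adic $L$-function $L_p(s,\chi)$. By construction $L_p(s,\chi)$ takes values in $\mathbb{Z}_p[\chi]$ and is $p$-adically analytic away from a simple pole at $s=1$ that only appears when $\chi$ is trivial. Its defining interpolation property at negative integers reads
$$L_p(1-n,\chi) \;=\; \bigl(1 - \chi(p)\,p^{n-1}\bigr)\,L(1-n,\chi) \qquad (n\geq 1).$$
Because $\chi$ is nontrivial there is no pole to deal with; and because $p \nmid N$ the value $\chi(p)$ is a root of unity of order prime to $p$, so the Euler factor $1-\chi(p)p^{n-1}$ is a $p$-adic unit for every $n\geq 1$. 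Dividing out then gives $L(1-n,\chi)\in\mathbb{Z}_p[\chi]$, which together with the identity above yields the proposition.

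The main obstacle is really just notational upkeep: one must be careful about the distinction between $\chi$ and its primitive refinement when quoting the interpolation formula, and to check that the Euler factor is indeed a $p$-adic unit (which uses both $p\nmid N$ and $n\geq 1$). A more self-contained route, staying entirely within the framework of Section \ref{constructingmeasures}, is to read off constant terms of the $q$-expansions appearing in Lemma \ref{restrictionmeasure}: since the moments of $J_{\chi}^{a,b}$ take values in the $\mathbb{Z}_p$-flat ring $V(\Gamma_1(N),\mathbb{W})$ they are automatically $p$-integral, and after choosing $(a,b)$ so that both $1-\langle a,b\rangle$ (acting on the $\chi$-eigenpart by $1-a^n\chi(b)$) and $1-\chi(p)p^{n-1}$ are $p$-adic units, one divides out to recover the same conclusion. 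This alternative works whenever one can find a pair $(a,b)$ with $a^n\chi(b)\not\equiv 1 \pmod{\mathfrak{p}}$ for the prime $\mathfrak{p}\mid p$ in $\mathbb{Z}[\chi]$, which is always possible when the order of $\chi$ is prime to $p$; in the general case one really does need the Kubota--Leopoldt machinery.
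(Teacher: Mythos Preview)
The paper does not give its own proof of this proposition; it simply quotes it from Carlitz. Your argument via the Kubota--Leopoldt $p$-adic $L$-function is valid and delivers the result, so there is nothing to compare against here beyond noting that Carlitz's original proof is considerably more elementary: it works directly with the generating function $\sum_{r=1}^{N}\chi(r)\,te^{rt}/(e^{Nt}-1)$ and establishes Kummer-type congruences for the $B_n^{\chi}$ by hand, without any $p$-adic interpolation machinery. Your route is quicker once one is willing to import that black box.

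Two small points of precision. First, the interpolation formula as you wrote it is off by a Teichm\"uller twist: the correct statement is
\[
L_p\bigl(1-n,\chi\omega^{n}\bigr)\;=\;\bigl(1-\chi(p)\,p^{\,n-1}\bigr)\,L(1-n,\chi),
\]
not $L_p(1-n,\chi)$ on the left. This does not damage the argument, since $\chi$ has conductor prime to $p$ while $\omega^{n}$ has conductor a power of $p$, so $\chi\omega^{n}$ is never trivial and there is no pole. Second, the Euler factor $1-\chi(p)p^{\,n-1}$ is a $p$-adic unit only for $n\geq 2$; this is fine because $p^{k}\mid n$ with $k\geq 1$ forces $n\geq p\geq 2$, and the case $k=0$ is vacuous.

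Your alternative via the constant terms of the Eisenstein measures $J^{a,b}_{\chi}$ is also sound and is not circular in the paper's logic (the existence of $H^{a,b}_{\chi}$ and Lemma~\ref{restrictionmeasure} are established independently of this proposition). As you correctly flag, it only works directly when one can choose $(a,b)$ with $1-a^{n}\chi(b)$ a $p$-adic unit, which is guaranteed when the order of $\chi$ is prime to $p$ but may fail otherwise; so the Kubota--Leopoldt argument is the cleaner uniform one.
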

\begin{coro}\label{limitlfunction}
Let $\chi$ be a nontrivial Dirichlet character modulo $N$ and $p \nmid N$. Then we have for the corresponding $L$-function 
$$ L(1-(p-1)p^r, \chi) \equiv  0 \: \emph{\text{mod}} \: \mathbb{Z}_p[\xi].
$$
\end{coro}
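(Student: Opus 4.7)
The plan is to combine the classical identity relating special values of Dirichlet $L$-functions to generalized Bernoulli numbers with the Carlitz divisibility result just stated. Recall the standard formula
$$L(1-n,\chi) \;=\; -\frac{B_n^{\chi}}{n} \qquad (n\geq 1),$$
which I would specialize at $n=(p-1)p^r$. The corollary then reduces to the numerical estimate $v_p\bigl(B_{(p-1)p^r}^{\chi}\bigr) \geq v_p\bigl((p-1)p^r\bigr) = r$, where $v_p$ is the normalized $p$-adic valuation on $\mathbb{Q}_p[\xi]$ (extended by continuity from $\mathbb{Q}_p$).

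The Carlitz proposition supplies precisely this lower bound on the numerator: since $p^r \mid (p-1)p^r$ and by assumption $p\nmid N$, the numerator of $B_{(p-1)p^r}^{\chi}$ is divisible by $p^r$ in $\mathbb{Z}[\xi]$. The only minor point to verify is that this divisibility in $\mathbb{Z}[\xi]$ implies the required $p$-integrality in the completion $\mathbb{Z}_p[\xi]$; but because $p\nmid N$, the cyclotomic extension $\mathbb{Z}[\xi]/\mathbb{Z}$ is unramified at $p$, so every prime of $\mathbb{Z}[\xi]$ lying over $p$ has ramification index one, and divisibility by $p^r$ in $\mathbb{Z}[\xi]$ descends to divisibility by $p^r$ in each completion, and hence in $\mathbb{Z}_p[\xi]$.

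On the denominator side, $\gcd(p-1,p)=1$ shows that $v_p((p-1)p^r)=r$ exactly. Combining these two estimates yields
$$v_p\!\left(L(1-(p-1)p^r,\chi)\right) \;=\; v_p\bigl(B_{(p-1)p^r}^{\chi}\bigr) - r \;\geq\; 0,$$
so $L(1-(p-1)p^r,\chi)\in\mathbb{Z}_p[\xi]$, which is the asserted vanishing modulo $\mathbb{Z}_p[\xi]$ inside $\mathbb{Q}_p[\xi]/\mathbb{Z}_p[\xi]$. There is no real obstacle to the argument: once the formula $L(1-n,\chi)=-B_n^{\chi}/n$ and Carlitz's theorem are in hand, the corollary is an immediate valuation calculation, and the only technical point is the compatibility of divisibility in $\mathbb{Z}[\xi]$ with $p$-adic valuation in $\mathbb{Z}_p[\xi]$, which is guaranteed by the unramifiedness of $\xi$ over $p$ under the hypothesis $p\nmid N$.
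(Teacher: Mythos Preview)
Your proof is correct and follows essentially the same approach as the paper: both invoke Carlitz's divisibility result for the numerator of $B_n^{\chi}$ together with the identity $L(1-n,\chi)=-B_n^{\chi}/n$. You simply spell out the valuation bookkeeping and the unramifiedness of $\mathbb{Z}[\xi]$ at $p$ more explicitly than the paper does.
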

\begin{proof}
This immediately follows from the preceding proposition together with the well-known relationship 
$$ -L(1-n,\chi) = \frac{B_n^{\chi}}{n} 
$$
between generalized Bernoulli numbers and $L$-functions (see \cite{leopoldt}).
\end{proof}
\begin{prop}\label{measurezero}
Let $b_n$ be a series congruent to $L(1-n,\chi)$ modulo $\mathbb{Z}_p[\xi]$ and $c\in \mathbb{Z}_p^{\times}$. Then
$$ \lim_{r \to \infty} (1-c^{(p-1)p^r})b_{(p-1)p^r} =0.
$$
\end{prop}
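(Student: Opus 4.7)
The plan is to combine two straightforward estimates in the $p$-adic absolute value. The key observation is that the product $(1-c^{(p-1)p^r})b_{(p-1)p^r}$ factors into two pieces which can be controlled independently: one factor vanishes in the limit, while the other remains bounded.

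First I would analyze the factor $(1-c^{(p-1)p^r})$. Since $c \in \mathbb{Z}_p^{\times}$, Euler's theorem (applied in $(\mathbb{Z}/p^{r+1})^{\times}$, which has order $(p-1)p^{r}$) yields $c^{(p-1)p^r} \equiv 1 \pmod{p^{r+1}}$, so $v_p\bigl(1 - c^{(p-1)p^r}\bigr) \geq r+1$ and in particular tends to $\infty$ as $r \to \infty$. Thus this factor approaches $0$ $p$-adically at a definite rate.

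Second, I would argue that $b_{(p-1)p^r}$ is bounded in the $p$-adic topology, i.e.\ lies in $\mathbb{Z}_p[\xi]$. This is exactly where Corollary \ref{limitlfunction} enters: it tells me that $L\bigl(1-(p-1)p^r,\chi\bigr) \in \mathbb{Z}_p[\xi]$. Combined with the standing hypothesis that $b_n - L(1-n,\chi) \in \mathbb{Z}_p[\xi]$, I conclude that $b_{(p-1)p^r} \in \mathbb{Z}_p[\xi]$, hence its $p$-adic absolute value is at most $1$.

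Combining the two estimates gives $\bigl| (1-c^{(p-1)p^r})\, b_{(p-1)p^r} \bigr|_p \leq p^{-(r+1)} \to 0$, which is the desired statement. There is essentially no obstacle: both ingredients are already packaged in the excerpt, and the argument is mechanical once Corollary \ref{limitlfunction} is in hand. The only interpretive point I would flag is the phrase ``congruent to $L(1-n,\chi)$ modulo $\mathbb{Z}_p[\xi]$'', which I read as the difference lying in $\mathbb{Z}_p[\xi]$; under this reading the proof is immediate and requires no further input.
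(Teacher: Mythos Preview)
Your proof is correct and follows essentially the same approach as the paper: bound $1-c^{(p-1)p^r}$ by Euler's theorem, use Corollary~\ref{limitlfunction} together with the hypothesis to see that $b_{(p-1)p^r}\in\mathbb{Z}_p[\xi]$, and multiply. Your estimate $v_p(1-c^{(p-1)p^r})\geq r+1$ is in fact slightly sharper than the paper's $\geq r$, but this is immaterial for the limit.
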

\begin{proof}
For $c$ a $p$-adic unit we have $$
1- c^{(p-1)p^r} \equiv 0 \: \text{mod}\: p^r.
$$
By Corollary \ref{limitlfunction} and our assumption on the series $b_n$ we get
$$ (1-c^{(p-1)p^r})b_{(p-1)p^r} \equiv 0 \: \text{mod} \: p^r.
$$
The result now follows by passing to the limit.
\end{proof}
\begin{theo}\label{meanzero}
For all $c\in \mathbb{Z}_p^{\times}$ 
$$ \int_{\mathbb{Z}_p^{\times}}1 d\widetilde{J}^{c,1} = 0.
$$
\end{theo}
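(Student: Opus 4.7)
The plan is to compute $\int_{\mathbb{Z}_p^\times} 1\, d\widetilde{J}^{c,1}$ as a $p$-adic limit of moments and show that the limit vanishes. First, I would approximate the constant function $1$ on $\mathbb{Z}_p^\times$ by the polynomials $x^{(p-1)p^r}$. By Fermat's little theorem these satisfy $x^{(p-1)p^r} \equiv 1 \pmod{p^{r+1}}$ for every $x \in \mathbb{Z}_p^\times$, so they converge uniformly to $1$. Continuity of $\widetilde{J}^{c,1}$ then gives
$$\int_{\mathbb{Z}_p^\times} 1\, d\widetilde{J}^{c,1} = \lim_{r \to \infty}\int_{\mathbb{Z}_p^\times} x^{(p-1)p^r} d\widetilde{J}^{c,1} = \lim_{r \to \infty}(1-c^{(p-1)p^r})\bigl(1-p^{(p-1)p^r-1}\mathrm{Frob}\bigr)G^{\chi}_{(p-1)p^r},$$
using the formula for the moments of $\widetilde{J}^{c,1}$ established just before the theorem.

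Next I would isolate the two factors on the right. The scalar $(1-c^{(p-1)p^r})$ lies in $p^{r+1}\mathbb{Z}_p$, since $c^{p-1} \equiv 1 \pmod p$. It therefore suffices to show that the other factor $(1-p^{(p-1)p^r-1}\mathrm{Frob})G^{\chi}_{(p-1)p^r}$ has $q$-expansion coefficients (at the standard cusp $(\widehat{\mathrm{Tate}(q)},\varphi_{can},\xi)$) that lie in $\mathbb{Z}_p[\xi]$ uniformly in $r$; combined with the valuation estimate on the scalar, this will force the product to converge to $0$ in the $p$-adic topology of $V(\Gamma_1(N),\mathbb{W})$, and hence by injectivity of the $q$-expansion (Theorem \ref{padicqexpansion}) to vanish in $V(\Gamma_1(N),\mathbb{W})$ itself.

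The verification of this uniform boundedness is the main technical step. Writing out the $q$-expansion of $(1 - p^{k-1}\mathrm{Frob})G^{\chi}_k$ using the explicit formula for $G^{\chi}_k$ from Section \ref{eisensteinseries} and the description of Frob on $q$-expansions from Section \ref{atkinsoperator}, the contributions from divisors $d$ of $n$ with $p \mid d$ cancel out against the Frobenius term, so the coefficient of $q^n$ simplifies to
$$2\sum_{\substack{d \mid n \\ p \nmid d}} d^{k-1}\chi(d).$$
For $k=(p-1)p^r$ each such $d$ is a $p$-adic unit so $d^{k-1}$ is a unit, and $\chi(d)$ is a root of unity; the coefficient therefore lies in $\mathbb{Z}_p[\xi]$ with bound independent of $r$. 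The constant term is $(1-p^{k-1}\langle p\rangle)L(1-k,\chi)$, and Corollary \ref{limitlfunction} tells us precisely that $L(1-(p-1)p^r,\chi) \in \mathbb{Z}_p[\xi]$ (in fact divisible by arbitrarily high powers of $p$), giving the uniform bound on the constant term as well.

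The part I expect to be most delicate is the telescoping argument for the $q^n$-coefficients; the point is that the Frobenius subtraction $1 - p^{k-1}\mathrm{Frob}$ is engineered exactly to kill the contribution of divisors $d$ divisible by $p$, leaving only divisors coprime to $p$, whose $(k-1)$-th powers remain units for all $k = (p-1)p^r$. Once this cancellation is made explicit and the Carlitz-type integrality result of Corollary \ref{limitlfunction} is invoked for the constant term, combining the two estimates yields the required bound and the theorem follows.
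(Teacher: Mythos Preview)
Your proof is correct and follows essentially the same route as the paper's. Both approximate $1$ by $x^{(p-1)p^r}$, invoke the moment formula for $\widetilde{J}^{c,1}_\chi$, use that $(1-c^{(p-1)p^r})$ lies in $p^{r}\mathbb{Z}_p$, verify that the $q$-expansion of $(1-p^{k-1}\mathrm{Frob})G_k^\chi$ at the standard cusp has all coefficients in $\mathbb{Z}_p[\xi]$ for $k=(p-1)p^r$ (the higher coefficients directly, the constant term via Corollary~\ref{limitlfunction}), and then conclude by the $q$-expansion principle. The paper packages the last two steps into Proposition~\ref{measurezero}; you carry out the explicit divisor-sum cancellation for the higher coefficients, which is exactly the content behind the paper's assertion that the $q$-expansion is ``congruent to $L(1-k,\chi)$ modulo $\mathbb{Z}_p[\xi]$''.

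One small inaccuracy: your parenthetical remark that $L(1-(p-1)p^r,\chi)$ is ``divisible by arbitrarily high powers of $p$'' overstates Corollary~\ref{limitlfunction}. Carlitz's result gives $p^r$ dividing the numerator of $B^\chi_{(p-1)p^r}$, and this $p^r$ exactly cancels the $p^r$ in the denominator $(p-1)p^r$ from the relation $L(1-n,\chi)=-B_n^\chi/n$; so one only obtains $L(1-(p-1)p^r,\chi)\in\mathbb{Z}_p[\xi]$, not that it tends to $0$. This is harmless, since integrality is all you use.
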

\begin{proof}
Recall that we have $1= x^0 = \lim_{r \to \infty} x^{(p-1)p^r}$. Hence we have to calculate the limit 
$$ \lim_{r\to \infty} \int_{\mathbb{Z}_p^{\times}} x^{(p-1)p^r} d \widetilde{J}^{c,1}_{\chi}=
\lim_{r\to \infty} (1-c^{(p-1)p^r})(1-p^{((p-1)p^r -1)} \text{Frob})G_{(p-1)p^r}^{\chi}.
$$
The result now follows from Proposition \ref{measurezero} and the $q$-expansion principle as the $q$-expansion of $(1-p^{k-1}\text{Frob})G_k^{\chi}$ is congruent to $L(1-k,\chi)$ modulo $\mathbb{Z}_p[\xi]$.
\end{proof}
We see that the family of measures $\widetilde{J}^{c,1}$ can not quite be the ones that we are looking for as their means are $0$ and not $\frac{1}{p}\log(c^{p-1})$ as required. The remedy however is easy: In \cite{AHR}(Proposition 10.10) the authors construct for each $c\in \mathbb{Z}_p^{\times}/\{\pm 1\}$ a measure $\nu_c$ with values in the ring of $p$-adic modular forms of level $1$ uniquely defined by the formula
$$\int_{\mathbb{Z}_p^{\times} /\{ \pm 1 \}}x^k = (1-c^k)(1-p^{k-1}\text{Frob})G_k
$$
where $G_k$ are the classical Eisenstein series for the trivial Dirichlet character $\chi =1$. Moreover they show that
$$ \int_{\mathbb{Z}_p^{\times}/\{\pm 1 \}} 1 d\nu_c = \frac{1}{2p} \log(c^{p-1}). 
$$
These measures extend uniquely to measures $\overline{\nu}_c$ defined on the whole of $\mathbb{Z}_p^{\times}$ by the formula 
$$ \int_{\mathbb{Z}_p^{\times}} x^k d\overline{\nu}_c = 2 \int_{\mathbb{Z}_p^{\times}/\{ \pm 1 \}} x^k d\nu_c
$$
 as the moments of the measures $\nu_c$ are zero for $k$ odd and $k\leq 4$. Their means are therefore given by 
\begin{equation}\label{meansnu}
    \int_{\mathbb{Z}_p^{\times}} 1 d \overline{\nu}_c = \frac{1}{p} \log(c^{p-1}).
\end{equation}  

\begin{theo}
Let $c$  be a $p$-adic unit and $\chi$ a nontrivial Dirichlet character modulo $N$ such that $\chi(-1)=-1$. Denote with $\mu^{\chi}_c$  the measures on $\mathbb{Z}_p^{\times}$ with values in $ MF_{p,*}(\Gamma_1(N))$ defined by 
$$ \mu_{c}^{\chi} = \overline{\nu}_c + \widetilde{J}_{\chi}^{c,1}. 
$$
Then the $\mu_c^{\chi}$ correspond to an element in $\pi_0 Map_{\mathbb{E}_{\infty}}(MU,Tmf_1(N)\widehat{_p})$.

\end{theo}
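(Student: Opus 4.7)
By Theorem \ref{orienttmfgammaall}, to show that $\mu_c^{\chi}$ represents an element of $\pi_0 Map_{\mathbb{E}_{\infty}}(MU, Tmf_1(N)\widehat{_p})$ it suffices to exhibit a sequence $\{g_k\}$ of holomorphic modular forms such that the moments of $\mu_c^{\chi}$ equal $(1-c^k)(1-p^{k-1}\text{Frob})g_k$, and then to verify that this sequence satisfies the four conditions listed there. My plan is to compute the moments of $\mu_c^{\chi}$ directly, identify the natural candidate $g_k$, and check the four conditions in turn.

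First I would compute the moments. By additivity of the integral, Lemma \ref{restrictionmeasure}, and the defining formula of \cite{AHR} for $\overline{\nu}_c$, one obtains
\[
\int_{\mathbb{Z}_p^{\times}} x^k \, d\mu_c^{\chi} = (1-c^k)(1-p^{k-1}\text{Frob})\bigl(G_k + G_k^{\chi}\bigr),
\]
so the natural candidate is $g_k := G_k + G_k^{\chi}$, manifestly independent of the unit $c$. Condition 4 ($g_k \in \overline{MF}_k(\Gamma_1(N))$) then follows from the classical fact that $G_k$ and $G_k^{\chi}$ are holomorphic modular forms for $\Gamma_1(N)$. Condition 1 (generalized Kummer congruences) is immediate: for each $c \in \mathbb{Z}_p^{\times}$ the construction produces a measure $\mu_c^{\chi}$ whose moments are precisely $(1-c^k)(1-p^{k-1}\text{Frob})g_k$, so Lemma \ref{kummercongruences} applies. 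For Condition 2, I would observe that the prescribed limit is exactly the mean $\int_{\mathbb{Z}_p^{\times}} 1 \, d\mu_c^{\chi}$, which by additivity together with Equation \ref{meansnu} and Theorem \ref{meanzero} equals $\tfrac{1}{p}\log(c^{p-1}) + 0$.

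The main work is Condition 3: $(1-p^{k-1}\text{Frob})g_k$ must lie in the kernel of $1 - \langle p \rangle U$. Here I would exploit the identity $U \circ \text{Frob} = \mathrm{id}$ together with the decomposition $T_p = p^{k-1}\text{Frob} + \langle p\rangle U$ recalled in Section \ref{atkinsoperator} to obtain the operator equation
\[
(1-\langle p\rangle U)(1-p^{k-1}\text{Frob}) = (1 + p^{k-1}\langle p\rangle) - T_p.
\]
Condition 3 is therefore equivalent to the Hecke eigenform relation $T_p g_k = (1+p^{k-1}\langle p\rangle) g_k$, which holds on each summand of $g_k$: on $G_k$ with $\langle p\rangle G_k = G_k$ and $T_p G_k = (1+p^{k-1})G_k$, and on $G_k^{\chi}$ with $\langle p\rangle G_k^{\chi} = \chi(p) G_k^{\chi}$ and $T_p G_k^{\chi} = (1 + \chi(p)p^{k-1}) G_k^{\chi}$ from Section \ref{eisensteinseries}. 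The main obstacle is to keep the sign and normalization conventions consistent between the Hecke action on level-$\Gamma_1(N)$ Eisenstein series and the formula for the $K(2)$-local logarithm in Proposition \ref{k2hecke}; once this bookkeeping is in place, Condition 3 reduces to the eigenvalue verification above and the theorem follows.
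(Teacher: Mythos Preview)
Your approach is essentially the same as the paper's: both verify the four conditions of Theorem \ref{orienttmfgammaall} using the measure property for Condition 1, Theorem \ref{meanzero} together with Equation \ref{meansnu} for Condition 2, and the Hecke-eigenform property of Eisenstein series for Condition 3. Your treatment of Condition 3 via the explicit operator identity $(1-\langle p\rangle U)(1-p^{k-1}\text{Frob}) = 1 - T_p + p^{k-1}\langle p\rangle$ is in fact more detailed than the paper's, which simply appeals to Section \ref{eisensteinseries} and \cite{AHR}.

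There is, however, a small gap in your verification of Condition 4. The blanket assertion that ``$G_k$ and $G_k^{\chi}$ are holomorphic modular forms for $\Gamma_1(N)$'' is false for the level-one Eisenstein series when $k\leq 2$: as noted at the end of Section \ref{eisensteinseries}, $G_1$ and $G_2$ are not modular forms. The paper circumvents this by observing that the moments of $\overline{\nu}_c$ vanish for odd $k$ (since $\overline{\nu}_c$ is by construction the extension of a measure on $\mathbb{Z}_p^{\times}/\{\pm 1\}$), so that the first moment of $\mu_c^{\chi}$ comes entirely from $\widetilde{J}^{c,1}_{\chi}$ and hence $g_1 = G_1^{\chi}$, which \emph{is} classical because $\chi(-1)=-1$. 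You should make the same observation (and note that $G_k^{\chi}$ vanishes for even $k$ under the hypothesis $\chi(-1)=-1$, so the low-weight bookkeeping is harmless); once this is in place your argument coincides with the paper's.
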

\begin{proof}
First observe that the measures $\widetilde{J}^{c,1}$ already take values in the subring $MF_{p,*}(\mathbb{W},\Gamma_1(N))$ of $V(\mathbb{W},\Gamma_1(N))$ and that the ring $MF_{p,*}$ of holomorphic $p$-adic modular forms of level $1$ is a subring of the ring $MF_{p,*}( \Gamma_1(N),\mathbb{W})$. Hence, as the sum of two measures is again a measure, the expression $ \mu_c^{\chi}=\overline{\nu}_c + \widetilde{J}^{c,1}$ indeed defines a measure on $\mathbb{Z}_p^{\times}$ with values in $MF_{p,*}(\mathbb{W},\Gamma_1(N))$.  From Theorem \ref{meanzero} and Equation \ref{meansnu} we see that 
$$ \int_{\mathbb{Z}_p^{\times}} 1 d \mu^{\chi}_c = \frac{1}{p}\log(c^{p-1}) 
$$
hence, after Theorem \ref{k1tmforient}, defines an element in $Orient(bu, L_{K(1)} gl_1 Tmf_1(N)\widehat{_p})$. 

The discussion in \ref{eisensteinseries} showed that the Eisenstein series 
$G_k^{\chi}$ are eigenforms for the Hecke operators $T_p$ for the eigenvalues $(1-p^{k-1}\chi(p))$. Together with \cite{AHR}(Remark 12.2) we see that the element defined by the $\mu_c^{\chi}$ lifts to an element in $Orient(bu, L_{K(1)\vee K(2)} gl_1 Tmf_1(N)\widehat{_p})$. 

Finally observe that $$
\int_{\mathbb{Z}_p^{\times}} x d \mu_c^{\chi} = (1- \:\text{Frob})G_1^{\chi}
$$
and $G_1^{\chi} \in MF_1(\Gamma_1(N)) \otimes \mathbb{Z}_p$ is already a classical modular form, hence by Theorem \ref{orienttmfgammaall} we obtain a lift to an element in $Map_{\mathbb{E}_{\infty}}(MU, Tmf_1(N)\widehat{_p})$.

\end{proof}



Using the homtopy pullback square

$$\xymatrix{ Map_{\mathbb{E}_{\infty}}(MU, Tmf_1(N)) \ar[r] \ar[d] &   \prod_p Map_{\mathbb{E}_{\infty}}(MU, Tmf_1(N)\widehat{_p}) \ar[d] \\
Map_{\mathbb{E}_{\infty}}(MU,Tmf_1(N)\otimes \mathbb{Q}) \ar[r] & \prod_p Map_{\mathbb{E}_{\infty}}(MU, Tmf_1(N)\widehat{_p}\otimes \mathbb{Q} )
}
$$
and observing that we have a pullback square after taking connected components ($\pi_1$ of the lower right corner vanishes) we can now assemble these orientations to an integral orientation.
\orientations*
\begin{remark}\label{conundrum}
We would now like to justify our insistence on adjoining $N$-th roots of unity everywhere in sight. It turns out that one can (and this is exactly what Katz does in \cite{katzeisenstein} and \cite{katzeisenstein2}) construct measures $H^{a,b}$ on $\mathbb{Z}_p$ whose moments are given by 
$$\int_{\mathbb{Z}_p} x^kdH^{a,b} = (1 -\langle a,b) \rangle G_{f,k+1}
$$
where the $G_{f,k+1}$ are a variation of the Eisenstein series that we used and which are already modular forms in $MF_{p,k+1}(\Gamma_1(N)^{arith},\mathbb{Z}_p)$ (so it seems we do not need to adjoin roots). 
However, when restricting these measures to $\mathbb{Z}_p^{\times}$ it turns out that the moments are now given by
$$ (1-\langle a,b)(1-p^k\langle p \rangle \text{Frob}^{arith})G_{f,k+1}
$$
(the reader might want to have a look at Remark \ref{frobeniuslevelstructures} in Section \ref{atkinsoperator} again). In \cite{katzeisenstein} Katz does not distinguish in his notation between arithmetic and naive level structures \footnote{he does however in the companion paper \cite{katzeisenstein2} where the factor $\langle p \rangle$ appears} and his formula for the moments of the restrictions to $\mathbb{Z}_p^{\times}$ reads as (\cite{katzeisenstein} Lemma 3.5.6) 
$$(1-\langle a,b \rangle) (1-p^{k}\text{Frob}) G_{f,k}.
$$
We would like to stress that there is no mathematical error on Katz's part only that his notation is probably slightly misleading as the operation that he calls "$\text{Frob}$" (this is the operation we called "$\widetilde{\text{Frob}}$" in Section \ref{atkinsoperator}) does \textbf{not} reduce to the $p$-th power operation modulo $p$ on $p$-adic modular forms with arithmetic $\Gamma_1(N)$-level structures (only up to an action by $\langle p \rangle$).

On the other hand, in our application we require that the operator $(1- p^{k-1} \langle p \rangle \text{Frob}^{arith})$ corresponds to the effect on homotopy of the $K(1)$-local logarithm $\ell_1$ (see Section \ref{k1locallog}) for $L_{K(1)}Tmf_1(N)$, i.e., the operator $\langle p \rangle \text{Frob}^{arith}$ should exactly reduce to the $p$-th power map modulo $p$ in order to be compatible with a $\theta$-algebra structure.

A close examination of the proof of Lemma \ref{restrictionmeasure} reveals that the diamond operators $\langle p \rangle$ inevitably make an appearance (unless of course one restricts oneself to $p$-adic modular forms where $\langle p \rangle$ acts trivially, i.e those of level $\Gamma_0(N)$) so one will always run into this conundrum when one insists on working over $\mathbb{Z}_p$. 
But, as we have seen, everything works out when working over $\mathbb{W}$ instead - this is an artefact of the fact that after adjoining an $N$-th root of unity the notions of arithmetic and naive level structures coincide.

\end{remark}
\bibliographystyle{amsalpha}

\bibliography{bibliography.bib}
\Addresses
\end{document}